\newlist{abbrv}{itemize}{1}
\setlist[abbrv,1]{label=,labelwidth=1.6in,align=parleft,itemsep=0.02\baselineskip,leftmargin=!}
\setlist[enumerate,1]{leftmargin=1.9cm}
\numberwithin{equation}{section}     
\renewcommand*{\theequation}{%
	\ifnum\value{section}=0 %
	\thechapter
	\else
	\thesection
	\fi
	.\arabic{equation}%
}
\theoremstyle{plain}
\newtheorem{theorem}{Theorem}[section]
\renewcommand*{\theequation}{%
	\ifnum\value{section}=0 %
	\thechapter
	\else
	\thesection
	\fi
	.\arabic{equation}%
}
\newtheorem{proposition}[theorem]{Proposition}
\newtheorem{corollary}[theorem]{Corollary}
\newtheorem{lemma}[theorem]{Lemma}
\newtheorem{cor}[theorem]{Corollary}
\newtheorem{conj}[theorem]{Conjecture}
\theoremstyle{definition}
\newtheorem{definition}[theorem]{Definition}
\newtheorem{remark}[theorem]{Remark}
\newtheorem{example}[theorem]{Example}
\definecolor{ImperialColor}{rgb}{0.000, 0.243, 0.455}
\definecolor{chaptergrey}{rgb}{0.000, 0.243, 0.455}
\newcommand{\mytitlepage}{
	\doublespacing
	\title{Wave equations on graded groups and hypoelliptic Gevrey spaces}
	\author{Chiara Alba Taranto}
	\begin{titlepage}
		\begin{center}
			\null\vfill
			{\LARGE\textcolor{ImperialColor}{\textsc{\thetitle}}}
			\\
			\vfill
			{\sc \footnotesize
				{\sc a thesis presented for the degree of}\\
				Doctor of Philosophy of Imperial College London\\
				{\sc and the}\\
				Diploma of Imperial College\\
				{\sc by} \\
			}
			{\large\sc{\theauthor}}
			\vfill\vfill\vfill
			{\sc \footnotesize{
					Department of Mathematics\\
					Imperial College \\
					180 Queen's Gate, London SW7 2BZ}
				\vfill
				{{October 2017}}
			}
		\end{center}
	\end{titlepage}

	\setstretch{1.2}
	
	\addtocounter{page}{1}
	
	\newpage\thispagestyle{empty}~
	\newpage
	\hphantom{1}
	\vfill
	\noindent I certify that this thesis, and the research to which it refers,
	are the product of my own work, and that any ideas or quotations from the
	work of other people, published or otherwise, are fully acknowledged in accordance
	with the standard referencing practices of the discipline.
	\vfill
	
	\chapter*{\sc Copyright}
	
	The copyright of this thesis rests with the author and is made available under a Creative Commons Attribution Non-Commercial No Derivatives licence. Researchers are free to copy, distribute or transmit the thesis on the condition that they attribute it, that they do not use it for commercial purposes and that they do not alter, transform or build upon it. For any reuse or redistribution, researchers must make clear to others the licence terms of this work.
	\pagestyle{plain}
}
\newcommand{\abstractpage}{
	\newpage
	\pagestyle{fancy}
	\renewcommand{\headrulewidth}{0.0pt}
	\vspace*{35pt}
	\begin{center}
		\Large 
		\Huge \textcolor{ImperialColor}{\scshape Abstract} \normalsize \\ \rm
	\end{center}
	The overall goal of this dissertation is to investigate certain classical results from harmonic analysis, replacing the Euclidean setting, the abelian structure and the elliptic Laplace operator with a non-commutative environment and hypoelliptic operators.
	
	More specifically, we consider wave equations for hypoelliptic homogeneous left-invariant operators on graded Lie groups with time-dependent non-negative propagation speeds that are H\"older-regular or even more so. The corresponding Euclidean problem has been extensively studied in the `80s and some additional results have been recently obtained by Garetto and Ruzhansky in the case of a compact Lie group. We establish sharp well-posedness results in the spirit of the classical result by Colombini, De Giorgi and Spagnolo. In this investigation, a structure reminiscent of Gevrey regularity appears, inspiring deeper investigation of certain classes of functions and a comparison with Gevrey classes.
	
	In the latter part of this thesis we discuss such Gevrey spaces associated to the sums of squares of vector fields satisfying the H\"ormander condition on manifolds. This provides a deeper understanding of the Gevrey hypoellipticity of sub-Laplacians. It is known that if $\slp$ is a Laplacian on a closed manifold $M$ then the standard Gevrey space $\gamma^s$ on $M$ defined in local coordinates can be characterised by the condition that $\|e^{D\slp^{\frac{1}{2s}}}\phi\|_{L^2(M)}<\infty$ for some $D>0$. The aim in this part is to discuss the conjecture that a similar characterisation holds true if $\slp$ is H\"ormander's sum of squares of vector fields, with a sub-Laplacian version of the Gevrey spaces involving these vector fields only. We prove this conjecture in one direction, while in the other we show it holds for sub-Laplacians on $SU(2)$ and on the Heisenberg group $\mathbb{H}_n$. 
	\vspace*{\fill}
	\newpage \lhead{} \rhead{}
	\cfoot{\thepage}
}
\newcommand{\dedicationpage}{
	\newpage \thispagestyle{empty} \vspace*{\fill}
	\it \noindent 
	To my grandparents,\\
	Chiara, Ciro, Enza, Salvatore.
	\vspace*{\fill} \newpage \rm
}
\newcommand{\acknowledgments}{
	\chapter*{\sc\textit{Acknowledgements}}
	\noindent
	I would like to express my sincere gratitude to my supervisor Professor Michael Ruzhansky for believing in me and giving me the opportunity to pursue my Ph.D. at Imperial College London. I thank him for his patience and immense knowledge. I am extremely grateful to Dr. Veronique Fischer for all the time she dedicated to me. Without her precious support I would have not appreciated the deep meaning and the beauty of this piece of Mathematics. Her enthusiasm and dedication have been the best example to follow, and her insightful comments and guidance helped me in both researching and writing. I also offer my gratitude to all my lecturers at \textit{Università degli Studi di Bari Aldo Moro}, particularly to my advisor Dr. Sandra Lucente.  
	
	I also want to thank all the people at Imperial College London, in particular our postgraduate administrator Mr. Anderson Santos, always ready to assist all the students, and all the Ph.D. students in the Maths department: clever, committed and interested in so many different activities and aspects of life. They made the campus an extremely inspiring place.
	
	I wish to thank all my friends: the new ones who allowed me to discover and enjoy the beauty of the `Big Smoke', the old ones who have always found a way to be present in my life independently of the physical distance. In particular my childhood friend Irene, my little cousin Rossana and my `Vitamin A' Alfredo. 
	
	I am deeply grateful to Professors Alberto Parmeggiani and Boguslaw  Zegarlinski, my examiners, for their invaluable improvements to my thesis and for inspiring me with further possible directions to develop this work.
	
	Last but not least, I would like to thank my family: my parents for supporting me spiritually throughout my studies and my life in general, for helping me to believe in my own skills and become a stronger and independent person, for all the times they met me at the airport with the biggest smiles I have ever seen.
	My brother, the most talented and lively person that I know. He has always been and always will be an example for me. My beloved boyfriend, Freddy. This last year would have not been the same if he had not been with me. His support and love, his perfectionism, his patience and joy helped me to overtake all the difficulties in this crucial and intense year. And of course, I thank him for all the mathematical discussions, for constantly encouraging me and for proof-reading my thesis.
	\vspace*{\fill} \newpage
}
\DeclareMathOperator*{\esssup}{ess\,sup}
\DeclareMathOperator{\Hom}{Hom}
\DeclareMathOperator{\Tr}{Tr}
\DeclareMathOperator{\singsupp}{sing\,supp}
\DeclareMathOperator{\Id}{Id}
\DeclareMathOperator{\SPAN}{span}
\DeclareMathOperator{\Dom}{Dom}
\DeclareMathOperator{\Op}{Op}
\DeclareMathOperator{\OP}{op}
\newcommand{\C}{{\mathbb C}}
\newcommand{\Z}{{\mathbb Z}}
\newcommand{\N}{{\mathbb N}}
\newcommand{\R}{{\mathbb R}}
\newcommand{\h}{{\mathbb H}}
\newcommand{\Rn}{{\R}^n}
\newcommand{\slp}{{\mathcal L}}
\newcommand{\HS}{{\mathtt{HS}}}
\newcommand{\Gh}{{\widehat{G}}}
\newcommand{\SU}{{\rm SU(2)}}
\newcommand{\Rep}{{\rm Re}}
\newcommand{\Imp}{{\rm Im}}
\begin{document}

\mytitlepage
\setstretch{1.2}
\abstractpage
\dedicationpage
\acknowledgments
\tableofcontents

\mainmatter
\addtocounter{page}{10}
\setcounter{chapter}{0}

\chapter{Introduction}\label{CP:1}

At the beginning of the 20th century a new class of second order partial differential equations with non negative and degenerate characteristic form started to attract the attentions of several mathematicians. These equations initially arose from the mathematical modelling of problems from theoretical physics and diffusion processes\footnote{A diffusion process is a solution to a stochastic differential equation.}. The first derivation is due to the mathematician Andrey Kolmogorov \cite{K1931,K1934}. Indeed, during his studies related to stochastic processes he found the following set of equations, pivotal in the field. They were written in the following form by Lars H\"ormander \cite{H1967}:
\begin{equation}\label{Kolmogorov}
-\frac{\partial u}{\partial x_0}+\sum_{j,k=1}^{n}a_{j,k}\frac{\partial^2 u}{\partial x_j\partial x_k}+\sum_{j,k=1}^nb_{j,k}x_j\frac{\partial u}{\partial x_k}+cu=f.
\end{equation}
Here $a_{j,k},b_{j,k}$ and $c$ are real constants and the matrix $A=(a_{j,k})$ is symmetric and positive semi-definite.
The same equations are also known as Fokker--Plank equations, as they have been independently derived by the physicists Adriaan Fokker and Max Plank to describe the time evolution of the probability density function of the velocity of a particle in Brownian motion. Their version turned out to be a special case of the Kolmogorov equations, explaining diffusion phenomena from a probabilistic point of view.
A peculiar aspect of this class of equations is that, despite the definiteness of $A$, in general they are not elliptic, since they have multiple characteristics, but they are hypoelliptic, defined as follows. 
\begin{definition}[Hypoelliptic operator]
	Given an open set $\Omega\subset\Rn$, a partial differential operator $P$ with coefficients smooth in $\Omega$ is hypoelliptic if $Pu\in\mathcal{C}^{\infty}(S)$ implies $u\in\mathcal{C}^{\infty}(S)$ for every distribution $u\in\mathcal {D}^{'}(\Omega)$ and every open subset $S$ of $\Omega$.
\end{definition}

The general quest of this work is to investigate certain classical results from harmonic analysis in which the Euclidean setting and the elliptic Laplacian are replaced by a non-commutative environment and hypoelliptic operators. As we have begun and will shortly continue to describe, in addition to the inherent desire of the mathematician to look for generalisations of all types, the interest in this direction of research comes also from both theoretical and applied fields, such as calculus of variations or robotics where hypoelliptic operators on non-commutative, or sub-Riemannian settings appear, e.g., \cite{CS2014,M1993}.

Examples of hypoelliptic operators include the heat operator or any elliptic operator. As for elliptic operators, sufficient and necessary conditions were initially determined for operators with constant coefficients, and subsequently extended to certain classes of operators with variable coefficients, see, e.g., \cite{H1990}. However, these classes do not include the Kolmogorov-type operators. 

We can observe from its definition that a sufficient condition to deduce hypoellipticity is the existence of a `smooth fundamental solution' in the sense of \cite[Theorem 11.1.1]{H1990}. In fact, in his work in 1934, Kolmogorov constructed a fundamental solution for \eqref{Kolmogorov} that is a smooth function off the diagonal. From this, one can immediately obtain the hypoellipticity of \eqref{Kolmogorov}. 

Notwithstanding this, it is extremely tricky to find fundamental solutions. Certain properties of the Kolmogorov operators inspired H\"ormander, and he made the important contribution of a collection of sufficient conditions to deduce hypoellipticity of a significant class of operators and link it to geometric properties. Momentarily we state the axial theorem by H\"ormander, in a style similar to the original publication \cite[Theorem 1.1]{H1967}\footnote{In \cite{H1967}, H\"ormander almost completely characterized hypoelliptic second order partial differential operators with $\mathcal{C}^{\infty}$ coefficients, in the sense that it can be seen that the sufficient condition of Theorem \ref{THM:Hormander} for hypoellipticity is also essentially necessary. In addition he extended a necessary condition for hypoellipticity to operators with variable coefficients, since he showed that the principal symbol of a hypoelliptic operator must be semi definite, i.e., all eigenvalues are non-negative.}. The idea underlying the theorem is that if given vector fields do not span the whole tangent space at every point, it is still possible to prove that certain operators are hypoelliptic under an appropriate geometric hypothesis. Such a geometric hypothesis allows one to `generate' the missing directions as commutators of the given ones. 

\begin{theorem}[H\"ormander's theorem]\label{THM:Hormander}
	Let an operator $P$ be written in the form
	\begin{align}\label{EQ:HormOp}
	P=\sum_{j=1}^{r}X_j^2+X_0+c,
	\end{align}
	where $X_0,\dots,X_r$ denote real-valued first order homogeneous differential operators in an open set $\Omega\subset\Rn$ with $\mathcal{C}^\infty$-coefficients and $c\in\mathcal{C}^\infty(\Omega)$. Let us assume that among the operators 
	\begin{align*}
	X_{j_1},[X_{j_1},X_{j_2}],[X_{j_1},[X_{j_2},X_{j_3}]],\dots,[X_{j_1},[X_{j_2},[X_{j_3},\dots]]],\dots,
	\end{align*}
	where $j_i=0,\dots,r$, there exist $n$ which are linearly independent at any given point in $\Omega$. Then it follows that $P$ is hypoelliptic. 
	
	Under these conditions, the differential operators $X_1,\dots,X_r$ are said to be a \textit{H\"ormander system}.
\end{theorem}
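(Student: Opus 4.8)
The plan is to derive the hypoellipticity of $P$ from a \emph{subelliptic a priori estimate}. Concretely, I would show that every point of $\Omega$ has a relatively compact neighbourhood $U$ and constants $C>0$, $\epsilon\in(0,1]$, with
\begin{equation*}
\|u\|_{s+\epsilon}^2\le C\bigl(\|Pu\|_{s}^2+\|u\|_{s}^2\bigr),\qquad u\in\mathcal{C}_c^\infty(U),\ s\in\R,
\end{equation*}
where $\|\cdot\|_s$ is the $L^2$-Sobolev norm in a coordinate chart, built from $\Lambda^s=(1-\Delta)^{s/2}$. Once this is available, hypoellipticity follows by a standard regularisation argument: given $Pu\in\mathcal{C}^\infty(S)$, one applies the estimate to $\chi_\varrho u$ for a family of Friedrichs mollifiers (or smoothed cut-offs) $\chi_\varrho$, controls the commutator $[P,\chi_\varrho]$ uniformly in $\varrho$, and passes to the limit to bootstrap $u\in H^{s+\epsilon}_{\mathrm{loc}}$ from $u\in H^{s}_{\mathrm{loc}}$; iterating over $s$ yields $u\in\bigcap_sH^s_{\mathrm{loc}}=\mathcal{C}^\infty$ near the point. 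The whole problem is thus the estimate, and the argument I have in mind is essentially H\"ormander's, streamlined via Kohn's $L^2$ technique.

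The estimate is built by tracking a \emph{gain} for first-order operators. Put $\mathcal{B}(u)^2:=\|Pu\|_0^2+\|u\|_0^2$ and call a real vector field $Z$ \emph{controlled with gain $\delta\in(0,1]$} on $U$ if $\|Zu\|_{\delta-1}\lesssim\mathcal{B}(u)$ for all $u\in\mathcal{C}_c^\infty(U)$ (allowing harmless lower-order terms on the right). First comes the energy estimate: since each $X_j$ is real, integrating $(Pu,u)_{L^2}$ by parts yields $\sum_{j=1}^r\|X_ju\|_0^2\lesssim|(Pu,u)_{L^2}|+\|u\|_0^2\lesssim\mathcal{B}(u)^2$, so $X_1,\dots,X_r$ are controlled with gain $1$; and since $X_0u=Pu-\sum_jX_j^2u-cu$, a comparison in $H^{-1}$ shows $X_0$ is controlled too (it is convenient to weight $X_0$ as a second-order term, so that a bracket containing it is charged an extra unit). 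Then comes the commutator lemma: if $W$ is controlled with gain $\delta$, then $[X_j,W]$ is controlled with gain $\delta/2$ for $j\ge1$, and $[X_0,W]$ with a smaller but still positive gain. This is proved by writing $\|[X_j,W]u\|_{\delta/2-1}^2$ as a pairing against $\Lambda^{\delta-2}[X_j,W]u$, expanding $[X_j,W]=X_jW-WX_j$, integrating by parts to move one differentiation onto the other factor, and using Cauchy--Schwarz together with the pseudodifferential calculus to absorb the commutators of powers of $\Lambda$ with the vector fields.

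Next I iterate. Since the brackets in the statement are left-nested, repeated application of the commutator lemma shows that every iterated bracket of length $\le m$ is controlled with a gain bounded below by a fixed $\epsilon>0$ (a dyadic fraction comparable to $2^{1-m}$, with the exponent adjusted for occurrences of $X_0$). By the H\"ormander condition, at each point of $\overline{U}$ some $n$ of the finitely many iterated brackets of length $\le m$ are linearly independent; since there are only finitely many such brackets, all of them are controlled with this common gain $\epsilon$, say $W_1,\dots,W_N$. Linear independence (of $n$ among the $W_k$) at every point of $\overline{U}$, together with compactness, forces $\sum_{k=1}^N|\sigma_{W_k}(x,\xi)|^2\gtrsim|\xi|^2$ uniformly, i.e.\ the system $(W_1,\dots,W_N)$ is elliptic of order one, so $\|v\|_1^2\lesssim\sum_k\|W_kv\|_0^2+\|v\|_0^2$. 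Applying this with $v=\Lambda^{\epsilon-1}u$ and discarding the (lower-order) commutators $[W_k,\Lambda^{\epsilon-1}]$ gives $\|u\|_\epsilon^2\lesssim\sum_k\|W_ku\|_{\epsilon-1}^2+\|u\|_0^2\lesssim\mathcal{B}(u)^2$, which is the desired estimate at $s=0$; the general $s$ follows by conjugating the whole scheme with $\Lambda^s$.

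The main obstacle, and where genuine care is required, is the commutator lemma together with the bookkeeping of gains: one must (i) choose the fractional Sobolev exponents so that each bracketing costs exactly a factor $1/2$ and no more; (ii) handle $X_0$ correctly, since it behaves as a second-order operator and a bracket involving it must be charged two units rather than one; and (iii) verify at every stage that the error terms produced by the pseudodifferential calculus, by the localisation, and by the ``absorb $\eta\|u\|_\epsilon$ into the left-hand side'' manoeuvre are genuinely of lower order than the quantity being estimated, so that they can be absorbed. By contrast, upgrading the a priori estimate to interior regularity is routine, provided the mollifier commutators $[P,\chi_\varrho]$ are controlled uniformly in $\varrho$.
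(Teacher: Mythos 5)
The paper does not actually prove this theorem: it is quoted in the Introduction as background, in a form close to H\"ormander's original statement, and the only justification offered is the citation to \cite{H1967} (the restatement in Chapter 2, Theorem \ref{HormanderTheorem}, likewise defers to the Appendix of \cite{Ric} for a proof). So there is no in-paper argument to compare yours against. Taken on its own terms, your outline is the standard and correct route --- the Kohn--H\"ormander $L^2$ proof via a subelliptic a priori estimate $\|u\|_{s+\epsilon}\lesssim\|Pu\|_s+\|u\|_s$, bootstrapped to interior regularity by mollification. The structure is right: the basic energy identity controls $X_1,\dots,X_r$ with gain $1$ and, via $X_0=P-\sum_jX_j^2-c$, controls $X_0$ in $H^{-1}$; the commutator lemma halves the gain at each bracketing; the H\"ormander condition plus compactness makes the finite family of controlled brackets an elliptic system of order one, which converts the bracket estimates into the Sobolev gain $\epsilon\sim 2^{1-m}$. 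You have also correctly identified where the real work lies --- the fractional-order commutator estimates, the two-unit weighting of $X_0$ (note that $X_0$ itself has gain $0$ in your normalisation, so the positivity of the gain for $[X_0,W]$ must come from the $X_j^2$ structure of $P$ rather than from $X_0$ directly), and the absorption of pseudodifferential error terms --- rather than glossing over them. As a proof proposal this is faithful to the classical argument; a complete write-up would of course have to execute the commutator lemma and the $X_0$ bookkeeping in detail, for which \cite{H1967} or the Appendix of \cite{Ric} are the natural references.
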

This was a milestone result that opened a research field. In particular, the theory of subelliptic second order partial differential equations with semi-definite characteristic form has found much attention. 
The main protagonists in this theory are the sub-Laplacian operators defined as
\begin{align*}
\slp:=-\sum_{j=1}^r X_j^2,
\end{align*}
for a given H\"ormander system $\{X_j\}_{j=1}^r$. It is natural, and even useful in many applications and results \cite{F75,FS-CPAM,S1973}, to extend and investigate these operators and their related properties to a more general setting than the Euclidean one. This is provided by a class of suitable nilpotent non-abelian Lie groups and corresponding Lie algebras admitting a `harmonic analysis' and for which left-invariant, homogeneous H\"ormander operators of degree two exist; in Chapter \ref{CP:2} we offer more details. In this non commutative environment, the sub-Laplacians play a crucial r\^ole, comparable to that of the Laplacian for elliptic equations. Indeed sub-Laplacians satisfy symmetry properties analogous to those of Laplacians, in the sense that
\begin{itemize}
	\item they are left-invariant with respect to the group law;
	\item they have degree $2$ with respect to the dilation automorphisms on the group.
\end{itemize} 

Remarkable contributions to this field were given in the `70s and `80s by Folland, Rothschild and Stein who developed and applied to sub-Laplacians the singular integral theory on nilpotent Lie groups, and over two decades later by Ponge \cite{P2008}. The paper by Folland \cite{F1977} presents an outline of the developments of the theory for the aforementioned hypoelliptic differential operators on non-abelian groups. We mention here a short selection from the long list of works that have been crucial to the development of the theory, referring the interested reader to these and the diverse references therein.

In 1974 Folland and Stein investigated operators from complex analysis in the context of subelliptic, in particular hypoelliptic, operators \cite{FS-CPAM}. More precisely, they considered the \textit{tangential Cauchy--Riemann complex} or $\overline{\partial}_b$ \textit{complex} and the associated `Kohn Laplacian' $\square_b$ on the complex sphere. Unlike the ordinary Laplacian, $\square_b$ is not elliptic, but subelliptic and on the complex sphere it can be proved that it is hypoelliptic. They argued that the case of the complex sphere can be analysed locally approximating the sphere with the Heisenberg group\footnote{On the Heisenberg group they found an integral operator which inverts $\square_b$ exactly.} $\h_n$, (see Section \ref{Heisenberg} for an introduction to $\h_n$), where the operator $\square_b$ is easily computed. In this setting, they could extend previous results about a fundamental solution for the hypoelliptic sub-Laplacian on $\h_n$ from \cite{F1973} and deduce several estimates for $\square_b$ in terms of $L^p$ Sobolev spaces.  

The following year Folland published a thorough study \cite{F75} of sub-Laplacians on \textit{stratified Lie groups}. In fact he constructed a general theory of subelliptic regularity on these groups: for example he extended several basic theorems from the Euclidean theory of differentiability to this non-abelian setting. Among other things, he obtained interpolation properties, boundedness of singular integrals and $L^p$ estimates.  

Subsequently, in 1976  Rothschild and Stein generalised the results by Folland and Stein by proving the so called \textit{Rothschild--Stein lifting theorem} through an inductive argument. Indeed, in \cite{Rothschild-Stein:AM-1976} they developed a theory to approximate more general operators by operators on nilpotent groups and, hence, extend the regularity theorems to more general cases. The works by Folland and Stein, and later Rothschild and Stein, extended the breadth of applicability of the subelliptic theory of differentiability to a bigger class of second order hypoelliptic partial differential equations.

To understand the underlying idea of the Rothschild--Stein lifting theorem, which allows us to apply the subelliptic regularity theory in a more general case, let us consider a `classical' example from both \cite{Rothschild-Stein:AM-1976} and \cite{F1977}. In general, given an operator defined as a sum of squares of vector fields, as in \eqref{EQ:HormOp}, and which satisfies the H\"ormander condition, we can immediately claim that the latter is hypoelliptic by H\"ormander's theorem \ref{THM:Hormander}. Nevertheless the application of H\"ormander's theorem does not yield any interesting or sharp results about its regularity properties. It is here that the Rothschild and Stein theorem intervenes. Suppose that we want to study the Grushin operator, on $\R^2$,
\begin{align*}
H:=\partial_x^2+(x\partial y)^2.
\end{align*}
The vector fields generating $H$, namely $X_1:=\partial_x$ and $X_2:=x\partial_y$, lose their linear independence along the line $x=0$.  Nevertheless at $x=0$ the commutator  
\begin{align*}
[X_1,X_2]=\partial_y+x\partial_x\partial_y-x\partial_y\partial_x=\partial_y
\end{align*}
can be utilised, so that the set $\{X_1,[X_1,X_2]\}$ spans $\R^2$ everywhere. Therefore the H\"ormander theorem applies and allows us to conclude that the Grushin operator is hypoelliptic. However it is impossible to have a two-dimensional non-abelian nilpotent Lie group. The cunning---and perhaps counter-intuitive---idea of Rothschild and Stein was to \textit{add certain extra variables} and therefore `lift' the original operator to a higher dimensional manifold endowed with the `required' geometric structure. Specifically, in the context of the Grushin operator, we can define the vector fields
\begin{align*}
\tilde{X}_1:=\partial_x,\quad\text{and}\quad\tilde{X}_2:=\partial_t+x\partial_y
\end{align*}
in $\R^3$ and then work in the corresponding three-dimensional group. In fact $\tilde{X}_1$ and $\tilde{X}_2$, together with their commutator $[\tilde{X}_1,\tilde{X}_2]$, give rise to the three-dimensional Lie algebra associated to the non-abelian three-dimensional Heisenberg group $\mathbb{H}_1$. Now we can study the hypoelliptic operator $\tilde{H}:=\tilde{X}_1^2+ \tilde{X}_2^2$ by using the non-abelian nilpotent stratified group $\mathbb{H}_1$, and afterwards transfer the results back to the original Grushin operator $H$.

We conclude this partial historical review by mentioning Folland's words from \cite{F1977}:

``[...]From this work\footnote{Folland himself refers to \cite{Rothschild-Stein:AM-1976}.} emerged the following general philosophy concerning the theory of differential operators constructed from vector fields whose commutators play an essential r\^ole. A natural class of models for non commuting vector fields is the left-invariant vector fields on non-Abelian Lie groups. To study a set of non commuting vector fields, then, one should find a group whose Lie algebra mimics the structure of the original vector fields in a suitable sense, and which admits a `harmonic analysis' that will yield results similar to the classical Euclidean case. The nilpotent groups with dilations (or the more special stratified groups) fulfil the second requirement, and they seem to be general enough to fulfil the first in all known cases." 

Another aspect of harmonic analysis on groups that we have not mentioned so far is the \textit{representation theory} of groups, that we will introduce in Chapter \ref{CP:2}. Although representations do not come into play in the aforementioned works by Folland, Stein and Rothschild, several results can be achieved using this `unwieldy'\footnote{According to \cite{F1977}.} tool. For example, Rockland proved a hypoellipticity theorem \cite{Rockland}  in the context of representations of the Heisenberg group $\h_n$. This is an analogue of the fact that a homogeneous differential operator on $\Rn$ with constant coefficients is hypoelliptic if and only if it is elliptic. For our purposes, it is important to observe that representations of Lie groups and infinitesimal representations of the corresponding Lie algebra have been used to develop a non-commutative Fourier analysis on groups, see \cite{Fol,F1994,T1986} or the more recent \cite{FR2016,RT2009}. This aspect will have a relevant r\^ole in the second part of this dissertation, so we postpone to Chapter \ref{CP:2} and \ref{CP:3} a more detailed presentation of the topic.

Since their introduction, these partial differential equations (pdes) with multiple characteristics have frequently appeared in several problems arising from both theoretical and applied fields. These include the geometric theory of several complex variables, calculus of variations and also, with regard to applied sciences, mathematical models in finance, in human visions and in robotics. A more comprehensive historical overview and list of references can be found in the monograph by Bonfiglioli, Lanconelli and Uguzzoni \cite{BLU}. As already observed for the Kolmogorov equation, one of the peculiarities of these `new' pdes is that they might fail to be elliptic at every point, explaining why they are known as \textit{subelliptic equations}. This lack of ellipticity means that the quadratic form associated with such operators is non-coercive, and therefore there may be a loss of control in some directions in the tangent space.

Nevertheless, it is possible to associate to these equations a peculiar underlying geometry, known as \textit{sub-Riemannian geometry} that allows one to recover the missing directions by an appropriate choice of commutators of vector fields. As we will carefully show in the next Chapter, a natural setting to study these problems is given by graded Lie groups\footnote{See Definition \ref{defGraded}.}, which possess an invariant measure and an intrinsic family of dilations making them a natural habitat for extensions of classical results in Euclidean harmonic analysis; see \cite{FS}. 

Several problems and results of Euclidean harmonic analysis are related to the elliptic Laplace operator, its spectral theory and functional calculus. It is often possible to replace the Laplace operator with a more general elliptic operator. However, in various contexts, particularly on non-commutative Lie groups and sub-Riemannian manifolds, the natural substitute for the Laplace operator need not be elliptic, and may be only subelliptic. Here new and interesting phenomena arise and the few known results typically involve a combination of tools from different areas of mathematics: evolution equations, representation theory and differential geometry.

We begin our analysis by explaining the idea of well-posedness for a generalisation of the time-dependent wave equation on groups.

\section{Weakly hyperbolic equations}
The literature on the classification of pdes is not always consistent. For this reason, we will specify here what is meant in this dissertation by a (weakly or strictly) hyperbolic equation. 

According to some authors, such as H\"ormander \cite{H1990}, the hyperbolicity of a given operator can be related to the possibility of admitting a well-posed initial value problem for arbitrary smooth data. The drawback of this interpretation is that it is not easy to check. Nevertheless, it is only the principal part of the operator, i.e. the sum of the terms of the operator involving the highest order of derivation, that affects the well-posedness of the corresponding initial value problem. On this basis, we provide below sufficient conditions on the principal part to guarantee  the hyperbolicity of the operator.

Let us consider a linear partial differential operator of order $m$ with positive time-dependent coefficients of the form
\begin{align*}
Pu(t,x)=\partial_t^m u(t,x)+ \sum_{|\alpha|\leq m}a_\alpha(t)\partial_x^\alpha u(t,x), 
\end{align*}
with $x$ in an $n$-dimensional manifold and $t$ in a finite interval $[0,\tau]$. We introduce an invariant quantity associated with the given differential operator: the principal symbol of the operator $P$ is the homogeneous polynomial of degree $m$ given by
\begin{align*}
p(\omega,\xi)=\omega^m+\sum_{|\alpha|= m }a_\alpha(t)\xi^\alpha.
\end{align*}
If the polynomial $p(\omega,\xi)$ admits $m$ real roots then the operator $P$ is said to be \textit{hyperbolic}. In particular, if 
\begin{itemize}
	\item $p(\omega,\xi)$ admits $m$ \textit{distinct} real roots, then $P$ is \textit{strictly hyperbolic},
	\item the real roots of $p(\omega,\xi)$ have non-trivial multiplicities, then $P$ is \textit{weakly hyperbolic}.
\end{itemize} 
The theory for strictly hyperbolic operators is well developed. In fact there are well-known techniques that allow one to determine the well-posedness of the initial value problems, to construct a representation formula for solutions, or to study the regularity of the operators. For details the interested reader can consult, e.g., the four volumes by H\"ormander \cite{H1990} or the monograph by Duistermaat \cite{D1996} where the Fourier integral theory is developed and applied to strictly hyperbolic operators. 

On the contrary weakly hyperbolic equations have been a challenging problem for a long time and there are no known general techniques to establish necessary and sufficient conditions for the well-posedness. The lack of strict hyperbolicity, due to the fact of dealing with degenerate-elliptic operators, means that Fourier integral operator techniques are no longer available, see \cite{Muller-Stein:Lp-wave-Heis}.  In the first Subsection we consider an important example of a weakly hyperbolic equation that has attracted and continues to attract the interest of many mathematicians. In the following Subsection we outline the ideas behind the definition of Gevrey spaces. In both we present the outline of the contribution made to these topics in this dissertation. In the final Subsection of this Chapter we give an overview of the structure of the material that follows, and offer an index of the notation implemented.

\subsection{The time-dependent wave equation}\label{SEC:intWE}

Let us consider the initial value problem for the wave equation on $\Rn$ with a time-dependent coefficient $a(t)$ , written
\begin{equation}\label{EQ:weRn}
\left\{\begin{array}{rclc}
\partial ^2_{t}u(t,x)+a(t) \Delta u(t,x)	&=&0,			&(t,x)\in [0,\tau]\times\Rn,\\
u(x,0)	&=&\varphi(x),	&x\in\Rn,\\
\partial _t u(x,0)	&=&\psi(x),		&x\in\Rn,
\end{array}\right.
\end{equation}  
where $\Delta=-\sum_{j=1}^n\partial ^2_{x_j}$ is the classical (positive) Laplacian on $\Rn$. In the `70s and `80s this problem has been extensively studied. Oleinik determined a sufficient condition for the coefficient $a(t)$ under which Problem \eqref{EQ:weRn} is well-posed in $\mathcal{C}^\infty$, see \cite{O1970}. If we drop Oleinik's condition, it is possible to construct explicit examples of initial value problems that are not well-posed in $\mathcal{C}^\infty$ and $\mathcal{D}'$, see \cite{CS1982, Colombini-Jannelli-Spagnolo:Annals-low-reg}. From this example, we can already realise that $\mathcal{C}^\infty$ and $\mathcal{D}'$ might not be ideal spaces to look for solutions for Cauchy problems. Indeed, there are certain classes of functions, the so-called Gevrey spaces, that appear naturally when dealing with pdes, in particular with weakly-hyperbolic equations. In fact, in \cite{CDS79} Colombini, De Giorgi and Spagnolo proved sharp well-posedness results for the Cauchy problem \eqref{EQ:weRn} in Gevrey classes.

Of course, the passage to non-Euclidean and non-commutative settings is a natural development in these studies. Several authors analysed similar problems on manifolds and groups, replacing the elliptic Laplacian in the wave equation with more general subelliptic operators. In this way, one faces weakly hyperbolic equations.

To convince ourselves this is the case we can consider a simple case for a positive sub-Laplacian operator $\slp$, that is, the wave equation
\begin{align*}
\partial_t u+a(t)\slp u=0
\end{align*}
on the special unitary group $SU(2)$, isomorphic to the three-dimensional sphere. Its Lie algebra $\mathfrak{su}(2)$ can be endowed with an orthonormal basis of vector fields ${X, Y, T }$, such that $[X, Y ] = T $. Therefore, the sub-Laplacian operator on $SU(2)$ can be defined as follows
\begin{align}\label{EQ:slpSU2}
\slp_{SU(2)} =-\big(X^2 +Y^2\big).
\end{align}
Moreover, the elements of $SU(2)$ can be parametrised using the so-called Euler angles, and the same applies to the vector fields in the Lie algebra $\mathfrak{su}(2)$, see \cite[Definition 11.3.1, Proposition 11.5.9]{RT2009}. In particular, replacing the expression of the vector fields in the Euler angles in \eqref{EQ:slpSU2}, we have a formulation of the sub-Laplacian given by
\begin{align*}
\slp_{SU(2)} =-\Big(\partial_{\theta}^2+\frac{1}{\sin^2\theta}\partial_\phi^2-2\frac{\cos\theta}{\sin^2\theta}\partial_{\theta}\phi\partial_\psi +\big(\frac{1}{\sin^2\theta}-1\big)\partial_{\theta}\psi^2+\frac{\cos\theta}{\sin\theta}\partial_\theta\Big),
\end{align*}
where we consider $(\phi,\theta,\psi)$ in the ranges $0 \leq \phi< 2\pi$, $0 \leq\theta\leq\pi$ and $-2\pi \leq \psi < 2\pi$. If we consider $\eta = (\eta_1,\eta_2,\eta_3)$ to be the dual variable to the Euler angles, the principal symbol of $\slp$ in these coordinates is
\begin{align*}
p(\eta)=-\Big(\eta_2^2+\frac{1}{\sin^2\theta}(\eta_1-\cos\theta \eta_3)^2\Big),
\end{align*}
so that the wave equation is weakly hyperbolic, with non-trivial multiplicities on the set $\eta_1 =\cos\theta\eta_3$ and $\eta_2 =0$.

Of course, several other groups and operators giving rise to weakly hyperbolic equations can be considered. We mention below some relevant works in this spirit.

In 1982, Nachman \cite{Nachman:wave-Heisenberg-CPDE-1982} studied the wave operator
\begin{align}\label{EQ:Nachman}
\partial_t^2-\sum_{j=1}^n \big(X_j^2+Y_j^2\big)+i\alpha T,
\end{align}
on the Heisenberg group $\mathbb{H}_n$, where $\alpha\in \R$ and $\{X_j,Y_j,T\}_{j=1}^n$ is a basis for the left invariant vector fields on $\mathbb{H}_n$. The Heisenberg group is plausibly the simplest example of a non-abelian, non-compact, locally compact, nilpotent, unimodular and stratified Lie group, that we will discuss in Section \ref{Heisenberg}. Nachman considered $|\alpha|<n$ and found, among other things, a formula for the fundamental solution of \eqref{EQ:Nachman} supported in a ``forward light cone"\footnote{This cone is much more complicated than the corresponding cone in the Euclidean case, owing to the underlying sub-Riemannian geometry.}.

In 1984 Helgason in \cite{Helgason:wave-eqns-hom-spaces-1984} considered a wave equation formally identical to \eqref{EQ:weRn} on homogeneous spaces and with $a(t)\equiv 1$, and obtained solution formulae for the Cauchy problem. 

Estimates of $L^p$-type for wave equations on manifolds and Lie groups have been considered, as well. For example, Seeger, Sogge and Stein \cite{SSS1991} and Chen, Fan and Sun \cite{CFS2010}  studied such estimates for strictly hyperbolic wave equations (with the usual Laplacian) using Fourier integral theory. These techniques are unavailable once we replace the elliptic Laplacian with the subelliptic (hypoelliptic) sub-Laplacian. In fact, a different approach has been implemented by M\"uller and Stein in \cite{Muller-Stein:Lp-wave-Heis} to study $L^p$-estimates for the wave equation
\begin{equation*}
\partial ^2_{t}u(t,x)+\slp u(t,x)=0,	\quad		(t,x)\in [0,\tau]\times\mathbb{H}_n,
\end{equation*}  
where $a(t)\equiv 1$ and $\slp=-\sum_{j=1}^n\big(X_j^2+Y_j^2\big)$ is a sub-Laplacian in the sense of H\"ormander. 

Finally, we mention the results by Garetto and Ruzhansky that are particularly relevant to the work presented in this dissertation. Indeed in \cite{GR2015} they studied the well-posedness of the following Cauchy problem for the wave equation for a sum of squares of vector fields on a \textit{compact} Lie group $G$:
\begin{equation}\label{EQ:weG}
\left\{\begin{array}{rclc}
\partial ^2_{t}u(t,x)-a(t)\slp u(t,x)	&=&0,			&(t,x)\in [0,\tau]\times G,\\
u(x,0)	&=&\varphi(x),	&x\in G,\\
\partial _t u(x,0)	&=&\psi(x),		&x\in G,
\end{array}\right.
\end{equation}  
with\footnote{According to their notation the operator $\slp$ is negative.} $\slp=X^2_1+\dots+X_k^2$, where $\{X_1,\dots,X_k\}$ is a family of left-invariant vector fields in the Lie algebra of $G$, not necessarily satisfying the H\"ormander condition. They established well-posedness results for \eqref{EQ:weG} in `Gevrey-type' spaces depending on the time-dependent coefficient $a$ for equations with irregular coefficients and multiple characteristics. Their approach consists in applying the global Fourier analysis on a compact Lie group (developed for example in \cite{T1986,RT2009}) to the Cauchy problem \eqref{EQ:weG}. Hence, the wave equation is transformed into an equation with coefficients depending only on $t$. Using properties of the symbolic calculus of operators and of the global Fourier transform on \textit{compact} Lie groups, the scalar equation in \eqref{EQ:weG} becomes a system of ordinary differential equations that decouples completely. This allows one to study each equation separately, using tools from classical analysis, such as, the reduction to a first-order system and estimates of the energy based either on Gr\"onwall's lemma or on the construction of the so-called (quasi-)symmetriser.\footnote{These arguments will become clearer in Chapter \ref{CP:3} where we will extend their techniques. Moreover we underline here that the work by Garetto and Ruzhansky follows the earlier argument by Kinoshita and Spagnolo \cite{KS2006}.}

Even though in this dissertation we are interested in non-abelian settings and developing global techniques based on group properties (such as the group Fourier transform), it is important to acknowledge that weakly hyperbolic operators with multiple characteristics have been investigated by several other authors using classical Euclidean techniques, wavefront sets and symplectic geometry. For instance, Ivrii and Petkov in \cite{IP1974}, H\"ormander in \cite{H1977} and, more recently, Parenti and Parmeggiani in \cite{PP2004,PP2009} have studied the $\mathcal{C}^\infty$-well-posedness of the Cauchy problems for hyperbolic operators with double characteristics in the Euclidean phase-space perspective, obtaining general results expressed in geometric and Euclidean vocabulary. 

\textbf{Original work.}\\
We consider a twofold generalization of the time-dependent wave equation studied by Garetto and Ruzhansky. Indeed, we study the well-posedness on \textit{graded Lie groups} of the wave equation for hypoelliptic homogeneous left-invariant operators $\mathcal{R}$ with time-dependent H\"older propagation speeds $a(t)$, i.e.
\begin{align*}
\partial ^2_{t}u(t,x)+a(t)\mathcal{R} u(t,x)=0.
\end{align*}
The operators $\mathcal{R}$ are a generalisation of sub-Laplacians first considered by \cite{Rockland} and later called after \textit{Rockland}. As specified in Definition \ref{rockland} they can be of any homogeneous degree $\nu$. Hence, the time-dependent wave equation for the sub-Laplacian on the Heisenberg group (or on a general stratified Lie group), or $p$-evolution equations for higher order operators are examples fitting our setting. Even in these cases our results are new.

The lack of compactness and the presence of more general operators of homogeneous degree $\nu>0$ make this analysis challenging. Also in this case, the group Fourier analysis and the symbolic calculus for Rockland operators, discussed later in Chapter \ref{CP:2}, are the key ingredients allowing us to establish sharp well-posedness results in the spirit of Colombini, De Giorgi and Spagnolo, \cite{CDS79}. In particular, we describe an interesting phenomenon of a loss of regularity, depending on the step of the group and on the order of the operator considered.

In this study, similarly to that of Garetto and Ruzhansky, a condition reminiscent of Gevrey regularity appears, in the sense that we obtain well-posedness results in the spaces $\mathcal{G}_{\mathcal{R}}^s(G)$ for $s\geq 1$, defined as follows
\begin{align}\notag
\mathcal G^s_{\mathcal R}(G):=\{f\in\mathcal{C}^\infty(G)\,|\,\exists A>0\,:\,\|e^{A\mathcal R^{\frac{1}{\nu s}}}f\|_{L^2(G)}<\infty\}.
\end{align}
The dependence on the Rockland operator $\mathcal{R}$ is apparent, though we demonstrate in Section \ref{SEC:weR} that these spaces are independent of the choice of $\mathcal{R}$. Moreover it is natural to associate them to Gevrey spaces, because of the classical characterisation existing in the Euclidean case (see Theorem \ref{Thm:Rodino}). This characterisation is equivalent to the classical definition of Gevrey space. Until now, a description of the spaces $\mathcal G^s_{\mathcal R}(G)$ as a modification of the classical definition of Gevrey spaces is still missing. This inspires the investigation of the relationship between the spaces $\mathcal G^s_{\mathcal R}(G)$ and a certain variant of Gevrey spaces we will introduce, the latter of which is outlined in the next Subsection and defined rigorously in Chapter \ref{CP:4}.

\subsection{Sub-Laplacian Gevrey spaces}

\textit{Gevrey functions}, so called in honour of Maurice Gevrey who introduced them  in 1918 \cite{G1918},  are defined as follows:
\begin{definition}[Gevrey functions of order $s$ in $\Omega$]
	Let $\Omega$ be an open subset of $\Rn$ and let $s\geq 1$. A function $f$ is a Gevrey function of order $s$, i.e. $f\in G^s(\Omega)$, if $f\in\mathcal{C}^\infty(\Omega)$ and for all compact subsets $K$ of $\Omega$ there exist two positive constants $A$ and $C$ such that for all $\alpha\in\N^n_0$ and for all $x\in K$ we have
	\[
	|\partial^\alpha f(x)|\leq A C^{|\alpha|}(\alpha!)^s.
	\]
\end{definition}
From the definition\footnote{For $s=1$ the corresponding Gevrey class of functions coincides with the space of real analytic functions. The Gevrey order $s$ is a way to estimate the divergence of the Taylor expansion of a smooth function: the larger $s$, the `more divergent' the Taylor expansion.} we can deduce that the Gevrey classes of functions are intermediate spaces between the smooth functions and the real-analytic functions\footnote{A function $f\in\mathcal{C}^\infty(\Rn)$ is real-analytic if and only if for every compact set $K\subset \Rn$ there exists a constant $C$ such that for every $x\in K$ and every $\alpha\in\N^n_0$ we have the bound $|\partial^\alpha f(x)|\leq C^{|\alpha|+1}\alpha!.$}. For this reason they are reasonably important in several applications in, e.g., Gevrey micro-local analysis \cite{BT1997,MR1997}, Gevrey solvability \cite{CZ1993,MR1997,DFT2009}, in the study of hyperbolic equations \cite{Bronshtein:TMMO-1980,CDS79,M1985}, dynamical systems \cite{G1995, RS1996} and evolution partial differential equations \cite{F1989,KV2011,LMR2000}. 

A characterisation on the Fourier transform side of the Gevrey spaces effectively  increases their applicability in many problems, most notably allowing one to obtain energy estimates for evolution pdes, see e.g. Rodino's book \cite{R1993} or \cite{DFT2009}. Particularly well known is the equivalence between Gevrey functions and smooth functions $\widehat\phi$ satisfying the inequality 
\begin{equation}\label{EQ:charRn}
|\widehat\phi(\xi)|\leq C e^{-\epsilon |\xi|^{\frac{1}{s}}},\quad\text{for all }\xi\in\Rn,
\end{equation}
for positive constants $C$ and $\epsilon$. This characterisation highlights an important link between the Gevrey functions and the Laplacian operator, by observing that the Fourier transform of the Laplacian $|\xi|^2$ appears in the function $ |\xi|^{\frac{1}{s}}=(|\xi|^2)^{\frac{1}{2s}}$ in \eqref{EQ:charRn}.

In 1969, Seeley \cite{S1969} characterized analytic functions on compact manifolds in terms of their eigenfunction expansions. Recently, Dasgupta and Ruzhansky, starting from Seeley's work, extended the study of the Gevrey regularity to the non-commutative  compact case: firstly, they shown in \cite{DR2014} that the Gevrey spaces defined in local coordinates on a compact Lie group allow for a similar `global' descriptions in terms of the Laplacian on the group. Subsequently, in \cite{DR2016} they further extended  the characterisation from \cite{DR2014} to the setting of an elliptic operator $E$ on a general compact manifold $M$. In particular they proved that a function $f$ belongs to the Gevrey class $G^s(M)$ if and only if 
\begin{equation}\label{EQ:Gs}
|\widehat{f}(j)|\leq C e^{-\epsilon \lambda_j^{\frac{1}{s\nu}}},
\end{equation}
for some $\epsilon, C>0$ and for all $j\in\N_0$, where $\lambda_j$ are the eigenvalues of $E$ and 
$\widehat{f}(j)=(f,e_j)_{L^2(M)}$, with $e_j$ being the eigenfunction of $E$ corresponding to $\lambda_j$. Furthermore, the spectral calculus of $E$ allows one to prove the equivalence between the latter condition \eqref{EQ:Gs} and
\begin{equation}\label{EQ:GsExp}
\exists D>0\textrm{ such that }
\| e^{D E^{\frac{1}{s\nu}}} f\|_{L^2(M)}<\infty. 
\end{equation}

\textbf{Original work.}\\
Our initial aim has been to study the Gevrey hypoellipticity of sub-Laplacians by modifying the Gevrey spaces.
In order to do that, we investigated what happens in the characterization of Gevrey spaces if the operator $E$ is no longer elliptic but \textit{hypoelliptic}. The model case for this situation will be when, for example, we replace the second order elliptic pseudo-differential operator $E$ in \eqref{EQ:GsExp} by a H\"ormander sum of squares, say $\slp$. The main question here is to understand how to define \text{new} Gevrey classes depending on $\slp$, say $\gamma^s_\slp(M)$, in order for such a characterisation still to hold.

Furthermore, it is of key importance to remove the compactness assumption on $M$ while also, for some results, allowing it to have more structure. For $\slp$ a sub-Laplacian, we will look at the cases when $M$ is a stratified Lie group, for example the Heisenberg group.

We will present partial answers to these questions. Indeed, we introduce new classes of functions, obtained adapting the definition of Gevrey spaces to the fact that we consider hypoelliptic operators. However, we will be assisted by the choice of the operator being H\"ormander's sum of squares so that we have a fixed collection of vector fields at our disposal to work with. We prove the inclusion of our new spaces in spaces of functions defined in terms of the boundedness of a heat kernel operator. Unfortunately, the other direction, that would yield the general characterisation, is still open, \textit{even in the compact case}. Notwithstanding this, we show that the well-known non-commutative Fourier analysis on specific groups allows us to obtain the desired characterisation on these groups. This is the case for the special unitary group SU(2) and the Heisenberg group $\h_n$, respectively compact and non-compact. It is important to emphasise also that, contrary to the Dasgupta--Ruzhansky work, the general argument we have developed for this study is independent of the symbolic calculus, and exploits standard tools from analysis such as factorial inequalities, norm properties, functional calculus for sub-Laplacians and Sobolev embeddings. Moreover, if we consider the `degenerate' case when the sub-Laplacian is actually a Laplacian and the manifold is compact, our arguments provide a shorter, less technical proof of \cite[Theorem 2.6]{DR2014} which does not use the symbolic calculus.

\subsubsection{Sub-Laplacian wave equation \& sub-Laplacian Gevrey spaces}

As we observed for the Euclidean case, the characterisation of Gevrey functions on the Fourier transform side is highly relevant for applications, for example in the study of the well-posedness of hyperbolic pdes in $\Rn$ 
\cite{CDS79}. Moving closer to the subject of this thesis, we can consider again the following Cauchy problem for the sub-Laplacian wave equation on a stratified Lie group $G$:
\begin{equation}\notag
\left\{\begin{array}{rclc}
\partial ^2_{t}u(t,x)+a(t)\slp u(t,x)	&=&0,			&(t,x)\in [0,\tau]\times G,\\
u(x,0)	&=&\varphi(x),	&x\in G,\\
\partial _t u(x,0)	&=&\psi(x),		&x\in G,
\end{array}\right.
\end{equation}  
where $\slp$ is a positive sub-Laplacian, clearly a particular case of \eqref{EQ:weG}. We observed that even in local coordinates it is natural to expect the appearance of Gevrey spaces in such problems. Equivalently in this non-commutative and hyperbolic context we can prove sharp well-posedness results in spaces that in literature have been na\"ively called \textit{Gevrey spaces}:
\begin{align}\label{EQ:GsR}
\mathcal G^s_{\slp}(G):=\{f\in\mathcal{C}^\infty(G)\,|\,\exists A>0\,:\,\|e^{A\slp^{\frac{1}{\nu s}}}f\|_{L^2(G)}<\infty\}.
\end{align}
The discussion above, that we will thoroughly extend in Chapter \ref{CP:4}, highlights the importance of a deeper investigation of these spaces in order to be able to correctly associate them to the Gevrey spaces. For the state-of-the-art, only when $G=\mathbb{H}_n$ or $SU(2)$ may we speak \textit{properly} of \textit{sub-Laplacian Gevrey spaces} when referring to \eqref{EQ:GsR}.

\section{Dissertation overview}
This dissertation comprises roughly three main parts. 
The first part corresponds to Chapter \ref{CP:2} and is devoted to the background material. In fact, we aim  to introduce all the necessary tools to make the reader familiar with the abstract Fourier analysis on groups and to fix the notation for the rest of this dissertation. Firstly, we introduce the definitions of Lie group and Lie algebra and we present some of their properties. Then we focus on the representation theory on arbitrary groups and on the existence of a suitable invariant measure on locally compact groups. These two topics will be the key ingredients to introduce the global \textit{Fourier analysis} on groups. Subsequently we fix our attention on a subclass of Lie groups, the graded groups, which are particularly interesting to our purposes. In fact, they are a natural habitat to extend Euclidean results. We conclude the Chapter with an introduction to two important groups, the special unitary group $SU(2)$ and the $(2n+1)$-dimensional Heisenberg group $\mathbb {H}_n$. 

The second part of this dissertation concerns the study of the hyperbolic wave equation on graded groups, and is given in Chapter \ref{CP:3}. We present well-posedness results for the initial value problem for  time-dependent variation of the wave equation. The purpose of this Chapter is to extend the study of the well-posedness of the Cauchy problem for the wave equation with time-dependent coefficients to graded groups. The proofs here rely on the global Fourier analysis on $G$. Indeed, a suitable application of the Fourier transform to the equations we are interested in allows one to view such equation as an infinite system of equations whose coefficients depend only on $t$. This leads to a range of sharp results based on the behaviour of the coefficient $a(t)$.

Finally, the last part of this thesis is developed in Chapter \ref{CP:4}, where we introduce our suggested definition of sub-Laplacian Gevrey spaces and we prove the inclusion of these spaces in a bigger class of functions depending on a heat-type operator. In addition, we prove a full characterisation in the cases of sub-Laplacian Gevrey spaces on the special unitary group SU(2) and on the Heisenberg group $\mathbb{H}_n$.
This Chapter is organised as follows. Firstly we recall the Euclidean Gevrey spaces and we state the characterisation of Gevrey functions in terms of their Fourier transform. Then, we move to the non-commutative setting, mentioning the case of Gevrey spaces on compact groups and manifolds studied by Dasgupta and Ruzhansky in \cite{DR2014}. Subsequently, we consider the more general setting of manifolds and develop our original work on sub-Laplacian Gevrey spaces. Indeed, we study these classes of functions, embedding them in bigger spaces explicitly dependent on the sub-Laplacian. We notice that our new classes of functions match the Gevrey type functions we encountered in the study of the well-posedness of the wave equation in Chapter \ref{CP:3}. Finally we consider the case of the special unitary group and the Heisenberg group, where the explicit symbolic calculus allows us to prove a complete characterisation for our sub-Laplacian Gevrey functions.  

\newpage

\textbf{Index of notation}

\begin{abbrv}
	\item[$\R$ $(\C)$]				real (complex) numbers
	\item[$\N_0$] 				natural positive numbers including $0$
	\item[$\alpha\in\N^n_0$] 			multi-index
	\item[$|\alpha|$]					length of the multi-index $\alpha$
	\item[$\delta_{m,n}$] 			Kronecker delta	
	\item[$\partial_x$]				partial derivative with respect to $x$
	\item[$\mathcal{H}$]		complex Hilbert space
	\item[$\|\cdot\|_{\mathcal{H}}$, $(\cdot,\cdot)_{\mathcal{H}}$] 			norm, inner product on $\mathcal{H}$
	\item[$\mathcal{U}(\mathcal{H})$]         space of unitary operators on $\mathcal{H}$
	\item[$X'$]             	dual space of a given topological vector space $X$    											
	\item[$L^*$]					(Hilbert space) adjoint of the operator $L$
	\item[$\|L\|_{\HS}$, $\Tr(L)$]                Hilbert--Schmidt norm, trace of an operator $L$
	\item[ $\|\cdot\|_{\OP}$] 				operator norm
	\item[$M$]					smooth manifold
	\item[$G$, $\mathfrak{g}$, $\mathfrak{U}(\mathfrak{g})$]     (Lie) group, Lie algebra, universal enveloping algebra
	\item[$e$]     					identity of a group $G$
	\item[$X_1,X_2,\dots$] 				left-invariant vector fields 
	\item[${[\cdot,\cdot]}$] 				commutator of vector fields
	\item[$\exp_{G}$]				exponential map 
	\item[$\Gh$] 					unitary dual of $G$	
	\item[$\pi_L$, $\pi_R$]     		left, right regular representation
	\item[$\pi$, ($\xi$)] 				unitary representation of a (\textit{compact}) Lie group $G$	
	\item[$d_\xi$]				dimension of a representation $\xi$ of a compact Lie group
	\item[$\mathcal{H}_\pi$]			Hilbert space associated with the representation $\pi$
	\item[$d\pi$ or $\pi$]  	infinitesimal representation of a Lie algebra
	\item[$\mathcal{H}_\pi^\infty$]			space of smooth vectors of $\pi$
	\item[$\mathcal{F}_G(f)(\pi),\widehat{f}(\pi),\pi(f)$] 				group Fourier transform of $f$ at the representation $\pi$
	\item[$\sigma_L(\xi)=\widehat{L}(\xi)$]				matrix-valued symbol of an operator $L$ on $G$ compact at $\xi$	
	\item[$\Delta$] 					Laplace operator
	\item[$\slp$]   					sum of squares of vector fields, often sub-Laplacian operator
	\item[$\mathcal{R}$]				Rockland operator
	\item[$\mu$]					measure on $M$, Haar measure on a Lie group $G$
	\item[$D_r$] 					dilation on $\mathfrak{g}$ and $G$
	\item[{$[\alpha]$}]				homogeneous degree
	\item[$SU(2)$, $\mathfrak{su}(2)$] 					special unitary group and its Lie algebra
	\item[$\mathbb{H}_n$, $\mathfrak{h}_n$]                Heisenberg group and Heisenberg Lie algebra
	\item[$e_\xi$] 			trigonometric function
	\item[$T_l$] 			representation of $SU(2)$
	\item[$\pi_{\lambda}$]	Schr\"odinger representation of $\h_n$
	\item[$\mathcal{D}(G)$, $\mathcal{C}^\infty(G)$]     smooth functions on a group $G$
	\item[$\mathcal{S}(G)$]  			Schwartz space on $G$
	\item[$\mathcal{D}'(G)$]			distributions on a group $G$
	\item[$L^p(G))$]	    			$p$-integrable functions with respect to the Haar measure on a group $G$			
	\item[$\smash{\displaystyle{\|\cdot\|_{L^p}}}$] 				$L^p$-norm
	\item[$G^s(\Omega)$] 						Gevrey space of order $s$ in $\Omega\subset\Rn$
	\item[$\gamma^s_{\mathbf{X},L^\infty}(M)$, $\gamma^s_{\mathbf{X},L^2}(M)$]  sub-Laplacian Gevrey space of order $s$ on $M$ with respect to $L^\infty$, $L^2$ -norm
	\item[$h_t$]						heat kernel
	\item[$\delta_0$]				Dirac delta function
	\item[$f\star g$]			convolution of $f$ and $g$
\end{abbrv}


\chapter{Preliminaries}\label{CP:2}

In this Chapter we aim to provide all the key notions to present our results, thereby introducing the reader to abstract harmonic analysis and fixing notation for the rest of this dissertation. Firstly we introduce the Lie group and Lie algebra structures that we will work with, then we focus on representation theory on arbitrary groups and finally we show how this allows one to introduce a \textit{Fourier analysis} on groups. We conclude this Chapter with an introduction to two particular groups, the special unitary group $SU(2)$ and the $(2n+1)$-dimensional Heisenberg group $\mathbb {H}_n$. The topics presented are well-known so we will omit most of the proofs, but the interested reader is invited to consult the references provided, such as \cite{CG90,Fol,FR2016,H2003,R2001,T1986}.

\section{Lie Groups and Lie algebras}
To extend the concepts of differentiation to groups, it is necessary to endow them with a differential structure. This is provided by the notion of a \textit{Lie group}, that, roughly speaking, is simultaneously a \textit{group}, i.e. an algebraic abstraction of symmetries, and a  \textit{differentiable manifold}, i.e. a space that locally looks Euclidean. The formal definition is as follows.

\begin{definition}[Lie group]
	A Lie group $G$ is a $\mathcal{C}^\infty$-manifold with a group structure so that the group operations are smooth. More precisely, the multiplication and inversion maps
	\begin{align*}
	(g,g')\in G\times G\rightarrow gg'\in G\,,\\
	g\in G\rightarrow g^{-1}\in G
	\end{align*}
	are smooth as maps of manifolds. We denote by $e$ the identity of a Lie group $G$.
\end{definition}
\begin{remark}
	As they are manifolds, Lie groups are naturally \textit{topological groups}, and consequently all topological results apply to Lie groups. Their underlying topology is locally compact and Hausdorff, since they are locally Euclidean. The local compactness will turn to be an essential property to guarantee the existence of a specific measure, as will become clear in the next sections. Similarly, a Lie group is called \textit{compact} if it is compact as a manifold.
\end{remark}

\begin{example}
	Some examples of Lie groups are:
	\begin{itemize}
		\item the special linear group $SL(n, \mathbb{K}) = \{A \in GL(n, \mathbb{K}) \,|\, \det A = 1\}$, with $\mathbb{K}=\R,\C$;
		
		\item the unitary group $U(n) = \{A \in GL(n,\C)\,|\, AA^* = I\}$, where $A^*$ denotes the Hermitian transpose of $A$;
		
		\item the special unitary group $SU(n)=U(n)\cap SL(n,\C)$.
	\end{itemize}
\end{example}

Arguably, one of the key points in the theory of Lie groups is the possibility of looking at the global object, the group, through its linearised version,  called its \textit{infinitesimal group} and better known as its Lie algebra. We now give a formal definition for a general Lie algebra.
\begin{definition}[Lie algebra]
	\label{Liealgebra}
	A (real) \textit{Lie algebra} $L$ is a vector space over $\R$ equipped with a bilinear map
	$$
	[\cdot,\cdot]:L\times L\rightarrow L,
	$$
	called the \textit{Lie bracket} or \textit{commutator}, satisfying the following properties
	\begin{enumerate}[1.]
		\item $[\cdot,\cdot]$ is skew-symmetric, i.e., $[X,Y]=-[Y,X]$ for all $X,Y\in L$;
		\item for every $X,Y,Z$, the Jacobi identity holds: $[X,[Y,Z]]=[[X,Y],Z]+[Y,[X,Z]]$.
	\end{enumerate}
\end{definition}
Every Lie group can be endowed with a Lie algebra structure related to its tangent vectors. In order to do this, we recall the definitions of tangent vector, tangent space and vector field.

\begin{definition}[Tangent vector, tangent space, tangent bundle, vector field]
	Let $x\in G$. A \textit{tangent vector} to $G$ at $x$ is a map $X_{x}:\mathcal C^{\infty}(G)\rightarrow \R$ such that for all $\alpha,\beta\in\R$ and $f,g \in \mathcal C^{\infty}(G)$ 
	\begin{enumerate}[i.]
		\item $X_{x}(\alpha f+\beta g)=\alpha X_{x}f+\beta X_{x}g$;
		\item $X_{x}$ satisfies the Leibniz formula, i.e., $X_{x}(fg)=g(x)X_{x}f+f(x)X_{x}g$.
	\end{enumerate}
	
	The space of all tangent vectors at $x$ is called the \textit{tangent space} and is denoted $T_xG$. This is a finite-dimensional vector space whose dimension equals the dimension of $G$ as a manifold. The \textit{tangent bundle} over $G$ is the disjoint union of all the tangent spaces
	\[
	TG=\bigcup_{x\in G}T_x G.
	\]
	A (smooth) section of the tangent bundle $X: G\rightarrow TG$ is called a \textit{vector field}. Given $x\in G$, the vector $X(x):=X_x\in T_x G$ acts on $\mathcal C^{\infty}(G)$ as follows
	\[
	X(x)(f)=(Xf)(x):=(X_x f)(x),\quad f\in\mathcal{C}^\infty(G). 
	\]
\end{definition}
As is clear in the above expression, every smooth vector field $X$ can be regarded as a linear map $X:\mathcal{C}^\infty (G)\rightarrow \mathcal{C}^\infty(G)$ such that $X(fg)=fX(g)+gX(f)$, for all $f,g\in\mathcal{C}^\infty(G)$. In this way $X$ acts like a derivative on functions from $\mathcal{C}^\infty(G)$. The space of all vector fields, denoted by $\mathfrak{X}(G)$, can be endowed with a bracket structure. Given $X,Y\in\mathfrak{X}(G)$, we can define a \textit{new} vector field $[X,Y]:G\rightarrow TG$ as follows

\begin{align}\label{bracket}
[X,Y](x)(f):=X(x)Yf-Y(x)Xf,\quad x\in G,\, f\in\mathcal{C}^\infty(G).
\end{align}

It can be proved this bracket structure is skew-symmetric and satisfies the Jacobi identity. Therefore, \eqref{bracket} defines a commutator bracket for vector fields, that induces a Lie algebra structure. We want to relate $\mathfrak{X}(G)$ to the group structure. Given an element $y\in G$, the \textit{left(resp. right)-translation} by $y$ is defined as follows

\[
L_y(\text{resp. }R_y):G\rightarrow G, \quad L_y(x):=yx\quad (\text{resp. }R_y(x):=xy).
\]

The derivatives of these maps act at the level of the tangent spaces $dL_y(\text{resp. }dR_y):TG\rightarrow TG$, in the sense that $dL_y\in\mathcal{L}(T_{\cdot}G,T_{y\cdot}G)$(resp. $dR_y\in\mathcal{L}(T_{\cdot}G,T_{\cdot y}G)$). From now on, we will consider the left-translation, but it is possible to reformulate all the following concepts in terms of $R_y$. 
\begin{definition}
	Let $X\in\mathfrak{X}(G)$. We say that $X$ is \textit{left-invariant} if, for every $g\in G$,
	\begin{align}\label{leftinvariant}
	X\circ L_g=dL_g\circ X.
	\end{align}
	We denote the space of all left-invariant vector fields by $\mathfrak{X}_L(G)$.
\end{definition}
From \eqref{leftinvariant}, a left-invariant vector field $X$ is uniquely determined by its value at the identity, i.e., $X_{e}\in T_e G$. This means there is a bijection between the space of left-invariant vector fields $\mathfrak{X}_L(G)$ and $T_eG$. Furthermore a straightforward calculation shows that if $X,Y\in\mathfrak{X}_L(G)$, then also $[X,Y]\in\mathfrak{X}_L(G)$. Consequently, $\mathfrak{X}_L(G)$ is also a Lie algebra with respect to the bracket commutator \eqref{bracket}.
\begin{definition}
	Let $G$ be a Lie group. The \textit{Lie algebra $\mathfrak g$ of} $G$ is $T_e G$ with the commutator bracket induced by its identification with $\mathfrak{X}_L (G)$, i.e.,
	\[
	[X,Y]_{T_e G}:=[\tilde{X},\tilde{Y}]_{\mathfrak{X}_L(G)}(e),\quad\forall X,Y\in T_e G,
	\]
	where for every $g\in G$ we define $\tilde{X}(g):=dL_g(X)$ and $\tilde{Y}(g):=dL_g(Y)$.
\end{definition}
\begin{example}
	Consider the Lie group $G=GL(n,\Rn)$. Its Lie algebra is $\mathfrak{g}=\R^{n\times n}=M_n(\R)$, equipped with the lie bracket $[A,B]_{M_n(\R)}:=AB-BA$, for all $A,B\in M_n(\R)$.
\end{example}
\subsection{Exponential map}

Let $G$ be a Lie group and $\mathfrak{g}$ its Lie algebra. We construct the \textit{exponential map} $\exp:\mathfrak{g}\rightarrow G$, a diffeomorphism that will allow us to obtain information about the structure of the Lie algebra. Consequently, we will be able to interpret tangent vectors in the Lie algebra as directional derivatives.

\begin{definition}[Exponential map]
	The \textit{exponential map} $\exp_G: \mathfrak{g}\rightarrow G$ is defined at every $X\in\mathfrak{g}$ as
	\[
	\exp_{G}(X):=\gamma_{X}(1),
	\]
	where $\gamma_X:[0,1]\rightarrow G$ is the unique solution to the initial value problem
	\begin{align*}
	\begin{cases}
	\gamma'(t)=X_{\gamma(t)}\\
	\gamma(0)=e,
	\end{cases}
	\end{align*}
	i.e., that solution determined by the left-invariant vector field associated with $X$.
\end{definition}

\begin{example}
	Consider $G=GL(n,\R)$ and $\mathfrak{g}=M_n(\R)$. The exponential map is given by the matrix exponential, expressed by the series expansion below  
	\begin{align}\label{conv_exp}
	\exp_{GL(n,\R)}:M_n(\R)\rightarrow GL(n,\R),\quad A\mapsto\sum_{k=1}^{\infty}\frac{A^k}{k!}.
	\end{align}
	The absolute convergence of the series \eqref{conv_exp} follows from the convergence of the exponential power series for real numbers and hence for square real matrices.
\end{example}

We define the \textit{descending central series} of a Lie algebra $\mathfrak{g}$ by
\begin{align*}
\mathfrak{g}_{(1)}:=\mathfrak{g},\quad \mathfrak{g}_{(m+1)}:=[\mathfrak{g},\mathfrak{g}_{(m)}].
\end{align*}

A Lie algebra is said to be \textit{nilpotent} if there exists an integer $n$ such that $\mathfrak{g}_{(n+1)}=0$. If, additionally, $\mathfrak{g}_{(n)}\neq 0$, then $n$ is minimal and $\mathfrak{g}$ is called \textit{$n$-step nilpotent}.  A Lie group is then called \textit{nilpotent} if its associated Lie algebra is nilpotent.

\begin{proposition}
	Let $G$ be a nilpotent and simply connected Lie group and $\mathfrak{g}$ be its Lie algebra. Then the exponential map $\exp_G:\mathfrak{g}\rightarrow G$ is a diffeomorphism.
\end{proposition}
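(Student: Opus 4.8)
The plan is to prove this in two stages: first establish that $\exp_G$ is a local diffeomorphism near the origin, and second upgrade this to a global diffeomorphism using nilpotency and simple connectedness. For the local statement, the key observation is that the differential of $\exp_G$ at $0\in\mathfrak{g}$ is the identity map on $\mathfrak{g}$ (under the canonical identification $T_0\mathfrak{g}\cong\mathfrak{g}$ and $T_eG\cong\mathfrak{g}$); this follows directly from the defining initial value problem, since $\frac{d}{dt}\big|_{t=0}\exp_G(tX)=X_{\gamma_X(0)}=X_e$. By the inverse function theorem, $\exp_G$ is then a diffeomorphism from a neighbourhood of $0$ onto a neighbourhood of $e$.

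For the global statement I would exploit nilpotency crucially. The cleanest route is via the Baker--Campbell--Hausdorff (BCH) formula: for a nilpotent Lie algebra the BCH series
\[
\exp_G(X)\exp_G(Y)=\exp_G\Big(X+Y+\tfrac12[X,Y]+\cdots\Big)
\]
terminates after finitely many terms (bounded by the step $n$ of $\mathfrak{g}$), so the right-hand side is a polynomial map $\mathfrak{g}\times\mathfrak{g}\to\mathfrak{g}$. One then equips $\mathfrak{g}$ itself with the group law $X\ast Y := \mathrm{BCH}(X,Y)$, checks it is a nilpotent simply connected Lie group, and observes that $\exp_G$ becomes a group homomorphism from $(\mathfrak{g},\ast)$ to $G$. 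Since it is a local diffeomorphism at the identity and a homomorphism, it is a covering map; because $G$ is connected and $\mathfrak{g}$ (being a vector space) is simply connected, the covering is trivial, so $\exp_G$ is a bijective local diffeomorphism, hence a diffeomorphism. An alternative, more hands-on argument proceeds by induction on the nilpotency step, writing $\mathfrak{g}$ as an extension of a lower-step algebra by a central ideal and tracking $\exp_G$ through the corresponding fibration $G\to G/Z$; injectivity and surjectivity then follow from the inductive hypothesis plus the fact that on the central piece $\exp_G$ restricts to the exponential of an abelian group, which is just the identification $\R^k\cong\R^k$.

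The main obstacle is the global surjectivity and injectivity, i.e., proving that a local diffeomorphism which is a priori only defined abstractly via flows is actually globally bijective. This is exactly where both nilpotency (to get the finiteness/polynomiality of BCH, or the central-extension structure) and simple connectedness (to rule out nontrivial covers, or equivalently to lift paths uniquely) enter and cannot be dispensed with — one should recall the counterexample of $\exp$ for compact groups or for $SL(2,\R)$, where the map fails to be injective or surjective respectively. I would expect the write-up to either invoke BCH and the covering-space argument, or to cite the standard reference (e.g.\ \cite{CG90}) for this classical fact, since the full inductive proof, while elementary, is somewhat lengthy.
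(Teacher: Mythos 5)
The paper does not actually prove this proposition: it states it and immediately refers the reader to \cite[Theorem 1.2.1]{CG90} and \cite[Proposition 1.2]{FS}, so there is no in-text argument to compare yours against. Your sketch is essentially the standard proof contained in those references: the local statement via $d(\exp_G)_0=\Id$ and the inverse function theorem, then globalisation using the fact that the Baker--Campbell--Hausdorff series terminates for a nilpotent algebra, so that $\mathfrak{g}$ carries a polynomial group law making $\exp_G$ a homomorphism and hence a covering map; the alternative induction on the nilpotency step through central extensions is precisely how \cite{CG90} organises the details. One correction to the write-up: in the covering-space step you assert the covering is trivial ``because $G$ is connected and $\mathfrak{g}$ (being a vector space) is simply connected''. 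That is not sufficient --- $\R\to S^1$ is a covering with simply connected total space over a connected base and is far from trivial. A covering with simply connected total space is the \emph{universal} covering, and it is one-sheeted precisely because $G$ is assumed \emph{simply} connected; you do identify this correctly in your closing paragraph as the role of that hypothesis, so the fix is purely local to that sentence.
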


For a proof of this Proposition, see \cite[Theorem 1.2.1]{CG90} or \cite[Proposition 1.2]{FS}.

As mentioned above, using the exponential map, every vector $X\in\mathfrak{g}$ can be seen as a left-invariant differential operator on $\mathcal{C}^\infty(G)$ defined as follows

\begin{align}\label{vectorfield}
Xf(x):=\frac{d}{dt}f(x\exp_G(tX))\Bigr|_{t=0}.
\end{align}

In the above, we have $\exp_G(tX)=\gamma_X(t)$. In fact a straight calculation shows that the integral curve of the vector field associated with $tX$ satisfies $\gamma_{tX}(s)=\gamma_X(ts)$.

We conclude this Section observing that we can look at the Lie algebra $\mathfrak{g}$ of a Lie group $G$ both as the tangent space at the identity $T_e G$ and as the vector space of first-order, left-invariant partial differential operators on $G$.

\begin{remark}[The Baker--Campbell--Hausdorff formula]
	For details see \cite{CG90,H2003,R2014}. \\
	Given two arbitrary vector fields $X, Y\in\mathfrak{g}$, the identity 
	\begin{align}\notag
	\exp_{G}(X)\exp_{G}(Y)&=\exp_{G}\Big(X+Y+\frac{1}{2}[X,Y]+\frac{1}{12}\big([X,[X,Y]]+[Y,[Y,X]]\big)+\\
	&-\frac{1}{24}[Y,[X,[X,Y]]]+\dots\Big),\label{BakerCampbellHausdorff}
	\end{align}
	where the dots indicate commutators of order higher then four, is said to be the Baker--Campbell--Hausdorff formula. This identity allows us to develop the product of exponentials of vector fields on $G$.
	Dealing with specific groups, in particular with nilpotent groups, the commutators become trivial from a certain order on, generating a finite series in the right-hand side of \eqref{BakerCampbellHausdorff}.
	
\end{remark}
An immediate consequence of the Baker--Campbell--Hausdorff formula is the following:
\begin{proposition}\label{inverseexponential}
	Let $G$ be a Lie group. Then, for every $X\in\mathfrak{g}$ we have
	\[
	\exp_G(X)^{-1}=\exp_{G}(-X).
	\]
\end{proposition}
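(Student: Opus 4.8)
The plan is to deduce the identity from the fact that $t\mapsto\exp_G(tX)$ is a one-parameter subgroup of $G$; this is precisely the structural content that makes the claim an ``immediate consequence'' of the Baker--Campbell--Hausdorff formula in the special case $Y=-X$, and, phrased via integral curves, it gives a proof valid for every Lie group and every $X\in\mathfrak g$ with no convergence issues.

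\emph{Step 1: reduce to the homomorphism property of $\gamma_X$.} First I would recall from the discussion around \eqref{vectorfield} that $\exp_G(tX)=\gamma_X(t)$ and, more generally, $\gamma_{tX}(s)=\gamma_X(ts)$, where $\gamma_X$ is the integral curve of the left-invariant vector field $X$ through $e$ (taken as the maximal, hence globally defined, solution, since left-invariant vector fields are complete). In particular $\exp_G(-X)=\gamma_{-X}(1)=\gamma_X(-1)$ by the reparametrisation formula, and $\exp_G(0)=\gamma_X(0)=e$. So the proposition follows once we know that $\gamma_X(s)\gamma_X(t)=\gamma_X(s+t)$ for all $s,t\in\R$: taking $s=1$, $t=-1$ gives $\exp_G(X)\exp_G(-X)=\gamma_X(0)=e$, and $s=-1$, $t=1$ gives the reversed product, so $\exp_G(X)^{-1}=\exp_G(-X)$.

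\emph{Step 2: prove $\gamma_X(s)\gamma_X(t)=\gamma_X(s+t)$.} Here I would fix $s$ and compare the two curves $\sigma(t):=\gamma_X(s)\gamma_X(t)=L_{\gamma_X(s)}\bigl(\gamma_X(t)\bigr)$ and $\tau(t):=\gamma_X(s+t)$, which both equal $\gamma_X(s)$ at $t=0$. Differentiating, $\tau'(t)=X_{\gamma_X(s+t)}=X_{\tau(t)}$ straight from the defining ODE of $\gamma_X$, while $\sigma'(t)=dL_{\gamma_X(s)}\bigl(X_{\gamma_X(t)}\bigr)=X_{L_{\gamma_X(s)}\gamma_X(t)}=X_{\sigma(t)}$ by the left-invariance \eqref{leftinvariant} of $X$. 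Thus $\sigma$ and $\tau$ solve the same initial value problem, and uniqueness of integral curves forces $\sigma=\tau$, which finishes the argument.

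\emph{On the main obstacle.} There is no genuine difficulty in this statement; the only point worth flagging is that one should not invoke \eqref{BakerCampbellHausdorff} naively, since that series converges in general only for $X,Y$ in a neighbourhood of $0$. One can nonetheless substitute $Y=-X$ directly into \eqref{BakerCampbellHausdorff} and observe that every iterated bracket appearing there then involves only the single vector $X$ and so vanishes, collapsing the right-hand side to $X+(-X)=0$ with no convergence question at all; but I would present the integral-curve argument above as the primary proof, as it is self-contained and exhibits the identity as nothing more than the $t=\pm 1$ instance of the one-parameter-subgroup homomorphism property.
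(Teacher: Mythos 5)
Your proof is correct, but it takes a different route from the paper. The paper's proof simply applies the Baker--Campbell--Hausdorff formula \eqref{BakerCampbellHausdorff} to $tX$ and $sX$: since $[X,X]=0$ every iterated bracket vanishes, the right-hand side collapses to $(t+s)X$, and setting $t=1$, $s=-1$ gives $\exp_G(X)\exp_G(-X)=e$ --- this is exactly the ``alternative'' you relegate to your closing remark. Your primary argument instead establishes the one-parameter-subgroup identity $\gamma_X(s)\gamma_X(t)=\gamma_X(s+t)$ from scratch, by comparing $t\mapsto L_{\gamma_X(s)}(\gamma_X(t))$ with $t\mapsto\gamma_X(s+t)$ and invoking left-invariance \eqref{leftinvariant} together with uniqueness of integral curves. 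What your route buys is precisely the point you flag: the BCH identity as stated is a local statement about a series, so invoking it for arbitrary $X$ (not just $X$ near $0$) is a small leap of faith that the paper glosses over, whereas the ODE argument is global, self-contained, and exposes the proposition as the $t=\pm1$ instance of the homomorphism property of $t\mapsto\exp_G(tX)$. What the paper's route buys is brevity and consistency with how the proposition is advertised (``an immediate consequence of the Baker--Campbell--Hausdorff formula''). Both proofs pivot on the same intermediate identity $\exp_G(tX)\exp_G(sX)=\exp_G((t+s)X)$; they differ only in how that identity is justified.
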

\begin{proof}
	Let $X\in\mathfrak{g}$. Let us observe that we can apply the Baker--Campbell--Hausdorff formula to $tX$ and $sX$ with $t,s\in \R$, to obtain 
	\begin{align*}
	\exp_{G}(tX)\exp_{G}(sX)=\exp_{G}\big((t+s)X\big),
	\end{align*}
	since $[X,X]=0$.
	For $t=1$ and $s=-1$, the latter identity becomes
	\begin{align*}
	\exp_{G}(X)\exp_{G}(-X)=e,
	\end{align*}
	and the proposition follows.
\end{proof}

\subsection{Universal enveloping algebra} 

For more detail, see \cite[Section 9.3]{H2003} or \cite[Section 1.3]{FR2016}. Let $G$ be a Lie group and $\mathfrak{g}$ its Lie algebra. The \textit{universal enveloping algebra} of $\mathfrak{g}$, denoted by $\mathfrak{U}(\mathfrak{g})$, is an associative algebra with identity over the complex field $\mathbb{C}$ given by the natural non-commutative polynomial algebra on $\mathfrak{g}$. The rough idea is to be able to embed $\mathfrak{g}$ as a subspace of such an associative algebra in a way that the bracket on $\mathfrak{g}$ may be written as $[X,Y]=XY-YX$, where $XY$ and $YX$ are computed in the algebra.\footnote{Here we mean that there exists a linear map $i:\mathfrak{g}\rightarrow \mathfrak{U}(\mathfrak{g})$ such that for every $X,Y\in\mathfrak{g}$ we have $i([X,Y])=i(X)i(Y)-i(Y)i(X)$.} Previously we have interpreted the Lie algebra $\mathfrak{g}$ as the space of left-invariant derivatives on $G$. Analogously the enveloping algebra can be interpreted as the vector space of finite-order left-invariant partial differential operators on $G$. The \textit{Poincar\'e--Birkhoff--Witt theorem} (PBW theorem), that we will state in a moment, clarifies this analogy. 

Let us proceed by formalising the above informal interpretation. Let $\mathfrak{g}^{\mathbb{C}}$ be the complexification of $\mathfrak{g}$, i.e. $\mathfrak{g}^{\mathbb{C}}:=\mathfrak{g}\oplus i \mathfrak{g}$ with the standard rule for multiplication by complex numbers\footnote{If $v_1+iv_2,v_3+iv_4\in \mathfrak{g}^{\mathbb{C}}$, with $v_1,v_2,v_3,v_4\in\mathfrak{g}$, then $(v_1+iv_2)(v_3+iv_4)=(v_1v_3-v_2v_4)+i(v_2v_3+v_1v_4)$.}. Recalling that each Lie algebra is a vector space, we can freely construct the \textit{tensor algebra} of a Lie algebra, that means, an algebra containing all possible tensor products of all possible vectors in the Lie algebra. Thus, the tensor algebra of $\mathfrak{g}$ is defined as 
\begin{align*}
\mathcal{T}(\mathfrak{g}):=\bigoplus_{m=0}^\infty\otimes^m\mathfrak{g}^{\mathbb{C}}=\bigoplus_{m=0}^\infty \underbrace{\mathfrak{g}^{\mathbb{C}}\otimes\dots\otimes\mathfrak{g}^{\mathbb{C}}}_\text{$m$ times}
\end{align*}
The universal enveloping algebra is obtained as a quotient of $\mathcal{T}(\mathfrak{g})$. Indeed, we consider a \textit{two-sided-ideal} in $\mathcal{T}(\mathfrak{g})$, that means, a subspace $\mathcal{J}$ of $\mathcal{T}(\mathfrak{g})$ satisfying that if $\alpha\in\mathcal{J}$ and $\beta\in\mathcal{T}(\mathfrak{g})$ then $\alpha\beta$ and $\beta\alpha$ belong to $\mathcal{J}$. This $\mathcal{J}$ is spanned by
\begin{align*}
\{X\otimes Y-Y\otimes X -[X,Y]\,|\,X,Y\in\mathfrak{g}\}.
\end{align*}
Most importantly this approach induces the commutator structure on $\mathcal{T}(\mathfrak{g})$. We call the quotient algebra
\begin{align*}
\mathfrak{U}(\mathfrak{g}):=\mathcal{T}(\mathfrak{g})/\mathcal{J}
\end{align*}
the \textit{universal enveloping algebra} of $\mathfrak{g}$. Furthermore, consider the quotient mapping
\begin{align*}
i:\mathcal{T}(\mathfrak{g})\rightarrow \mathfrak{U}(\mathfrak{g})=\mathcal{T}(\mathfrak{g})/\mathcal{J},\quad T\in\mathcal{T}(\mathfrak{g})\mapsto T+\mathcal{J}\in\mathfrak{U}(\mathfrak{g}).
\end{align*}
When restricted to $\mathfrak{g}$ this is the canonical mapping of $\mathfrak{g}$ that gives the embedding of $\mathfrak{g}$ into $\mathfrak{U}(\mathfrak{g})$, i.e., $i\big|_{\mathfrak{g}}:\mathfrak{g}\rightarrow \mathfrak{U}(\mathfrak{g})$. Now we may state the pivotal PBW theorem:
\begin{theorem}[Poincar\'e--Birkhoff--Witt]\label{PBW}
	Let $\mathfrak{g}$ be a finite-dimensional Lie algebra and let $\{X_1,\dots,X_n\}$ be a basis of $\mathfrak{g}$. Then for all multi-indices $\alpha=(\alpha_1,\dots,\alpha_n)\in\N^n_0$ the expressions
	\begin{align*}
	i(X_1)^{\alpha_1}\dots i(X_n)^{\alpha_n}
	\end{align*}
	span $\mathfrak{U}(\mathfrak{g})$ and are linearly independent. In particular, the elements $i(X_1),\dots,i(X_n)$ are linearly independent, meaning that the canonical map $i:\mathfrak{g}\rightarrow\mathfrak{U}(\mathfrak{g})$ is injective.
\end{theorem}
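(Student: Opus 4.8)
The plan is to establish the two assertions of Theorem~\ref{PBW} — spanning and linear independence — separately, as they are of quite different character.

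\textbf{Spanning.} Here I would argue by a double induction. An arbitrary element of $\mathfrak{U}(\mathfrak{g})$ is a linear combination of images $i(X_{j_1})\cdots i(X_{j_m})$ of tensor monomials with the indices $j_1,\dots,j_m$ in arbitrary order. Whenever two consecutive indices are out of order, say $j_\ell>j_{\ell+1}$, the fact that $\mathcal{J}$ is spanned by the elements $X\otimes Y-Y\otimes X-[X,Y]$ lets me replace $i(X_{j_\ell})i(X_{j_{\ell+1}})$ by $i(X_{j_{\ell+1}})i(X_{j_\ell})+i([X_{j_\ell},X_{j_{\ell+1}}])$; expanding $[X_{j_\ell},X_{j_{\ell+1}}]$ in the basis $\{X_1,\dots,X_n\}$, the correction term is a sum of monomials of tensor degree $m-1$. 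Thus, modulo terms of strictly lower degree, a degree-$m$ monomial is congruent to a \emph{non-decreasing} monomial of degree $m$. A downward induction on the number of inversions of the index string, at fixed degree $m$, nested inside an induction on $m$, then shows that every element of $\mathfrak{U}(\mathfrak{g})$ is a linear combination of the claimed ordered monomials $i(X_1)^{\alpha_1}\cdots i(X_n)^{\alpha_n}$.

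\textbf{Linear independence.} This is the substantive part, and I would use the classical Birkhoff--Witt construction of a representation on a polynomial algebra. Let $P=\mathbb{C}[z_1,\dots,z_n]$ with its monomial basis $z^\alpha=z_1^{\alpha_1}\cdots z_n^{\alpha_n}$, and call $z^\alpha$ \emph{$k$-admissible} when $k\le j$ for every $j$ in the support of $\alpha$. I would define operators $\rho(X_k)$ on $P$ recursively on degree: $\rho(X_k)z^\alpha:=z_k z^\alpha$ when $z^\alpha$ is $k$-admissible, and otherwise, writing $z^\alpha=z_j z^\beta$ with $j=\min(\operatorname{supp}\alpha)<k$, set
\[
\rho(X_k)z^\alpha:=z_j\,\rho(X_k)z^\beta+\rho([X_k,X_j])z^\beta,
\]
where $\rho([X_k,X_j])$ is understood by writing $[X_k,X_j]$ in the basis and extending $\rho$ linearly. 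The crux is the lemma that $\rho(X_k)\rho(X_l)-\rho(X_l)\rho(X_k)=\rho([X_k,X_l])$ as operators on all of $P$; this is proved by induction on the degree of the monomial to which both sides are applied, and it is precisely here that the Jacobi identity of Definition~\ref{Liealgebra} is consumed. Granting the lemma, $\rho$ is a Lie algebra homomorphism $\mathfrak{g}\to\mathrm{End}(P)$, so by the universal property of $\mathfrak{U}(\mathfrak{g})$ it factors through an algebra homomorphism $\widetilde\rho\colon\mathfrak{U}(\mathfrak{g})\to\mathrm{End}(P)$ with $\widetilde\rho(i(X))=\rho(X)$.

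To conclude, a short induction (applying $\rho(X_n)$, then $\rho(X_{n-1})$, and so on, always in the $k$-admissible case) shows $\widetilde\rho\big(i(X_1)^{\alpha_1}\cdots i(X_n)^{\alpha_n}\big)\,1=z^{\alpha}$. Since the $z^\alpha$ are visibly linearly independent in $P$, the ordered monomials $i(X_1)^{\alpha_1}\cdots i(X_n)^{\alpha_n}$ are linearly independent in $\mathfrak{U}(\mathfrak{g})$; together with the spanning property they form a basis, and specialising to $|\alpha|=1$ gives that $i$ is injective. The main obstacle is unquestionably the well-definedness and the homomorphism property of $\rho$: keeping the case distinctions of the recursion straight and verifying the commutator identity degree by degree is the one place where genuine work — and the Jacobi identity — enters. (An alternative would be Bergman's diamond-lemma/confluence argument applied to the rewriting system ``swap out-of-order generators'', but the representation-theoretic route keeps the dependence on earlier material minimal.)
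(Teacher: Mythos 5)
The paper itself states Theorem \ref{PBW} as background material and omits the proof entirely (the reader is referred to \cite{H2003} and \cite{FR2016}), so there is no in-text argument to compare against; I therefore assess your proposal on its own terms. Your overall strategy is the classical one --- spanning by rewriting out-of-order monomials modulo lower tensor degree, and linear independence via a representation of $\mathfrak{U}(\mathfrak{g})$ on the polynomial algebra $P=\C[z_1,\dots,z_n]$ --- and the spanning half, the reduction of independence to the commutator lemma, and the final evaluation $\widetilde\rho\big(i(X_1)^{\alpha_1}\cdots i(X_n)^{\alpha_n}\big)1=z^\alpha$ are all sound.

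There is, however, a genuine gap in your construction of $\rho$. In the non-admissible case you set $\rho(X_k)z^\alpha:=z_j\,\rho(X_k)z^\beta+\rho([X_k,X_j])z^\beta$, i.e.\ you \emph{multiply} the polynomial $\rho(X_k)z^\beta$ by $z_j$, whereas the classical recursion applies the \emph{operator} $\rho(X_j)$ to it. The two differ on the lower-order part $w$ of $\rho(X_k)z^\beta=z_kz^\beta+w$, since $w$ need not be $j$-admissible, and with your definition the key lemma is false. Concretely, take $\mathfrak{g}$ three-dimensional with $[X_2,X_1]=X_1$, $[X_3,X_1]=0$, $[X_3,X_2]=X_1$ (Jacobi holds: $\operatorname{span}\{X_1,X_3\}$ is an abelian ideal on which $X_2$ acts linearly). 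Your rules give $\rho(X_3)z_2=z_2z_3+z_1$ and $\rho(X_2)z_1=z_1z_2+z_1$, hence $\rho(X_2)\rho(X_3)z_2=z_2^2z_3+z_1z_2+z_1$, while $\rho(X_3)\rho(X_2)z_2=\rho(X_3)z_2^2=z_2\,\rho(X_3)z_2+\rho(X_1)z_2=z_2^2z_3+2z_1z_2$; the commutator applied to $z_2$ is therefore $z_1z_2-z_1$, which is not $\rho([X_3,X_2])z_2=z_1z_2$. So $\rho$ is not a Lie algebra homomorphism and the universal property cannot be invoked. The repair is to define $\rho(X_k)z^\alpha:=\rho(X_j)\bigl(\rho(X_k)z^\beta\bigr)+\rho([X_k,X_j])z^\beta$, carrying along in the induction the auxiliary statement that $\rho(X_k)z^\beta-z_kz^\beta$ has degree at most $|\beta|$ (which makes the recursion well-founded); with that definition the commutator identity holds by construction whenever the smaller of the two indices is admissible for the monomial, and only the remaining case consumes the Jacobi identity. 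This is precisely the bookkeeping your sketch flags as ``the one place where genuine work enters,'' but as written the recursion underneath it is the wrong one.
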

Note that the injectivity of the map $i$ allows us to identify $\mathfrak{g}$ with its image under $i$. Therefore, from now on we denote the elements of the universal enveloping algebra by $X$ in place of $i(X)$. In other words, if we regard the elements of the basis of $\mathfrak{g}$ as first-order left-invariant derivatives, we can reformulate the PBW theorem saying that any element of $\mathfrak{U}(\mathfrak{g})$, i.e., any left-invariant differential operator $T$ on $\mathfrak{g}$ can be written in a unique way as a finite sum
\begin{align*}
T=\sum_{\alpha\in\N^n_0}c_\alpha X^\alpha,
\end{align*} 
where all but a finite number of the coefficients $c_\alpha\in\mathbb{C}$ are zero and for every multi-index $\alpha\in\mathbb{N}^n$ we denote $X^\alpha=X_1^{\alpha_1}\dots X_n^{\alpha_n}$, with $X_1,\dots, X_n$ basis for $\mathfrak{g}$. For future reference, given $T\in \mathfrak{U}(\mathfrak{g})$ as above, we define the \textit{formal adjoint operator} of $T$ as the element
\begin{align}\label{formaladjoint}
T^*:=\sum_{\alpha\in\N^n_0}\bar{c}_\alpha(-X_n)^{\alpha_n}\dots(-X_1)^{\alpha_1}\in\mathfrak{U}(\mathfrak{g}).
\end{align} 
We note that this definition coincides with the classical definition of adjoint of the corresponding left-invariant vector fields.

\section{Harmonic analysis on groups}
Broadly speaking, the classical theory of Fourier analysis is concerned with representing certain functions as a superposition of simpler functions, often called simple waves $e^{2\pi ix}$. This is a powerful method to tackle many problems from applied sciences, such as engineering, neurovision or image/sound processing. The main tool, as well as the starting point, of this theory is the notion of \textit{Fourier transform}, a continuous version of the Fourier coefficients. Following Stein and Shakarchi's monograph \cite{SS2003}, if $f$ is an integrable function on $\Rn$, its Fourier transform is the function $\widehat{f}$ defined on the phase space $\Rn$ as follows

\begin{align}\label{euclidianfouriertransform}
\mathcal F(f)(\xi)=\widehat{f}(\xi):=\int_{\Rn}e^{-2\pi i \xi\cdot x}f(x)dx.
\end{align}

Under suitable hypotheses, the function $f$ can be obtained from its Fourier transform via the inverse Fourier transform:
\begin{align*}
\mathcal F ^{-1} \big(\mathcal{F}(f)\big)(x)=\int_{\Rn}e^{2\pi i \xi\cdot x}\widehat{f}(\xi)d\xi=f(x).
\end{align*}

From the definition \eqref{euclidianfouriertransform} several further properties can be derived. This makes the Fourier transform a crucial tool, for example, in the study of partial differential equations. We are interested in extending these notions to suitable groups. In order to do this, we need to integrate on the group, i.e., we need a measure that satisfies an analogue of the translation invariance of the Lebesgue measure in the Euclidean case. We also want to define a Fourier transform on groups, so we need to identify a counterpart for the phase functions $e_{\xi}:=e^{2\pi i \xi\cdot}$ appearing in \eqref{euclidianfouriertransform}. In the next Section we will introduce these as the two ingredients that are the \textit{Haar measure} and the \textit{representation theory} of groups. They will then allow us to generalise the classical Fourier analysis by providing a general form of the \textit{Fourier transform for groups}.

\subsection{First ingredient for harmonic analysis on groups:\\ Haar measure}

Further details for this Section can be found in \cite{DE2009,FR2016}. Every Lie group $G$ is naturally a topological space. Therefore we can consider the \textit{Borel $\sigma$-algebra} $\mathcal{B}$ on $G$, that is, the smallest $\sigma$-algebra containing all open and closed sets. Any measure defined on $\mathcal{B}$ is called a \textit{Borel measure}. A measure $\mu$ on $G$ is \textit{locally finite} if for every element $g\in G$ there exists a neighbourhood $U$ of $g$ whose measure is finite, i.e., $\mu(U)<\infty$. A \textit{Radon measure} $\mu$ on $\mathcal{B}$ is a locally finite Borel measure satisfying that if $A\in\mathcal{B}$ then we have

\begin{align*}
\mu(A)=\inf_{\substack{A\subset U,\,U\in\mathcal{B}, \\ U \text{ open}}}\mu(U),\quad \mu(A)=\sup_{\substack{{K\subset A,\,K\in\mathcal{B}}\\ k\text{ compact}}}\mu(K).
\end{align*}

Roughly speaking, these conditions guarantee that the measure interacts nicely with the topology. A Borel measure $\mu$ is said to be \textit{left(resp. right)-invariant} if for every measurable set $A\in G$ and every $x\in G$ we have
\[
\mu(xA)=\mu(A)\quad(\text{resp. }\mu(Ax)=\mu(A)).
\]

\begin{definition}[Haar measure]\label{DEF:haar}
	Let $G$ be a locally compact group. A non-zero, left(resp. right)-invariant, Radon measure is called a \textit{left(resp. right)-Haar measure}.
\end{definition}

The Riesz representation theorem allows us to characterise Haar measures in terms of positive functionals on the space of continuous functions with compact support. In fact, given a measure $\mu$, it is a left(resp. right)-Haar measure if and only if
\[
\int_Gf(gx)d\mu(x)=\int_Gf(x)d\mu(x),
\left(\text{resp.}~\int_Gf(xg)d\mu(x)=\int_Gf(x)d\mu(x)\right),
\] 
for every $f\in\mathcal{C}_C(G)$ and every $g\in G$, see \cite[Proposition 11.4]{F2013}.

Note that if $\mu$ is a left-Haar measure, then the measure $\tilde{\mu}(E):=\mu(E^{-1})$ is a right-Haar measure. The proposition below is of fundamental importance and its detailed proof can be found in \cite{F1994,DE2009,F2013,R2014}. 

\begin{proposition}
	Let $G$ be a locally compact group. There exists a Haar measure that is uniquely determined up to a multiplicative constant. This means that if $\mu$ and $\nu$ are Haar measures on $G$ then there exists a constant $c>0$ such that $\nu=c\mu$.
\end{proposition}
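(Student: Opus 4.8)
The plan is to prove the two assertions of the Proposition separately: the \emph{existence} of a left Haar measure and its \emph{uniqueness} up to a positive scalar (the right-invariant case being symmetric, or obtained by pushing a left Haar measure forward under $x\mapsto x^{-1}$). Since the setting here is a Lie group, existence can in fact be gotten cheaply: take any nonzero $\omega_e\in\Lambda^{n}(T_e^*G)$ with $n=\dim G$, left-translate it to a nowhere-vanishing left-invariant $n$-form $\omega$ on $G$, and let $\mu$ be the Radon measure induced by $\omega$; left-invariance of $\omega$ immediately gives $\mu(xA)=\mu(A)$. For the general locally compact group stated in the Proposition, however, I would run the classical construction via covering numbers, which uses no differential structure.

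\emph{Existence.} Let $\mathcal{C}_C^+(G)$ denote the nonnegative continuous functions of compact support. For $f\in\mathcal{C}_C^+(G)$ and $\phi\in\mathcal{C}_C^+(G)\setminus\{0\}$ set
\[
(f:\phi):=\inf\Bigl\{\textstyle\sum_{j=1}^{N}c_j\ :\ N\in\N,\ c_j\ge 0,\ s_j\in G,\ f(x)\le\textstyle\sum_{j=1}^{N}c_j\,\phi(s_j^{-1}x)\ \ \forall x\in G\Bigr\},
\]
which is finite because $\operatorname{supp}f$ is compact. Fix a reference $f_0\in\mathcal{C}_C^+(G)\setminus\{0\}$ and put $I_\phi(f):=(f:\phi)/(f_0:\phi)$. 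Directly from the definition one checks that $I_\phi$ is left-invariant, monotone, positively homogeneous, subadditive, and confined to the fixed interval $[(f_0:f)^{-1},(f:f_0)]$ (the submultiplicativity $(f:\phi)\le (f:\psi)(\psi:\phi)$ gives both bounds). The essential point is the \emph{asymptotic additivity}: for $f_1,f_2\in\mathcal{C}_C^+(G)$ and $\varepsilon>0$ there is a neighbourhood $U$ of $e$ such that $\operatorname{supp}\phi\subset U$ forces $I_\phi(f_1)+I_\phi(f_2)\le I_\phi(f_1+f_2)+\varepsilon$. Granting this, the net $\{I_\phi\}$ — directed by shrinking $\operatorname{supp}\phi$ toward $\{e\}$ — lies in the compact product space $\prod_{f}[(f_0:f)^{-1},(f:f_0)]$, so it has a cluster point $I$; by the estimate $I$ is additive (as well as monotone, positively homogeneous, left-invariant, and nonzero since $I(f_0)=1$). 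Extending $I$ linearly to $\mathcal{C}_C(G)$ and applying the Riesz representation theorem produces a nonzero, left-invariant Radon measure, i.e.\ a left Haar measure.

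\emph{Uniqueness.} Let $\mu,\nu$ be left Haar measures. First note that every left Haar measure has full support: if it vanished on some nonempty open $U$, then by left-invariance it would vanish on all translates $gU$, hence on every compact set (by finite subcovers), hence identically by inner regularity — a contradiction. Consequently $\int_G f\,d\mu>0$ whenever $0\ne f\in\mathcal{C}_C^+(G)$. Fix such an $f$; then for any $g\in\mathcal{C}_C(G)$ the function $h:=g-\bigl(\int_G g\,d\mu/\int_G f\,d\mu\bigr)f$ satisfies $\int_G h\,d\mu=0$, so it suffices to show that $\int_G h\,d\mu=0$ implies $\int_G h\,d\nu=0$. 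To this end, for any $h\in\mathcal{C}_C(G)$ and $\phi\in\mathcal{C}_C^+(G)$, I would use Fubini together with the substitution $y\mapsto x^{-1}y$ (left-invariance of $\nu$) followed by $x\mapsto yx$ (left-invariance of $\mu$) to obtain
\[
\Bigl(\int_G h\,d\mu\Bigr)\Bigl(\int_G\phi\,d\nu\Bigr)=\int_G\phi(x^{-1})\Bigl(\int_G h(yx)\,d\nu(y)\Bigr)\,d\mu(x).
\]
Now choose $\phi=\phi_n\in\mathcal{C}_C^+(G)$ supported in a neighbourhood base at $e$ and normalised by $\int_G\phi_n(x^{-1})\,d\mu(x)=1$ (possible by full support of $\mu$). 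Since $h$ is uniformly continuous with compact support, $x\mapsto\int_G h(yx)\,d\nu(y)$ is continuous at $e$ with value $\int_G h\,d\nu$ there; hence the right-hand side tends to $\int_G h\,d\nu$ as $n\to\infty$, while the left-hand side is $0$. Therefore $\int_G h\,d\nu=0$, so $\nu=c\mu$ with $c=\int_G f\,d\nu\big/\int_G f\,d\mu$, and $c>0$ by full support of $\nu$.

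\emph{Main obstacle.} The one genuinely delicate step is the asymptotic additivity of the covering numbers: while subadditivity and the remaining properties of $(f:\phi)$ and $I_\phi$ are elementary, controlling the defect $(f_1+f_2:\phi)-(f_1:\phi)-(f_2:\phi)$ and showing it is negligible compared with $(f_0:\phi)$ as $\operatorname{supp}\phi$ shrinks requires a careful uniform-continuity argument that separates $f_1$ and $f_2$ on slightly enlarged supports. The remaining ingredients — the Tychonoff compactness and extraction of a cluster point, the passage from $\mathcal{C}_C^+(G)$ to $\mathcal{C}_C(G)$, the Riesz representation theorem, and in the uniqueness proof the two substitutions together with the approximate-identity limit — are all routine.
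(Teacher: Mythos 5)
The paper does not actually prove this proposition: it states it and refers the reader to \cite{F1994,DE2009,F2013,R2014} for the details. Your argument is precisely the standard proof found in those references (Weil's construction of the invariant integral via the covering numbers $(f:\phi)$, Tychonoff compactness of $\prod_f[(f_0:f)^{-1},(f:f_0)]$, and the Riesz representation theorem for existence; the Fubini/approximate-identity computation for uniqueness), so in effect you are supplying the proof the paper delegates to its sources. The outline is correct. In particular, the double-substitution identity
$\bigl(\int h\,d\mu\bigr)\bigl(\int\phi\,d\nu\bigr)=\int\phi(x^{-1})\bigl(\int h(yx)\,d\nu(y)\bigr)d\mu(x)$
checks out using only left-invariance of both measures (no unimodularity is smuggled in), Fubini is legitimate because the integrand is supported in a compact subset of $G\times G$, and the full-support argument justifying the normalisation $\int\phi_n(x^{-1})\,d\mu(x)=1$ and the positivity of $c$ is sound. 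Two small points worth making explicit if you write this out in full: (i) the continuity at $e$ of $x\mapsto\int h(yx)\,d\nu(y)$ uses the \emph{right} uniform continuity of $h$ together with the fact that the supports of $h(\cdot\,x)$ stay inside a fixed compact set for $x$ near $e$; (ii) the asymptotic additivity of $(\,\cdot:\phi)$, which you correctly flag as the only genuinely delicate step, is usually closed by comparing $f_1,f_2$ with $g=f_1+f_2+\delta f_0'$ for a suitable bump $f_0'$ and using uniform continuity of $f_i/g$ — your description of the required argument is accurate, but as written it remains a deferred lemma rather than a completed one.
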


\begin{example}
	The Lebesgue measure is the\footnote{Up to a multiplicative constant.} Haar measure on $\Rn$ and $\mathbb{T}$. The latter groups are abelian, therefore the left- and right-Haar measures coincide.
\end{example}

Given a locally compact group $G$, we say that it is \textit{unimodular} if the left-Haar measure\footnote{Uniquely defined up to a constant.} is also right-invariant. We are interested in Lie groups that are (naturally) locally compact and nilpotent. Since it is proved that nilpotent groups are unimodular, we can restrict our attention to Haar measures that are simultaneously left and right invariant. For a detailed discussion, we refer to \cite{R2014}.

Once we have a suitable left-invariant measure, we can introduce the usual \textit{Lebesgue spaces} on Lie groups. For every $1\leq p\leq \infty$, we denote by $L^p(G)$ the spaces of functions $f$ that are measurable with respect to the Haar measure, with norms

\begin{align*}
\|f\|_{L^p(G)}&=\Bigg(\int_G |f(x)|^p d\mu(x)\Bigg)^{1/p},\quad\text{for }p\in[1,\infty),\\
\|f\|_{L^\infty(G)}&=\esssup_{x\in G}|f(x)|,\quad\text{for }p=\infty.
\end{align*}

\subsection{Second ingredient for harmonic analysis on groups:\\ Representation theory}

For more detailed treatise see e.g. \cite{F1994,FR2016,H2003,R2014,RT2009}.
To accomplish the desired generalisation of Fourier analysis to Lie groups we need to determine a class of functions corresponding to the group homomorphisms 
\begin{align*}
e_\xi:\Rn\rightarrow \mathcal{U}(\C),\quad x\mapsto e^{2\pi i x\cdot \xi},\quad \text{for every }{\xi\in\Rn}.
\end{align*}

These functions are provided by the \textit{representations}. Roughly speaking, the fundamental idea in the representation theory of groups is to study algebraic abstract structures, such as groups, by representing their elements as linear transformations of vector spaces. In fact a representation is a linear action of a group on a vector space, providing the machinery of linear algebra on groups. We rigorously define representations below.

\begin{definition}[Unitary representation]
	Let $G$ be a group and $\mathcal H_\pi$ be a Hilbert space. A \textit{unitary representation} $\pi$ of $G$ over $\mathcal H_\pi$ is any group homomorphism from $G$ into the space of unitary operator on $\mathcal H_\pi$. This means it is a linear map
	\[
	\pi:G\rightarrow \mathcal{U}(\mathcal{H}_\pi)
	\]
	satisfying the following properties:
	\begin{enumerate}[i.]
		\item for every $g,h\in G$ we have $\pi(gh)=\pi(g)\pi(h)$;
		\item for every $g\in G$, $\pi(g)$ is unitary, i.e., $\pi(g)^{-1}=\pi^*(g)$, where $\cdot^*$ denotes the adjoint operator.
	\end{enumerate}
	The \textit{dimension of the representation} $\pi$ equals the dimension of the Hilbert space, $d_\pi=\dim \pi=\dim \mathcal{H}_\pi$. If the Hilbert space $\mathcal{H}_\pi$ is infinite-dimensional then the representation itself is said to be \textit{infinite-dimensional}.
\end{definition}

Note that condition ii., together with the linearity, implies $\pi^*(g)=\pi(g)^{-1}=\pi(g^{-1})$. 
\begin{example}
	Given a nilpotent locally compact group $G$ endowed with its Haar measure, the most classic example of unitary representation is given by the \textit{left (resp. right) regular representation}, defined by
	\begin{align*}
	\pi_L(\text{resp. }\pi_R): G\rightarrow \mathcal U (L^2(G)), \quad\text{such that} \\
	\pi_L(g)f(h)=f(gh),\quad(\text{resp. }\pi_R(g)f(h)=f(hg)).
	\end{align*} 
\end{example}

In order to classify \textit{all} the representations of a given group, it is necessary to introduce the following extra notions.
\begin{definition}[Invariant subspace]
	Let $\pi$ be a representation of the group $G$ on $\mathcal{H}_\pi$. A vector sub-space $W\subset \mathcal{H}_\pi$ is said to be \textit{$\pi$-invariant} if for every $g\in G$ we have $\pi(g)W\subset W$.
\end{definition}

\begin{definition}[Irreducible representation]
	A representation $\pi$ of the group $G$ on $\mathcal{H}_\pi$ is called \textit{irreducible} if the only $\pi$-invariant subspaces are the trivial ones, i.e., the whole space $\mathcal H_\pi$ and $\{0\}$.
\end{definition}

Irreducible representations are the building blocks of representations, in the sense that, under additional conditions of topological and algebraic nature, every (finite-dimensional) representation can be described as a direct sum\footnote{See \cite[Definition 6.3.14]{RT2009}, \cite[Definition 7.2.7, Proposition 7.2.8]{RT2009}.} of irreducible ones.

\begin{definition}[Strongly continuous representation]
	Let $G$ be a Lie group and $\pi$ be a representation of $G$ on $\mathcal H_\pi$. We say that $\pi$ is \textit{strongly continuous} if for every $x\in\mathcal{H}_\pi$, the following map is continuous:
	\[
	G\rightarrow \mathcal{H}_\pi,\quad g \in G\mapsto \pi(g)x\in \mathcal{H}_\pi.
	\]
\end{definition}

\begin{definition}[Equivalent representations]
	Let $\pi_1$ and $\pi_2$ be representations of $G$ respectively on the Hilbert spaces $\mathcal{H}_{\pi_1}$ and $\mathcal{H}_{\pi_2}$. A linear mapping $A:\mathcal{H}_{\pi_1}\rightarrow \mathcal{H}_{\pi_2}$, denoted by $A\in \Hom(\pi_1,\pi_2)$, is said to be an \textit{intertwining operator} between the representations $\pi_1$ and $\pi_2$ if for every $g\in G$ we have
	\[
	A\pi_1(g)=\pi_2(g)A.
	\]
	This is easily visualised as the commutation of the diagram
	\[ 
	\begin{tikzcd}
	\mathcal{H}_{\pi_1} \arrow{r}{\pi_1(g)} \arrow[swap]{d}{A} & \mathcal{H}_{\pi_1} \arrow{d}{A} \\
	\mathcal{H}_{\pi_2} \arrow{r}{\pi_2(g)}& \mathcal{H}_{\pi_2}
	\end{tikzcd}
	\]
	for every $g\in G$.
	If there exists an invertible intertwining operator $A\in \Hom(\pi_1,\pi_2)$, then $\pi_1$ and $\pi_2$ are called \textit{equivalent representations} and we write $\pi_1\sim\pi_2$.
\end{definition}

\begin{definition}[Unitary dual]
	The \textit{unitary dual} of a locally compact group $G$, denoted by $\widehat G$, is the set consisting of all equivalence classes of strongly continuous, irreducible, unitary representations of $G$, that is,
	\[
	\widehat{G}:=\big\{[\pi]\,|\,\pi \text{ is a strongly continuous unitary irreducible representation of }G \big\},
	\]
	where $[\pi]=\{\pi_j\,|\,\pi_j\sim\pi\}$.
\end{definition}

For the sake of simplicity, we often abuse the notation and use $\pi$ to denote both a specific representative of an equivalence class and the class itself. At the end of this Chapter, we will classify all the representations of two meaningful groups. For now, let us consider a pivotal example underlining the relation between representations and the phase functions appearing in the definition of the Euclidean Fourier transform.

\begin{example}\label{trigonometrics}
	Let $G=\Rn$. Then for every $\xi\in\Rn$ consider the trigonometric functions
	\[
	e_\xi: \Rn\rightarrow \mathcal{U}(\C),\quad e_\xi(x)=e^{2\pi i \xi\cdot x}.
	\]
	These are strongly continuous unitary representations, clearly irreducible since they are one-dimensional. Therefore the unitary dual of $\Rn$ is 
	\[
	\widehat{\Rn}=\big\{e_\xi\,|\,\xi\in\Rn\big\}\simeq\Rn.
	\]
\end{example}

\indent As underlined above Definition \ref{Liealgebra}, when working with Lie groups, it is key to consider the infinitesimal group, or in other words to work at the \textit{infinitesimal level}. This means looking at what happens at the level of the Lie algebra associated to the group. In order to do this we need to introduce some preliminary concepts.

\begin{definition}[Smooth vector]
	Let $\pi$ be a representation of the Lie group $G$ on the Hilbert space $\mathcal H_\pi$. A vector $v\in\mathcal H_\pi$ is said to be \textit{smooth} or \textit{of type $\mathcal C^\infty$} if the function 
	\[
	G\ni g\mapsto \pi(g)v\in\mathcal H_\pi
	\]
	is a $\mathcal C^\infty$ map. The space of all smooth vectors of a representation $\pi$ is denoted $\mathcal H_\pi^\infty$.
\end{definition}

\begin{definition}[Infinitesimal representation]
	Let $\mathfrak g $ be the Lie algebra of $G$ and let $\pi$ be a strongly continuous representation of $G$ on a Hilbert space $\mathcal H_\pi$. For every $X\in\mathfrak g$ and $v\in\mathcal H_\pi^\infty$ we define 
	\begin{align}\label{definfrep}
	d\pi(X)v:=\lim_{t\rightarrow 0}\frac{1}{t}\Big(\pi\big(\exp_G(tX)\big)v-v\Big)=\frac{d}{dt}\Bigr|_{t=0}\pi\big(\exp_{G}(tX)\big)v.
	\end{align}
	Then $d\pi$ is a representation of $\mathfrak g$ on $\mathcal H_\pi^\infty$ called the \textit{infinitesimal representation} associated to $\pi$. Consequently, using the algebra of $\mathfrak{U}(\mathfrak{g})$ we can define  $d\pi(T)$ for any $T\in\mathfrak{U}(\mathfrak{g})$, by considering its corresponding left-invariant differential operator.
\end{definition}
Recalling the expression of vector fields in terms of the derivative \eqref{vectorfield}, we can write the infinitesimal representation as $d\pi(X)=X\pi(e)$.
\begin{remark}
	In \cite[Proposition 1.7.3]{FR2016} the existence of the limit \eqref{definfrep} in the norm topology of $\mathcal{H}_\pi$ is proved. Furthermore, the authors show that every $d\pi(X)$ leaves $\mathcal H_\pi^\infty$ invariant and, in fact, is a representation of $\mathfrak{g}$ on $\mathcal H_\pi^\infty$ such that for every $X,Y\in\mathfrak{g}$ we have
	\begin{align*}
	d\pi\big([X,Y]\big)=d\pi(X)d\pi(Y)-d\pi(Y)d\pi(X).
	\end{align*}
	For more details, see \cite[Chapter 4]{CG90}.
\end{remark}

\begin{proposition}\label{propertyinfrep}
	Let $G$ be a Lie group and $\mathfrak{g}$ be its Lie algebra. Consider a representation $\pi\in\widehat{G}$ on $\mathcal{H}_\pi$ and the corresponding infinitesimal representation $d\pi$ of $\mathfrak{g}$ on $\mathcal{H}_\pi^{\infty}$. Then, for every $X\in\mathfrak{g}$ the operator $\pi(X)$ is skew-hermitian, meaning 
	\[
	d\pi^*(X)=-d\pi(X).
	\]
\end{proposition}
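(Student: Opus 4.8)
The plan is to exploit directly the unitarity of $\pi$ together with the fact, recorded in Proposition \ref{inverseexponential}, that $\exp_G(tX)^{-1}=\exp_G(-tX)$. Since $\pi(g)$ is unitary for every $g\in G$, condition ii. of the definition of a unitary representation gives $\pi(g)^*=\pi(g)^{-1}=\pi(g^{-1})$; taking $g=\exp_G(tX)$ for $t\in\R$ yields
\[
\pi\big(\exp_G(tX)\big)^*=\pi\big(\exp_G(-tX)\big).
\]

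Next I would fix two smooth vectors $u,v\in\mathcal{H}_\pi^\infty$ and study the scalar function $t\mapsto\big(\pi(\exp_G(tX))u,v\big)_{\mathcal{H}_\pi}$. Moving the adjoint across the inner product, this equals $\big(u,\pi(\exp_G(-tX))v\big)_{\mathcal{H}_\pi}$ for all $t$. Differentiating both sides at $t=0$: by the Remark following the definition of the infinitesimal representation (see \cite[Proposition 1.7.3]{FR2016}), for $w\in\mathcal{H}_\pi^\infty$ the map $t\mapsto\pi(\exp_G(tX))w$ is differentiable at $0$ in the norm of $\mathcal{H}_\pi$ with derivative $d\pi(X)w$, and since the inner product is continuous we may differentiate under it. The left-hand side gives $\big(d\pi(X)u,v\big)_{\mathcal{H}_\pi}$. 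For the right-hand side, the substitution $s=-t$ shows $\frac{d}{dt}\big|_{t=0}\pi(\exp_G(-tX))v=-d\pi(X)v$, so the right-hand side gives $-\big(u,d\pi(X)v\big)_{\mathcal{H}_\pi}$.

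Combining the two computations, $\big(d\pi(X)u,v\big)_{\mathcal{H}_\pi}=-\big(u,d\pi(X)v\big)_{\mathcal{H}_\pi}$ for all $u,v\in\mathcal{H}_\pi^\infty$, which is precisely the claim $d\pi^*(X)=-d\pi(X)$ on the space of smooth vectors. The only point demanding care is the interchange of the derivative in $t$ with the Hilbert-space inner product, and this is the sole genuine obstacle; it is handled by the norm-differentiability of $t\mapsto\pi(\exp_G(tX))w$ on smooth vectors, which is already available in the excerpt. Everything else is a formal manipulation of unitarity and the chain rule.
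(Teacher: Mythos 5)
Your proof is correct and follows essentially the same route as the paper's: both rest on unitarity ($\pi(g)^*=\pi(g^{-1})$), Proposition \ref{inverseexponential} giving $\exp_G(tX)^{-1}=\exp_G(-tX)$, and passing to the limit $t\to 0$ of the resulting difference quotients of inner products between smooth vectors. The only cosmetic difference is that the paper manipulates the difference quotient operators directly (decorated with a factor of $i$), whereas you phrase the identical computation as differentiating the matrix coefficient $t\mapsto\big(\pi(\exp_G(tX))u,v\big)_{\mathcal{H}_\pi}$ at $t=0$.
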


\begin{proof}
	Consider any two smooth vectors $u,v\in\mathcal{H}_\pi^{\infty}$. Then:
	\begin{align*}
	\Big(u,\frac{i}{t}\big(\pi(\exp_{G}(tX))v-v\big)\Big)&=\Big(u,\frac{i}{t}\big(\pi(\exp_{G}(tX))-\Id\big)v\Big)\\
	&=\Big(\frac{-i}{t}\big(\pi(\exp_{G}(tX))-\Id\big)^*u,v\Big).
	\end{align*}
	We recall that the unitarity of the representation $\pi$ means that for every $g\in G$ we have $\pi^*(g)=\pi(g^{-1})$. Therefore applying Proposition \ref{inverseexponential} we can continue the above calculation as follows:
	\begin{align*}
	\Big(u,\frac{i}{t}\big(\pi(\exp_{G}(tX))v-v\big)\Big)=\Big(\frac{i}{-t}\big(\pi(\exp_{G}(-t X))u-u\big),v\Big).
	\end{align*}
	Taking the limit of both sides as $t\rightarrow0$ and keeping in mind the definition of infinitesimal representation, the above equivalence becomes
	\begin{align*}
	\big(u,d\pi(X)v\big)=\big(d\pi(X)u,v\big),
	\end{align*}
	which yields the desired identity $d\pi^*(X)=-d\pi(X)$.
\end{proof}
\begin{remark}
	By abuse of notation, we will often still denote the infinitesimal representation by $\pi$, therefore, for any $X\in\mathfrak g$, we might write $\pi(X)$ meaning $d\pi(X)$.
\end{remark}

\subsection{The group Fourier transform}\label{Sec:groupFouriertransform}

We conclude this Section introducing the definition of the Fourier transform on groups, the so-called 	\textit{group Fourier transform}. This concludes the analogy with the Euclidean Fourier transform, that is
\begin{align}\label{fouriertransformR}
\widehat{f}(\xi)&:=\int_{\Rn} e^{-2\pi i \xi\cdot x}f(x)dx=\int_{\Rn}e_\xi(-x)f(x)dx\notag\\
&=\int_{\Rn}e_\xi(x)^*f(x)dx=:\widehat{f}(e_\xi),
\end{align}
rewritten in the notation introduced in Example \ref{trigonometrics}.

In accordance with \eqref{fouriertransformR}, we define the group Fourier transform of a suitable complex-valued function on $G$ evaluated at a representation $\pi$ as the integral against the Haar measure of $f$ multiplied by the adjoint of $\pi$. We define this formally as follows.
\begin{definition}[Group Fourier transform]\label{def:groupFouriertransform}
	Let $G$ be a locally compact group and $\pi$ an irreducible unitary representation on some Hilbert space $\mathcal H_\pi$. Given $f\in L^1(G)$ we define its \textit{group Fourier transform at $\pi$} as the operator $\widehat f (\pi): \mathcal H_\pi\rightarrow \mathcal H_\pi$ defined through the inner product by
	\begin{align*}
	\Big(\widehat f(\pi)v_1,v_2\Big)_{\mathcal H_\pi}=\int_G\big(\pi^*(x)v_1,v_2)_{\mathcal H_\pi}f(x)dx.
	\end{align*}
	This is written concisely, using both classical notation and equivalent notation specific to this area \cite{FR2016}, as
	\begin{align*}
	\mathcal{F}_G f(\pi)\equiv\widehat f(\pi)\equiv \pi(f):=\int_{G}\pi^*(x)f(x)d\mu.
	\end{align*}
\end{definition}

After choosing a basis for $\mathcal{H}_\pi$, the group Fourier transform can be seen as a matrix $\widehat{f}(\pi)\in \mathbb{C}^{d_\pi\times d_\pi}$, either finite or infinite-dimensional, according to the dimension of the Hilbert space. 

Starting from this, it is possible to derive properties corresponding to those of the Euclidean Fourier transform, such as the relation between translation and modulation through the Fourier transform. Below we state and prove some of the properties relevant to our purposes. 
\begin{proposition}
	\label{Prop:groupFouriertransform}
	Let $f$ be in $L^1(G)$ and $\pi\in\widehat{G}$. Then the following hold:
	\begin{enumerate}[1.]
		\item The group Fourier transform maps translation to `modulation', that means, for every $g\in G$ we have 
		\begin{align}\label{modulation}
		\mathcal{F}_G(f\circ L_g)(\pi)=\pi(g)\mathcal{F}_G f(\pi),
		\end{align}
		where $L_g$ is the left-translation by $g$.
		\item If $f\in \mathcal{D}(G)\cap L^1(G)$ and $X\in \mathfrak{g}$, then:
		\[
		\mathcal{F}_G(Xf)(\pi)=\pi(X)\mathcal{F}_Gf(\pi).
		\]
	\end{enumerate}
\end{proposition}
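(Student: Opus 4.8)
The plan is to prove both properties directly from the definition of the group Fourier transform (Definition \ref{def:groupFouriertransform}), testing against pairs of vectors $v_1,v_2\in\mathcal{H}_\pi$ and using the defining identity $\big(\widehat{f}(\pi)v_1,v_2\big)_{\mathcal{H}_\pi}=\int_G(\pi^*(x)v_1,v_2)_{\mathcal{H}_\pi}\,f(x)\,dx$, together with the homomorphism and unitarity properties of $\pi$. Since two bounded operators agreeing in all matrix coefficients coincide, it suffices to verify the claimed identities weakly.

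For property 1, I would compute $\big(\mathcal{F}_G(f\circ L_g)(\pi)v_1,v_2\big)=\int_G(\pi^*(x)v_1,v_2)\,f(gx)\,dx$ and then substitute $y=gx$, i.e.\ $x=g^{-1}y$, which is legitimate because the Haar measure is left-invariant (and the groups we care about are unimodular, but left-invariance alone suffices here). This gives $\int_G(\pi^*(g^{-1}y)v_1,v_2)\,f(y)\,dy$. Now $\pi^*(g^{-1}y)=\pi\big((g^{-1}y)^{-1}\big)=\pi(y^{-1}g)=\pi(y^{-1})\pi(g)=\pi^*(y)\pi(g)$, using the homomorphism property and $\pi(g)^{-1}=\pi(g^{-1})$. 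Hence $(\pi^*(g^{-1}y)v_1,v_2)=(\pi^*(y)\pi(g)v_1,v_2)$, and the integral becomes $\big(\mathcal{F}_G f(\pi)\,\pi(g)v_1,v_2\big)$, which is $\big(\pi(g)\mathcal{F}_G f(\pi)v_1,v_2\big)$ only after being careful about the order --- in fact the computation naturally yields $\mathcal{F}_G f(\pi)\,\pi(g)$ acting on $v_1$, so I would double-check the placement of $\pi(g)$ against the stated \eqref{modulation} and adjust the bookkeeping (the precise left/right placement depends on the convention in Definition \ref{def:groupFouriertransform} for which slot $f$ multiplies).

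For property 2, the idea is to differentiate under the integral sign. Write $Xf(x)=\frac{d}{dt}\big|_{t=0}f(x\exp_G(tX))$ from \eqref{vectorfield}. Then $\big(\mathcal{F}_G(Xf)(\pi)v_1,v_2\big)=\int_G(\pi^*(x)v_1,v_2)\,\frac{d}{dt}\big|_{t=0}f(x\exp_G(tX))\,dx$. Assuming $f\in\mathcal{D}(G)$ so the $t$-derivative is uniform and compactly supported, I can exchange $\frac{d}{dt}$ and $\int_G$, then substitute $y=x\exp_G(tX)$, i.e.\ $x=y\exp_G(-tX)$ using Proposition \ref{inverseexponential}, again invoking right-invariance of the Haar measure. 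This turns the integrand's representation factor into $\pi^*(y\exp_G(-tX))=\pi\big(\exp_G(tX)y^{-1}\big)=\pi(\exp_G(tX))\pi^*(y)$. Differentiating at $t=0$ and recognizing the definition \eqref{definfrep} of $d\pi(X)$ applied to the vector $v_1$ (after moving the adjoint appropriately, using Proposition \ref{propertyinfrep} if a sign or transpose needs to be tracked), I obtain $\big(\mathcal{F}_G f(\pi)\,d\pi(X)v_1,v_2\big)$, i.e.\ $\pi(X)\mathcal{F}_G f(\pi)$ in the paper's shorthand $\pi(X)=d\pi(X)$.

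The main obstacle is the differentiation under the integral sign in part 2 and, relatedly, the justification that the vector-valued limit defining $d\pi(X)$ can be pulled past the integral and the inner product --- one needs $v_1$ (or the relevant image vector) to be a smooth vector and one needs the interchange of limit and integration to be valid, which is where the hypothesis $f\in\mathcal{D}(G)\cap L^1(G)$ is essential. A secondary but genuine source of fiddliness throughout is the consistent tracking of adjoints and left/right placements: both $\pi^*$ versus $\pi$ and the side on which the operator $\pi(g)$ or $d\pi(X)$ ends up acting must be matched exactly to the convention fixed in Definition \ref{def:groupFouriertransform}, and I would verify against the one-dimensional Euclidean case (Example \ref{trigonometrics}) as a sanity check that the signs come out as in \eqref{fouriertransformR}.
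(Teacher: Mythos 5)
Your route is the same as the paper's: both parts are proved directly from Definition \ref{def:groupFouriertransform} by a change of variables justified by the invariance of the Haar measure, the homomorphism and unitarity of $\pi$, and (for part 2) differentiation under the integral sign via \eqref{vectorfield}, Proposition \ref{inverseexponential} and \eqref{definfrep}. Working weakly against pairs of vectors rather than with operator-valued integrals is only a cosmetic difference. The domain point you raise for part 2 is the right one: $d\pi(X)$ lives on $\mathcal H_\pi^\infty$, and $f\in\mathcal D(G)$ is what lets you exchange the $t$-derivative with the integral and ensures the resulting vectors are smooth.

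Two remarks on the order bookkeeping, which you rightly identify as the delicate point. First, in part 1 your hesitation should be resolved in favour of your own computation: since $\pi^*(y)=\pi(y^{-1})$, the map $y\mapsto\pi^*(y)$ is an \emph{anti}-homomorphism, so $\pi^*(g^{-1}x)=\pi^*(x)\,\pi(g)$ and the definition genuinely produces $\mathcal F_Gf(\pi)\,\pi(g)$, with $\pi(g)$ on the right. The paper's displayed proof writes $\pi^*(g^{-1}x)=\pi^*(g^{-1})\pi^*(x)$, which silently commutes the two factors; for a non-abelian $\widehat G$ these operators need not commute, so the placement in \eqref{modulation} should be read with your ordering (consistent, e.g., with the convention in \cite{FR2016}). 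Second, in part 2 you have a small internal slip in the opposite direction: your (correct) factorisation $\pi^*\big(y\exp_G(-tX)\big)=\pi\big(\exp_G(tX)\big)\pi^*(y)$ puts the new factor on the \emph{left}, so after integrating in $y$ and differentiating at $t=0$ the pairing reads $\big(d\pi(X)\,\mathcal F_Gf(\pi)v_1,v_2\big)$, not $\big(\mathcal F_Gf(\pi)\,d\pi(X)v_1,v_2\big)$ as you wrote; the operator identity $\mathcal F_G(Xf)(\pi)=\pi(X)\mathcal F_Gf(\pi)$ you conclude with is nevertheless the correct one and agrees with the paper. The sanity check against \eqref{fouriertransformR} in the abelian case cannot detect either issue, since there all the operators commute.
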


\begin{proof}
	\noindent 1. Let $g\in G$. We write the group Fourier transform in \eqref{modulation} explicitly and observe that the left-invariance of the Haar measure allows us to perform a `natural' change of variable to obtain
	\[
	\mathcal{F}_G(f\circ L_g)(\pi)=\int_G\pi^*(x)f(gx)d\mu(x)=\int_G\pi^*(g^{-1}x)f(x)d\mu(x).
	\]
	By definition, the representation $\pi$ is a group homomorphism and is unitary, whence
	\begin{align*}
	\mathcal{F}_G(f\circ L_g)(\pi)&=\int_G\pi^*(g^{-1})\pi^*(x)f(x)d\mu(x)=\int_G\pi(g)\pi^*(x)f(x)d\mu(x)=\\
	&=\pi(g)\mathcal{F}_Gf(\pi).
	\end{align*}
	This completes the proof of \eqref{modulation}. 
	
	\noindent 2. Let $X\in\mathfrak{g}$. Recalling the expression of vector fields in terms of time derivative \eqref{vectorfield}, we can express the Fourier transform of $Xf$ as follows
	\begin{align*}
	\mathcal{F}_G(Xf)(\pi)&=\int_G \pi^*(x)\frac{d}{dt}\Bigr|_{t=0}f(x\exp_{G}(tX))d\mu(x)=\\
	&=\frac{d}{dt}\Bigr|_{t=0}\int_G\pi^*(x)f(x\exp_{G}(tX))d\mu(x),
	\end{align*}
	where we have used the dominated convergence theorem to change the order between integral and derivative. Using the invariance of the Haar measure with respect to the group operation, we can perform a change of variable in the above calculation to find
	\begin{align*}
	\mathcal{F}_G(Xf)(\pi)=\frac{d}{dt}\Bigr|_{t=0}\int_G\pi^*(x\exp_{G}(-tX))f(x)d\mu(x),
	\end{align*}
	where we have used Proposition \ref{inverseexponential}. Therefore, moving the derivative back inside the integral, we can conclude
	\begin{align}\label{formulaforvectorfield}
	\mathcal{F}_G(Xf)(\pi)=\int_G \pi^*(x)Xf(x)d\mu(x)=-\int_GX\pi^*(x)f(x)d\mu(x).
	\end{align}
	Using again the expression \eqref{vectorfield} for vector fields, we can observe 
	\begin{align*}
	X\pi^*(x)=\frac{d}{dt}\Bigr|_{t=0}\pi^*\big(x\exp_{G}(tX)\big)=\frac{d}{dt}\Bigr|_{t=0}\pi^*(x)\pi^*\big(\exp_{G}(tX) \big)=\pi^*(x)\pi^*(X).
	\end{align*}
	Proposition \ref{propertyinfrep} states that $\pi^*(X)=-\pi(X)$, therefore these considerations allow us to continue the calculation \eqref{formulaforvectorfield} to obtain
	\begin{align*}
	\mathcal{F}_G(Xf)(\pi)=\int_G\pi^*(x)\pi(X)f(x)d\mu(x)=\pi(X)\mathcal{F}_Gf(\pi),
	\end{align*}
	which is the desired result.
\end{proof}
We proceed now with a crucial result: the Plancherel theorem for simply connected nilpotent Lie groups. By the \textit{orbit method} \cite[Section 1.8.1]{FR2016}, \cite{CG90}, the unitary dual $\widehat{G}$ is described as a subset of an Euclidean space and this makes it possible to construct a measure on $\widehat{G}$. Indeed, $\widehat{G}$ can be endowed with a measure $\mu$ called the \textit{Plancherel measure}. The existence of such a measure implies the possibility of integrating on the unitary dual and, therefore, of determining an inverse Fourier transform.  
We recall that the Schwartz space on a simply connected nilpotent Lie group $G$ is defined by identifying $G$ with the underlying vector space of its Lie algebra. More precisely:
\begin{definition}[Schwartz functions]
	Let $G$ be a simply connected, nilpotent Lie group and $\mathfrak{g}$ be its Lie algebra. A function $f$ on $G$ is said to be a \textit{Schwartz function} on $G$ if $f\circ \exp_{G}$ is a Schwartz function on $\mathfrak{g}$. We denote by $\mathcal{S}(G)$ the space of Schwartz functions. 
\end{definition}
It can be shown\footnote{For more detail, the interested reader can refer to \cite[Section 4.2 and 4.3, Theorem 4.3.9]{CG90}.} that the aforementioned Plancherel measure is such that for every $f\in \mathcal{S}(G)$
\begin{itemize}
	\item the operator $\pi(f)=\mathcal{F}_Gf(\pi)$ is \textit{trace class}, i.e. the trace norm is finite;
	\item the function $\widehat{G}\ni\pi\mapsto \Tr\big(\pi(f)\big)$ is integrable against $\mu$, and satisfies
	\begin{align*}
	f(0)=\int_{\widehat{G}}\Tr\big(\pi(f)\big)d\mu(\pi).
	\end{align*}
\end{itemize}
Thus, we can formulate the Fourier inversion formula, keeping in mind Property 1. of the group Fourier transform, Proposition \ref{Prop:groupFouriertransform}:
\begin{proposition}[Fourier inversion formula]
	Let $G$ be a simply connected, nilpotent Lie group. Let $f\in\mathcal{S}(G)$ and $\pi\in\widehat{G}$. Then for every $g\in G$ the operators $\pi(g)\pi(f)$ and $\pi(f)\pi(g)$ are trace class and the Fourier inversion formula holds
	\begin{align*}
	f(g)=\int_{\widehat{G}}\Tr\big(\pi(g)\widehat{f}(\pi)\big)d\mu(\pi)=\int_{\widehat{G}}\Tr\big(\widehat{f}(\pi)\pi(g)\big)d\mu(\pi).
	\end{align*}
\end{proposition}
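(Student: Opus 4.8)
The plan is to deduce the general inversion formula from the special case $g=e$, which is exactly the Plancherel-type identity $f(0)=\int_{\widehat G}\Tr(\pi(f))\,d\mu(\pi)$ stated immediately before the proposition, using the translation-to-modulation property of the group Fourier transform.

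First I would fix $g\in G$ and set $f_g:=f\circ L_g$, so that $f_g(x)=f(gx)$. The one point requiring care is that $f_g$ again belongs to $\mathcal S(G)$. Since $G$ is simply connected and nilpotent, $\exp_G$ is a global diffeomorphism, and by the Baker--Campbell--Hausdorff formula \eqref{BakerCampbellHausdorff}, which terminates because $\mathfrak g$ is nilpotent, left translation by $g$ reads in exponential coordinates as a polynomial diffeomorphism of $\mathfrak g$ whose inverse (translation by $g^{-1}$) is again polynomial; composition with such a map preserves the Schwartz space $\mathcal S(\mathfrak g)$, so $f_g\in\mathcal S(G)$. Alternatively, this invariance may simply be quoted from \cite{CG90}. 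Applying the Plancherel-type identity to $f_g$ and using $f_g(e)=f(g)$ gives
\[
f(g)=f_g(e)=\int_{\widehat G}\Tr\big(\pi(f_g)\big)\,d\mu(\pi),
\]
and part~1 of Proposition~\ref{Prop:groupFouriertransform} identifies $\pi(f_g)=\mathcal F_G(f\circ L_g)(\pi)=\pi(g)\widehat f(\pi)$. Substituting this into the last display yields the first equality of the proposition, and shows en route that $\pi\mapsto\Tr(\pi(g)\widehat f(\pi))$ is $\mu$-integrable, since it coincides with $\pi\mapsto\Tr(\pi(f_g))$ and $f_g\in\mathcal S(G)$.

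It remains to handle the trace-class assertions and the second equality. Here I would use that $\pi(g)$ is unitary, hence a bounded operator on $\mathcal H_\pi$, while $\widehat f(\pi)=\pi(f)$ is trace class for $f\in\mathcal S(G)$; since the trace-class operators form a two-sided ideal inside the bounded operators, both $\pi(g)\widehat f(\pi)$ and $\widehat f(\pi)\pi(g)$ are trace class. Cyclicity of the trace for a trace-class operator composed with a bounded one gives $\Tr(\pi(g)\widehat f(\pi))=\Tr(\widehat f(\pi)\pi(g))$ for each $\pi$, and integrating against $\mu$ delivers the second equality.

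The genuinely non-routine step is the Schwartz-space invariance $f\circ L_g\in\mathcal S(G)$; everything else is a direct substitution together with standard facts about trace-class operators. If one takes that invariance as known, the whole argument is essentially formal.
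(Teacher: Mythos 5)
Your proposal is correct and is exactly the route the paper intends: the paper gives no written proof, simply asserting the proposition follows from the identity $f(0)=\int_{\widehat G}\Tr(\pi(f))\,d\mu(\pi)$ ``keeping in mind Property 1'' of Proposition~\ref{Prop:groupFouriertransform}, which is precisely your substitution $f\mapsto f\circ L_g$ together with the modulation property and cyclicity of the trace. The only cosmetic point is that a direct computation of $\pi^*(g^{-1}x)$ actually gives $\pi(f\circ L_g)=\widehat f(\pi)\pi(g)$ rather than the order stated in the paper's Property 1, but since both orderings appear in the proposition and agree under the trace, this does not affect your argument.
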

\begin{theorem}[Plancherel theorem]\label{Thm:Plancherelformula}
	Let $f\in\mathcal{S}(G)$ and $\pi\in\widehat{G}$. The operator $\widehat{f}(\pi)=\mathcal{F}_Gf(\pi)$ is Hilbert--Schmidt, i.e. the Hilbert--Schmidt norm
	$$\|\widehat{f}(\pi)\|_{HS}^2=\Tr\big(\widehat{f}(\pi)\widehat{f}(\pi)^*\big)$$
	is finite. Furthermore, the function $\widehat{G}\ni\pi\mapsto\|\widehat{f}(\pi)\|^2_{HS}$ is integrable against $\mu$, and the Plancherel formula holds:
	\begin{align}\label{PlancherelFormula}
	\|f\|^2_{L^2(G)}=\int_G|f(x)|^2dx=\int_{\widehat{G}}\|\widehat{f}(\pi)\|^2_{HS}d\mu(\pi)=\big{\|}\|\widehat{f}(\pi)\|_{HS}\big{\|}^2_{L^2(\widehat{G})}.
	\end{align} 
\end{theorem}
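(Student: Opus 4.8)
The plan is to extract the whole theorem from the Fourier inversion formula just established, applied to one cleverly chosen Schwartz function. Given $f\in\mathcal S(G)$, put $f^*(x):=\overline{f(x^{-1})}$ and $g:=f^*\star f$. Since $G$ is simply connected and nilpotent, the exponential map is a global diffeomorphism and, by the Baker--Campbell--Hausdorff formula, the group law is polynomial in exponential coordinates; it is then a standard fact (see \cite[Chapter 4]{CG90}) that $\mathcal S(G)$ is stable under the involution $f\mapsto f^*$ and under convolution. Hence $g\in\mathcal S(G)$, and all the statements preceding the inversion formula are available for $g$: the operator $\widehat g(\pi)$ is trace class, the function $\pi\mapsto\Tr(\widehat g(\pi))$ is integrable against the Plancherel measure $\mu$, and $g(e)=\int_{\widehat G}\Tr(\widehat g(\pi))\,d\mu(\pi)$.

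Next I would carry out two elementary computations. On the one hand, unwinding the convolution and using $f^*(y)=\overline{f(y^{-1})}$ gives $g(e)=\int_G|f(y^{-1})|^2\,d\mu(y)$, and since nilpotent groups are unimodular the substitution $y\mapsto y^{-1}$ yields $g(e)=\|f\|_{L^2(G)}^2$. On the other hand, directly from Definition~\ref{def:groupFouriertransform}, the unitarity of $\pi$, and the invariance of the Haar measure (exactly as in the proof of Proposition~\ref{Prop:groupFouriertransform}), one verifies the two identities $\widehat{f_1\star f_2}(\pi)=\widehat{f_2}(\pi)\,\widehat{f_1}(\pi)$ and $\widehat{f^*}(\pi)=\widehat f(\pi)^*$; combining them gives $\widehat g(\pi)=\widehat f(\pi)\,\widehat f(\pi)^*$, a positive operator on $\mathcal H_\pi$.

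Now I would assemble the three conclusions. Because $g\in\mathcal S(G)$, the operator $\widehat g(\pi)=\widehat f(\pi)\widehat f(\pi)^*$ is trace class for every $\pi\in\widehat G$; being positive, its trace equals its trace norm, and finiteness of $\Tr\big(\widehat f(\pi)\widehat f(\pi)^*\big)$ is precisely the statement that $\widehat f(\pi)$ is Hilbert--Schmidt with $\|\widehat f(\pi)\|_{HS}^2=\Tr\big(\widehat f(\pi)\widehat f(\pi)^*\big)=\Tr\big(\widehat g(\pi)\big)$. Integrability of $\pi\mapsto\Tr(\widehat g(\pi))$ then gives integrability of $\pi\mapsto\|\widehat f(\pi)\|_{HS}^2$. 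Finally, applying the Fourier inversion formula to $g$ at the identity, where $\pi(e)=\Id$, and inserting the two computations,
\begin{align*}
\|f\|_{L^2(G)}^2=g(e)=\int_{\widehat G}\Tr\big(\widehat g(\pi)\big)\,d\mu(\pi)=\int_{\widehat G}\|\widehat f(\pi)\|_{HS}^2\,d\mu(\pi),
\end{align*}
which is the Plancherel formula \eqref{PlancherelFormula}; its last equality is merely the definition of the $L^2(\widehat G)$-norm.

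The genuine content here sits in the inputs rather than in the argument itself: the fact that $\mathcal S(G)$ is an involutive convolution algebra on a simply connected nilpotent group, and the construction of the Plancherel measure (via the orbit method) underlying the inversion formula. The only thing requiring care in the write-up is bookkeeping of conventions — in particular the order reversal $\widehat{f_1\star f_2}(\pi)=\widehat{f_2}(\pi)\widehat{f_1}(\pi)$ coming from $\widehat f(\pi)=\int_G\pi(x)^{-1}f(x)\,d\mu(x)$ — so as to land on $\widehat g(\pi)=\widehat f(\pi)\widehat f(\pi)^*$; had I used $g=f\star f^*$ instead, the roles would swap to $\widehat f(\pi)^*\widehat f(\pi)$, but the trace is $\|\widehat f(\pi)\|_{HS}^2$ either way, so the conclusion is unaffected.
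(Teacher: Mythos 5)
The paper does not actually prove this theorem: it is stated as background in Chapter \ref{CP:2}, with the proof deferred to \cite{CG90} and \cite{FR2016}. Your argument is correct and is precisely the derivation those references (and the paper's own setup) intend — the two properties of the Plancherel measure listed immediately before the statement, namely that $\pi(g)$ is trace class for $g\in\mathcal S(G)$ and that $g(e)=\int_{\widehat G}\Tr(\pi(g))\,d\mu(\pi)$, are applied to $g=f^*\star f$, and the identities $\widehat{f_1\star f_2}(\pi)=\widehat{f_2}(\pi)\widehat{f_1}(\pi)$, $\widehat{f^*}(\pi)=\widehat f(\pi)^*$ and $\Tr(AA^*)=\|A\|_{HS}^2$ close the argument. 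Your care with the order reversal (so that $g=f^*\star f$ yields $\widehat f(\pi)\widehat f(\pi)^*$) is exactly the point where one could slip with these conventions, and you handle it correctly.
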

As remarked in \cite[Theorem 1.8.11]{FR2016} there is a unitary extension of Formula \eqref{PlancherelFormula} that holds for every $f\in L^2(G)$, defining the group Fourier transform $\mathcal{F}_G$ on every such $f$, and ensuring it is an isometry onto $L^2(\widehat G)$. Therefore, from now on we use the same notation for the group Fourier transform of square integrable functions.

\subsection{Quantization}\label{SEC:quantization}
Let us assume that $G$ is a nilpotent Lie group. In this Section we will consider fields of operator parametrised by $\widehat G$ not necessarily bounded.

\begin{definition}[Field of operators]  \cite[Definition 1.8.14]{FR2016}
	A $\widehat G$-field of operators on smooth vectors is any family $\sigma=\{\sigma_\pi\,|\,\pi=[\pi]\in\widehat G \}$ of classes of operators where for every $\pi \in \widehat G$ the class $\sigma_\pi$ is defined to be
	\[
	\{\sigma_{\pi_1}:\mathcal H_{\pi_1}^\infty \rightarrow \mathcal H_{\pi_1} \,|\, \pi_1\in\pi=[\pi]\},
	\]
	such that for any $\sigma_{\pi_1},\sigma_{\pi_2}\in\sigma_{\pi}$ we have
	\[
	\pi_1\sim_T\pi_2 \implies \sigma_{\pi_1}T=T\sigma_{\pi_2},
	\]
	where $T$ is an intertwining operator between $\pi_1$ and $\pi_2$. 
\end{definition}
Note that this is well defined because of \cite[Lemma 1.8.12.]{FR2016}, which in turn results from a more general argument in \cite{DM78}.  The latter Lemma states that if two representations $\pi_1$ and $\pi_2$ are intertwined by the the unitary operator $T$, that is, $T\pi_1=\pi_2 T$, then $T$ maps $\mathcal{H}_{\pi_1}^\infty$ onto $\mathcal{H}_{\pi_2}^\infty$ bijectively. Often we will use the shorthand notation
\begin{align*}
\sigma=\{\sigma_{\pi}:\mathcal{H}_\pi^\infty\rightarrow\mathcal{H}_\pi\,|\,\pi\in\widehat{G}\}=\{\sigma_{\pi}\,|\,\pi\in\widehat{G}\}.
\end{align*}

We emphasise that given $T\in\mathfrak{U}(\mathfrak{g})$, according to the definition of infinitesimal representations, the family $\{\pi(T)\,|\,\pi\in\widehat{G}\}$ yields a field of operators acting on smooth vectors and parametrised by $\widehat{G}$.

\begin{definition}[Symbol]\label{symbol}
	A symbol is a field of operators $\sigma=\big\{\sigma (x,\pi):\mathcal H_\pi^\infty\rightarrow\mathcal H_\pi\,|\,(x,\pi)\in G\times\widehat G\big\}$, satisfying for every $x\in G$
	\[
	\sigma(x,\cdot):=\{ \sigma(x,\pi): \mathcal H_\pi^\infty\rightarrow\mathcal H_\pi\,|\,\pi\in\widehat G\}\in L^\infty(\widehat G).
	\]
\end{definition}
Every symbol $\sigma$ is associated with an operator given by the following quantisation formula
\[
\Op(\sigma)\phi(x)=\int_{\widehat G}\Tr\big(\pi(x)\sigma(x,\pi)\widehat\phi(\pi)\big) d\mu(\pi),
\]
for every $\phi\in\mathcal S(G)$ and $x\in G$. As showed in Theorem 5.1.39 in \cite{FR2016}, the quantisation map
\[
\sigma\mapsto \Op(\sigma)
\]
is bijective and linear. Then, given an operator $T$, it makes sense to define its symbol as $\sigma=\sigma_T$. 

Now recall that any finite-dimensional Lie group $G$ has Lie algebra $\mathfrak{g}$ endowed with a basis, say, $\{X_1,\ldots,X_n\}$. We can observe that, according to the Poincar\'e--Birkhoff--Witt Theorem \ref{PBW}, any left-invariant differential operator $T$ on $G$ can be written in a unique way as the finite sum
\[
T=\sum_{|\alpha|\leq M}c_\alpha X^\alpha,
\]
where $\alpha=(\alpha_1,\ldots,\alpha_n)$ is a multi-index, all the coefficients $c_\alpha\in\C$, at least one $c_\alpha$ with $|\alpha|=M$ is non-zero, and $X^\alpha=X_1^{\alpha_1}\dots X_n^{\alpha_n}$.
\begin{definition}
	The non-negative integer $M$ is called the \textit{order} of $T$.
\end{definition}
This allows us to look at any left-invariant differential operator $T$ on $G$ as an element of the enveloping algebra $\mathfrak U(\mathfrak g)$ of the Lie algebra of $G$. Therefore, the family of infinitesimal representations $\{\pi(T)\,|\,\pi\in\widehat G \}$ yields a field of operators that turns out to be the symbol associated with our operator $T$. Hence $T$ can be quantised as follows:
\[
T=\sum_{|\alpha|\leq M}c_\alpha X^\alpha=\Op\Bigg(\sum_{|\alpha|\leq M}c_\alpha\pi(X)^\alpha\Bigg).
\]

\section{Graded Lie groups}\label{Sec:gradedLiegroup}

A natural habitat for extending certain classical results from Euclidean harmonic analysis is provided by graded groups. The main reason for this is the possibility of 'naturally' endowing these groups with a dilation structure and a non-elliptic but hypoelliptic operator, that will play an analogous r\^ole as the elliptic Laplace operator in the Riemannian setting. Moreover, these groups are extremely relevant for applications, since they occur naturally in the geometry of certain symmetric domains, in specific non-elliptic partial differential equations and in complex analysis in several variables, see, e.g., \cite{BFG2016,BLU,CRS2005,VSC1992} and the references therein.

In this Section we begin with the concept of homogeneous Lie group, of which $\Rn$ is the simplest example, leading us to the notion of a graded Lie group. This is a subclass of the homogeneous Lie groups where we can introduce the hypoelliptic Rockland operators. Once all the necessary tools have been introduced, we will present certain properties of the symbols of the Rockland operators that will be fundamental for our studies. We will omit most of the proofs, and present the results in a way congenial to our purposes. More detailed treatments can be found in the recent monograph by Fischer and Ruzhansky \cite{FR2016}, in the notes by Ricci \cite{Ric} or in the more classical reference of Folland and Stein \cite{FS}. 
Let us start with this first subclass of Lie groups, that is, the class of homogeneous Lie groups.
\begin{definition}[Dilations]
	Let $\mathfrak{g}$ be a Lie algebra. A family of \textit{dilations} on $\mathfrak{g}$ is a family $\{D_r\,|\, r>0\}$ of linear mappings from $\mathfrak{g}$ to itself, having the form 
	\begin{align*}
	D_r=\exp(A\ln r)=\sum_{l=0}^{\infty}\frac{1}{l!}(\ln(r)A)^l,
	\end{align*}
	where $\exp$ expresses the exponential of matrices, $A$ is a diagonalisable linear operator on $\mathfrak{g}$ with positive eigenvalues and $\ln(r)$ denotes the natural logarithm of $r>0$. 
\end{definition}
\begin{remark}
	Each $D_r$ is a Lie algebra homomorphism, i.e., a linear map from $\mathfrak{g}$ to itself which respects the Lie bracket on $\mathfrak{g}$:
	\begin{align*}
	[D_rX,D_rY]=D_r[X,Y],\quad\text{for all }X,Y\in\mathfrak{g}.
	\end{align*}
\end{remark}
The eigenvalues of the operator $A$, say $v_1,\dots,v_k$, are called \textit{weights} of the dilations. We can realise the maps $A$ and $D_r$ for every $r>0$ in a matrix form. Choosing a basis of eigenvectors for $A$, we obtain the diagonal matrices
\begin{align*}
A=\begin{pmatrix}
v_1 & \\
& v_2 \\
&		  & \ddots \\
&		  &        & v_k\\
\end{pmatrix}
\quad\text{and}\quad
D_r=\begin{pmatrix}
r^{v_1} & \\
& r^{v_2} \\
&		  & \ddots \\
&		  &        & r^{v_k}\\
\end{pmatrix}.
\end{align*}
Furthermore, we can observe that we can always `adjust' the eigenvalues of $A$. In fact, if $\{D_r\}_{r>0}$ is a family of dilations on $\mathfrak{g}$, then so is $\{\tilde{D}_{r}\}_{r>0}$, with $\tilde{D}_r=D_{r^\alpha}=\exp(\alpha A\ln r)$. Hence, suitably choosing $\alpha$, we can always assume that the minimum eigenvalue of $A$ is $1$.
\begin{definition}[Homogeneous group]
	A simply connected Lie group $G$ whose Lie algebra is endowed with a family of dilations is called a \textit{homogeneous group}.
\end{definition}
We note that we talk about a homogeneous group instead of a homogeneous algebra, because the dilations can be transported to the level of the group by means of the exponential map $\exp_{G}:\mathfrak{g}\rightarrow G$. Indeed, for every $r>0$, we can define the map
\begin{align}\label{dilationonG}
\exp_{G}\circ D_r\circ \exp_{G}^{-1},
\end{align}
which is an automorphism of the group $G$. Therefore, from now on, we will denote by $D_r$ both the dilations on the group $G$ and on the algebra $\mathfrak{g}$.

Furthermore, for the sake of interest, we mention the \textit{homogeneous dimension} $Q$, of a homogeneous group $G$, with respect to a given family of dilations $\{D_r\}_{r>0}$, defined by
\begin{align*}
Q:=\sum_{j=1}^{k}v_j\dim W_{v_j},
\end{align*}
where $v_1,\dots,v_k$ are the eigenvalues (without repetition) of the operator $A$ associated to the family of dilations and $W_1,\dots,W_k$ are the corresponding eigenspaces. From the definition it follows that the topological dimension is always less than or equal to the homogeneous dimension. Furthermore, for any integrable function $f$ on $G$, it holds
\begin{align}\label{integrationdilation}
\int_G f(D_r(x))d\mu(x)=r^{-Q}\int_Gf(x)d\mu(x),
\end{align}
where we have integrated with respect to the Haar measure $d\mu$ on $G$. 
\begin{proposition}\label{nilpotent}
	Every homogeneous Lie group is nilpotent.
\end{proposition}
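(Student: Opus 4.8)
The plan is to use the eigenspace decomposition of the generator $A$ of the dilations together with the fact that each $D_r$ is a Lie algebra homomorphism, in order to show that the bracket is "graded": iterated commutators are pushed into eigenspaces of $A$ with larger and larger eigenvalues, and since $\mathfrak{g}$ is finite-dimensional these eigenvalues are bounded, so sufficiently long brackets must vanish. First I would write $\mathfrak{g}=\bigoplus_{\lambda}W_\lambda$ as the direct sum of the eigenspaces of $A$, where $\lambda$ ranges over the finitely many (positive) eigenvalues $v_1<\dots<v_k$ of $A$; this is a genuine direct-sum decomposition since $A$ is diagonalisable, and for $X\in W_\lambda$ one has $D_rX=\exp(\ln(r)A)X=r^\lambda X$ for every $r>0$.

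The key step is to prove that $[W_\lambda,W_\mu]\subseteq W_{\lambda+\mu}$, with the convention that $W_\nu=\{0\}$ when $\nu$ is not an eigenvalue of $A$. Given $X\in W_\lambda$ and $Y\in W_\mu$, write $[X,Y]=\sum_\nu Z_\nu$ with $Z_\nu\in W_\nu$; using that each $D_r$ respects the bracket,
\[
\sum_\nu r^\nu Z_\nu = D_r[X,Y] = [D_rX,D_rY] = r^{\lambda+\mu}[X,Y] = \sum_\nu r^{\lambda+\mu}Z_\nu
\]
for all $r>0$. Comparing components in the direct sum gives $r^\nu Z_\nu = r^{\lambda+\mu}Z_\nu$ for every $\nu$ and every $r>0$, so choosing $r\neq 1$ forces $Z_\nu=0$ whenever $\nu\neq\lambda+\mu$; hence $[X,Y]=Z_{\lambda+\mu}\in W_{\lambda+\mu}$, which is the claim.

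Finally I would induct on the descending central series. Let $v_1>0$ be the smallest eigenvalue of $A$. Since $\mathfrak{g}_{(1)}=\mathfrak{g}=\bigoplus_{\lambda\geq v_1}W_\lambda$, the gradedness of the bracket gives, by induction on $m$,
\[
\mathfrak{g}_{(m+1)}=[\mathfrak{g},\mathfrak{g}_{(m)}]\subseteq\Big[\bigoplus_{\lambda\geq v_1}W_\lambda,\ \bigoplus_{\mu\geq m v_1}W_\mu\Big]\subseteq\bigoplus_{\nu\geq (m+1)v_1}W_\nu.
\]
Because $\mathfrak{g}$ is finite-dimensional, $v_k$ is the largest eigenvalue, and choosing $m$ with $(m+1)v_1>v_k$ makes the right-hand side trivial, so $\mathfrak{g}_{(m+1)}=0$. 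Thus $\mathfrak{g}$ is nilpotent, and therefore $G$ is nilpotent by definition of a nilpotent Lie group. There is no deep obstacle here; the only point needing care is the bookkeeping — correctly using the eigenspace decomposition to deduce that the bracket is graded, and invoking finite-dimensionality to obtain a maximal weight so that the induction on the central series terminates.
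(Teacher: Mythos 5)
Your proposal is correct and follows essentially the same route as the paper: both use that each $D_r$ is a Lie algebra homomorphism to deduce that the bracket sends the eigenspace for weight $v_j$ times the eigenspace for weight $v_k$ into the eigenspace for weight $v_j+v_k$, and then conclude that iterated brackets of sufficiently high order vanish because the accumulated weight exceeds the largest eigenvalue of $A$. If anything, your component-wise comparison in the direct sum (showing $Z_\nu=0$ for $\nu\neq\lambda+\mu$) makes explicit a small step the paper leaves implicit.
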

\begin{proof}
	Consider a homogeneous Lie group $G$ endowed with a family of dilations $\{D_r\}_{r>0}$. Suppose that $v_1,\dots,v_n$ and $W_{v_1},\dots,W_{v_n}$ are, respectively, the corresponding weights and eigenspaces. From the definition of dilation, it follows that for every $j=1,\dots,n$ the following identity holds:
	\begin{align*}
	D_r\bigr|_{W_{v_j}}=r^{v_j}\Id.  
	\end{align*}
	Hence, if we take $X\in W_{v_j}$ and $Y\in W_{v_k}$, then we have
	\begin{align*}
	D_r[X,Y]=[D_r X, D_r Y]=r^{v_j+v_k}[X,Y],
	\end{align*}
	meaning $[X,Y] \in W_{v_j+v_k}$. Therefore, we can conclude that for every $m\in\N_0$ such that $m\times\big(\min\limits_{j=1,\dots,n} v_j\big)>\max\limits_{j=1,\dots,n} v_j$, any Lie bracket of order $m$ in $\mathfrak{g}$ is zero, i.e., $\mathfrak{g}^{(m)}=0$. This means that $\mathfrak{g}$ is nilpotent.
\end{proof}
Another important result, due to ter Elst and Robinson, is as follows.
\begin{proposition}[Lemma 2.2 of \cite{tER:97}]\label{adapted basis}
	Let $\mathfrak{g}$ be a homogeneous Lie algebra endowed with a family of dilations $\{D_r\}_{r>0}$. Then there exist a basis $\{X_1,\dots, X_n\}$ of $\mathfrak{g}$, positive numbers $v_1,\dots,v_n>0$ and an integer $n'\in\{1,\dots,n\}$, such that 
	\begin{enumerate}[i.]
		\item $[\mathfrak{g},\mathfrak{g}]\subset \SPAN\{X_{n'+1},\dots,X_n\}=\R X_{n'+1}\oplus\dots\oplus\R X_{n}$;
		\item for every $1\leq j\leq n$ and for every $r>0$, we have
		\begin{align}\label{basisofeigenvectors}
		D_r(X_j)=r^{v_j}X_j.
		\end{align}
		Furthermore, $X_1,\dots,X_{n'}$, together with all their iterated commutators, generate the Lie algebra $\mathfrak{g}$.
	\end{enumerate}
\end{proposition}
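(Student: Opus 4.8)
The plan is to build the basis by diagonalising the dilation operator $A$ and then refining the resulting eigenbasis so that the derived subalgebra $[\mathfrak g,\mathfrak g]$ sits inside the span of the high-weight vectors. First I would recall that $A$ is diagonalisable with positive eigenvalues $v_1,\dots,v_k$ and eigenspaces $W_{v_1},\dots,W_{v_k}$, so $\mathfrak g=\bigoplus_j W_{v_j}$; choosing any basis adapted to this decomposition already gives vectors satisfying $D_r(X_j)=r^{v_j}X_j$, which is property (ii). The key structural observation—exactly the computation used in Proposition \ref{nilpotent}—is that $[W_{v},W_{v'}]\subseteq W_{v+v'}$, because $D_r[X,Y]=[D_rX,D_rY]=r^{v+v'}[X,Y]$ whenever $X\in W_v$, $Y\in W_{v'}$. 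Consequently $[\mathfrak g,\mathfrak g]$ is a graded subspace, i.e. $[\mathfrak g,\mathfrak g]=\bigoplus_j\big([\mathfrak g,\mathfrak g]\cap W_{v_j}\big)$.

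Next I would arrange the basis so that $[\mathfrak g,\mathfrak g]$ is spanned by a final segment $X_{n'+1},\dots,X_n$. Within each eigenspace $W_{v_j}$, pick a basis of the subspace $[\mathfrak g,\mathfrak g]\cap W_{v_j}$ and extend it to a basis of all of $W_{v_j}$; label the ``extension'' vectors (those outside $[\mathfrak g,\mathfrak g]$) as the low-index generators and the vectors lying in $[\mathfrak g,\mathfrak g]$ as the high-index ones. Reordering the eigenspaces if necessary (this does not affect the eigenvector property), one can list first all the vectors not in $[\mathfrak g,\mathfrak g]$, calling them $X_1,\dots,X_{n'}$, and then all the vectors spanning $[\mathfrak g,\mathfrak g]$, calling them $X_{n'+1},\dots,X_n$. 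By the grading $[\mathfrak g,\mathfrak g]\subseteq\SPAN\{X_{n'+1},\dots,X_n\}$, which is property (i), and each $X_j$ is still a $D_r$-eigenvector, so property (ii) persists.

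Finally I would verify that $X_1,\dots,X_{n'}$ together with their iterated commutators generate $\mathfrak g$. Let $\mathfrak h$ be the subalgebra they generate. Since $\mathfrak g$ is nilpotent (Proposition \ref{nilpotent}), $\mathfrak g/[\mathfrak g,\mathfrak g]$ is spanned by the images of $X_1,\dots,X_{n'}$ by construction, so $\mathfrak h+[\mathfrak g,\mathfrak g]=\mathfrak g$. A standard induction on the nilpotency step—using $[\mathfrak g,\mathfrak g]=[\mathfrak h+[\mathfrak g,\mathfrak g],\,\mathfrak g]\subseteq \mathfrak h+[\mathfrak g,\mathfrak g_{(2)}]$ and iterating—shows $\mathfrak g_{(m)}\subseteq\mathfrak h$ for all $m\ge 2$, whence $\mathfrak h=\mathfrak g$. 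The main obstacle is making this last generation argument clean: one must track how the lower central series interacts with the subalgebra generated by a transversal of $[\mathfrak g,\mathfrak g]$, and the cleanest route is the induction just sketched, terminating because $\mathfrak g_{(N+1)}=0$ for some $N$.
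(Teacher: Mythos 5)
The paper does not actually prove this proposition: it is quoted as Lemma 2.2 of ter Elst and Robinson \cite{tER:97} and used as a black box, so there is no in-text argument to compare yours against. Your proof is nonetheless correct and self-contained. The two ingredients you isolate are exactly the right ones: the relation $[W_v,W_{v'}]\subseteq W_{v+v'}$ (already recorded in the paper's Proposition \ref{nilpotent}) forces $[\mathfrak g,\mathfrak g]$ to be a graded subspace, so an eigenbasis can be chosen adapted to it; and a subalgebra $\mathfrak h$ with $\mathfrak h+[\mathfrak g,\mathfrak g]=\mathfrak g$ must exhaust a nilpotent $\mathfrak g$. Two small remarks. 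First, the closing induction is phrased slightly loosely: what the iteration actually yields is $\mathfrak g=\mathfrak h+\mathfrak g_{(m)}$ for every $m\ge 2$, via $[\mathfrak g,\mathfrak g]=[\mathfrak h+\mathfrak g_{(m)},\mathfrak h+\mathfrak g_{(m)}]\subseteq \mathfrak h+\mathfrak g_{(m+1)}$, and $\mathfrak h=\mathfrak g$ drops out at $m=N+1$ where $\mathfrak g_{(N+1)}=0$; the intermediate assertion that $\mathfrak g_{(m)}\subseteq\mathfrak h$ for all $m\ge 2$ is only known a posteriori. Second, in this graded setting you could avoid the lower central series altogether and induct on the weight: since all weights are positive, $[\mathfrak g,\mathfrak g]\cap W_{v}=\sum_{v'+v''=v}[W_{v'},W_{v''}]$ with $v',v''<v$, so the lowest-weight eigenspace lies entirely in the complement and every higher one is reached by bracketing strictly lower weights already in $\mathfrak h$. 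Either route is fine; yours is the standard one and it closes.
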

We will refer to the basis $\{X_1,\dots,X_{n'},\dots,X_n\}$ appearing in Proposition \ref{adapted basis} as an \textit{adapted basis} for $\mathfrak{g}$. Moreover, from \eqref{basisofeigenvectors}, we can deduce that $X_1,\dots, X_n$ form a basis of eigenvectors for the operator $A$ associated with the dilations of $\mathfrak{g}$.

Now we introduce the subclass, of the class of homogeneous Lie groups, that we will work with.
\begin{definition}[Graded Lie algebra and graded Lie group]\label{defGraded}
	A Lie algebra $\mathfrak g$ is \textit{graded} if it is endowed with a vector space decomposition:
	\begin{align*}
	\mathfrak g = \oplus_{j=1}^\infty V_j,
	\end{align*}
	such that 
	\begin{itemize}
		\item all but finitely many of the $V_j$'s are $\{0\}$;
		\item $[V_i,V_j]\subset V_{i+j}$.
	\end{itemize}
	A Lie group $G$ is \textit{graded} when it is a simply connected Lie group whose Lie algebra is graded.
\end{definition}
In the above definition, in order to guarantee the exponential map to be a global diffeomorphism between the Lie algebra and the corresponding Lie group, we require the graded group to be simply connected. 

Every graded Lie group is a homogeneous group. In fact, it can be naturally endowed with a family of dilations. Writing $\mathfrak{g}=\oplus_{j=1}^\infty V_j$ we can define the dilations $D_r:=\exp(A\ln r)$, where the operator $A$ is defined as follows:
\begin{align}
\label{EQ:naturaldilationsA}
AX:=jX,\quad\text{for every }X\in V_j.
\end{align}

From the homogeneity of graded groups and Proposition \ref{nilpotent} we can deduce that graded groups are nilpotent. This nilpotency underlines that a gradation may not even exist since, contrapositively, non-nilpotent groups are automatically non-graded. Nonetheless, it is possible to exhibit examples of nilpotent groups which are not graded, for example in \cite{G76,FR2016}. 

A classical but trivial example of a graded Lie group is the abelian group $G=\Rn$ whose Lie algebra is again $\mathfrak{g}=\Rn$. Indeed, we can immediately identify the trivial gradation $V_1=\Rn$. Another pivotal example is the Heisenberg group $\mathbb{H}_n$ that we will describe in detail at the end of this Chapter. We can immediately observe that a gradation is not unique. In fact, if we go back to $\Rn$ any of its vector space decompositions yields a different graded structure. Nevertheless, the natural homogeneous structure for the graded Lie algebra is the same for every gradation.

Another interesting subclass of graded group is given by the stratified Lie group, whose formal definition is as follows.
\begin{definition}[Stratified Lie algebra and stratified Lie group]\label{DefStratified}
	A Lie algebra $\mathfrak g$ is \textit{stratified} if it is graded as $\oplus_{j=1}^\infty V_j$ and its first stratum $V_1$ generates $\mathfrak g$ as an algebra. Thus, every element of the Lie algebra can be written as a linear combination of elements in $V_1$ and their iterated commutators.
	
	A Lie group is \textit{stratified} if it is simply connected and its Lie algebra is stratified.
	
	If there are moreover $p$ non-zero $V_j$'s in the vector space decomposition of the Lie algebra, then both group and algebra are said to be \textit{stratified of step $p$}.
\end{definition}
The relevance to us of stratified Lie groups owes to H\"ormander's work on hypoelliptic operators \cite{H1967}. Before we can state his key theorem, we must introduce the latter concept.
\begin{definition}[Hypoelliptic operator]\label{defHypoellipticity}
	Let $L$ be a linear differential operator on a manifold $M$ with smooth coefficients. Then $L$ is called \textit{hypoelliptic} if for any distribution $u\in\mathcal D'(M)$, the condition $Lu\in\mathcal{C}^{\infty}(N)$ with $N\subseteq M$ yields $u\in\mathcal{C}^\infty(N)$. We can equivalently express this as saying that L is hypoelliptic if for every $u\in\mathcal D'(M)$ we have
	\begin{align}\label{singsupp}
	\singsupp u\subseteq\singsupp Lu, 
	\end{align}
	where the \textit{singular support} of a distribution, denoted by $\singsupp$, is the complement of the largest open subset $N\subseteq M$ in which the given distribution is $\mathcal{C}^\infty$.
\end{definition}
The elliptic \textit{Laplace operator} $\Delta=\sum_{j=1}^{n}\partial_{x_j}^2$ in $\Rn$ or the \textit{heat operator} $\partial_t-\Delta$ in $\R^{n+1}$ are examples of hypoelliptic operator, see \cite{B2014}. More generally, it follows immediately from the definition that every elliptic operator with smooth coefficients is also hypoelliptic, since it satisfies \eqref{singsupp}.

H\"ormander characterised the large class of second order hypoelliptic operators with constant coefficients. His foundational result is as follows.

\begin{theorem}[H\"ormander's theorem]\label{HormanderTheorem}
	Let $M$ be a $\mathcal{C}^\infty$ connected manifold. We consider a family of $\mathcal{C}^\infty$ vector fields on $M$, $\textbf{X}=\{X_0,X_1,\dots,X_k\}$, such that, at every point $x\in M$ the Lie algebra generated by $\textbf{X}$---i.e., generated by $X_1, \dots, X_k$ together with their iterated commutators---is the whole tangent space at this point. Let $c\in\mathcal{C}^\infty(M)$. Then, the second order differential operator
	\begin{align}\label{HormanderSublapl}
	\mathcal{L}=\sum_{j=1}^{k}X_j^2+X_0+c
	\end{align}
	is hypoelliptic in $M$.
\end{theorem}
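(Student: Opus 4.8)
The plan is to reduce to a local statement and to prove the \emph{subelliptic estimate} that makes hypoellipticity a formality. Since hypoellipticity is a local property, I would fix $x_0\in M$, work in a coordinate chart identifying a neighbourhood of $x_0$ with an open set in $\Rn$, write $\|\cdot\|_s$ for the local Sobolev norm, and let $\Lambda^s$ be the Bessel-potential operator with symbol $(1+|\xi|^2)^{s/2}$. The goal is: there exist a neighbourhood $U$ of $x_0$ and constants $C,\epsilon>0$ with
\[
\|u\|_\epsilon^2 \le C\big(\|\mathcal{L}u\|_0^2 + \|u\|_0^2\big), \qquad u\in\mathcal{C}_c^\infty(U).
\]
The first step is the \textbf{basic estimate}: integrating by parts, $\mathrm{Re}\,(\mathcal{L}u,u) = -\sum_{j=1}^k\|X_ju\|_0^2 + \mathrm{Re}\,(X_0u,u) + O(\|u\|_0^2)$, and since $X_0+X_0^\ast$ has order zero the drift term is $O(\|u\|_0^2)$; Cauchy--Schwarz then gives $\sum_{j=1}^k\|X_ju\|_0^2 \le C(\|\mathcal{L}u\|_0^2+\|u\|_0^2)$. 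Rewriting $X_0u = \mathcal{L}u - \sum_j X_j^2 u - cu$ and using that each $X_j$ maps $L^2$ into $H^{-1}$ controls $\|X_0u\|_{-1}$ by the same right-hand side, so all of $X_0,\dots,X_k$ are ``controlled'', the drift one derivative lower than the others.

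The heart of the proof is the \textbf{commutator induction}. The key lemma is a gain-with-loss statement: if iterated brackets $Z,Z'$ of the $X_j$ satisfy $\|Zu\|_{\sigma-1}+\|Z'u\|_{\sigma'-1}\lesssim\|\mathcal{L}u\|_0+\|u\|_0$, then $[Z,Z']$ satisfies the same bound at level $\tfrac12\min(\sigma,\sigma')$, i.e.\ $\|[Z,Z']u\|_{\frac12\min(\sigma,\sigma')-1}\lesssim\|\mathcal{L}u\|_0+\|u\|_0$. I would prove this by estimating the pairing $([Z,Z']u,\Lambda^{2\tau-1}u)$ with $\tau=\tfrac12\min(\sigma,\sigma')$: integrating $Z$ and $Z'$ by parts and invoking the standard fact that $[Z,\Lambda^s]$ has order $s$, the pairing is dominated by products of the already-controlled quantities, after absorbing a small multiple of $\|[Z,Z']u\|_{\tau-1}^2$ into the left-hand side. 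Feeding in the basic estimate (where $X_1,\dots,X_k$ sit at level $\sigma=1$ and $X_0$ at level $\sigma=0$) and iterating the lemma along bracket lengths --- at most $r-1$ times, where $r=r(x_0)$ is the smallest integer for which brackets of length $\le r$ span $T_{x_0}M$, finite by hypothesis and, by lower semicontinuity of the rank, uniform on a neighbourhood of $x_0$ --- one obtains $L^2$-control of a family of vector fields spanning $T_xM$ at every nearby $x$. Summing these estimates and interpolating yields the subelliptic estimate with $\epsilon=2^{1-r}$.

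It remains to \textbf{bootstrap}. I would first promote the a priori estimate to $\|u\|_{s+\epsilon}\le C_s(\|\mathcal{L}u\|_s+\|u\|_s)$ by conjugating with $\Lambda^s$ and controlling $[\mathcal{L},\Lambda^s]$ (its top-order part reproduces $X_j$-derivatives that are reabsorbed via the basic estimate), then localise with cutoffs and justify the inequality for genuine distributions using Friedrichs mollifiers, so as not to presuppose the regularity being proved. Finally, given $u\in\mathcal{D}'(M)$ with $\mathcal{L}u\in\mathcal{C}^\infty(N)$, choose $x_0\in N$ and nested cutoffs $\phi\prec\psi$ supported in $N$ (that is, $\psi\equiv1$ on the support of $\phi$) with $\phi\equiv1$ near $x_0$; from $\|\phi u\|_{s+\epsilon}\lesssim\|\psi\mathcal{L}u\|_s+\|\psi u\|_s+\|u\|_{-M}$ and the fact that $u$ lies locally in some $H^{-M}$, iterating gains $\epsilon$ derivatives at each step, so $u\in\bigcap_sH^s_{\mathrm{loc}}$ near $x_0$, hence $u\in\mathcal{C}^\infty$ near $x_0$ by Sobolev embedding; as $x_0\in N$ was arbitrary, $u\in\mathcal{C}^\infty(N)$.

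I expect the \textbf{main obstacle} to be the commutator induction: one must keep careful track of the halving of the Sobolev index at every bracket step and verify that, because the bracket-generating condition holds at a finite step $r$, the procedure terminates with a genuinely positive gain $\epsilon>0$ rather than $\epsilon=0$; moreover the drift field $X_0$, controlled only one order below the $X_j$, must be slotted into this bookkeeping with care. By comparison the passage from test-function estimates to distributional ones is routine, though it does require the mollifier argument to be carried out honestly.
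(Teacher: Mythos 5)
The paper does not prove this theorem: it states it and refers to \cite[Appendix]{Ric} for the proof, so there is no in-paper argument to compare yours against. Your sketch is the classical Kohn--H\"ormander route (basic $L^2$ estimate for the $X_j$, commutator induction with halving of the Sobolev index, subelliptic estimate $\|u\|_\epsilon\lesssim\|\mathcal{L}u\|_0+\|u\|_0$, then bootstrap and mollification), and in outline that is the right proof.

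There is, however, one concrete flaw in the bookkeeping. You place the drift field $X_0$ at level $\sigma=0$ (controlled only in $H^{-1}$), and your halving lemma produces brackets at level $\tfrac12\min(\sigma,\sigma')$. With $\sigma=0$ for $X_0$, every iterated bracket containing $X_0$ is then controlled only at level $0$, i.e.\ in $H^{-1}$, which is trivially true for any first-order operator and carries no information. So whenever $X_0$ is genuinely needed to satisfy the bracket-generating hypothesis --- which is exactly the situation for the Kolmogorov operator \eqref{Kolmogorov} motivating the whole discussion, and which the paper's earlier Theorem \ref{THM:Hormander} explicitly allows by letting $j_i=0,\dots,r$ --- your induction terminates with $\epsilon=0$ and the subelliptic estimate fails to materialise. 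The standard repair is to show that $X_0$ actually sits at level $\tfrac12$, not $0$: writing $X_0u=\mathcal{L}u-\sum_jX_j^2u-cu$ and pairing against $\Lambda^{-1}X_0u$, the terms $(\Lambda^{-1}X_0u,X_j^2u)$ are handled by moving one $X_j$ across and commuting it with the order-zero operator $\Lambda^{-1}X_0$, giving $\|X_0u\|_{-1/2}^2\lesssim\|\mathcal{L}u\|_0^2+\sum_j\|X_ju\|_0^2+\|u\|_0^2$. Feeding $\sigma=\tfrac12$ for $X_0$ into your halving scheme restores a strictly positive gain for all brackets and lets the induction close. You flagged that $X_0$ ``must be slotted in with care'', but as written your scheme does not do so, and this is precisely the step that separates the full theorem from the pure sum-of-squares case.
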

In this work, we call an operator of the form \eqref{HormanderSublapl} \textit{sub-Laplacian} when $X_0=0$ and $c=0$. A proof of H\"ormander's theorem can be found in \cite[Appendix]{Ric}. 

Recalling the definition of stratified Lie algebra, we observe that any basis for the first stratum $V_1$ forms a H\"ormander system. More precisely, assuming that $V_1$ has dimension $k$ and can be given the basis $\{X_1,\dots,X_k\}$, we can consider its associated hypoelliptic sub-Laplacian operator, given by
\begin{align}\label{slp}
\slp:=-\big(X_1^2+\dots+X_k^2\big).
\end{align}

\subsection{Operators on graded Lie groups}\label{Sec:Operators}
In this Subsection, we briefly present some notions related to operators defined on graded Lie groups. More detail can be found in \cite[Section 3.1]{FR2016}.

Let $G$ be a graded Lie group and $\mathfrak{g}=\oplus_{j=1}^{k}V_j$ its Lie algebra. In the previous Section, we observed that $G$ is naturally endowed with the homogeneous structure, given by
\begin{align*}
D_r(X)=r^j X,\quad \text{for every }X\in V_j\text{ and }j\in\{1,\dots,k\}.
\end{align*} 
Moreover, these dilations can be carried to the level of the group using the exponential map of $G$, as in \eqref{dilationonG}. Therefore, given a function $f$ on $G$ it can be composed with $D_r$. Recalling property \eqref{integrationdilation} relating the Haar measure $d\mu$ and dilations, for all measurable functions $f$ and $\phi$ on $G$, whenever the integrals exist, we have the identity
\begin{align*}
\int_G (f\circ D_r)(x)\phi(x)d\mu(x)=r^{-Q}\int_Gf(x)(\phi\circ D_{\frac{1}{r}})(x)d\mu(x),
\end{align*}     
where $Q$ is the homogeneous dimension of $G$. This allows us to extend the composition of \textit{maps} with dilations, i.e., $f\mapsto f\circ D_r$, to \textit{distributions} in $\mathcal{D}'(G)$ as follows: 
\begin{align*}
\big(f\circ D_r,\phi\big):=r^{-Q}\big(f,\phi\circ D_{\frac{1}{r}}\big),\quad f\in\mathcal{D}'(G),\phi\in\mathcal{D}(G).
\end{align*} 
From the above, we can formulate the definition of homogeneity for operators.
\begin{definition}[Homogeneous operator]
	Let $\nu\in \C$. A linear operator $T:\mathcal{D}(G)\rightarrow \mathcal{D}'(G)$ is \textit{homogeneous of degree $\nu$} or \textit{$\nu$-homogeneous} if for every $\phi\in\mathcal{D}(G)$ and $r>0$ we have
	\begin{align*}
	T(\phi\circ D_r)=r^\nu(T\phi)\circ D_r.
	\end{align*}
\end{definition}
\begin{example}
	Consider a left-invariant vector field $X\in\mathfrak{g}=\oplus_{j=1}^k V_j$ and assume that $X\in V_j$ for a certain $j\in\{1,\dots,k\}$. Then $X$ is an eigenvector corresponding to the eigenvalue $j$ of the operator $A$ associated to the natural dilations of the graded algebra, defined in \eqref{EQ:naturaldilationsA}. Observe that $X$ is homogeneous of degree $j$, since
	\begin{align*}
	X(f\circ D_r)(x)&=\frac{d}{dt}\Bigr|_{t=0}(f\circ D_r)(x\exp_G(tX))=\frac{d}{dt}\Bigr|_{t=0}f(rx\exp_G(r^{j}tX))=\\
	&=r^{j}\frac{d}{dt'}\Bigr|_{t'=0}f(rx\exp_{G}(t'X))=r^j(Xf)(rx).
	\end{align*} 
	Furthermore, let $X_1, \dots,X_n\in \mathfrak{g}$ be homogeneous differential operators of degree $\nu_1,\dots,\nu_n$ respectively. For every multi-index $\alpha\in\N_0^n$, the operator
	\begin{align*}
	X^\alpha=X_1^{\alpha_1}\dots X_n^{\alpha_n}
	\end{align*}  
	has homogeneous degree
	\begin{align*}
	[\alpha]:=\nu_1\alpha_1+\dots+\nu_n\alpha_n.
	\end{align*}
	This differs from the length of $\alpha$, given by $|\alpha|=\alpha_1+\dots+\alpha_n$, which is the order of the operator $X^\alpha$.
\end{example}

\subsubsection{Rockland operators}\label{SectionRockland} 
We conclude our overview by introducing \textit{Rockland operators}, a generalisation of sub-Laplacians to the bigger class that is graded groups, instead of stratified Lie groups. The study of these operators began with Rockland's paper \cite{Rockland}, where he analysed differential operators on the Heisenberg group. More details and a historical review can be found in the fourth Chapter of the monograph \cite{FR2016} or in the work by ter Elst and Robison \cite{tER:97}.

Firstly we will introduce these operators in general. Then we will focus on the case of \textit{positive} Rockland operators. Subsequently we will define Sobolev spaces in terms of Rockland operators and show their independence from the particular choice of operator. Finally we will talk about the associated symbols, i.e., the infinitesimal representations, and present some properties.  

\begin{definition}\label{rockland}
	A differential operator  $\mathcal R$ on a homogeneous group $G$ is defined to be a \textit{Rockland operator} if it is left-invariant, homogeneous of degree $\nu>0$ and injective in each non-trivial irreducible unitary representation. The latter means that for each representation $\pi\in\widehat G$ except the trivial one, the operator $\pi(\mathcal R)$ is injective on the space of smooth vectors $\mathcal H^\infty_\pi$, meaning for every $v\in\mathcal H^\infty_\pi$ we have
	\begin{align}\label{Rockland}
	\pi(\mathcal R)v=0\implies v=0.
	\end{align}
\end{definition}
It can be shown that graded groups provide the most natural context to study Rockland operators. In fact, the existence of a Rockland operator on a homogeneous group forces the group to be graded, as proved in \cite[Proposition 4.1.3.]{FR2016} (the original statement of this result is due to Miller \cite{Miller:80}, even if his proof presents a gap that was filled by ter Elst and Robinson in \cite{tER:97}). Therefore, from now on, we only consider a graded Lie group $G$ of dimension $n$ and its Lie algebra $\mathfrak{g}$. 

\begin{example}\label{exampleRockland}
	\noindent 1. Stratified Lie groups.\\
	\noindent A meaningful example of a Rockland operator is provided by the sub-Laplacian $\slp$ on a stratified Lie group $G$, specified in \eqref{slp}. We recall it is given, in terms of the basis $\{X_1,\dots,X_r\}$ for the first stratum $V_1$ of the Lie algebra $\mathfrak{g}$, by
	\[
	\slp=-\sum_{j=1}^{r}X_j^2.
	\]
	Then $\slp$ is a Rockland operator of homogeneous degree $2$. In fact, it is clearly a homogeneous left-invariant differential operator of degree $2$, so we only need to show the Rockland injectivity condition. Let us consider a non-trivial representation $\pi\in\widehat{G}$ and a smooth vector $v\in\mathcal{H}_\pi^\infty$, such that 
	\begin{align}\label{RC}
	\pi(\slp)v=0.
	\end{align} 
	We want to prove that $v=0$. Considering the inner product on the Hilbert space $\mathcal{H}_\pi$, from \eqref{RC} it follows that 
	\begin{align*}
	0&=\big(\pi(\slp)v,v\big)=-\Big(\big(\pi(X_1)^2v,v\big)+\dots+\big(\pi(X_r)^2v,v\big)\Big)=\\
	&=\big(\pi(X_1)v,\pi(X_1)v\big)+\dots+\big(\pi(X_r)v,\pi(X_r)v\big)=\\
	&=\|\pi(X_1)\|^2_{\mathcal{H}_\pi}+\dots+\|\pi(X_r)\|^2_{\mathcal{H}_\pi},
	\end{align*}
	implying
	\begin{align*}
	\pi(X_1)v=\dots=\pi(X_r)v=0.
	\end{align*}
	We now recall that the system $\{X_1,\dots,X_r\}$ forms a basis for the first stratum $V_1$, which in turn generates $\mathfrak{g}$ as a Lie algebra. This implies that for any vector field $X\in\mathfrak{g}$, we have $\pi(X)v=0$. Finally, since the representation $\pi$ is non-trivial and irreducible, the vector $v$ is forced to be zero. We conclude that every sub-Laplacian on a stratified group is a Rockland operator.
	
	\noindent 2. Graded groups.\\
	\noindent The argument above can be slightly adjusted to construct a `classical' Rockland operator on an $n$-dimensional graded group $G$. Indeed, we can consider a basis $\{X_1,\ldots,X_n\}$ for the Lie algebra $\mathfrak{g}$ and the natural family of dilations $\{D_r\}_{r>0}$ with weights $v_1,\ldots,v_n$, such that for every $j\in\{1,\ldots,n\}$ and for every $r>0$ we have
	\begin{align*}
	D_r X_j=r^{v_j} X_j.
	\end{align*}
	Then, for any common multiple $v_0$ of the weights $v_1,\ldots,v_n$ and constants $c_j>0,~j\in\{1,\ldots,n\}$, the operator
	\begin{align*}
	\mathcal{R}=\sum_{j=1}^n (-1)^{v_0/v_j} c_j X_j^{2v_0/v_j}
	\end{align*}
	is a Rockland operator of homogeneous degree $2v_0$. In fact, an analogous argument to the case of the sub-Laplacian allows us to conclude that for every non-trivial representation $\pi\in\widehat{G}$ and for every smooth vector $v\in\mathcal{H}_\pi^{\infty}$ such that $\pi(\mathcal{R})v=0$, we have $\pi(X_j)^{v_0/v_j}v=0$ for every $j\in\{1,\dots,n\}$. Now we can observe that, for any $p\in\N$ and any $Z\in\mathfrak{U}(\mathfrak{g})$, if $\pi(Z)^pv=0$ then
	\begin{itemize}
		\item for \textit{odd} $p$,
		\begin{align*}
		\pi(Z)^{p+1}v=\pi(Z)\pi^p(Z)v=0;
		\end{align*}
		\item for \textit{even} $p$,
		\begin{align*}
		0=\big(\pi(Z)^pv,v\big)=(-1)^{\frac{p}{2}}\big(\pi(Z)^{\frac{p}{2}}v,\pi(Z)^{\frac{p}{2}}v\big)=(-1)^{\frac{p}{2}}\|\pi(Z)^{\frac{p}{2}}v\|^2,
		\end{align*}
		and therefore $\pi(Z)^{p/2}v=0$.
	\end{itemize}
	The argument above can be applied repeatedly to $Z=X_j$ and $p=\frac{v_0}{v_j},\frac{v_0}{2v_j},\dots$, until we obtain $\pi(X_j)v=0$ for every $j=1,\dots,n$. Then $v$ must be $0$, and therefore $\mathcal{R}$ is a Rockland operator.  
	
	\noindent 3. Integer powers of Rockland operators.\\
	\noindent Given a Rockland operator $\mathcal R$, any power of it is again a Rockland operator. Indeed, given $k\in\N$, $\mathcal R^k$ satisfies the Rockland condition. This owes to the last example yielding $\pi(\mathcal{R})v=0$ whenever $\pi(\mathcal{R}^k)v=0$, for any non-trivial $\pi\in\widehat{G}$ and any $v\in\mathcal{H}_\pi^{\infty}$.
\end{example}

As underlined in the introduction, hypoelliptic differential operators play a crucial r\^ole in several problems both from applied and theoretical fields. This makes the Rockland operators highly relevant to us. In fact, Rockland himself in his work \cite{Rockland} conjectured the equivalence between the Rockland condition and hypoellipticity. This was eventually proved in full generality by Helffer and Nourrigat in \cite[Theorem 0.1]{HN-79}, where they demonstrated the following pivotal result.
\begin{theorem}\label{HelfferNourrigat}
	Let $\mathcal R$ be a left-invariant homogeneous differential operator of degree $\nu$ on a homogeneous Lie group $G$. Then
	\begin{center}
		$\mathcal{R}$ is Rockland $\iff$ $\mathcal{R}$ is hypoelliptic.
	\end{center}
	Furthermore, any operator of the form 
	\begin{align*}
	\mathcal{R}+\sum_{[\alpha]<\nu}c_\alpha X^\alpha, \quad\text{with constants }c_\alpha\in\C,
	\end{align*}
	is also hypoelliptic.
\end{theorem}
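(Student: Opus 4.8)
The plan is to prove the equivalence by establishing the two implications separately, and then to deduce the perturbation statement from the same a priori estimate that drives the harder implication. Throughout, $\pi(\mathcal R)$ denotes the infinitesimal representation of $\mathcal R$ at $\pi\in\widehat G$, and $Q$ the homogeneous dimension of $G$.

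\emph{Necessity: $\mathcal R$ hypoelliptic $\Rightarrow$ $\mathcal R$ Rockland.} I would argue by contraposition. Suppose the Rockland condition fails, so there is a non-trivial $\pi\in\widehat G$ and a non-zero smooth vector $v\in\mathcal H_\pi^\infty$ with $\pi(\mathcal R)v=0$. Form the diagonal matrix coefficient $u(x):=(\pi(x)v,v)_{\mathcal H_\pi}$; since $v$ is smooth, $u\in\mathcal C^\infty(G)$, and because $\mathcal R$ is left-invariant one computes $(\mathcal R u)(x)=(\pi(x)\pi(\mathcal R)v,v)_{\mathcal H_\pi}=0$, while unitarity of $\pi$ gives $|u(x)|\leq\|v\|_{\mathcal H_\pi}^2$. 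Thus $u$ is a bounded global solution of $\mathcal R u=0$. Now invoke a Liouville-type theorem for hypoelliptic left-invariant operators that are homogeneous of positive degree on a homogeneous group --- equivalently, use that such an $\mathcal R$, being hypoelliptic, possesses a homogeneous fundamental solution smooth away from the origin, and apply the group Fourier transform to $\mathcal R K=\delta_0$ to see that $\pi(\mathcal R)$ is left-invertible on $\mathcal H_\pi^\infty$. In the first route $u$ must be constant, hence $u\equiv\|v\|_{\mathcal H_\pi}^2$; equality in Cauchy--Schwarz then forces $\pi(x)v=v$ for every $x\in G$, so $\mathbb C v$ is a non-zero $\pi$-invariant subspace and irreducibility makes $\pi$ trivial, a contradiction. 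In the second route $v=0$ directly. Either way $\mathcal R$ is not hypoelliptic.

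\emph{Sufficiency: $\mathcal R$ Rockland $\Rightarrow$ $\mathcal R$ hypoelliptic.} This is the deep direction, and I would sketch it rather than carry it out. The target is a subelliptic a priori estimate $\|u\|_{s+\nu}\lesssim\|\mathcal R u\|_{s}+\|u\|_{s}$, valid locally and for all $s$, in Sobolev spaces adapted to the dilations of $G$ (the Sobolev spaces associated to $\mathcal R$); once available, this yields hypoellipticity by the standard bootstrap with Friedrichs mollifiers. The estimate itself is obtained by a representation-theoretic reduction: via the orbit method one parametrises $\widehat G$ and reduces the estimate on $G$ to \emph{uniform} estimates for the family $\{\pi(\mathcal R):\pi\in\widehat G\}$; the Rockland hypothesis guarantees that each $\pi(\mathcal R)$ is injective on $\mathcal H_\pi^\infty$, and, together with homogeneity, that it admits a one-sided parametrix with control uniform over $\widehat G$. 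Passing between scales and handling the representations attached to quotients and subgroups of $G$ is done by the Helffer--Nourrigat induction on the dimension (or step) of the group. In a thesis one takes this as a black box, as in \cite{HN-79,FR2016}; it is also where essentially all the difficulty lies.

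\emph{The perturbation.} Write $P:=\mathcal R+\sum_{[\alpha]<\nu}c_\alpha X^\alpha$. Each monomial $X^\alpha$ with $[\alpha]<\nu$ has homogeneous degree strictly below $\nu$, hence is genuinely lower order on the adapted Sobolev scale: for each $\varepsilon>0$ there is $C_\varepsilon>0$ with $\big\|\sum_{[\alpha]<\nu}c_\alpha X^\alpha u\big\|_{s}\leq\varepsilon\|u\|_{s+\nu}+C_\varepsilon\|u\|_{s}$. Substituting into the a priori estimate for $\mathcal R$ and absorbing the $\varepsilon$-term on the left shows that $P$ obeys the same estimate, and therefore $P$ is hypoelliptic as well; equivalently, $\mathcal R$ is the principal part of $P$ for the natural filtration and hypoellipticity depends only on it.
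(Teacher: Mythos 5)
Your proposal cannot be compared against a proof in the paper, because the paper does not prove this theorem: it is imported wholesale as \cite[Theorem 0.1]{HN-79}, with the remark that Rockland conjectured the equivalence and Helffer--Nourrigat established it (the necessity having been treated earlier by Beals \cite{Beals-Rockland}). So anything you write here is already more than the thesis attempts.

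On the merits, your sketch is the standard one and is essentially sound. The necessity argument via the matrix coefficient $u(x)=(\pi(x)v,v)_{\mathcal H_\pi}$ is exactly the route taken in \cite[Section 4.1]{FR2016}: the computation $\mathcal R u=(\pi(\cdot)\pi(\mathcal R)v,v)$ is correct for left-invariant operators, and the equality case of Cauchy--Schwarz does force $\pi(x)v=v$ and hence triviality of $\pi$. The one thing you should state honestly is that the Liouville property you invoke (bounded global solutions of a hypoelliptic, left-invariant, homogeneous operator are constant) is itself a non-trivial theorem of Geller type, proved via the existence of homogeneous fundamental solutions and Schwartz-space arguments; it is citable but not free. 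Your alternative route through $\mathcal R K=\delta_0$ is shakier: a fundamental solution homogeneous of degree $\nu-Q$ and smooth away from the origin is guaranteed by Folland's theorem only for $0<\nu<Q$ (with logarithmic corrections otherwise), and making sense of $\pi(K)$ as a left inverse of $\pi(\mathcal R)$ on $\mathcal H_\pi^\infty$ requires care since $K$ is not integrable; I would drop that route and keep the Liouville one. The sufficiency direction is correctly identified as the deep content and is legitimately black-boxed, as the thesis itself does. For the perturbation, your interpolation inequality $\|X^\alpha u\|_s\leq\varepsilon\|u\|_{s+\nu}+C_\varepsilon\|u\|_s$ for $[\alpha]<\nu$ is available on the adapted Sobolev scale (cf. \eqref{continuosmapping} and Theorem \ref{interpolation}), and absorbing it into the a priori estimate is the right idea; just note that passing from the a priori estimate to hypoellipticity still requires the cutoff-and-mollifier bootstrap, in which the commutators of $\mathcal R$ with cutoffs generate further lower-order terms that must be controlled by the same absorption --- this is routine but is where the actual work in that step lives.
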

\begin{remark}
	Helffer and Nourrigat's theorem, together with H\"ormander's theorem (Theorem \ref{HormanderTheorem}), allows us to deduce immediately that every sub-Laplacian is a Rockland operator.
\end{remark}

The next part of our brief overview about Rockland operators relates their functional and symbolic calculus. We begin by stating the following property that relies on the hypoellipticity.
\begin{proposition}\label{essentiallyselfadjoint}
	Let $\mathcal{R}$ be a Rockland operator on a graded Lie group $G$ and $\pi\in\widehat{G}$. We assume that $\mathcal{R}$ is formally self-adjoint, that is, $\mathcal{R}^*=\mathcal{R}$ as an element of the univeral enveloping algebra $\mathfrak{U}(\mathfrak{g})$ (see Definition \ref{formaladjoint}). Then the operators $\mathcal{R}$ and $\pi(\mathcal{R})$, respectively densely defined on $\mathcal{D}(G)\subset L^2(G)$ and $\mathcal{H}_\pi^\infty\subset\mathcal{H}_\pi$, are essentially self-adjoint.
\end{proposition}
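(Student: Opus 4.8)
The plan is to prove essential self-adjointness in the standard way: first check that each operator is symmetric on the stated domain, and then show that both deficiency subspaces are trivial, i.e. that $\ker(A^\ast-i\,\Id)=\ker(A^\ast+i\,\Id)=\{0\}$, where $A=\mathcal R|_{\mathcal D(G)}$ (resp. $A=\pi(\mathcal R)|_{\mathcal H_\pi^\infty}$) and $A^\ast$ denotes the Hilbert-space adjoint. Symmetry of $\mathcal R$ on $\mathcal D(G)$ follows from the hypothesis $\mathcal R^\ast=\mathcal R$ together with the fact that each left-invariant vector field is skew-symmetric on $\mathcal D(G)$ with respect to the (bi-invariant) Haar measure, so that integration by parts produces no boundary terms; symmetry of $\pi(\mathcal R)$ on $\mathcal H_\pi^\infty$ follows from Proposition \ref{propertyinfrep} (i.e. $d\pi(X)^\ast=-d\pi(X)$ on smooth vectors) together with the definition \eqref{formaladjoint} of the formal adjoint in $\mathfrak U(\mathfrak g)$, which yields $(\pi(\mathcal R)u,v)=(u,\pi(\mathcal R^\ast)v)=(u,\pi(\mathcal R)v)$ for all $u,v\in\mathcal H_\pi^\infty$.

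For $\mathcal R$ on $L^2(G)$, suppose $u\in L^2(G)$ lies in a deficiency space, so that $\mathcal R^\ast u=\lambda u$ with $\lambda\in\{i,-i\}$; testing against $\phi\in\mathcal D(G)$ and using that $\mathcal R$ coincides with its own formal adjoint, this means $(\mathcal R-\lambda)u=0$ in $\mathcal D'(G)$. Now $\mathcal R-\lambda$ is a perturbation of the Rockland operator $\mathcal R$ by a constant term of homogeneous degree $0<\nu$, hence hypoelliptic by the Helffer--Nourrigat theorem (Theorem \ref{HelfferNourrigat}), so $u\in\mathcal C^\infty(G)$. It then remains to deduce $u=0$ from the fact that $u$ is a smooth $L^2$-solution of $\mathcal R u=\lambda u$. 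For this I would use a dilation cut-off: fix $\chi\in\mathcal D(G)$ with $\chi\equiv1$ near $e$ and $0\le\chi\le1$, put $\chi_R:=\chi\circ D_{1/R}$ (which has compact support), and pair the eigenvalue equation with $\chi_R^2\,u$ in $L^2(G)$. Using the symmetry of $\mathcal R$ and the compact support of $\chi_R$, the diagonal term is $\lambda\|\chi_R u\|_{L^2}^2$, which must agree with its conjugate up to commutator contributions of the form $([\mathcal R,\chi_R^2]u,u)$; each derivative that falls on $\chi_R$ contributes a factor $O(R^{-1})$ (all dilation weights being $\ge1$) supported in a dilated annulus escaping to infinity, while the remaining factors $X^\alpha u$ with $[\alpha]<\nu$ have finite global $L^2$-norm thanks to the subelliptic a priori estimate $\|X^\alpha u\|_{L^2}\lesssim\|\mathcal R u\|_{L^2}+\|u\|_{L^2}$ attached to the hypoellipticity of $\mathcal R$. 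Letting $R\to\infty$, the commutator terms vanish; since $\lambda\notin\R$ this forces $\|u\|_{L^2}=0$. Hence both deficiency spaces are trivial and $\overline{\mathcal R}$ is self-adjoint.

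For $\pi(\mathcal R)$ on $\mathcal H_\pi^\infty$ I would instead argue by analytic vectors. By Nelson's theorem the space $\mathcal H_\pi^\omega$ of analytic vectors of the unitary representation $\pi$ is dense in $\mathcal H_\pi$, and since $\mathcal R\in\mathfrak U(\mathfrak g)$, every $v\in\mathcal H_\pi^\omega$ is an analytic vector for the symmetric operator $\pi(\mathcal R)$. A symmetric operator possessing a dense set of analytic vectors is essentially self-adjoint, so $\pi(\mathcal R)|_{\mathcal H_\pi^\omega}$ is essentially self-adjoint; since $\mathcal H_\pi^\omega\subseteq\mathcal H_\pi^\infty\subseteq\Dom\big(\overline{\pi(\mathcal R)|_{\mathcal H_\pi^\omega}}\big)$ and $\pi(\mathcal R)|_{\mathcal H_\pi^\infty}$ is a symmetric extension of $\pi(\mathcal R)|_{\mathcal H_\pi^\omega}$, the two have the same closure, so the former is essentially self-adjoint as well. (Alternatively, since left-invariant vector fields generate the right regular representation $\pi_R$ on $L^2(G)$, the $L^2(G)$ statement above is the special case $\pi=\pi_R$ of this one, modulo checking that $\mathcal D(G)$ is a core for $d\pi_R(\mathcal R)$.)

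The step I expect to be the main obstacle is the passage from ``$u$ is a smooth $L^2$-solution of $(\mathcal R-\lambda)u=0$'' to ``$u=0$'': hypoellipticity by itself only yields smoothness, and controlling the behaviour at infinity so that the integration by parts in the cut-off argument is legitimate — in particular establishing that the intermediate-order terms $X^\alpha u$, $[\alpha]<\nu$, are globally square-integrable without circularly invoking the spectral calculus of $\mathcal R$ that we are in the process of justifying — is the delicate point. Everything else (the symmetry checks, the application of Helffer--Nourrigat, and Nelson's analytic-vector theorem for $\pi(\mathcal R)$) is routine.
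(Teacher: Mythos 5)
The thesis does not actually prove this proposition --- it states it and defers to \cite[Proposition 4.1.15]{FR2016} --- so there is no in-paper argument to compare yours with line by line; judged on its own terms, your proposal has one step that fails outright and one step that you have rightly flagged but not closed. The failing step is the treatment of $\pi(\mathcal{R})$ via Nelson's analytic vector theorem. A vector $v\in\mathcal{H}_\pi^\omega$ analytic for the \emph{representation} satisfies bounds of the type $\|d\pi(X)^\alpha v\|\leq C^{|\alpha|}\,|\alpha|!$, so for $T\in\mathfrak{U}(\mathfrak{g})$ of order $m$ one only gets $\|d\pi(T)^k v\|\leq C^{mk}(mk)!\sim C'^{\,k}(k!)^m$; as soon as $m\geq 2$ (which holds for every sub-Laplacian and every operator in Example \ref{exampleRockland}) this is too fast for $v$ to be an analytic vector of $d\pi(T)$, and the series $\sum_k t^k\|d\pi(T)^kv\|/k!$ genuinely diverges in examples (take $G=\R$, the regular representation, $T=-d^2/dx^2$ and $v$ with $\widehat v(\xi)=e^{-|\xi|}$: then $v$ is analytic for translations but $\|T^kv\|/k!\gtrsim k!/4^k\to\infty$). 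This is precisely why the Nelson--Stinespring theorem for elliptic elements of $\mathfrak{U}(\mathfrak{g})$ is not a one-line corollary of the analytic vector theorem. The Nussbaum/Masson--McClary criterion (a \emph{semibounded} symmetric operator with a dense set of Stieltjes vectors is essentially self-adjoint) would rescue the positive order-two case, but the proposition assumes only formal self-adjointness and allows arbitrary homogeneous degree, so this route does not prove the stated result. (Your fallback of recovering the $L^2(G)$ case as ``$\pi=\pi_R$'' also does not quite parse, since $\pi_R$ is reducible and the statement concerns $\pi\in\widehat G$.)

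For $\mathcal{R}$ on $L^2(G)$ your plan --- symmetry, triviality of the deficiency spaces, smoothness of distributional eigenfunctions from Theorem \ref{HelfferNourrigat} applied to $\mathcal{R}\mp i$ (more precisely to the transpose $\mathcal{R}^t=\overline{\mathcal{R}}$, which is again Rockland), then a dilation cut-off --- is the correct strategy and is essentially the one in \cite{FR2016}. But, as you say yourself, the decisive point is the global square-integrability of the intermediate derivatives $X^\alpha u$ with $[\alpha]<\nu$ for a smooth $L^2$ eigenfunction that is \emph{not} compactly supported. The a priori inequality $\|X^\alpha u\|_{L^2}\lesssim\|\mathcal{R}u\|_{L^2}+\|u\|_{L^2}$ is available for $u\in\mathcal{D}(G)$, but extending it to your $u$ needs its own cut-off-and-absorption argument, and the Sobolev machinery of Definition \ref{defSobolev} cannot be invoked without circularity since it presupposes the very self-adjoint extension being constructed. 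So this half is a sound outline with an open gap at exactly the place where the work lies: to complete it you should either carry out the absorption scheme for the commutator terms $\big([\mathcal{R},\chi_R^2]u,u\big)$ explicitly, or do as the thesis does and cite \cite[Proposition 4.1.15]{FR2016}.
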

A detailed proof of the above proposition can be found in \cite[Proposition 4.1.15]{FR2016}. Crucially it allows us to obtain the functional calculus of Rockland operators, since we can apply the spectral theorem for unbounded operators \cite[Theorem VIII.6]{RS1980} to the self-adjoint extensions of $\mathcal{R}$ and $\pi(\mathcal{R})$, and will do so momentarily.

However we emphasise a key point here. If we start with a formally self-adjoint Rockland operator and take any self-adjoint extension, the latter admits many useful properties. By restriction of the domain, these properties will also \textit{automatically hold for the original Rockland operator}, where they are well defined.
Hence, for the remainder of this thesis we employ an abuse of notation, by denoting any self-adjoint extensions of $\mathcal{R}$ and $\pi(\mathcal{R})$, respectively on $L^2(G)$ and $\mathcal H_\pi$, respectively by $\mathcal{R}$ and $\pi(\mathcal{R})$ themselves. 
\begin{theorem}[Functional calculus of Rockland operators]\label{RockFuncCalc}
	Let $\mathcal{R}$ be a Rockland operator on a graded Lie group $G$ and $\pi\in\widehat{G}$. We assume that $\mathcal{R}$ is formally self-adjoint. Then there exist spectral measures $E$ and $E_\pi$, corresponding to the self-adjoint extensions of $\mathcal R$ and $\pi(\mathcal R)$ respectively, such that
	\begin{align*}
	\mathcal{R}=\int_{\R} \lambda dE(\lambda)\quad\text{and}\quad\pi(\mathcal R)=\int_\R\lambda dE_\pi(\lambda).
	\end{align*} 
	For any Borel subset $B\subset\R$, we have 
	\begin{align*}
	\pi(E(B))=E_\pi(B).
	\end{align*}
	Furthermore, if $\phi$ is a measurable function on $\R$, the spectral multiplier operator is defined by
	\begin{align}\label{EQ:fc}
	\phi(\mathcal{R}):=\int_{\R}\phi(\lambda)dE(\lambda),
	\end{align}
	and its domain $\Dom\big(\phi(\mathcal{R})\big)$ is the space of functions $f\in L^2(G)$ such that the integral $\int_{\R}|\phi(\lambda)|^2d\big(E(\lambda)f,f\big)$ is finite. In addition, if $\phi\in L^\infty(\R)$, then the spectral multiplier $\phi(\mathcal{R})$ is bounded on $L^2(G)$ and left-invariant, and for any $f\in L^2(G)$, it holds:
	\begin{align}
	\mathcal F\Big(\phi\big(\mathcal{R}\big)f\Big)(\pi)=\phi\big(\pi(\mathcal{R})\big)\widehat f(\pi).
	\end{align}
\end{theorem}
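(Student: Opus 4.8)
The plan is to deduce every assertion from the spectral theorem for unbounded self-adjoint operators, once we know that the group Fourier transform intertwines $\mathcal R$ with the field of operators $\{\pi(\mathcal R)\}_{\pi\in\widehat G}$. By Proposition \ref{essentiallyselfadjoint}, $\mathcal R$ on $\mathcal D(G)\subset L^2(G)$ and $\pi(\mathcal R)$ on $\mathcal H_\pi^\infty\subset\mathcal H_\pi$ are essentially self-adjoint; following the standing abuse of notation we denote their unique self-adjoint extensions again by $\mathcal R$ and $\pi(\mathcal R)$. Applying the spectral theorem \cite[Theorem VIII.6]{RS1980} produces spectral measures $E$ on $L^2(G)$ and $E_\pi$ on $\mathcal H_\pi$ with $\mathcal R=\int_\mathbb R\lambda\,dE(\lambda)$ and $\pi(\mathcal R)=\int_\mathbb R\lambda\,dE_\pi(\lambda)$. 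The spectral multiplier \eqref{EQ:fc} and the description of $\Dom\big(\phi(\mathcal R)\big)$ are then the standard objects attached to any self-adjoint operator, and for $\phi\in L^\infty(\mathbb R)$ the bound $\|\phi(\mathcal R)\|_{\OP}\le\|\phi\|_{L^\infty(\mathbb R)}$ is immediate. Thus only the intertwining relations and left-invariance need genuine argument.

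For $\pi(E(B))=E_\pi(B)$, start from the fact that $\mathcal R\in\mathfrak U(\mathfrak g)$ (Theorem \ref{PBW}): iterating part (2) of Proposition \ref{Prop:groupFouriertransform} over the monomials of $\mathcal R$ gives $\mathcal F_G(\mathcal R\phi)(\pi)=\pi(\mathcal R)\widehat\phi(\pi)$ for all $\phi\in\mathcal D(G)$. Combined with Theorem \ref{Thm:Plancherelformula}, which realises $\mathcal F_G$ as an isometry of $L^2(G)$ onto $L^2(\widehat G)$ (a space of $\mu$-square-integrable Hilbert--Schmidt operator fields), this says that under $\mathcal F_G$ the self-adjoint operator $\mathcal R$ corresponds to the decomposable operator $\int^\oplus_{\widehat G}\pi(\mathcal R)\,d\mu(\pi)$ acting fibrewise by left multiplication. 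The work is to check that $\mathcal D(G)$ is a core making this an equality of self-adjoint operators (measurability of the field $\pi\mapsto\pi(\mathcal R)$ and the essential self-adjointness of each $\pi(\mathcal R)$ on $\mathcal H_\pi^\infty$ enter here). Granting this, uniqueness of the spectral resolution of a decomposable self-adjoint operator forces $E(B)=\int^\oplus_{\widehat G}E_\pi(B)\,d\mu(\pi)$ for every Borel $B\subset\mathbb R$, which in symbol notation reads $\widehat{E(B)f}(\pi)=E_\pi(B)\widehat f(\pi)$, i.e. $\pi(E(B))=E_\pi(B)$.

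The remaining claims follow quickly. Left-invariance: $\mathcal R$ is left-invariant, so it commutes with each (unitary) left translation $L_g$; hence $L_g$ commutes with the self-adjoint extension, thus with every $E(B)$, and therefore with $\phi(\mathcal R)=\int_\mathbb R\phi\,dE$, so $\phi(\mathcal R)$ is left-invariant. For the Fourier identity, when $\phi=\sum_i c_i\mathbf{1}_{B_i}$ is simple, $\mathcal F\big(\phi(\mathcal R)f\big)(\pi)=\sum_i c_i\,\widehat{E(B_i)f}(\pi)=\sum_i c_i\,E_\pi(B_i)\widehat f(\pi)=\phi\big(\pi(\mathcal R)\big)\widehat f(\pi)$. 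For general $\phi\in L^\infty(\mathbb R)$, pick simple $\phi_n\to\phi$ pointwise with $\|\phi_n\|_{L^\infty}\le\|\phi\|_{L^\infty}$; then $\phi_n(\mathcal R)f\to\phi(\mathcal R)f$ in $L^2(G)$ and, by the Plancherel formula \eqref{PlancherelFormula}, $\phi_n(\pi(\mathcal R))\widehat f(\pi)\to\phi(\pi(\mathcal R))\widehat f(\pi)$ in $L^2(\widehat G)$; passing to a $\mu$-a.e.\ convergent subsequence and using $\mathcal F(\phi_n(\mathcal R)f)(\pi)=\phi_n(\pi(\mathcal R))\widehat f(\pi)$ gives the stated identity for $\mu$-almost every $\pi$.

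The main obstacle is the second paragraph: upgrading the formal intertwining on $\mathcal D(G)$ to a genuine direct-integral decomposition of the self-adjoint operator $\mathcal R$ over $(\widehat G,\mu)$ --- verifying the measurable-field structure, that $\mathcal D(G)$ is a common core, and uniqueness of the fibrewise spectral measure. This is exactly where the detailed harmonic-analytic machinery of \cite{FR2016} is required; everything else is routine spectral theory.
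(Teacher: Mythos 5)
Your proposal is correct and follows essentially the same route the paper takes: the paper gives no independent proof of this theorem, but derives it by applying the spectral theorem \cite[Theorem VIII.6]{RS1980} to the self-adjoint extensions guaranteed by Proposition \ref{essentiallyselfadjoint} and deferring the intertwining relation $\pi(E(B))=E_\pi(B)$ to \cite[Corollary 4.1.16]{FR2016}, which establishes it exactly via the direct-integral (Plancherel) decomposition you describe. You correctly identify that the only genuinely nontrivial step is upgrading the formal identity $\mathcal F_G(\mathcal R\phi)(\pi)=\pi(\mathcal R)\widehat\phi(\pi)$ on the core $\mathcal D(G)$ to a decomposition of the self-adjoint extension itself, and that this is precisely the part outsourced to \cite{FR2016}.
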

\begin{remark}
	We point out one more time that is remarkable to be able to find the spectral projection formula for Rockland operators $\mathcal{R}$ and their infinitesimal representations $\pi(\mathcal{R})$ when they are only assumed to be formally self-adjoint. The abuse of notation simplifies the topic, by ensuring one uses less heavy notation. A similar abuse is in Fischer--Ruzhansky \cite[Corollary 4.1.16]{FR2016}, but only for the infinitesimal representations of $\mathcal{R}$. Nevertheless, we highlight that in formula \eqref{EQ:fc}, to define $\phi(\mathcal{R})$ we need to consider the proper self-adjoint extension (of) $\mathcal{R}$.
\end{remark}

We are interested in working with \textit{positive} Rockland operators, where the positivity is meant in the operator sense:
\begin{definition}
	An operator $T$ on a Hilbert space $\mathcal{H}$ is \textit{positive} if for any vectors $v,v_1,v_2\in\Dom(T)\subset\mathcal H$ it holds
	\begin{align*}
	\big(Tv_1,v_2\big)_{\mathcal{H}}=\big(v_1,Tv_2\big)_{\mathcal{H}} \quad \text{and} \quad \big(Tv,v\big)_{\mathcal{H}} \geq 0.
	\end{align*}
\end{definition} 
Let us observe that the linear combination with non-negative coefficients of positive operators is clearly a positive operator. Furthermore, an easy calculation\footnote{More details can be found in the proof of part $2$ of Proposition \ref{Prop:groupFouriertransform}.} shows that first order differential operators are formally skew-symmetric, i.e. $\int_G \big(Xf_1\big)f_2=-\int_Gf_1\big(Xf_2\big)$. This is because
\begin{align*}
\int_G(Xf_1)f_2d\mu&=\frac{d}{dt}\Bigl|_{t=0}\int_Gf_1(xe^{tX})f_2(x)d\mu(x)=\frac{d}{dt}\Bigl|_{t=0}\int_Gf_1(y)f_2(ye^{-tX})d\mu(y)=\\
&=-\int_Gf_1(xf_2)d\mu.
\end{align*}
Therefore, given a left-invariant vector field $X\in\mathfrak{g}$ and a positive power $p\in 2\N_0$, the operator $(-1)^{\frac{p}{2}}X^p$ is positive on $G$. Hence the Rockland operators presented in Example \ref{exampleRockland} are all positive. This implies also that \textit{any graded Lie group admits a positive Rockland operator}.

Note that the positivity yields the formal self-adjointness of the operator, and therefore the hypotheses of Proposition \ref{essentiallyselfadjoint} are fulfilled, making available the functional calculus for positive Rockland operators. In addition, we also gain information on their spectra, since it is included in $[0,\infty)$, as is the spectrum of any positive operator. 

If $\mathcal R$ is a positive Rockland operator, then, for every $\pi\in\widehat{G}$, also the operator $\pi(\mathcal{R})$ is positive. In fact, from Theorem \ref{RockFuncCalc}, we know that $\pi(E(B))=E_\pi(B)$. Therefore, since $E$ is supported in $[0,\infty)$, so is $E_\pi$ and thus
\begin{align*}
\big(\pi(\mathcal{R})v,v\big)_{\mathcal{H}_\pi}=\int_{0}^{\infty}\lambda d(E_\pi(\lambda)v,v)_{\mathcal{H}_\pi}\geq 0.
\end{align*}
Furthermore, Hulanicki, Jenkins and Ludwig showed in \cite{HJL1985} that the spectrum of the infinitesimal representations of a Rockland operator $\pi(\mathcal R)$ is \textit{discrete}. This implies that, once we fix an orthonormal basis for the Hilbert space associated with the chosen representation $\pi$, the infinite matrix associated to the operator $\pi(\mathcal{R})$ is a diagonal matrix of the form
\begin{align}\label{eq:symbRock}
\pi(\mathcal R)=\begin{pmatrix}
\pi^2_1 & 0      & \dots    & \dots \\
0      & \pi^2_2 & 0    & \dots \\
\vdots&   0       & \ddots  &          \\
\vdots& \vdots  &             & \ddots
\end{pmatrix},
\end{align}
where the $\pi_j$'s are strictly positive real numbers. Note that, according to Definition \ref{symbol}, the family of infinite representations, i.e. the positive, self-adjoint operators $\pi(\mathcal{R})$ with $\pi\in\widehat{G}$, gives rise to the symbol associated to the positive Rockland operator $\mathcal R$:
\[
\sigma(\mathcal R)=\big\{\pi(\mathcal R)\,|\, \pi\in\widehat G\big\}.
\]
\subsubsection{(Inhomogeneous) Sobolev spaces on graded Lie groups}

Let $\mathcal R$ be a positive Rockland operator of homogeneous degree $\nu$ on a graded Lie group $G$. As before, we also use the notation $\mathcal R$ to denote any self-adjoint extensions of $\mathcal R$ on $L^2(G)$. 

\begin{definition}[Sobolev space]\label{defSobolev}
	For any real number $s\in\R$, the \textit{Sobolev space} associated to $\mathcal R$, denoted by $H^s_\mathcal R(G)$, is the subspace of $\mathcal S'(G)$ obtained by completing $\mathcal S(G)$ with respect to the \textit{Sobolev norm}
	\begin{align*}
	\|f\|_{H^s_\mathcal R(G)}:=\|(I+\mathcal R)^{\frac{s}{\nu}}f\|_{L^2(G)},\quad \forall f\in \mathcal S(G).
	\end{align*}
\end{definition}
We omit the details that can be found in \cite[Chapter 4.4]{FR2016} (or in \cite[Section 4]{F75} where these concepts are introduced in the particular case of stratified Lie groups and sub-Laplacians), but it is possible to show that Sobolev spaces on graded Lie groups satisfy our requirements, in the sense that they behave in an analogous way to---and share many properties with---their Euclidean counterparts. We mention below only the results that will be useful to our purposes.
\begin{itemize}
	
	\item Given a positive Rockland operator of homogeneous degree $\nu$ and $s\in\R$ the Sobolev norm is equivalent to the following norm
	\begin{align}\label{eqSobolevnorm}
	f\mapsto \|f\|_{L^2(G)}+\|\mathcal{R}^{\frac{s}{\nu}}f\|_{L^2(G)}.
	\end{align}
	The interested reader can find a proof in \cite[Theorem 4.4.3]{FR2016}.
	
	\item Left-invariant differential operators act continuously on Sobolev spaces on graded Lie groups. In fact, if we fix a positive Rockland operator $\mathcal{R}$, a left-invariant differential operator $T$ of homogeneous degree $\nu_T$ maps continuously $H^{s+\nu_T}_\mathcal{R}$ to $H^s_\mathcal{R}$ for any $s\in\R$. In terms of norm, this means that there exists a constant $C$ depending on $s$ and $T$ such that for every $\phi\in\mathcal{S}(G)$ we have
	\begin{align}\label{continuosmapping}
	\|T\phi\|_{H^s_{\mathcal{R}}} \leq C \|\phi\|_{H^{s+\nu_T}_{\mathcal{R}}}.
	\end{align}
	The reader can find a proof in \cite[Theorem 4.4.16]{FR2016}.
	
	\item Given a positive Rockland operator $\mathcal R$, for every $s\in\R$ the \textit{dual space} of $H^s_\mathcal{R}(G)$ is isomorphic to $H^{-s}_{\bar{\mathcal{R}}}(G)$ via the distributional duality. For more details consult \cite[Lemma 4.4.7]{FR2016}.
	
	\item Also the interpolation between Sobolev spaces on graded Lie groups works in the same way as for their Euclidean counterparts. In particular, the following holds.
	\begin{theorem}\label{interpolation}
		Let $G_1$ and $G_2$ be graded Lie groups, and $\mathcal{R}_1$ and $\mathcal{R}_2$ be positive Rockland operators respectively on $G_1$ and $G_2$. Let $a_0,a_1,b_0,b_1$ be real numbers. Consider a linear mapping $T$ from $H^{a_0}_{\mathcal{R}_1}(G_1)+H^{a_1}_{\mathcal{R}_1}(G_1)$ to locally integrable functions on $G_2$, and suppose that $T$ maps $H^{a_0}_{\mathcal{R}_1}(G_1)$ boundedly into $H^{b_0}_{\mathcal{R}_2}(G_2)$, and $H^{a_1}_{\mathcal{R}_1}(G_1)$ boundedly into $H^{b_1}_{\mathcal{R}_2}(G_2)$. Then $T$ extends uniquely to a bounded mapping from $H^{a_t}_{\mathcal{R}_1}(G_1)$ to $H^{b_t}_{\mathcal{R}_2}(G_2)$ for $t\in[0,1]$, with
		\begin{align*}
		a_t:=ta_1+(1-t)a_0 \quad\text{and}\quad b_t:=tb_1+(1-t)b_0.
		\end{align*}
	\end{theorem}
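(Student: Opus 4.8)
The plan is to deduce the interpolation theorem for Rockland--Sobolev spaces directly from the classical complex interpolation method, by transferring the problem via the functional calculus to weighted $L^2$ spaces on the spectra of $\mathcal{R}_1$ and $\mathcal{R}_2$. The key observation is that, by the spectral theorem for the self-adjoint extension of a positive Rockland operator (Theorem \ref{RockFuncCalc}), the map $f\mapsto (I+\mathcal{R}_i)^{s/\nu_i} f$ is, for each real $s$, a Hilbert space isomorphism from $H^s_{\mathcal{R}_i}(G_i)$ onto $L^2(G_i)$; more precisely, if $dE_i$ is the spectral measure of $\mathcal{R}_i$, then $H^s_{\mathcal{R}_i}(G_i)$ is isometrically isomorphic to $L^2\big(\R,(1+\lambda)^{2s/\nu_i}\,d(E_i(\lambda)f,f)\big)$-type weighted spaces, and the scale $\{H^s_{\mathcal{R}_i}(G_i)\}_{s\in\R}$ is exactly the complex interpolation scale generated by the pair $\big(H^{a_0}_{\mathcal{R}_i},H^{a_1}_{\mathcal{R}_i}\big)$ with the appropriate affine parametrisation.

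First I would recall (citing \cite[Chapter 4.4]{FR2016} for the facts already quoted in the excerpt) that $\mathcal{S}(G_i)$ is dense in each $H^s_{\mathcal{R}_i}(G_i)$ and that these form a decreasing, compatible family of Hilbert spaces. Then I would introduce the analytic family of operators $T_z := (I+\mathcal{R}_2)^{-b_z/\nu_2}\, T\, (I+\mathcal{R}_1)^{a_z/\nu_1}$ on $L^2(G_1)$, where $a_z = za_1+(1-z)a_0$ and $b_z = zb_1+(1-z)b_0$ for $z$ in the closed strip $0\le \operatorname{Re} z\le 1$; the fractional powers are well-defined bounded (on the relevant subspaces) operators via the functional calculus since $(1+\lambda)^{w}$ is a bounded multiplier on the spectrum for $\operatorname{Re} w \le 0$ and is handled on $\mathcal{S}$ otherwise. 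The hypotheses say precisely that $T_z$ is uniformly bounded on $L^2(G_1)\to L^2(G_2)$ on the two boundary lines $\operatorname{Re} z = 0$ and $\operatorname{Re} z = 1$. An application of the Stein interpolation theorem (or, since the endpoint exponents are both $2$, simply the three-lines lemma applied to $z\mapsto (T_z f, g)$ for $f\in\mathcal{S}(G_1)$, $g\in\mathcal{S}(G_2)$) then gives boundedness of $T_t: L^2(G_1)\to L^2(G_2)$ for $t\in[0,1]$, which unwinds to the asserted bound $\|Tf\|_{H^{b_t}_{\mathcal{R}_2}}\le C\|f\|_{H^{a_t}_{\mathcal{R}_1}}$ on $\mathcal{S}(G_1)$, and hence by density on all of $H^{a_t}_{\mathcal{R}_1}(G_1)$. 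Uniqueness of the extension is automatic from density together with the hypothesis that $T$ is already defined (and agrees) on $H^{a_0}_{\mathcal{R}_1}+H^{a_1}_{\mathcal{R}_1}\supseteq \mathcal{S}(G_1)$.

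The main obstacle is the analyticity and uniform-boundedness bookkeeping for the operator family $T_z$: one must check that $z\mapsto (T_z f,g)$ is holomorphic in the open strip and continuous and bounded on its closure, which requires that the unbounded factors $(I+\mathcal{R}_1)^{a_z/\nu_1}$ applied to a Schwartz function stay in a space on which $T$ acts, and that the imaginary-part oscillations $(1+\lambda)^{i\operatorname{Im} z}$ contribute only a bounded (indeed unimodular) multiplier — this last point is where positivity of $\mathcal{R}_i$ and the support of the spectral measure in $[0,\infty)$ are essential, since it guarantees $1+\lambda\ge 1$ and so $(1+\lambda)^{i\tau}$ has modulus one and $(1+\lambda)^{-\sigma}$ is bounded by $1$ for $\sigma\ge 0$. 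Once these are in place the argument is the standard complex-interpolation machinery; I would remark that alternatively one may simply invoke the general principle that the complex interpolation space $[H^{a_0}_{\mathcal{R}_i},H^{a_1}_{\mathcal{R}_i}]_t$ equals $H^{a_t}_{\mathcal{R}_i}$ — a consequence of the spectral representation identifying each $H^s_{\mathcal{R}_i}$ with a weighted $L^2$ space and the known interpolation of weighted $L^2$ spaces — and then the theorem is an immediate instance of the interpolation property of the complex method. The proof in \cite[Theorem 4.4.24 (and surrounding results)]{FR2016} proceeds essentially along these lines, and I would cite it for the details not reproduced here.
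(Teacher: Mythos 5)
The paper itself offers no proof of Theorem \ref{interpolation}: it simply cites \cite[Theorem 4.4.9]{FR2016} and \cite[Theorem 4.7]{F75}, and your argument --- reduce to $L^2$ via the functional calculus, form an analytic family of Bessel-type potentials conjugating $T$, and apply the three-lines/Stein interpolation theorem, with the purely imaginary powers contributing only unimodular multipliers because the spectrum lies in $[0,\infty)$ --- is precisely the argument those references carry out. Two small corrections: the analytic family should be $T_z=(I+\mathcal{R}_2)^{b_z/\nu_2}\,T\,(I+\mathcal{R}_1)^{-a_z/\nu_1}$ (your signs are reversed, so the family you wrote down is not the one that the hypotheses render uniformly bounded on $L^2$ along the lines $\operatorname{Re}z=0$ and $\operatorname{Re}z=1$), and the relevant reference in \cite{FR2016} is Theorem 4.4.9 rather than 4.4.24, the latter being the local Sobolev embedding quoted elsewhere in this thesis.
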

	One can find a proof in \cite[Theorem 4.4.9]{FR2016} or \cite[Thereom 4.7]{F75}.
	
	\item The definition of Sobolev spaces is independent of the choice of a Rockland operator. 
\end{itemize}
We are going to prove this last result in Theorem \ref{THM:SobolevindepRockland}, since we will use the same argument later. In order to do this, we need the following.
\begin{lemma}\label{lemma1}
	Let $\mathcal{R}$ be a positive Rockland operator of homogeneous degree $\nu$ on a graded Lie group $G$, and $l\in\N$. Then
	\begin{align*}
	H_{\mathcal{R}}^{l\nu}=\Big\{f\in L^2(G)\,|\, \|X^\alpha f\|_{L^2}<\infty, \forall\alpha\in\N_0^n \text{ such that }[\alpha]=l\nu\Big\},
	\end{align*} 
	and the map
	\begin{align*}
	f\mapsto \|f\|_{L^2}+\sum_{[\alpha]=l\nu}\|X^\alpha f\|_{L^2}
	\end{align*}
	is a norm on $H^{l\nu}_{\mathcal{R}}$ equivalent to the Sobolev norm.
\end{lemma}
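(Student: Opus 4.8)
The plan is to derive both assertions from the norm equivalence, which I would first establish on the dense subspace $\mathcal{S}(G)$ and then transfer to the completion. Throughout, $\mathcal{R}^{l}$ denotes the $l$-th power of $\mathcal{R}$: it is again a positive Rockland operator, of homogeneous degree $l\nu$ (cf.\ Example \ref{exampleRockland} and the discussion of positivity thereafter), in particular formally self-adjoint, and by \eqref{eqSobolevnorm} the Sobolev norm $\|\cdot\|_{H^{l\nu}_{\mathcal{R}}}$ is equivalent to $f\mapsto\|f\|_{L^{2}}+\|\mathcal{R}^{l}f\|_{L^{2}}$. Since $l\nu\geq 0$ we have $H^{l\nu}_{\mathcal{R}}\subset L^{2}(G)$, so the completion defining $H^{l\nu}_{\mathcal{R}}$ is realised inside $L^{2}(G)$.

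For the norm equivalence on $\mathcal{S}(G)$, the first ingredient is the algebraic identity
\[
\mathcal{R}^{l}=\sum_{[\alpha]=l\nu}c_{\alpha}X^{\alpha}
\]
for suitable $c_{\alpha}\in\C$: writing $\mathcal{R}^{l}=\sum_{\alpha}c_{\alpha}X^{\alpha}$ by the Poincar\'e--Birkhoff--Witt Theorem \ref{PBW}, comparing $\mathcal{R}^{l}(\phi\circ D_{r})=r^{l\nu}(\mathcal{R}^{l}\phi)\circ D_{r}$ with $X^{\alpha}(\phi\circ D_{r})=r^{[\alpha]}(X^{\alpha}\phi)\circ D_{r}$, and using the invertibility of $D_{r}$ together with the linear independence of the $X^{\alpha}$ forces $c_{\alpha}=0$ whenever $[\alpha]\neq l\nu$. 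Hence $\|\mathcal{R}^{l}\phi\|_{L^{2}}\leq C\sum_{[\alpha]=l\nu}\|X^{\alpha}\phi\|_{L^{2}}$ for $\phi\in\mathcal{S}(G)$, so by \eqref{eqSobolevnorm}
\[
\|\phi\|_{H^{l\nu}_{\mathcal{R}}}\leq C\Big(\|\phi\|_{L^{2}}+\sum_{[\alpha]=l\nu}\|X^{\alpha}\phi\|_{L^{2}}\Big).
\]
Conversely, each $X^{\alpha}$ with $[\alpha]=l\nu$ is a left-invariant differential operator of homogeneous degree $l\nu$, so by \eqref{continuosmapping} (with $s=0$, $\nu_{T}=l\nu$) it maps $H^{l\nu}_{\mathcal{R}}$ boundedly into $H^{0}_{\mathcal{R}}=L^{2}(G)$; together with $\|\phi\|_{L^{2}}\leq\|\phi\|_{H^{l\nu}_{\mathcal{R}}}$ and summing over the finitely many $\alpha$ with $[\alpha]=l\nu$, this gives $\|\phi\|_{L^{2}}+\sum_{[\alpha]=l\nu}\|X^{\alpha}\phi\|_{L^{2}}\leq C\|\phi\|_{H^{l\nu}_{\mathcal{R}}}$. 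Thus the two norms are equivalent on $\mathcal{S}(G)$, and since every $X^{\alpha}\colon H^{l\nu}_{\mathcal{R}}\to L^{2}(G)$ is bounded the equivalence extends to all of $H^{l\nu}_{\mathcal{R}}$ by density.

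It remains to identify the underlying set. By \eqref{eqSobolevnorm}, $H^{l\nu}_{\mathcal{R}}$ is the completion of $\mathcal{D}(G)$ for the graph norm $\|\cdot\|_{L^{2}}+\|\mathcal{R}^{l}\cdot\|_{L^{2}}$, i.e.\ $H^{l\nu}_{\mathcal{R}}=\Dom\big(\overline{\mathcal{R}^{l}|_{\mathcal{D}(G)}}\big)$. Because $\mathcal{R}^{l}$ is a positive, hence formally self-adjoint, Rockland operator, Proposition \ref{essentiallyselfadjoint} (applied to $\mathcal{R}^{l}$) shows that it is essentially self-adjoint on $\mathcal{D}(G)$, so this closure is its self-adjoint extension and its domain is precisely $\{f\in L^{2}(G)\,:\,\mathcal{R}^{l}f\in L^{2}(G)\}$, with $\mathcal{R}^{l}f$ understood in $\mathcal{S}'(G)$. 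Finally, $\{f\in L^{2}:\mathcal{R}^{l}f\in L^{2}\}=\{f\in L^{2}:X^{\alpha}f\in L^{2}\ \text{ for all }[\alpha]=l\nu\}$: the inclusion ``$\supseteq$'' is immediate from $\mathcal{R}^{l}=\sum_{[\alpha]=l\nu}c_{\alpha}X^{\alpha}$, while for ``$\subseteq$'' any such $f$ lies in $H^{l\nu}_{\mathcal{R}}$ by the identification just made, whence $X^{\alpha}f\in L^{2}$ by boundedness of $X^{\alpha}\colon H^{l\nu}_{\mathcal{R}}\to L^{2}(G)$. This yields the set equality, and with the previous paragraph the asserted equivalence of norms on $H^{l\nu}_{\mathcal{R}}$. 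The step I expect to be the main obstacle is exactly this identification of the abstract Sobolev completion with the concrete ``maximal domain'' $\{f\in L^{2}:\mathcal{R}^{l}f\in L^{2}\}$; the decisive input there is the essential self-adjointness of the \emph{power} $\mathcal{R}^{l}$ via Proposition \ref{essentiallyselfadjoint}, the rest being the algebra of \eqref{eqSobolevnorm}, \eqref{continuosmapping} and Theorem \ref{PBW}.
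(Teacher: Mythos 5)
Your proof is correct and follows essentially the same route as the paper: the Poincar\'e--Birkhoff--Witt decomposition $\mathcal{R}^l=\sum_{[\alpha]=l\nu}c_{\alpha}X^{\alpha}$ gives one inequality via \eqref{eqSobolevnorm}, and the continuity \eqref{continuosmapping} of $X^\alpha\colon H^{[\alpha]}_{\mathcal R}\to L^2$ gives the other. You go further than the paper in two harmless ways --- justifying via homogeneity why only terms with $[\alpha]=l\nu$ occur, and making explicit the identification of the completion with the maximal domain of $\mathcal{R}^l$ via Proposition \ref{essentiallyselfadjoint} --- both of which the paper's proof leaves implicit.
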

\begin{proof}
	According to the Poincar\'e--Birkhoff--Witt theorem (Theorem \ref{PBW}), we can write the Rockland operator as follows
	\begin{align*}
	\mathcal{R}^l=\sum_{[\alpha]=l\nu}c_{\alpha,l}X^\alpha.
	\end{align*} 
	Therefore, for every $\phi\in\mathcal{S}(G)$ the inequality below is straightforward
	\begin{align}\label{eq4}
	\|\mathcal{R}^l\phi\|_{L^2}\leq \max\limits_{[\alpha]=l\nu} |c_{\alpha,l}|\sum_{[\alpha]=l\nu}\|X^\alpha\phi\|_{L^2}.
	\end{align}
	Now define $C=\max_{[\alpha]=l\nu}\{|c_{\alpha,l}|,1\}$. Then if we add $\|\phi\|_{L^2}$ to both sides of \eqref{eq4} and recall the norm equivalence given by \eqref{eqSobolevnorm}, the inequality \eqref{eq4} becomes
	\begin{align}\label{eq5}
	\|\phi\|_{H^{l\nu}_\mathcal{R}}\leq C\Big(\|\phi\|_{L^2}+\sum_{[\alpha]=l\nu}\|X^\alpha \phi\|_{L^2}\Big).
	\end{align}
	On the other hand, according to  \eqref{continuosmapping} the operator $X^\alpha$ continuously maps $H^{[\alpha]}_\mathcal{R}(G)$ to $L^2(G)$. Hence, there exists a constant $c>0$ such that for every $\phi\in\mathcal S(G)$ we have
	\begin{align}\label{eq6}
	\sum_{[\alpha]=l\nu}\|X^\alpha\phi\|_{L^2}\leq c\|\phi\|_{H^{l\nu}_{\mathcal{R}}}.
	\end{align}
	Inequalities \eqref{eq5} and \eqref{eq6} prove the Lemma.
\end{proof}
\begin{theorem}
	\label{THM:SobolevindepRockland}
	The Sobolev spaces on a graded Lie group $G$ associated with \textit{any} positive Rockland operators of the same order coincide.
\end{theorem}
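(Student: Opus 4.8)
The plan is to bootstrap from Lemma \ref{lemma1}, which already does the substantive work for Sobolev exponents that are positive integer multiples of $\nu$, and then reach arbitrary real exponents by interpolation and duality. Write $\mathcal R_1,\mathcal R_2$ for two positive Rockland operators on $G$ of the same homogeneous degree $\nu$. Since both are defined on the same group they share its dilation structure, so I would fix once and for all an adapted basis $\{X_1,\dots,X_n\}$ of $\mathfrak g$ as in Proposition \ref{adapted basis}, which gives a common notion of homogeneous degree $[\alpha]$ for a monomial $X^\alpha$. For each $l\in\N$, Lemma \ref{lemma1} describes $H^{l\nu}_{\mathcal R_i}(G)$, for $i=1,2$, as the set of $f\in L^2(G)$ with $\|X^\alpha f\|_{L^2}<\infty$ whenever $[\alpha]=l\nu$, and shows the Sobolev norm of $H^{l\nu}_{\mathcal R_i}(G)$ is equivalent to $f\mapsto\|f\|_{L^2}+\sum_{[\alpha]=l\nu}\|X^\alpha f\|_{L^2}$. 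Both this set and this reference norm are manifestly independent of $i$, so $H^{l\nu}_{\mathcal R_1}(G)=H^{l\nu}_{\mathcal R_2}(G)$ with equivalent norms; equivalently, the identity map is a Banach-space isomorphism between them.

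Next I would pass to all $s\geq 0$ by interpolation. Given such an $s$, choose $l\in\N$ with $l\nu\geq s$. The identity map is bounded from $L^2(G)=H^0_{\mathcal R_1}(G)$ to $H^0_{\mathcal R_2}(G)=L^2(G)$, and by the previous step bounded from $H^{l\nu}_{\mathcal R_1}(G)$ to $H^{l\nu}_{\mathcal R_2}(G)$; since $H^0_{\mathcal R_1}(G)+H^{l\nu}_{\mathcal R_1}(G)=L^2(G)$ consists of locally integrable functions, Theorem \ref{interpolation} applies with $G_1=G_2=G$, $a_0=b_0=0$, $a_1=b_1=l\nu$, and yields that the identity is bounded from $H^{tl\nu}_{\mathcal R_1}(G)$ to $H^{tl\nu}_{\mathcal R_2}(G)$ for every $t\in[0,1]$ — in particular for the exponent $s$ (a short density argument with $\mathcal S(G)$ identifies the interpolated map with the identity). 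As $l$ may be taken arbitrarily large, this gives a continuous inclusion $H^s_{\mathcal R_1}(G)\hookrightarrow H^s_{\mathcal R_2}(G)$ for all $s\geq 0$, and exchanging $\mathcal R_1$ with $\mathcal R_2$ gives equality with equivalent norms. For $s<0$ I would invoke duality: $\bar{\mathcal R}_1$ and $\bar{\mathcal R}_2$ are again positive Rockland operators of degree $\nu$ (conjugating coefficients preserves left-invariance, homogeneity, the Rockland injectivity condition and positivity), so the case already proved gives $H^{-s}_{\bar{\mathcal R}_1}(G)=H^{-s}_{\bar{\mathcal R}_2}(G)$ with equivalent norms; dualizing through the isomorphism between the dual of $H^s_{\mathcal R_i}(G)$ and $H^{-s}_{\bar{\mathcal R}_i}(G)$ then settles $s<0$, completing the proof for every $s\in\R$.

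The one genuinely substantive ingredient is Lemma \ref{lemma1}, which is already in hand, so the argument is essentially an assembly of the interpolation and duality properties of the Sobolev scale; the only point requiring (routine) care is the verification that $\bar{\mathcal R}$ inherits the positive-Rockland property, since that is exactly what licenses applying the duality statement to the negative-order spaces. If one wished to avoid even that, the case $s<0$ still seems to require the duality result, so there is no real shortcut around introducing the conjugate operator.
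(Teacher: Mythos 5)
Your proof is correct and follows essentially the same route as the paper's: Lemma \ref{lemma1} gives coincidence with equivalent norms at the exponents $l\nu$, Theorem \ref{interpolation} extends this to all $s\geq 0$, and duality via $H^{-s}_{\bar{\mathcal R}}(G)$ handles $s<0$. The only cosmetic difference is that the paper first replaces $\mathcal R_1,\mathcal R_2$ by the powers $\mathcal R_1^{\nu_2},\mathcal R_2^{\nu_1}$ to force a common homogeneous degree (so its argument also covers operators of different degrees), whereas you work directly at the common degree $\nu$ that the statement assumes; your added care in checking that $\bar{\mathcal R}$ is again a positive Rockland operator is a detail the paper leaves implicit.
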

\begin{proof}
	We have previously observed that on a graded Lie group we can always construct positive Rockland operators. Therefore, let $\mathcal{R}_1$ and $\mathcal{R}_2$ be two positive Rockland operators on $G$ of homogeneous degrees $\nu_1$ and $\nu_2$ respectively. Powers of positive Rockland operators are again positive Rockland operators, and in particular $\mathcal{R}_1^{\nu_2}$ and $\mathcal{R}_2^{\nu_1}$ are two positive Rockland operators both of homogeneous degree $\nu=\nu_1\nu_2$. 
	
	For every $l\in\N$, the Sobolev spaces of exponent $l\nu$ associated to both Rockland operators $\mathcal{R}_{1}^{\nu_2}$ and $\mathcal{R}_{2}^{\nu_1}$ will coincide and have equivalent norms, because of Lemma \ref{lemma1}. The interpolation result, Theorem \ref{interpolation}, allows us to extend the equality to Sobolev spaces of all orders $s\geq 0$ and the duality to any $s\in\R$.
\end{proof}
\section{Examples of Lie groups}
In the last Section of this Chapter, we present two examples of Lie groups and their well-known symbolic calculus. The first is the compact special unitary group, $SU(2)$. The second is a pivotal example of a non-compact, locally compact, nilpotent Lie group. It represents a starting model for the study of stratified Lie groups.

The theory for compact groups is well-known and we have not gone into details in this dissertation since it is less interesting and more restrictive for our purposes. Nevertheless, the special unitary group $SU(2)$ has been the first case where we have been able to achieve the results in Chapter \ref{CP:4}. Furthermore, $SU(2)$ serves as base model for the more thought-provoking case of the Heisenberg group. 

\subsection{Compact case: $SU(2)$}\label{SEC:SU2}

The \textit{special unitary group}, denoted by $SU(2)$, is the space of all the matrices $A\in\C^{2\times 2}$ such that
\[
A^*=A^{-1}\quad\text{and}\quad\text{det}(A)=1.
\]
This is a compact Lie group, whose elements can be characterised as follows: a matrix $A\in\C^{2\times2}$ is in $SU(2)$ if and only if it is of the form
\[
A=
\begin{pmatrix}
\alpha & \beta\\
-\bar{\beta} & \bar{\alpha}
\end{pmatrix},
\]
where $\alpha$ and $\beta$ are complex numbers satisfying the identity $|\alpha|^2+|\beta|^2=1$.
From this characterisation, it follows that there exists a bijective homomorphism between the three-dimensional sphere $\mathbb{S}^3=\{(z_1,z_2)\in\C^2~|~|z_1|^2+|z_2|^2=1\}$ and $SU(2)$. Furthermore, the recently proved \textit{Poincar\'e conjecture}\footnote{At the beginning of the $20^{th}$ century, Henri Poincar\'e was interested in the topology of three-dimensional manifolds, and in 1904 he formulated a question that became known as the \textit{Poincar\'e conjecture}, see \cite[pp. 498 and 370]{Poincare}. It is one of the seven Millennium Prize problems defined by the Clay Mathematics Institute in 2000, and is the only one to have been solved: Grigori Perelman proved the conjecture in 2003 \cite{Perelman1,Perelman2,Perelman3}.} shows that every closed, simply connected, three-dimensional manifold $M$ is globally homeomorphic to the three-dimensional sphere, and consequently to $SU(2)$. This means that the results obtained for the special unitary group may be applied to more general manifolds. 

Let us consider the Lie algebra $\mathfrak{su}(2)$. A basis for $\mathfrak{su}(2)$ is given by the so-called \textit{Pauli matrices}
\[
X=\begin{pmatrix}
0 & i\\
i & 0
\end{pmatrix}, \quad
Y=\begin{pmatrix}
0 & -1\\
1 & 0
\end{pmatrix}\quad\text{and }
Z=\begin{pmatrix}
i & 0\\
0 & -i
\end{pmatrix}.
\]
Their commutation relations satisfy the identities below 
\[
[X,Y]=2Z,\qquad[Y,Z]=2X,\qquad\text{and }[Z,X]=2Y.
\]
We define the positive sub-Laplacian operator associated with this vector fields, as follows
\begin{equation}\notag
\slp:=-(X^2+Y^2).
\end{equation}
As seen for a general Lie group, every element $K\in\mathfrak{su}(2)$ can be regarded  as a left-invariant differential operator $D_K:\mathcal{C}^\infty(SU(2))\rightarrow\mathcal{C}^\infty(SU(2))$ satisfying formula  \eqref{vectorfield}. Therefore $D_K$ is defined as follows
\[
D_Kf(U):=\frac{d}{dt}f(U\text{exp}(tK))\big |_{t=0},
\] 
for every $f\in \mathcal{C}^\infty(SU(2))$ and  $U\in SU(2)$. 

The representation theory for $SU(2)$ and its Lie algebra $\mathfrak{su}(2)$ is well known. It provides a simple yet illuminating example of how commutation relations can be used. Furthermore, this representation theory is explicitly used in the study of the more complicated setting of semisimple Lie algebras, see \cite[Part II]{H2003}.

Let us present a classification of all the unitary irreducible representations of $SU(2)$, up to equivalences. Since $SU(2)$ is a compact group, it is possible to apply the \textit{Peter--Weyl Theorem} which decomposes $L^2(G)$ into finite-dimensional subspaces that are invariant under the left and right actions of $G$. See \cite{H2003, RT2009} for more details. Applying this theorem to $SU(2)$, it is possible to obtain a classification of all the representations and, therefore a non-commutative Fourier analysis for $SU(2)$. 

For every $l\in\frac{1}{2}\N_0$, let $V_l$ denote the space of homogeneous polynomials in two complex variables of total degree $2l$, that is, the space of functions of the form
\[
f(z_1,z_2)=a_0z_1^{2l}+a_1z_1^{2l-1}z_2+\dots+a_{2l}z_2^{2l},
\]
where $z_1,z_2\in\C$ and the $a_k$'s are arbitrary complex constants. Then, for every $l\in\frac{1}{2}\N_0$ there exists a unitary irreducible representation of $SU(2)$ on $V_l$
\[
T_l:SU(2)\rightarrow GL(V_l),
\]
given by
\[
T_l(U)f(z):=f(U^{-1}z),
\]
for every $U\in SU(2)$, $f\in V_l$ and $z=(z_1,z_2)^T\in \C^2$. Its finite dimension equals the dimension of the vector space $V_l$, that is, $2l+1$. It is possible to show that every finite dimensional, unitary, irreducible representation of $SU(2)$ is equivalent to one and only one of the $T_l$'s. We omit the details, but we refer the interested reader to \cite{RT2009}. The representations $T_l$ can be interpreted in a matrix form given by:
\begin{align}\notag
\begin{cases}
T_l:=t^l=(t^l_{mn})_{m,n}\in\C^{(2l+1)\times(2l+1)},\quad l\in\frac{1}{2}\N_0,\,-l\leq m,n \leq l,\,l-m,l-n\in\Z\\
t^l(uv)=t^l(u)t^l(v),\hfill \text{with }u,v\in SU(2). 
\end{cases}
\end{align}
We conclude this Subsection, providing an explicit expression for the symbols of the vector fields $X$ and $Y$ according to \cite[Theorem 12.2.1]{RT2009}
\begin{align}\label{EQ:x}
&\sigma_X(l)_{m,n}=-\sqrt{(l-n)(l+n+1)}\delta_{m,n+1}=-\sqrt{(l-m+1)(l+m)}\delta_{m-1,n};\\\label{EQ:y}
&\sigma_Y(l)_{m,n}=-\sqrt{(l+n)(l-n+1)}\delta_{m,n-1}=-\sqrt{(l+m+1)(l-m)}\delta_{m+1,n}.
\end{align}
Consequently, we can calculate the symbol of the sub-Laplacian operator associated with $X$ and $Y$, that is given by the diagonal matrix
\begin{equation}\label{EQ:slp}
\sigma_\slp(l)_{m,n}=(l(l+1)-m^2)\delta_{mn}.
\end{equation}

\subsection{The Heisenberg group}\label{Heisenberg}

The Heisenberg\footnote{Its name owes to the equivalence between its commutator relation \eqref{EQ:commrelHeis} and the commutator relation from quantum mechanics commonly called \textit{Heisenberg's uncertainty principle}. See \cite[Chapter 1]{Fol} for more details.} group $\mathbb{H}_n$ is arguably the most important---and even simplest---example of a non-abelian, non-compact, locally compact, nilpotent, unimodular and stratified Lie group. For a deeper introduction and discussion the reader is invited to consult \cite{FH1987,Fol,T1998,T2008}.

The Heisenberg group has several equivalent realisations one can choose according to context and purpose. The most common approach is to define it as the manifold
\[
\mathbb{H}_n:=\R^n\times\R^n\times\R,
\]
endowed  with the group law
\[
(x,y,t)(x',y',t')=\big(x+x',y+y',t+t'+\alpha(xy'-yx')\big),
\]
where $\alpha \in\R$\textbackslash$\{0\}$, $x,x',y,y'\in\Rn$, $t,t'\in\R$ and $xy=\sum_{j=1}^nx_jy_j$ denotes the usual scalar product on $\Rn$. Its group operation is then a perturbation of the sum in $\R^{2n+1}$ on the third component. In the quantum mechanical analogy, the first two components denote respectively the momentum and the position of a particle, and describe the state of the particle. 

A second possible realisation is to identify the set of triples $(x,y,t)\in\R^n\times\R^n\times\R$ with the group of $(n+2)\times(n+2)$ upper triangular matrices of the form
\[
\left(\begin{array}{c|c|c}
1 	& x^T 	& t+\alpha xy \\
\hline
\mathbf{0}	& I		& 2\alpha y \\
\hline
0	&\mathbf{0}^T		& 1
\end{array}\right)
\]
with the standard matrix product, where $I$ denotes the identity matrix on $\Rn$, $x^T$ is the transpose of $x$ so that $x^Ty=xy$ is again the scalar product and $\mathbf{0}$ denotes the zero-vector in $\Rn$. 

The last realisation we mention rephrases the first in terms of complex numbers. By setting $z=x+ i y$ with $x,y\in\Rn$, we can define
\[
\mathbb{H}_n:=\C^n\times\R,
\] 
with the product
\[
(z,t)(z',t')=(z+z',t+t'-\alpha \, \text{Im}(z\overline{z'})),
\]
where $z\overline{z'}=(x+iy)(x'-iy')=xx'+yy'-ixy'+ix'y$ and $xy$ is the scalar product on $\Rn$.

We will use the first realisation of these three, and as is standard we will take $\alpha=\frac{1}{2}$. It is well known that the Haar measure of $\mathbb{H}_n$ coincides with the Lebesgue measure of $\R^{2n+1}$. The identities 
\begin{align}\notag
(x,y,t)^{-1}&=(-x,-y,-t)\quad\text{and}\\\notag
(x,y,t)(x',y',t')(x,y,t)^{-1}&=(x',y',t'+xy'-yx')
\end{align}
underline that $\mathbb{H}_n$ is not commutative, and its centre $\mathbf{Z}$, i.e., the set of commuting elements, comprises all elements of the form $(0,0,t)$ with $t\in \R$. 

The space of all left-invariant vector fields of $\mathbb{H}_n$, that is, its Lie algebra $\mathfrak{h_n}$, admits the canonical basis
\begin{align}\notag
&X_j=\partial_{x_j}-\frac{y_j}{2}\partial_t,\\\notag
&Y_j=\partial_{y_j}+\frac{x_j}{2}\partial_t,\\\notag
&T=\partial_t,
\end{align}
where $j\in\{1,\ldots,n\}$. These satisfy the well-known commutation relations
\begin{equation}
\label{EQ:commrelHeis}
[X_j,Y_j]=T,\quad j\in\{1,\ldots,n\},
\end{equation}
and $[X_j,Y_k]=0$ for $j\neq k$. Therefore, it is straightforward that the Heisenberg Lie algebra admits the stratification
\[
\mathfrak{h}_n=V_1\oplus V_2,
\]
where $V_1=\sum_{j=1}^n \R X_j \oplus \R Y_j$ and $V_2=\R T$.

The representation theory of the Heisenberg group is well-understood. Indeed we can get a complete classification of all unitary, irreducible representations using the \textit{Stone--von Neumann Theorem}. Before stating the latter we construct a family of representations that will give rise to the classification.

The following lemma, proved using \textit{Schur's Lemma}, assigns to each representation of $\mathbb{H}_n$ a real number related to the action of the representation on the centre of the Heisenberg group.
\begin{lemma}\label{lemmaCentre}
	Let $\pi$ be a unitary, irreducible representation of $\mathbb{H}_n$. Then there exists a non-zero $\lambda=\lambda(\pi)\in\R$ such that
	\[
	\pi(0,0,t)=e^{i\lambda t},
	\]
	interpreted as a multiplication operator on $L^2(\Rn)$.
	Furthermore, if $\pi_1$ and $\pi_2$ are two equivalent unitary irreducible representations then $\lambda(\pi_1)=\lambda(\pi_2)$.
\end{lemma}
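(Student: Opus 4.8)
\textit{Proof sketch.} The plan is to read off $\lambda$ from Schur's Lemma applied to the centre, to identify the resulting one-parameter family of scalars, and then to check invariance under equivalence.

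First I would use that $(0,0,t)$ is central: for every $g\in\h_n$ we have $(0,0,t)g=g(0,0,t)$, so the homomorphism property of $\pi$ gives $\pi(0,0,t)\pi(g)=\pi(g)\pi(0,0,t)$ for all $g\in\h_n$. Hence $\pi(0,0,t)$ is a unitary operator lying in the commutant of the irreducible representation $\pi$. By Schur's Lemma --- in the form valid for unitary representations on a possibly infinite-dimensional Hilbert space, where the spectral theorem is applied to the normal operator $\pi(0,0,t)$ --- there is a scalar $c(t)$ with $\pi(0,0,t)=c(t)\,\Id_{\mathcal H_\pi}$, and $|c(t)|=1$ by unitarity.

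Next I would observe that $t\mapsto c(t)$ is a continuous homomorphism from $\R$ into $\mathbb T$: from $(0,0,t)(0,0,s)=(0,0,t+s)$ and the homomorphism property we get $c(t+s)=c(t)c(s)$, and fixing a unit vector $v\in\mathcal H_\pi$ the strong continuity of $\pi$ makes $t\mapsto c(t)=(\pi(0,0,t)v,v)_{\mathcal H_\pi}$ continuous. Since the continuous characters of $\R$ are exactly $t\mapsto e^{i\lambda t}$ with $\lambda\in\R$, we conclude $\pi(0,0,t)=e^{i\lambda t}\Id_{\mathcal H_\pi}$; once the Stone--von Neumann realisation of $\pi$ on $L^2(\Rn)$ is available this reads literally as multiplication by $e^{i\lambda t}$. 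For the non-vanishing of $\lambda$: if $\lambda=0$ then $\pi$ is trivial on the centre $\mathbf Z$, hence factors through the abelian quotient $\h_n/\mathbf Z\simeq\R^{2n}$, so it is one-dimensional and is one of the unitary characters $(x,y,t)\mapsto e^{i(a\cdot x+b\cdot y)}$; discarding these degenerate (and Plancherel-negligible) representations leaves precisely those with $\lambda\neq0$, which are exactly the infinite-dimensional ones.

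Finally, for invariance under equivalence, I would take an invertible intertwining operator $A\in\Hom(\pi_1,\pi_2)$, so $A\pi_1(g)=\pi_2(g)A$ for all $g$; specialising to $g=(0,0,t)$ and using the formula just proved gives $e^{i\lambda(\pi_1)t}A=e^{i\lambda(\pi_2)t}A$, and since $A\neq0$ this forces $e^{i\lambda(\pi_1)t}=e^{i\lambda(\pi_2)t}$ for all $t\in\R$, whence $\lambda(\pi_1)=\lambda(\pi_2)$ (e.g.\ by differentiating at $t=0$). There is no genuinely hard step: the one point that needs care is invoking Schur's Lemma in the right generality (a unitary operator in the commutant of an irreducible unitary representation is scalar, via the spectral theorem), and being clear that the ``multiplication operator on $L^2(\Rn)$'' phrasing and the automatic $\lambda\neq0$ are really features of the representations retained after the Stone--von Neumann classification rather than of arbitrary irreducible unitary representations of $\h_n$.
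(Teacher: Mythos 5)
Your proposal is correct and follows exactly the route the paper indicates: the paper does not print a proof of this lemma, stating only that it is ``proved using Schur's Lemma'' and deferring to \cite[Chapter VI]{R2001}, and your argument (centrality of $(0,0,t)$, Schur's Lemma via the spectral theorem for the unitary commutant, continuous characters of $\R$, and intertwiners for the equivalence claim) is the standard one. You are also right to flag that the advertised $\lambda\neq 0$ is not automatic for an arbitrary irreducible unitary representation --- the paper's own Lemma \ref{trivialoncentre} explicitly treats the case $\lambda(\pi)=0$ --- so the non-vanishing is really a convention about which representations one retains rather than a consequence of the argument.
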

The next result provides a characterisation of one-dimensional representations.
\begin{lemma}\label{trivialoncentre}
	If $\pi$ is a unitary, irreducible representation of $\mathbb{H}_n$ and is trivial on the centre, i.e. $\lambda(\pi)=0$, then the representation $\pi$ is one-dimensional and there exists $(\xi,\eta)\in\Rn\times\Rn$ such that
	\[
	\pi(x,y,t)=e^{i(\xi x+\eta y)},
	\]
	for every element $(x,y,t)$ in $\mathbb{H}_n$.
\end{lemma}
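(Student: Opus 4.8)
The plan is to reduce to the abelian quotient $\mathbb{H}_n/\mathbf{Z}$ and then invoke the fact that irreducible unitary representations of abelian groups are one-dimensional.

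First I would use the hypothesis directly: by Lemma \ref{lemmaCentre}, $\pi(0,0,t)=e^{i\lambda(\pi)t}\Id=\Id$ for all $t\in\R$, so $\pi$ is trivial on the centre $\mathbf{Z}=\{(0,0,t)\,|\,t\in\R\}$. Since $\mathbf{Z}$ is a closed normal subgroup, $\pi$ factors through the quotient: there is a strongly continuous unitary representation $\bar\pi$ of $\mathbb{H}_n/\mathbf{Z}$ on $\mathcal{H}_\pi$ with $\bar\pi(g\mathbf{Z})=\pi(g)$, and the $\bar\pi$-invariant subspaces coincide with the $\pi$-invariant ones, so $\bar\pi$ is again irreducible. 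Reading off the group law, the assignment $(x,y,t)\mathbf{Z}\mapsto(x,y)$ is a topological isomorphism $\mathbb{H}_n/\mathbf{Z}\cong\R^{2n}$, which is abelian.

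Next I would apply Schur's lemma. For every $g\in\mathbb{H}_n/\mathbf{Z}$ the operator $\bar\pi(g)$ commutes with all $\bar\pi(h)$ by commutativity of $\R^{2n}$, hence lies in the commutant of the irreducible representation $\bar\pi$; therefore $\bar\pi(g)=\chi(g)\Id$ for a scalar $\chi(g)\in\mathbb{T}$. But then every linear subspace of $\mathcal{H}_\pi$ is $\bar\pi$-invariant, and irreducibility forces $\dim\mathcal{H}_\pi=1$; thus $\pi$ is one-dimensional. Being one-dimensional, $\pi$ is a continuous unitary character of $\mathbb{H}_n$ that is trivial on $\mathbf{Z}$, so it descends to a continuous character of $\R^{2n}$. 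By the standard description of the characters of $\R^{2n}$ --- every continuous homomorphism $\R^{2n}\to\mathbb{T}$ has the form $v\mapsto e^{i\langle w,v\rangle}$ for a unique $w\in\R^{2n}$ --- there exist $\xi,\eta\in\Rn$ with $\pi(x,y,t)=e^{i(\xi x+\eta y)}$, as claimed.

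The only point requiring care is the passage to $\bar\pi$: one must check that $\bar\pi$ is well defined and strongly continuous (immediate from triviality on $\mathbf{Z}$ and the quotient topology), and that Schur's lemma is invoked in its operator-theoretic form valid for possibly infinite-dimensional unitary representations, so that ``the commutant consists of scalars'' is legitimate. Granting these standard facts, the rest is routine.
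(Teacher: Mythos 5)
Your proof is correct. The paper itself does not prove this lemma --- it states it and refers to \cite[Chapter VI]{R2001} --- and your argument (factor through the abelian quotient $\mathbb{H}_n/\mathbf{Z}\cong\R^{2n}$, apply the operator-theoretic Schur lemma to force one-dimensionality, then classify the continuous characters of $\R^{2n}$) is precisely the standard one found in that reference. The only point worth flagging is cosmetic: you invoke Lemma \ref{lemmaCentre} to get $\pi(0,0,t)=e^{i\lambda(\pi)t}\Id$ with $\lambda(\pi)=0$, but as stated that lemma asserts the existence of a \emph{non-zero} $\lambda$, so strictly you should rederive the scalar action on the centre directly from Schur's lemma (the centre lands in the commutant, hence acts by a continuous character $e^{i\lambda t}$ of $\R$, with $\lambda=0$ allowed); this is a slip in the paper's phrasing rather than a gap in your argument.
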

Then the following result constructs the explicit form of the desired representations.
\begin{proposition}\label{Prop1}
	Let $\sigma_1,\sigma_2$ be unitary representations of $\Rn$ on the same Hilbert space $\mathcal{H}$, such that
	\begin{align}\label{comrel}
	\sigma_1(x)\sigma_2(y)=e^{i\lambda x y}\sigma_2(y)\sigma_1(x),
	\end{align}
	for a certain $\lambda\in\R\setminus\{0\}$. Then there exists a unique unitary irreducible representation $\pi$ of $\mathbb{H}_n$ on $\mathcal{H}$ such that
	\begin{align}\notag
	&\pi(x,0,0)=\sigma_1(x),\\\notag
	&\pi(0,y,0)=\sigma_2(y),\\\notag
	&\pi(0,0,t)=e^{i \lambda t},
	\end{align}
	for every $(x,y,t)\in\mathbb{H}_n$.
\end{proposition}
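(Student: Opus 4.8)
The plan is to build $\pi$ explicitly from the prescribed one-parameter pieces, letting the group law of $\mathbb{H}_n$ dictate the phase. Since with $\alpha=\frac12$ one has $(x,0,0)(0,y,0)(0,0,-\tfrac12 xy)=(x,y,0)$, the only candidate compatible with multiplicativity and the boundary data is
\begin{align*}
\pi(x,y,t):=e^{i\lambda\left(t-\frac{1}{2}xy\right)}\,\sigma_1(x)\sigma_2(y).
\end{align*}
First I would check this is a homomorphism. Expanding $\pi(x,y,t)\pi(x',y',t')$ and pushing $\sigma_2(y)$ past $\sigma_1(x')$ by means of the commutation relation \eqref{comrel}, which gives $\sigma_2(y)\sigma_1(x')=e^{-i\lambda x'y}\sigma_1(x')\sigma_2(y)$, one collects $\sigma_1(x+x')\sigma_2(y+y')$ (using the additivity $\sigma_i(a)\sigma_i(b)=\sigma_i(a+b)$) together with a scalar phase. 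A short bookkeeping of the quadratic phases shows this phase equals $e^{i\lambda(t+t'-\frac12 xy-\frac12 x'y'-x'y)}$, which is exactly the prefactor of $\pi\big((x,y,t)(x',y',t')\big)$ once the Heisenberg third coordinate $t+t'+\frac12(xy'-yx')$ is inserted; so the two agree. Unitarity is immediate, since $\sigma_1,\sigma_2$ are unitary and the scalar factor has modulus one, and strong continuity is inherited from $\sigma_1,\sigma_2$.

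Uniqueness is the easy half and I would dispatch it next. Any representation $\pi'$ agreeing with the prescribed data on the three coordinate subgroups must, by multiplicativity applied to the same factorization $(x,y,t)=(x,0,0)(0,y,0)(0,0,t-\tfrac12 xy)$, satisfy $\pi'(x,y,t)=\sigma_1(x)\sigma_2(y)\,e^{i\lambda(t-\frac12 xy)}=\pi(x,y,t)$, so $\pi'=\pi$.

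The hard part will be irreducibility, and here I expect a genuine obstacle rather than a computation. Because every $\pi(x,y,t)$ is a scalar multiple of the product $\sigma_1(x)\sigma_2(y)$, and a scalar phase preserves every subspace, a closed subspace $W\subseteq\mathcal{H}$ is $\pi$-invariant if and only if it is jointly invariant under all $\sigma_1(x)$ and all $\sigma_2(y)$. Thus $\pi$ is irreducible exactly when the family $\{\sigma_1(x),\sigma_2(y)\}_{x,y\in\Rn}$ acts irreducibly on $\mathcal{H}$. This last property is \emph{not} a consequence of \eqref{comrel} alone: the direct sum of two copies of the canonical pair again satisfies \eqref{comrel} yet fails to be irreducible, so the proposition as literally worded needs a hypothesis beyond the Weyl relation. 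To conclude I would therefore either add the assumption that $(\sigma_1,\sigma_2)$ has no nontrivial common closed invariant subspace, or---as is clearly intended in this build-up toward the Stone--von Neumann theorem---specialize to the canonical modulation/translation operators on $\mathcal{H}=L^2(\Rn)$. In that concrete model irreducibility follows from Schur's lemma: a bounded operator commuting with every translation is a Fourier multiplier, and one commuting in addition with every modulation must be multiplication by a constant, whence the commutant of $\pi$ is scalar. I would flag the general-$(\sigma_1,\sigma_2)$ phrasing of the irreducibility claim as precisely the point requiring this extra input.
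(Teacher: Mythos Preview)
Your construction matches the paper exactly: the paper does not give its own proof but refers to \cite[Chapter VI]{R2001} and states that the standard argument uses precisely the formula
\[
\pi(x,y,t)=e^{i\lambda\left(t-\frac{1}{2}xy\right)}\sigma_1(x)\sigma_2(y),
\]
which is the formula you derived and verified. So on existence and uniqueness you are aligned with (and more explicit than) the paper.

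Your flag on irreducibility is correct and worth keeping. The proposition as literally stated asserts that $\pi$ is irreducible, but as you observe this cannot follow from the Weyl relation \eqref{comrel} alone, since a direct sum of two copies of any pair satisfying \eqref{comrel} again satisfies it while producing a reducible $\pi$. The paper sidesteps this by not proving the proposition itself and, a few lines later, specialising to the concrete pair on $L^2(\Rn)$ and remarking that irreducibility of \emph{that} representation is proved in \cite{T1998}. So the intended reading is exactly the one you identified: either one adds the hypothesis that $\{\sigma_1(x),\sigma_2(y)\}$ has trivial common invariant subspaces, or one only claims irreducibility in the Schr\"odinger model. Your Schur-lemma sketch for the concrete case is the standard route and is fine.
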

From a physical perspective the unitary representations $\sigma_1$ and $\sigma_2$ are generated respectively by momentum and position operators in quantum mechanics, see \cite{T2008}. 

Proofs for these three results can be found in, e.g., \cite[Chapter VI]{R2001}. The standard proof of Proposition \ref{Prop1} uses the following explicit form for the representation:
\[
\pi(x,y,t)=e^{i\lambda (t-\frac{1}{2}x y)}\sigma_1(x)\sigma_2(y).
\]
In particular we can choose
\[
\mathcal{H}= L^2(\Rn),
\]
and
\begin{align}\notag
&[\sigma_1(x)\phi](u)=\phi(u+x),\\\notag
&[\sigma_2(y)\phi](u)=e^{i y u }\phi(u).
\end{align}
Then the property \eqref{comrel} is satisfied for $\lambda=1$ and the resulting representation is given by
\begin{align}\label{schrodinger}
\pi_1(x,y,t)\phi(u)=e^{i(t-\frac{1}{2}xy+yu)}\phi(u+x).
\end{align}
Furthermore, it is possible to prove that the representation \eqref{schrodinger} is irreducible \cite{T1998}.

By dilating this irreducible representation, we can obtain a representation corresponding to each $\lambda\in\R\setminus\{0\}$. In order to do this, for every $\lambda \ne 0$ consider the following continuous automorphism of $\mathbb{H}_n$:
\[
\tau_{\lambda}(x,y,t)=(x,\lambda y, \lambda t ).
\]   
The composition of a unitary irreducible representation with $\tau_\lambda$ is again a unitary, irreducible representation, and thus we recover the desired representation
\begin{align}\label{schrodinger2}
\pi_\lambda(x,y,t)\phi(u)=[\pi_1(x,\lambda y,\lambda t)](u)=e^{i\lambda(t-\frac{1}{2}xy+yu)}\phi(u+x).
\end{align}
\begin{definition}
	Let $\lambda\in\R\setminus\{0\}$. The infinite-dimensional representation
	\[
	\pi_\lambda:\mathbb{H}_n\rightarrow\mathcal{U}(L^2(\Rn)),
	\]
	given by \eqref{schrodinger2} is called the \textit{Schr\"odinger representation}.
\end{definition}
Thus the Heisenberg group is constructed as a group of unitary operators acting on $L^2(\Rn)$.

Now we state the Stone--von Neumann Theorem, which says that any irreducible unitary representation of $\mathbb{H}_n$ that is non-trivial on the centre is equivalent to some $\pi_\lambda$. We roughly follow the presentation in \cite[Section 1.5]{Fol}.
\begin{theorem}[Stone--von Neumann]
	Let $\pi$ be a unitary representation of $\mathbb{H}_n$ on a Hilbert space $\mathcal H$, such that $\pi(0,0,t)$ is the multiplication operator $e^{i\lambda t}$ for some $\lambda\in\R\setminus\{0\}$. Then there is an orthonormal basis $\{v_\alpha\}$ for $\mathcal{H}$ such that the spans $\mathcal{H}_\alpha=\SPAN\{v_\alpha\}$ give the orthogonal decomposition
	\[
	\mathcal{H}=\oplus \mathcal H_{\alpha}.
	\]
	Each $\mathcal{H}_\alpha$ is invariant under $\pi$, and each $\pi|_{\mathcal H_\alpha}$ is unitarily equivalent to $\pi_\lambda$.\\
	In particular, if $\pi$ is irreducible then $\pi$ is equivalent to $\pi_\lambda$.
\end{theorem}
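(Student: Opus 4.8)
The plan is to prove the theorem by passing to the \emph{integrated representation} and exhibiting a distinguished projection built from a Gaussian, the classical twisted-convolution method. First I would reduce to the case $\lambda=1$: the map $\tau_\lambda(x,y,t)=(x,\lambda y,\lambda t)$ is an automorphism of $\h_n$ with $\tau_{1/\lambda}(0,0,t)=(0,0,t/\lambda)$, so $\pi':=\pi\circ\tau_{1/\lambda}$ satisfies $\pi'(0,0,t)=e^{it}$; since $\pi_\lambda=\pi_1\circ\tau_\lambda$, any orthogonal decomposition $\mathcal H=\bigoplus_\alpha\mathcal H_\alpha$ for $\pi'$ with $\pi'|_{\mathcal H_\alpha}\cong\pi_1$ immediately gives $\pi|_{\mathcal H_\alpha}\cong\pi_\lambda$ (the subspaces are unchanged, only the action is twisted back). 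So assume $\pi(0,0,t)=e^{it}$. Because $\pi$ is scalar on the centre, integrating $\pi$ against $L^1$ functions on $\h_n$ factors through a partial Fourier transform in $t$, and one is reduced to studying the algebra $\big(L^1(\R^{2n}),\natural\big)$ of twisted convolution acting on $\mathcal H$ via $\sigma(F)=\int_{\R^{2n}}F(x,y)\,\pi(x,y,0)\,dx\,dy$, together with its involution $F^{*}(x,y)=\overline{F(-x,-y)}$.

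The heart of the argument is the choice of a suitably normalised Gaussian $\Phi(x,y)=c_n e^{-\frac14(|x|^2+|y|^2)}$ on $\R^{2n}$ and the operator $P:=\sigma(\Phi)$. I would verify, by explicit Gaussian integrals in the twisted-convolution algebra, that $\Phi^{*}=\Phi$ and $\Phi\natural\Phi=\Phi$, so that $P=P^{*}=P^{2}$ is an orthogonal projection, and moreover that
\[
P\,\pi(x,y,0)\,P=\rho(x,y)\,P ,
\]
for an explicit Gaussian function $\rho$ on $\R^{2n}$ with $\rho(0)=1$. This identity expresses the fact that $\Phi$ is, up to a scalar, a minimal self-adjoint idempotent of twisted convolution, and it is precisely what makes the corresponding projection rank one in the Schr\"odinger model. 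One also needs $P\neq0$, which follows from nondegeneracy of the integrated representation (an approximate identity of $\big(L^1(\R^{2n}),\natural\big)$ maps under $\sigma$ to operators converging strongly to $I$, so $\sigma$ cannot annihilate $\Phi$).

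Granting these facts I would conclude as follows. Choose an orthonormal basis $\{v_\alpha\}$ of the range of $P$ and set $\mathcal H_\alpha:=\overline{\SPAN}\{\pi(g)v_\alpha:g\in\h_n\}$, which is $\pi$-invariant by construction. Using $\pi(h)^{*}\pi(g)=\pi(h^{-1}g)$ and $P\pi(g)P=\rho(g)P$, the inner products $\langle\pi(g)v_\alpha,\pi(h)v_\beta\rangle$ collapse to a scalar Gaussian expression times $\langle v_\alpha,v_\beta\rangle$; this yields orthogonality $\mathcal H_\alpha\perp\mathcal H_\beta$ for $\alpha\neq\beta$, and simultaneously shows that the diagonal matrix coefficient $g\mapsto\langle\pi(g)v_\alpha,v_\alpha\rangle$ is \emph{the same} function of $g$ for every $\alpha$, coinciding with the matrix coefficient of a fixed normalised Gaussian $g_0(u)=c_n' e^{-|u|^2/2}\in L^2(\Rn)$ under $\pi_1$. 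By the GNS-type uniqueness of a cyclic representation determined by a positive-definite matrix coefficient, the assignment $\pi(g)v_\alpha\mapsto\pi_1(g)g_0$ extends to a unitary intertwiner $\mathcal H_\alpha\to L^2(\Rn)$, so $\pi|_{\mathcal H_\alpha}\cong\pi_1$. Finally $\bigoplus_\alpha\mathcal H_\alpha=\mathcal H$: a vector $w$ orthogonal to all $\mathcal H_\alpha$ satisfies $P\pi(g)w=0$ for all $g$, hence $\langle w,\pi(g)P\pi(h)w\rangle=0$ for all $g,h$; since the operators $\pi(g)P\pi(h)$ weakly span a $*$-subalgebra acting nondegenerately on $\mathcal H$ (as $P\neq0$), it follows that $w=0$. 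Undoing the reduction through $\tau_\lambda$ replaces $\pi_1$ by $\pi_\lambda$, and if $\pi$ is irreducible there is only one summand, so $\pi\cong\pi_\lambda$.

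The main obstacle is the middle step: proving that $P=\sigma(\Phi)$ is a \emph{nonzero orthogonal projection} satisfying $P\pi(g)P=\rho(g)P$. Everything else is essentially formal, but this identity is where the special structure of the Heisenberg commutation relations enters, and it requires both the Gaussian twisted-convolution computations (the minimal-idempotent property of $\Phi$) and a clean nonvanishing argument for $P$; some care is also needed to make the integrated representation $\sigma$ genuinely well-defined, with absolutely convergent integrals, on a large enough subalgebra of $L^1(\R^{2n})$.
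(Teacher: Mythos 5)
Your route is the classical twisted-convolution/Gaussian-idempotent argument, which is precisely the proof in Folland's \emph{Harmonic Analysis in Phase Space}, Section 1.5 --- the very source the thesis cites for this theorem; the thesis itself states the result without proof. In structure, then, your proposal is the intended one: reduction to $\lambda=1$ via $\tau_\lambda$, the integrated representation $\sigma$ on the twisted-convolution algebra, the projection $P=\sigma(\Phi)$ satisfying $P\pi(x,y,0)P=\rho(x,y)P$, orthogonality and the unitary intertwiners read off from matrix coefficients, and the completeness step. All of that is sound, and your bookkeeping with the central character and the automorphism $\tau_{1/\lambda}$ is consistent with the conventions used in the thesis.

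The one step that does not hold together as written is the assertion that $P\neq 0$ ``follows from nondegeneracy of the integrated representation.'' Nondegeneracy of $\sigma$ does not by itself prevent $\sigma$ from annihilating a particular nonzero element of $L^1(\R^{2n})$: a nondegenerate representation of a Banach $*$-algebra can have a nonzero kernel ideal, so the approximate-identity argument only rules out $\sigma\equiv 0$, not $\sigma(\Phi)=0$. What rescues the step is a density fact specific to the Gaussian: the modulated translates $e^{i(px+qy)}\Phi(x-a,y-b)$ span a dense subspace of $L^1\cap L^2(\R^{2n})$ (provable via the Bargmann transform or Hermite expansions). Since $\pi(g)P\pi(h)=\sigma(F_{g,h})$ with $F_{g,h}$ exactly such a modulated translate of $\Phi$, the vanishing of $P$ would force $\sigma$ to vanish on a dense subspace and hence identically, contradicting nondegeneracy. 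The same density input is what you are implicitly invoking when you claim the operators $\pi(g)P\pi(h)$ ``weakly span a $*$-subalgebra acting nondegenerately'' in your completeness step; as written that nondegeneracy is asserted rather than proved, and it is precisely the content of the density lemma. (A cleaner way to finish completeness, as in Folland: restrict $\pi$ to $\bigl(\oplus_\alpha\mathcal H_\alpha\bigr)^{\perp}$ and rerun the nonvanishing argument to reach a contradiction.) With that lemma supplied, your proof is complete.
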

We conclude the Section by stating that, together with Lemma \ref{trivialoncentre}, the Stone--von Neumann Theorem gives rise to a complete classification of all unitary, irreducible representations of $\mathbb{H}_n$.
\newpage
\begin{theorem}
	Let $\pi$ be a unitary, irreducible representation of $\mathbb{H}_n$ on a Hilbert space $\mathcal H$. If $\pi$ is trivial on the centre, then $\pi$ is one-dimensional and is given by
	\[
	\pi(x,y,t)=e^{i(\xi x +\eta y)},
	\]
	for some $(\xi,\eta)\in\Rn\times\Rn$. Otherwise, $\pi$ is equivalent to $\pi_\lambda$ for some $\lambda\in\R\setminus\{0\}$.
\end{theorem}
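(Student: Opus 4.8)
The plan is to obtain the theorem as an immediate synthesis of the three preparatory results already at hand: Lemma \ref{lemmaCentre}, Lemma \ref{trivialoncentre}, and the Stone--von Neumann Theorem. The organising principle is the scalar $\lambda=\lambda(\pi)$ attached to $\pi$ by its action on the centre $\mathbf Z=\{(0,0,t)\,|\,t\in\R\}$: the whole statement is a dichotomy between $\lambda=0$ and $\lambda\neq0$, and each branch is handled by exactly one of the cited results.

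First I would apply Lemma \ref{lemmaCentre}. Since $\pi$ is unitary and irreducible and $\mathbf Z$ is central, Schur's Lemma forces each $\pi(0,0,t)$ to be a scalar operator; unitarity together with the homomorphism property $\pi(0,0,t+t')=\pi(0,0,t)\pi(0,0,t')$ then pins this scalar down, so that $\pi(0,0,t)=e^{i\lambda t}$ for a real number $\lambda=\lambda(\pi)$ uniquely determined by $\pi$, and indeed constant on the equivalence class of $\pi$.

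The argument then splits into two mutually exclusive and exhaustive cases. If $\lambda=0$, then $\pi$ is trivial on the centre, so Lemma \ref{trivialoncentre} applies verbatim: it gives that $\pi$ is one-dimensional and of the form $\pi(x,y,t)=e^{i(\xi x+\eta y)}$ for some $(\xi,\eta)\in\Rn\times\Rn$, which is precisely the first alternative of the theorem. If $\lambda\neq0$, then $\pi$ satisfies the hypothesis of the Stone--von Neumann Theorem with this same $\lambda$, and that theorem furnishes an orthogonal decomposition $\mathcal H=\oplus_\alpha\mathcal H_\alpha$ into $\pi$-invariant subspaces on each of which $\pi$ restricts to a representation unitarily equivalent to the Schr\"odinger representation $\pi_\lambda$. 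Irreducibility of $\pi$ forces the decomposition to have a single summand, hence $\pi\sim\pi_\lambda$, which is the second alternative.

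There is essentially no genuine obstacle here beyond careful bookkeeping: one must check that the $\lambda$ produced by Lemma \ref{lemmaCentre} is literally the same $\lambda$ fed into the Stone--von Neumann Theorem, which it is, since both are characterised by $\pi(0,0,t)=e^{i\lambda t}$; and one should note that the two alternatives exhaust all cases precisely because $\lambda(\pi)$ is well defined and is either zero or nonzero. It is worth emphasising that the real analytic content has already been spent in the proofs of the three auxiliary results---in particular in verifying that each $\pi_\lambda$ is itself irreducible (so that the second alternative names genuine points of $\widehat{\mathbb H_n}$) and that distinct values of $\lambda$ give inequivalent representations.
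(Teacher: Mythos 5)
Your proposal is correct and follows exactly the route the paper intends: the theorem is stated there as an immediate consequence of Lemma \ref{lemmaCentre}, Lemma \ref{trivialoncentre} and the Stone--von Neumann Theorem, with the dichotomy on $\lambda(\pi)$ being zero or not, and the final clause of Stone--von Neumann already recording that irreducibility forces equivalence to a single $\pi_\lambda$. Your only extra care---checking that the $\lambda$ from Lemma \ref{lemmaCentre} is the one fed into Stone--von Neumann---is sound and also quietly corrects the paper's slip of calling that $\lambda$ ``non-zero'' in the lemma's statement.
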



\chapter{Rockland wave equation}
\label{CP:3}
In the `80s, the question of the well-posedness of the Cauchy problem associated to the wave equation
\begin{equation}\label{Cauchy}
\left\{\begin{array}{rclc}
\partial ^2_{t}u(t,x)-a(t)\partial ^2_{x}u(t,x)	&=&0,			&(t,x)\in [0,\tau]\times\Rn,\\
u(0,x)	&=&\varphi(x),	&x\in\Rn,\\
\partial _t u(0,x)	&=&\psi(x),		&x\in\Rn,
\end{array}\right.
\end{equation}
with a time-dependent coefficient $a(t)\geq 0$ has been extensively studied. For the sake of completeness, we recall the definition of well-posedness:
\begin{definition}
	A Cauchy problem is said to be \textit{well-posed} in a certain class of real functions or distributions, say $\mathcal{F}{(\Rn_x)}$, if, for any initial data $\varphi$ and $\psi$ in $\mathcal{F}{(\Rn_x)}$, it admits a unique solution $u(t,x)$ in $\mathcal{C}^1([0,\tau],\mathcal{F}(\Rn_x))$, and the mapping 
	\begin{align*}
	(\varphi,\psi)\mapsto u 
	\end{align*} 
	is continuous. Furthermore, if the Cauchy problem is well-posed, then the corresponding equation is said to be \textit{hyperbolic}.
\end{definition}

In 1979, Colombini, de Giorgi and Spagnolo proved in their seminal paper \cite{CDS79} well-posedness results for the Cauchy problem \eqref{Cauchy}. In fact, it became well known that \eqref{Cauchy} is well posed in $\mathcal{C}^\infty(\Rn)$ and $\mathcal{D}'(\Rn)$ provided that there is a constant $\lambda$ such that
\begin{align*}
a(t)\geq\lambda>0,
\end{align*}
and that $a$ is `regular enough'. But as soon as the coefficient $a$ is allowed either to vanish at some points, or to be `not regular enough'---for example the H\"older regularity $a\in \mathcal C^\alpha$ for $0<\alpha<1$ is sufficient---then the problem \eqref{Cauchy} might not be well-posed in $\mathcal C^\infty$ or $\mathcal{D}'$. We note that Gevrey regularity appears naturally in the latter cases, postponing the discussion of Gevrey functions to Chapter \ref{CP:4}.

Shortly after, Colombini and Spagnolo \cite{CS1982} and Colombini, Jannelli and Spagnolo \cite{Colombini-Jannelli-Spagnolo:Annals-low-reg} considered the one-dimensional case  and the $n$-dimensional case of the Cauchy problem \eqref{Cauchy}, respectively. In particular the former explicitly constructed a smooth, positive $a(t)$ such that $a(t)\equiv 0$ for $\rho< t\leq \tau$, and smooth initial data $\varphi$ and $\psi$ in a way that \eqref{Cauchy} has no solutions in $\mathcal{C}^1([0,\tau],\mathcal{D}'(\R))$. The latter proved the existence of a smooth positive function $a$ satisfying $a(0)=0$ such that the corresponding wave equation loses the uniqueness of the solution.

The purpose of this Chapter is to extend the study of the well-posedness of the Cauchy problem for the wave equation with time-dependent coefficients to graded groups. Indeed, graded groups are the natural setting to generalise many questions of Euclidean harmonic analysis, as shown in Chapter \ref{CP:2}.

We underline that the whole Chapter is based on the preprint paper \cite{RT2017}.

\section{Main results}\label{SEC:weR}

Let $G$ be a graded Lie group, as defined in Definition \ref{defGraded}. We are interested in the well-posedness of the Cauchy problem
\begin{equation}\label{CP}
\left\{\begin{array}{rclc}
\partial_t^2 u(t,x)+a(t)\mathcal R u(t,x)	&=&0,		&(t,x)\in[0,\tau]\times G,\\
u(0,x)	&=&u_0(x),	&x\in G,\\
\partial_t u(0,x)	&=&u_1(x),	&x\in G,
\end{array}\right.
\end{equation}
for the time-dependent propagation speed $a=a(t)\geq 0$, where $\mathcal{R}$ is a positive Rockland operator, which is defined in Definition \ref{rockland}.

In the case of $G=\Rn$ and $\mathcal R=-\Delta=-\sum_{j=1}^n\frac{\partial^2}{\partial x_j^2}$, the equation in \eqref{CP} is the usual wave equation with time-dependent propagation speed, and the full Cauchy problem becomes equivalent to \eqref{Cauchy}. However in the following situations we will obtain new results:
\begin{itemize}
	\item[(i)] $G=\mathbb H^n$ is the Heisenberg group and $\mathcal R$ is the positive Kohn--Laplacian on $G$.
	\item[(ii)] $G$ is a stratified Lie group and $\mathcal R$ is a sub-Laplacian on $G$.
	\item[(iii)] $G$ is a graded Lie group in the sense of Folland and Stein \cite{FS} and $\mathcal R$ is any positive Rockland operator on $G$, i.e. any positive left-invariant homogeneous hypoelliptic differential operator on $G$.
\end{itemize} 
In fact, we prove our results for the latter case (iii), the other two being special instances. In particular, if $G$ is the Euclidean space $\Rn$, the Heisenberg group $\mathbb H^n$, or any stratified Lie group, the case (iii) allows one to take $\mathcal R$ to be an operator of \textit{any order}, as long as it is a positive left-(or right-)invariant homogeneous hypoelliptic differential operator. In $\Rn$ the related so-called $p$-evolution equations have been studied in, e.g., \cite{CHR08a,CHR08b, CC10}, with more restrictive conditions on $a(t)$ than those considered in this work.

For $a(t)\equiv 1$, $G$ the Heisenberg group $\mathbb H^n$ and $\mathcal R$ a positive sub-Laplacian, the wave equation \eqref{CP} was studied by M\"uller and Stein \cite{Muller-Stein:Lp-wave-Heis} and Nachman \cite{Nachman:wave-Heisenberg-CPDE-1982}.
Other noncommutative settings with $a(t)\equiv 1$ have been analysed as well, see, e.g., Helgason \cite{Helgason:wave-eqns-hom-spaces-1984}.
For $G$ a compact Lie group and $-\mathcal R$ any H\"ormander sum of squares of vector fields on $G$ the problem \eqref{CP} was studied in \cite{GR2015}, and so the results of this Chapter provide a nilpotent counterpart of the results there.

Apart from the intrinsic value of studying subelliptic and related operators on stratified or graded Lie groups, in view of the celebrated lifting theorem of Rothschild and Stein \cite{Rothschild-Stein:AM-1976}, these settings are the model cases for many corresponding problems for general partial differential operators on manifolds.

From the point of view of the time-dependent coefficient $a(t)$, we aim to carry out a comprehensive analysis. Thus we distinguish between the following four cases:
\begin{enumerate}[\text{$\quad$ Case} 1:]
	\item $a\in\mathcal {\rm Lip}([0,\tau])$, $a(t)\geq a_0>0$;
	\item $a\in\mathcal C^\alpha([0,\tau])$, $0<\alpha<1$, $a(t)\geq a_0 >0$;
	\item $a\in\mathcal C^l([0,\tau])$, $l \geq 2$, $a(t)\geq 0$;
	\item $a\in\mathcal C^\alpha([0,\tau])$, with $0<\alpha<2$, $a(t)\geq0$.
\end{enumerate}
The first is the simplest situation since $a$ is regular and time-non-degenerate, while in the fourth we have an irregular coefficient allowed to vanish at some points. The second and third situations are `intermediate' cases, in the sense that we have either sufficient regularity or strict positivity. We distinguish between these cases because the results and methods of proofs are rather different. In Case 1, we develop a `classical' approach based on the use of the symmetriser, although in Case 3 the presence of degeneracies is dealt with by constructing the so-called \textit{quasi-symmetriser}. In Case 2, we proceed with an argument based on the regularisation and separation of characteristic roots. We observe that in the fourth Case we choose the order of the H\"older regularity of the coefficient to vary between zero and two, $0<\alpha<2$, in order to provide a setting similar to the one in Case 2, and develop a similar proof. In fact if $a\in\mathcal C^\alpha([0,\tau])$ with $0<\alpha<2$, then the characteristic roots belong to $\mathcal C^{\frac{\alpha}{2}}([0,\tau])$, where $0<\frac{\alpha}{2}<1$, analogously to Case 2.

An outline of this Chapter is as follows. Shortly we will recall the notions necessary to stating the main theorem, some comments and a corollary. In Section \ref{SEC:ODE} we examine and prove energy estimates for certain second-order ordinary differential equations whose parameter dependence is explicitly given. In Section \ref{SEC:proof} we finish the proofs of all four claims.

Fundamental to all the proofs is application of the global Fourier analysis on $G$, developed in Chapter \ref{CP:2}, to the Cauchy problem \eqref{CP}. This allows one to view the Rockland wave equation as an infinite system of equations with coefficients depending only on $t$, leading to a range of sharp results based on the behaviour of the coefficient $a(t)$. We underline that the infinite system of equations is due to the fact that the group Fourier transform produces operators. Nevertheless, the properties of symbols of positive Rockland operators allow one to decouple the system completely into independent scalar equations, as we will see in the third Section.

We note that if the operator $\mathcal R$ is not elliptic, the local approach to the Cauchy problem \eqref{CP} is problematic since the equation is only {\em weakly hyperbolic} in Case 1 above. Consequently, since equation \eqref{CP} in local coordinates has principal symbol with variable multiplicities, very little is known about its well-posedness. For such problems, only a few special results are available for second-order operators, for example from Nishitani \cite{Nishitani:BSM-1983} or Melrose \cite{Melrose:wave-subelliptic-1986}.
Non-Lipschitz coefficients have been also seen analysis, e.g., Colombini and M\'etivier \cite{CM} or Colombini and Lerner \cite{CL}.

In $\Rn$ with $\mathcal R$ being the Laplacian, the case of $a$ being less than H\"older-regular---such as discontinuous or measure-valued---has been considered in \cite{Garetto-Ruzhansky:ARMA}. However, such low regularity requires very different methods from those applied here.

To formulate our results, let us briefly recall some necessary notation that has been introduced in Chapter \ref{CP:2}. (We refer mainly to Folland and Stein \cite{FS} and Fischer and Ruzhansky \cite{FR2016}).
Let $G$ be a {\em graded Lie group}, i.e., a simply connected Lie group such that its
Lie algebra $\mathfrak g$ has  a vector space decomposition
\begin{equation}\label{EQ:graded}
\mathfrak g = \oplus_{j=1}^\infty V_j,
\end{equation} 
such that all but finitely many of the $V_j$'s are $\{0\}$ and 
$[V_i,V_j]\subset V_{i+j}$, see Definition \ref{defGraded}. A special case analysed in detail by Folland \cite{F75} is that of {\em stratified Lie groups}, where the first stratum $V_1$ generates $\mathfrak g$ as an algebra, as in Definition \ref{DefStratified}. The prototypical example of such a Lie group is the Heisenberg group, which is the object of study in Subsection \ref{Heisenberg}. In general, graded Lie groups are necessarily homogeneous and nilpotent. In this way any graded Lie group can be viewed as $\Rn$ with a polynomial group law. 

Let $\mathcal R$ be a positive Rockland operator on $G$, that is, a positive (in the operator sense) left-invariant differential operator which is homogeneous of degree $\nu>0$ and which satisfies the so-called Rockland condition. The latter means that for each non-trivial representation $\pi\in\widehat G$ the operator $\pi(\mathcal R)$ is injective on the space of smooth vectors $\mathcal H^\infty_\pi$, i.e.,
\begin{align}\label{Rockland}
\forall v\in\mathcal H^\infty_\pi \quad \pi(\mathcal R)v=0\implies v=0.
\end{align}
Alternative characterisations of such operators have been considered by Rockland \cite{Rockland} and Beals \cite{Beals-Rockland}, until the definitive result of Helffer and Nourrigat \cite{HN-79} saying that {\em Rockland operators are precisely the left-invariant homogeneous hypoelliptic differential operators on $G$.} A Rockland operator exists on a homogeneous Lie group if and only if the Lie group is graded \cite{Miller:80, tER:97}, as discussed in Subsection \ref{SectionRockland}. As mentioned there, if $G$ is a stratified Lie group and $\{X_1,\dots,X_k\}$ is a basis for the first stratum of its Lie algebra, then the positive sub-Laplacian 
\[
\slp=-\sum_{j=1}^{k}X_j^2
\]
is a positive Rockland operator. Moreover, for any $m\in\mathbb N$, the operator
$$
\mathcal R=(-1)^{m}\sum_{j=1}^{k}X_j^{2m}
$$
is also a positive Rockland operator on the stratified Lie group $G$. More generally, for any $n$-dimensional graded Lie group $G$, given a basis $X_1,\ldots,X_n$ of its Lie algebra $\mathfrak g$ with dilation weights $\nu_1,\ldots,\nu_n$ such that
\begin{equation}\label{EQ:dils}
D_r X_j=r^{\nu_j} X_j,\quad j=1,\ldots,n,\; r>0,
\end{equation} 
where $D_r$ are dilations on $\mathfrak g$, then for any common multiple $\nu_0$ of $\nu_1,\ldots,\nu_n$
the operator
$$
\mathcal R=\sum_{j=1}^n (-1)^{\frac{\nu_0}{\nu_j}} a_j X_j^{2\frac{\nu_0}{\nu_j}},\quad a_j>0,
$$
is a Rockland operator of homogeneous degree $2\nu_0$. 

To formulate our results we will need two scales of spaces, namely, Sobolev and Gevrey spaces, adapted to the setting of graded Lie groups. In Chapter \ref{CP:2}, we have introduced the (inhomogeneous) Sobolev spaces, see Definition \ref{defSobolev}. We postpone a deeper discussion about the so-called Gevrey spaces to the next Chapter. 

Let $G$ be a graded Lie group and let $\mathcal R$ be a positive Rockland operator of homogeneous degree $\nu$. For any real number $s\in\R$, we consider the Sobolev space $H^s_\mathcal R(G)$. In Theorem \ref{THM:SobolevindepRockland}, we saw that these spaces do not depend on the choice of the Rockland operator $\mathcal R$. We will prove later on in this Chapter that these spaces are well suited to treat Case 1 above. But even in the Euclidean case, with the elliptic Laplace operator in place of the hypoelliptic Rockland operator in the wave equation \eqref{CP}, if the coefficient $a(t)$ is not Lipschitz regular or may become zero, the Gevrey spaces appear naturally (see, e.g., Bronshtein \cite{Bronshtein:TMMO-1980}) since we can no longer expect well-posedness in $\mathcal C^\infty(G)$ or $\mathcal D'(G)$. Thus, it is necessary to introduce another class of functions: given any $s\geq 1$, we define the Gevrey-type space of functions on the graded Lie group $G$
\begin{align}\label{G}
\mathcal G^s_{\mathcal R}(G):=\{f\in\mathcal{C}^\infty(G)\,|\,\exists A>0\,:\,\|e^{A\mathcal R^{\frac{1}{\nu s}}}f\|_{L^2(G)}<\infty\}.
\end{align}

This space is the subelliptic version of the usual Gevrey space. Indeed the class of $\mathcal{G}^s_{\mathcal{R}}(G)$ generalises the class of Gevrey spaces in the sense that $\mathcal{R}$ being the Laplacian would imply \eqref{G} is the `classical' Gevrey space of index $s$. Garetto and Ruzhansky \cite{GR2015} were the first to describe spaces similar to \eqref{G} but constructed from a smaller class of operators---a sum of squares of vector fields---and from compact Lie groups. Dasgupta and Ruzhansky considered classical Gevrey spaces \eqref{G} and corresponding spaces of ultradistributions, on compact Lie groups \cite{DR2014} and on compact manifolds\footnote{In fact they considered a general elliptic operator, not simply the Laplacian.} \cite{DR2016}. In keeping with the next Chapter, we refer to these spaces as the \textit{classical Gevrey spaces} when they are defined with respect to all the vector fields generating the Lie algebra, or, equivalently, with respect to the Laplacian operator.

By an argument similar to both \cite[Theorem 2.4]{DR2014} for elliptic operators and to one for sub-Laplacians that we will develop thoroughly in the next Chapter, it can be shown that if $\mathcal R$ is a positive Rockland operator of homogeneous degree $\nu$, then a function $f$ belongs to $\mathcal G^s_{\mathcal R}(G)$ if and only if there exist constants $B,C>0$ such that for every  $k\in\mathbb N_0$ we have
\begin{equation}\label{EQ:Gevchar}
\|\mathcal R^k f\|_{L^2(G)}\leq C B^{\nu k} ((\nu k)!)^s.
\end{equation} 
Since Sobolev spaces do not depend on the particular choice of Rockland operator used in their definition, as was shown in Theorem \ref{THM:SobolevindepRockland}, the characterisation \eqref{EQ:Gevchar} of Gevrey spaces implies that the same holds for $\mathcal G^s_{\mathcal R}(G)$. 

Let us proceed by formulating the main well-posedness results for the Cauchy problem \eqref{CP} for the Rockland wave equation. For convenience in stating and reading the next theorem we recall below the four types of coefficient $a(t)$ considered, being the four cases to which we will refer throughout the rest of this Chapter.
\begin{enumerate}[\textbf{$\qquad$ Case }\bf 1:]
	\item $a\in\mathcal {\rm Lip}([0,\tau])$, $a(t)\geq a_0>0$;
	\item $a\in\mathcal C^\alpha([0,\tau])$, $0<\alpha<1$, $a(t)\geq a_0 >0$;
	\item $a\in\mathcal C^l([0,\tau])$, $l \geq 2$, $a(t)\geq 0$;
	\item $a\in\mathcal C^\alpha([0,\tau])$, with $0<\alpha<2$, $a(t)\geq0$.
\end{enumerate}

\begin{theorem}\label{THM:main}
	Let $G$ be a graded Lie group and let $\mathcal R$ be a positive Rockland operator of homogeneous degree $\nu$. 
	Let $T>0$.
	Then in the above Cases 1-4 the following results hold:
	\begin{enumerate}[\text{Case} 1:]
		\item Given $s\in\R$, if the initial Cauchy data $(u_0,u_1)$ are in $H_\mathcal R^{s+\frac{\nu}{2}}(G)\times H_\mathcal R^s(G)$, then there exists a unique solution of \eqref{CP} in 
		$$\mathcal C\left([0,\tau],H_\mathcal R^{s+\frac{\nu}{2}}(G)\right)\cap\,\mathcal C^1([0,\tau],H_\mathcal R^s(G)),$$
		satisfying the following inequality for all $t\in [0,\tau]$:
		\begin{align}\label{inequality case 1}
		\|u(t,\cdot)\|^2_{H_{\mathcal R}^{s+\frac{\nu}{2}}}+\|\partial_t u(t,\cdot)\|^2_{H_{\mathcal R}^{s}}\leq C(\|u_0\|^2_{H_{\mathcal R}^{s+\frac{\nu}{2}}}+\|u_1\|^2_{H_{\mathcal R}^{s}});
		\end{align}
		
		\item If the initial Cauchy data $(u_0, u_1)$ are in $\mathcal G^s_\mathcal R(G)\times \mathcal G^s_\mathcal R(G)$, then there exists a unique solution of \eqref{CP} in $\mathcal C^2([0,\tau],\mathcal G_\mathcal R^s(G))$, provided that
		\[
		1\leq s <1 +\frac{\alpha}{1-\alpha};
		\]
		
		\item If the initial Cauchy data $(u_0, u_1)$ are in $\mathcal G^s_\mathcal R(G)\times \mathcal G^s_\mathcal R(G)$, then there exists a unique solution of \eqref{CP} in $\mathcal C^2([0,\tau],\mathcal G_\mathcal R^s(G))$, provided that
		\[
		1\leq s <1 +\frac{l}{2};
		\]
		
		\item If the initial Cauchy data $(u_0, u_1)$ are in $\mathcal G^s_\mathcal R(G)\times \mathcal G^s_\mathcal R(G)$ then there exists a unique solution of \eqref{CP} in $\mathcal C^2([0,\tau],\mathcal G_\mathcal R^s(G))$, provided that
		\[
		1\leq s <1 +\frac{\alpha}{2}. 
		\]
	\end{enumerate}
\end{theorem}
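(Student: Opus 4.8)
The plan is to apply the group Fourier transform on $G$ to the Cauchy problem \eqref{CP}, reducing it to a family (parametrised by $\pi\in\widehat G$) of infinite systems of second-order ODEs in $t$, and then to exploit the diagonal form \eqref{eq:symbRock} of the symbol $\pi(\mathcal R)$ to decouple the system into independent scalar equations indexed by the eigenvalues of $\pi(\mathcal R)$. Concretely, setting $\widehat u(t,\pi)=\mathcal F_G u(t,\cdot)(\pi)$ and using Property~2 of Proposition~\ref{Prop:groupFouriertransform} together with Theorem~\ref{RockFuncCalc}, the equation becomes $\partial_t^2\widehat u(t,\pi)+a(t)\pi(\mathcal R)\widehat u(t,\pi)=0$; since $\pi(\mathcal R)$ acts diagonally with strictly positive entries $\{\pi_j^2\}_j$ on a fixed orthonormal basis, each matrix column entry $v(t)=\widehat u(t,\pi)_{kj}$ solves the scalar equation
\begin{equation}\label{EQ:scalarODE}
v''(t)+a(t)\,\lambda^2\, v(t)=0,\qquad \lambda:=\pi_j,
\end{equation}
with data determined by the Fourier coefficients of $u_0,u_1$. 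The large-parameter regime $\lambda\to\infty$ governs both the Sobolev and the Gevrey statements via the Plancherel formula \eqref{PlancherelFormula}.

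The next step is to establish, for each of the four cases, the relevant energy estimate for \eqref{EQ:scalarODE} with explicit dependence on $\lambda$; these are precisely the ODE estimates to be proved in Section~\ref{SEC:ODE}. In Case~1 ($a$ Lipschitz, $a\ge a_0>0$) one uses the standard energy $E(t)=|v'(t)|^2+a(t)\lambda^2|v(t)|^2$, or its symmetriser refinement, to get $E(t)\le C E(0)$ uniformly in $\lambda$; translating back through Plancherel and the equivalent Sobolev norm \eqref{eqSobolevnorm} gives \eqref{inequality case 1} with the stated $\tfrac\nu2$-shift (the $\lambda$-weight on $|v|^2$ corresponds to $\mathcal R^{1/2}$, hence homogeneous degree $\tfrac\nu2$). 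In Case~3 ($a\in\mathcal C^l$, possibly vanishing) one replaces the symmetriser by the \emph{quasi-symmetriser} of Kinoshita--Spagnolo, obtaining an energy estimate with a loss $\lambda^{\varepsilon}$ for any $\varepsilon>0$ that is summable against the Gevrey weight precisely when $s<1+l/2$. In Cases~2 and~4 ($a\in\mathcal C^\alpha$) one regularises $a$ by convolution at scale $\lambda^{-\beta}$, separates the characteristic roots $\pm\sqrt{a(t)}\,\lambda$ (which lie in $\mathcal C^{\alpha/2}$ in Case~4, matching the $\mathcal C^\alpha$ situation of Case~2 with $\alpha$ replaced by $\alpha/2$), and balances the regularisation error against the Gevrey weight; optimising $\beta$ yields the thresholds $s<1+\frac{\alpha}{1-\alpha}$ and $s<1+\frac{\alpha}{2}$ respectively. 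Finally one checks that $u\in\mathcal C^2([0,\tau],\mathcal G^s_{\mathcal R}(G))$ by verifying the characterisation \eqref{EQ:Gevchar}: the exponential-type bounds $|v(t)|,|v'(t)|\lesssim e^{C\lambda^{1/s}}(|v(0)|+|v'(0)|)$ give $\|e^{A\mathcal R^{1/(\nu s)}}u(t,\cdot)\|_{L^2}<\infty$ for a suitable shrunken $A$, and differentiating \eqref{EQ:scalarODE} twice in $t$ controls $\partial_t^2 u$ likewise.

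The existence of a solution follows by solving each scalar ODE \eqref{EQ:scalarODE} (classical ODE theory), reassembling $\widehat u(t,\pi)$, and checking via the above estimates that the result lies in the asserted space; uniqueness follows because the scalar ODEs have unique solutions and $\mathcal F_G$ is injective, while continuous dependence on the data is immediate from the linear energy inequalities. The main obstacle is not the Fourier-reduction step---which is routine given the machinery of Chapter~\ref{CP:2}, in particular the diagonalisation \eqref{eq:symbRock}---but the delicate $\lambda$-uniform ODE estimates in Cases~2--4: constructing the quasi-symmetriser and tracking its degeneration as $a\to 0$ (Case~3), and carrying out the regularisation/root-separation argument with sharp control of the error terms so that the exponents $1+\frac{\alpha}{1-\alpha}$ and $1+\frac{\alpha}{2}$ come out exactly. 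These are the heart of Section~\ref{SEC:ODE} and are where the Gevrey loss of regularity, and its dependence on the order $\nu$ only through the homogeneity normalisation, genuinely enters.
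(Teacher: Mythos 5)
Your proposal follows essentially the same route as the paper: group Fourier transform plus the diagonal form of $\pi(\mathcal R)$ to decouple into scalar ODEs $v''+a(t)\beta^2v=0$, then the symmetriser (Case 1), quasi-symmetriser (Case 3), and regularisation/root-separation (Cases 2 and 4) energy estimates with explicit $\beta$-dependence, reassembled via Plancherel and absorbed into the Gevrey weight $e^{-A\beta^{1/s}}$. One small imprecision: in Case 3 the loss is exponential, of the form $(1+\beta^{l/\sigma})e^{K\beta^{1/\sigma}}$ with $\sigma=1+\tfrac l2$, not merely polynomial $\lambda^{\varepsilon}$; it is the comparison of the exponents $1/\sigma$ and $1/s$ that produces the threshold $s<1+\tfrac l2$, exactly as in your Cases 2 and 4.
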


As will follow from the proof, in Cases 2 and 4 one can respectively take the equalities $s=1 +\frac{\alpha}{1-\alpha}$ and $s=1 +\alpha$, provided that $T>0$ is small enough.
We refer to \cite{GR2012, GR2013} concerning the sharpness of the above Gevrey indices in the case of $G=\Rn$ and $\mathcal R=\Delta$, and for further relevant references for that case.

Let us formulate a corollary from Theorem \ref{THM:main} showing the local loss of regularity for the Cauchy problem \eqref{CP}. We recall that any graded Lie group $G$ can be identified, for example through the exponential mapping, with the Euclidean space $\Rn$ where $n$ is the topological dimension of $G$. Then, if $\nu_1,\ldots,\nu_n$ are the dilation weights on $G$ as in \eqref{EQ:dils}, for any $s\in\R$ we have the local Sobolev embedding theorems \cite[Theorem 4.4.24]{FR2016}
\begin{equation}\label{EQ:Sobemb}
H^{s/\nu_1}_{loc}(\Rn)\subset H^s_{\mathcal R, loc}(G)\subset H^{s/\nu_n}_{loc}(\Rn),
\end{equation} 
where we assume $\nu_1$ and $\nu_n$ to be respectively the smallest and largest weights of the dilations, and the local Sobolev spaces are defined as follows
\begin{align*}
H^s_{\mathcal R, loc}(G):=\big\{f\in\mathcal{D}'(G)\,\big|\,\varphi f\in\ H^s_{\mathcal{R}}(G)\text{ for all }\varphi\in\mathcal{D}(G)\big\}.
\end{align*}
If $G$ is a stratified Lie group, we have $\nu_1=1$ and $\nu_n$ is the step of $G$, i.e., the number of steps in the stratification of its Lie algebra. In other words, if $G$ is a stratified Lie group of step $r$ and $H^s(G)$ is the Sobolev space defined using (any) sub-Laplacian on $G$, then the embeddings \eqref{EQ:Sobemb} are reduced to
\begin{equation}\label{EQ:Sobembs}
H^{s}_{loc}(\Rn)\subset H^s_{\mathcal R, loc}(G)\subset H^{s/r}_{loc}(\Rn).
\end{equation} 
These embeddings are sharp, as proved by Folland in \cite[Theorem 4.16]{F75}. Consequently, using the characterisation \eqref{EQ:Gevchar} of $\mathcal G^s_{\mathcal R}(G)$, we also obtain the embeddings 
\begin{equation}\label{EQ:Gevemb}
\mathcal G^{s \nu_1}(\Rn)\subset \mathcal G^s_{\mathcal R}(G)\subset \mathcal G^{s\nu_n}(\Rn),
\end{equation} 
where the space $\mathcal G^{\sigma}(\Rn)$ is the classical Euclidean Gevrey space, namely, the space of all smooth functions $f\in \mathcal C^\infty(\Rn)$ such that for every compact set $K\subset\Rn$ there exist two constants $B,C>0$ such that for every $\alpha$ we have
\begin{equation}\label{gevreyRn}
|\partial^\alpha f(x)|\leq C B^{|\alpha|}(\alpha!)^\sigma \quad\textrm{ for all } x\in K.
\end{equation}
Consequently, if $G$ is a stratified Lie group of step $r$ we have the embeddings
\begin{equation}\label{EQ:Gevembs}
\mathcal G^{s}(\Rn)\subset \mathcal G^s_{\mathcal R}(G)\subset \mathcal G^{s r}(\Rn).
\end{equation} 
Using the embeddings \eqref{EQ:Gevembs}, we obtain the following spatially local well-posedness result using the usual Euclidean Gevrey spaces. Here we may also assume that the Cauchy data are compactly supported due to the finite propagation speed of singularities. To emphasise the resulting phenomenon of local loss of Euclidean regularity we formulate it in the simplified setting of stratified Lie groups, with the topological identification $G\sim\Rn$.

\begin{corollary}\label{COR:main}
	Let $G\sim \Rn$ be a stratified Lie group of step $r$ and let $\mathcal R$ be a positive Rockland operator of homogeneous degree $\nu$ (for example, $\mathcal R$ can be a positive sub-Laplacian, for which $\nu=2$). 
	Assume that the Cauchy data $(u_0,u_1)$ are compactly supported.
	Then in the Cases 1-4 above the following hold:
	\begin{enumerate}[\text{Case} 1:]
		\item Given $s\in\R$, if $(u_0,u_1)$ are  in  $H^{s+\frac{\nu}{2}}(\Rn)\times H^s(\Rn)$, then there exists a unique solution of \eqref{CP} in $\mathcal C([0,\tau],H^{(s+\frac{\nu}{2})/r}(\Rn))\cap\mathcal C^1([0,\tau],H^{s/r}(\Rn))$, satisfying the following inequality for all values of $t\in [0,\tau]$:
		\begin{align*}
		\|u(t,\cdot)\|^2_{H^{(s+\frac{\nu}{2})/r}}+\|\partial_t u(t,\cdot)\|^2_{H^{s/r}}\leq C(\|u_0\|^2_{H^{s+\frac{\nu}{2}}}+\|u_1\|^2_{H^{s}});
		\end{align*}
		
		\item If $(u_0, u_1)$ are in $\mathcal G^s(\Rn)\times \mathcal G^s(\Rn)$, then there exists a unique solution of \eqref{CP} in $\mathcal C^2([0,\tau],\mathcal G^{sr}(\Rn))$, provided that
		\[
		1< s <1 +\frac{\alpha}{1-\alpha};
		\]
		
		\item If $(u_0, u_1)$ are in $\mathcal G^s(\Rn)\times \mathcal G^s(\Rn)$, then there exists a unique solution of \eqref{CP} in $\mathcal C^2([0,\tau],\mathcal G^{sr}(\Rn))$, provided that
		\[
		1< s <1 +\frac{l}{2};
		\]
		
		\item If $(u_0, u_1)$ are in $\mathcal G^s(\Rn)\times \mathcal G^s(\Rn)$ then there exists a unique solution of \eqref{CP} in $\mathcal C^2([0,\tau],\mathcal G^{sr}(\Rn))$, provided that
		\[
		1< s <1 +\frac{\alpha}{2}. 
		\]
	\end{enumerate}
\end{corollary}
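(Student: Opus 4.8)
The plan is to deduce the Corollary directly from Theorem \ref{THM:main} by sandwiching the group-adapted Sobolev and Gevrey spaces between the classical Euclidean ones, using the embeddings \eqref{EQ:Sobembs} and \eqref{EQ:Gevembs}. Throughout we use the topological identification $G\sim\Rn$ and the fact that, since $r$ is the step of the stratified group, the smallest and largest dilation weights are $\nu_1=1$ and $\nu_n=r$. First I would record the two chains of inclusions that will be used: on the one hand $H^s_{\mathrm{loc}}(\Rn)\subset H^s_{\mathcal R,\mathrm{loc}}(G)\subset H^{s/r}_{\mathrm{loc}}(\Rn)$ for every $s\in\R$, and on the other hand $\mathcal G^{s}(\Rn)\subset\mathcal G^s_{\mathcal R}(G)\subset\mathcal G^{sr}(\Rn)$ for every $s\geq 1$; the latter follows from the characterisation \eqref{EQ:Gevchar} of $\mathcal G^s_{\mathcal R}(G)$ together with \eqref{EQ:Sobembs}, exactly as in the derivation of \eqref{EQ:Gevembs}. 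Because the Cauchy data are compactly supported, membership in the local spaces above is equivalent to membership in the corresponding global spaces $H^s_{\mathcal R}(G)$, $\mathcal G^s_{\mathcal R}(G)$, so I may freely pass between the local and global versions when invoking Theorem \ref{THM:main}.

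For Case 1: if $(u_0,u_1)\in H^{s+\frac\nu2}(\Rn)\times H^s(\Rn)$ are compactly supported, then by $H^{s}_{\mathrm{loc}}(\Rn)\subset H^s_{\mathcal R,\mathrm{loc}}(G)$ and compact support we get $(u_0,u_1)\in H^{s+\frac\nu2}_{\mathcal R}(G)\times H^s_{\mathcal R}(G)$. Theorem \ref{THM:main}, Case 1, produces a unique solution $u\in\mathcal C([0,\tau],H^{s+\frac\nu2}_{\mathcal R}(G))\cap\mathcal C^1([0,\tau],H^s_{\mathcal R}(G))$ obeying \eqref{inequality case 1}. Applying the right-hand embedding $H^\sigma_{\mathcal R,\mathrm{loc}}(G)\subset H^{\sigma/r}_{\mathrm{loc}}(\Rn)$ with $\sigma=s+\frac\nu2$ and $\sigma=s$ respectively gives $u\in\mathcal C([0,\tau],H^{(s+\frac\nu2)/r}_{\mathrm{loc}}(\Rn))\cap\mathcal C^1([0,\tau],H^{s/r}_{\mathrm{loc}}(\Rn))$, and these embeddings are continuous, so estimating the left-hand side of \eqref{inequality case 1} from below by the Euclidean norms and the right-hand side from above by the Euclidean norms (using $\nu_1=1$) yields the stated inequality. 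For Cases 2, 3 and 4 the argument is the same: compactly supported data in $\mathcal G^s(\Rn)\times\mathcal G^s(\Rn)$ lie in $\mathcal G^s_{\mathcal R}(G)\times\mathcal G^s_{\mathcal R}(G)$ by the left-hand Gevrey embedding; Theorem \ref{THM:main} gives, under the respective hypothesis on $s$ ($1\le s<1+\tfrac{\alpha}{1-\alpha}$, $1\le s<1+\tfrac l2$, $1\le s<1+\tfrac\alpha2$), a unique solution $u\in\mathcal C^2([0,\tau],\mathcal G^s_{\mathcal R}(G))$; and the right-hand embedding $\mathcal G^s_{\mathcal R}(G)\subset\mathcal G^{sr}(\Rn)$ places it in $\mathcal C^2([0,\tau],\mathcal G^{sr}(\Rn))$. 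Uniqueness is inherited from Theorem \ref{THM:main} because the Euclidean spaces embed into the group ones.

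There are two small points I expect to need care with rather than a genuine obstacle. The first is the passage between local and global spaces: here one invokes compact support of the data together with the finite propagation speed of singularities for the Rockland wave equation, which keeps the solution compactly supported in $x$ uniformly for $t\in[0,\tau]$, so that the local regularity statements are legitimate (this is why the hypothesis of compactly supported data is imposed). The second is the range of $s$: in the Corollary one must write $1<s$ rather than $1\le s$ because $\mathcal G^1(\Rn)$ is the space of real-analytic functions on $\Rn$, whose only compactly supported element is $0$; thus the endpoint $s=1$ is vacuous once compact support is required, and the genuine content starts at $s>1$. With these observations in place the Corollary follows, and the sharpness remarks after Theorem \ref{THM:main} (allowing the endpoint Gevrey indices for small $T$) transfer verbatim.
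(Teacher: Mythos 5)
Your proposal is correct and follows essentially the same route as the paper: the Corollary is obtained by applying the Sobolev embeddings \eqref{EQ:Sobembs} and the Gevrey embeddings \eqref{EQ:Gevembs} (with $\nu_1=1$, $\nu_n=r$ for a stratified group of step $r$) to transfer the conclusions of Theorem \ref{THM:main}, with compact support of the data and finite propagation speed justifying the passage between local and global spaces. Your observations on why the Gevrey range becomes $1<s$ (compactly supported analytic functions are trivial) and on the direction of each embedding in the norm inequality of Case 1 match what the paper relies on.
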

The statements in Cases 2-4 for $s=1$ are simply alternative proofs of the results of Bony and Shapira in \cite{Bony-Schapira:analytic-IM-1972}.
For $G=\Rn$ and $\mathcal R$ being the Laplacian, we have $r=1$ and there is no loss of regularity in any of the Cases 1-4
\cite{CDS79,CK2002,GR2012,GR2013,KS2006}.
However on the Heisenberg group with step $r=2$ we already observe the local loss of regularity in Euclidean Sobolev and Gevrey spaces in all statements of Cases 1-4 in Corollary \ref{COR:main}.
We also note that using the Sobolev and Gevrey embeddings \eqref{EQ:Sobemb} and \eqref{EQ:Gevemb}, it is easy to formulate an extension of Corollary \ref{COR:main} to general graded Lie groups.

\section{Parameter dependent energy estimates}
\label{SEC:ODE}

In this Section we prove certain energy estimates for second order ordinary differential equations with explicit dependence on parameters. This will be crucial in the proof of Theorem \ref{THM:main} where the parameters will correspond to the following spectral decomposition of the infinitesimal representations of Rockland operators\footnote{See Section \ref{Sec:Operators}, Formula \eqref{eq:symbRock}.}:
\begin{align}\label{EQ:R-spec}
\pi(\mathcal R)=\begin{pmatrix}
\pi^2_1 & 0      & \dots    & \dots \\
0      & \pi^2_2 & 0    & \dots \\
\vdots&   0       & \ddots  &          \\
\vdots& \vdots  &             & \ddots
\end{pmatrix},
\end{align}
where $\pi_j$ are strictly positive real numbers and $\pi\in\widehat G\setminus\{1\}$.\

Results of the following type have been of use in different estimates related to weakly hyperbolic partial differential equations, such as \cite{CK2002} and \cite{GR2015}. However, in the aforementioned papers the conclusions rely on more general results, see also \cite{GR2012}. We partly follow the argument in \cite{GR2015} based on a standard reduction to a first order system. Consequently, we carry out different types of arguments depending on the specific assumptions in each of the cases. This allows us to formulate the precise dependence on parameters for ordinary differential equations corresponding to the propagation coefficient $a(t)$ as in Cases 1-4 of Theorem \ref{THM:main}, to which we refer in the following statement.
\begin{proposition}\label{lemma}
	Let $\beta>0$ be a positive constant and let $a(t)$ be a function that behaves according to Cases $1,2,3$ and $4$. Let $T>0$. Consider the Cauchy problem
	\begin{align}\label{ODE}
	\begin{cases}
	v''(t)+\beta^2 a(t)v(t)=0\quad\text{with }t\in(0,\tau],\\
	v(0)=v_0\in\C,\\
	v'(0)=v_1\in\C.
	\end{cases}
	\end{align}  
	Then the following holds: 
	\begin{enumerate}[\text{Case} 1:]
		\item There exists a constant $C>0$ such that for all $t\in [0,\tau]$ we have
		\[
		\beta^2|v(t)|^2+|v'(t)|^2\leq C(\beta^2|v_0|^2+|v_1|^2).
		\]
		\item There exist two positive constants $C,K>0$ such that for all $t\in [0,\tau]$ we have
		\begin{equation}\label{EQ:case2}
		\beta^2|v(t)|^2+|v'(t)|^2\leq Ce^{Kt \beta^{\frac{1}{s}}}(\beta^2|v_0|^2+|v_1|^2),
		\end{equation} 
		for any
		$1\leq s < 1+\frac{\alpha}{1-\alpha}$. Moreover, there exists a constant $k>0$ such that
		for any $\beta_0\geq 1$ the estimate \eqref{EQ:case2} holds for $K= k\beta_0^{1-\alpha-\frac1s}$ for all $\beta\geq \beta_0$.
		\item There exist two positive constants $C,K>0$ such that for all $t\in [0,\tau]$ we have
		\[
		\beta^2|v(t)|^2+|v'(t)|^2\leq C(1+\beta^\frac{l}{\sigma})e^{K\beta^\frac{1}{\sigma}}\big(\beta^2|v_0|^2+|v_1|^2\big),
		\]
		with $\sigma= 1+\frac{l}{2}$. 
		\item There exist two positive constants $C,K>0$ such that 
		\begin{equation}\label{EQ:case4}
		\beta^2|v(t)|^2+|v'(t)|^2\leq C(1+\beta^\frac{\alpha}{\alpha+1})e^{Kt\beta^\frac{1}{s}}(\beta^2|v_0|^2+|v_1|^2),
		\end{equation} 
		for any $1\leq s<1+\frac{\alpha}{2}$.
		Moreover, there exists a constant $k>0$ such that
		for any $\beta_0\geq 1$ the estimate \eqref{EQ:case4} holds for $K= k\beta_0^{\frac{1}{1+\alpha}-\frac1s}$ for all $\beta\geq \beta_0$.
	\end{enumerate}
	The constants $C$ in the above inequalities may depend on $a$ and $T$ but not on $\beta, v_0$ or $v_1$.
\end{proposition}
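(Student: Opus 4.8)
The four cases rest on a single device: the reduction of \eqref{ODE} to a first-order system together with an energy estimate, the regularity or degeneracy of $a$ deciding how to build the energy. Setting $V=(\beta v,v')$, the equation becomes $V'=\beta\mathcal J(t)V$ with $\mathcal J(t)$ the $2\times2$ matrix with rows $(0,1)$ and $(-a(t),0)$; testing against a symmetriser $S=\mathrm{diag}(w,1)$ with a positive companion coefficient $w=w(t)$ amounts to tracking the energy $E(t):=|v'(t)|^{2}+\beta^{2}w(t)|v(t)|^{2}$. Differentiating and using the equation gives the basic identity $E'(t)=\beta^{2}w'(t)|v(t)|^{2}+2\beta^{2}(w(t)-a(t))\,\mathrm{Re}(v'(t)\overline{v(t)})$, in which the genuinely hyperbolic part drops out because $SJ$ is skew-Hermitian when $w=a$. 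Once a differential inequality $E'\le\Lambda(t,\beta)E$ is secured, Gr\"onwall gives $E(t)\le\exp(\int_{0}^{\tau}\Lambda)\,E(0)$, and since $\beta^{2}|v|^{2}+|v'|^{2}\le(1+\inf w^{-1})E$ while $E(0)\le\beta^{2}\|w\|_{\infty}|v_{0}|^{2}+|v_{1}|^{2}$, each estimate reduces to a good choice of $w$ and good bounds on the two terms of the identity. The range of bounded $\beta$ is trivial --- if $\beta\le\beta_{0}$ then $\beta^{2}a(t)$ is bounded and a crude Gr\"onwall estimate on $|v|^{2}+|v'|^{2}$ gives all four inequalities with a constant depending on $\beta_{0},a,\tau$ only --- so I assume $\beta\ge1$ throughout.

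\emph{Cases 1 and 2 (nondegenerate).} In Case 1 the choice $w=a$ kills the cross term and leaves $E'(t)=\beta^{2}a'(t)|v|^{2}\le a_{0}^{-1}\|a'\|_{\infty}E(t)$ by $a\ge a_{0}$ and the Lipschitz bound; Gr\"onwall and $w=a\le\|a\|_{\infty}$ give the bounded-energy conclusion. In Case 2 I regularise: for a Friedrichs mollifier $a_{\varepsilon}:=a\ast\rho_{\varepsilon}$ one has $|a-a_{\varepsilon}|\lesssim\varepsilon^{\alpha}$, $|a_{\varepsilon}'|\lesssim\varepsilon^{\alpha-1}$ and $a_{\varepsilon}\ge a_{0}/2$ for small $\varepsilon$ (this is the ``separation of the characteristic roots'' $\pm i\beta\sqrt{a}$, which stay uniformly apart). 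With $w=a_{\varepsilon}$ and a weighted Young inequality in the cross term, $E_{\varepsilon}'\lesssim(\varepsilon^{\alpha-1}+\beta\varepsilon^{\alpha})E_{\varepsilon}$; choosing $\varepsilon=\beta^{-1}$ gives $E_{\varepsilon}'\lesssim\beta^{1-\alpha}E_{\varepsilon}$, hence $E_{\varepsilon}(t)\le e^{C\tau\beta^{1-\alpha}}E_{\varepsilon}(0)$. Since $|E-E_{\varepsilon}|\lesssim\varepsilon^{\alpha}\beta^{2}|v|^{2}\lesssim\beta^{-\alpha}E_{\varepsilon}$, $E$ and $E_{\varepsilon}$ are comparable for $\beta\ge1$, and $\beta^{1-\alpha}\le\beta^{1/s}$ (for $\beta\ge1$) exactly when $1-\alpha\le1/s$, i.e.\ when $s\le1+\frac{\alpha}{1-\alpha}$; this yields \eqref{EQ:case2}. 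Writing $\beta^{1-\alpha}=\beta^{\,1-\alpha-1/s}\beta^{1/s}\le\beta_{0}^{\,1-\alpha-1/s}\beta^{1/s}$ for $\beta\ge\beta_{0}$ gives the ``moreover'' statement with $K=k\beta_{0}^{\,1-\alpha-1/s}$.

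\emph{Cases 3 and 4 (degenerate) and the main obstacle.} Here $a$ may vanish, so $w=a$ no longer controls $\beta^{2}|v|^{2}$; the remedy is a \emph{quasi-symmetriser}, i.e.\ $w=a_{\varepsilon}+\delta$ with a small floor $\delta>0$ and $a_{\varepsilon}$ the mollification of $a$ (genuinely needed in Case 4, where $a\in\mathcal C^{\alpha}$, harmless in Case 3). In the $\mathcal C^{2}$ regime the decisive input is the Glaeser/convexity inequality for nonnegative $\mathcal C^{2}$ functions, $|a'(t)|^{2}\le2\|a''\|_{\infty}a(t)$: via Jensen it gives $|a_{\varepsilon}'|\lesssim\sqrt{a_{\varepsilon}}\le\sqrt{w}$, so the $w'|v|^{2}$ term is controlled by $\delta^{-1/2}E$, while the excess term, handled by a weighted Young inequality, contributes $\beta\delta^{-1/2}(\varepsilon^{l}+\delta)$ in Case 3 and $\beta\delta^{-1/2}(\varepsilon^{\alpha}+\delta)$ in Case 4. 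Optimising $\varepsilon$ (to absorb the approximation error) and $\delta$ then produces the exponential rate $\beta^{1/s}$ for every admissible $s$ --- $s<1+\frac{l}{2}$ in Case 3 (the sub-case $l=2$, where $\sigma=2$, already following from the plain quasi-symmetriser) and $s<1+\frac{\alpha}{2}$ in Case 4 --- together with, after a careful choice of these parameters, the polynomial prefactors $1+\beta^{l/\sigma}$ and $1+\beta^{\alpha/(\alpha+1)}$; the ``moreover'' clause in Case 4 follows exactly as in Case 2, and the endpoint refinements noted after the statement come from keeping the Gr\"onwall constant small when $\tau$ is small. The \textbf{main obstacle} is Case 3 for $l>2$: the plain quasi-symmetriser $\mathrm{diag}(a+\delta,1)$ uses only $\mathcal C^{2}$ information and stalls at $\sigma=2$, so to reach the sharp $\sigma=1+\frac{l}{2}$ one must construct a higher-order quasi-symmetriser in the spirit of Kinoshita--Spagnolo \cite{KS2006} --- an approximate energy assembled from $a$ and its derivatives up to order $l$, together with the higher Glaeser inequalities and an iterated Gr\"onwall argument that propagates several scales at once --- and then optimise the resulting multi-parameter family while tracking the exact prefactors. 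Combining all four cases with the $\beta\le\beta_{0}$ case and translating $E$ back through $\beta^{2}|v|^{2}+|v'|^{2}\lesssim(1+\inf w^{-1})E$ completes the proof.
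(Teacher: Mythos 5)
Your reduction to a first-order system and your Case 1 match the paper's argument. Your Case 2 is correct but takes a genuinely different route: the paper uses the Colombini--Kinoshita device of \cite{CK2002}, writing $V=e^{-\rho(t)\beta^{1/s}}(\det H)^{-1}HW$ with $H$ built from the regularised characteristic roots $\pm\sqrt a*\varphi_\epsilon$ and proving monotonicity of $|W|$, whereas you run the original Colombini--De Giorgi--Spagnolo approximate energy $E_\varepsilon=|v'|^2+\beta^2a_\varepsilon|v|^2$ from \cite{CDS79}. Since $a\ge a_0>0$ both land on the same rate $\varepsilon^{\alpha-1}+\beta\varepsilon^\alpha$ and the same threshold $s<1+\frac{\alpha}{1-\alpha}$; yours is shorter, while the paper's diagonalisation is what it later adapts to the degenerate Case 4.

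The genuine gap is Case 3 for $l>2$. You correctly diagnose that $\mathrm{diag}(a+\delta,1)$ plus the Glaeser inequality $|a'|^2\le2\|a''\|_\infty a$ uses only $\mathcal C^2$ information and stalls at $\sigma=2$, but the remedy you propose --- a ``higher-order quasi-symmetriser assembled from $a$ and its derivatives up to order $l$'' --- is neither carried out nor the right tool: the higher-order quasi-symmetrisers $Q^{(m)}_\epsilon$ of \cite{KS2006} are for equations of order $m>2$, not for second-order equations with smoother coefficients. The paper keeps the same $2\times2$ quasi-symmetriser $Q^{(2)}_\epsilon=\mathrm{diag}(2a+2\epsilon^2,2)$ for all $l\ge2$ and instead imports the integral lemma of Kinoshita--Spagnolo (\cite[Lemma 2]{KS2006}, see also \cite[Lemma 2]{GR2013}),
\[
\int_0^T\frac{\big|\big(\tfrac{d}{dt}Q^{(2)}_\epsilon(t)V(t),V(t)\big)\big|}{\big(Q^{(2)}_\epsilon(t)V(t),V(t)\big)^{1-\frac1l}\,|V(t)|^{\frac2l}}\,dt\;\le\;c_T\,\|Q^{(2)}_\epsilon\|_{\mathcal C^l([0,\tau])}^{\frac1l},
\]
which is where the $\mathcal C^l$ regularity of $a$ actually enters. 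Combined with the coercivity $(Q^{(2)}_\epsilon V,V)\ge c_1^{-1}\epsilon^2|V|^2$ this bounds $\int_0^T(\tfrac{d}{dt}Q_\epsilon V,V)/(Q_\epsilon V,V)\,dt$ by $c\,\epsilon^{-2/l}$, and the choice $\epsilon^{-2/l}=\beta\epsilon$, i.e.\ $\epsilon=\beta^{-l/(l+2)}$, produces the rate $\beta^{2/(l+2)}=\beta^{1/\sigma}$ and the prefactor $\epsilon^{-2}=\beta^{l/\sigma}$. Without this lemma or an equivalent substitute your Case 3 does not reach $\sigma=1+\tfrac l2$.

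A secondary gap is Case 4 for $1\le\alpha<2$. Your sketch never bounds the $\beta^2w'|v|^2$ term there; for $0<\alpha<1$ the mollifier bound $|a_\varepsilon'|\lesssim\varepsilon^{\alpha-1}$ together with $\delta=\varepsilon^\alpha$ does give $\varepsilon^{-1}+\beta\varepsilon^{\alpha/2}$ and hence the correct rate $\beta^{2/(2+\alpha)}$, but for $\alpha\ge1$ one only has $|a_\varepsilon'|\lesssim1$, the term costs $\delta^{-1}E$, and the optimisation caps at the rate $\beta^{2/3}$, i.e.\ $s<\tfrac32$, short of the claimed $s<1+\tfrac\alpha2$. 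The paper sidesteps this by regularising the characteristic roots $\pm\sqrt a\in\mathcal C^{\alpha/2}$ rather than the coefficient $a$ itself (shifting them by $\epsilon^{\alpha/2}$ and $2\epsilon^{\alpha/2}$ so that $\det H\gtrsim\epsilon^{\alpha/2}$); to salvage your version you would need that device or a Glaeser-type interpolation bound on $|a'|$ in terms of a power of $a$.
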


\begin{remark}
	This proposition and its proof are a non-explicitly stated part of \cite{KS2006}, in which a more general situation is treated. We adapt the reasoning and clarify it where necessary.
\end{remark}

\begin{proof}
	First we reduce the problem \eqref{ODE} to a first order system. In order to do this we rewrite it in a standard way as a matrix-valued equation. Thus we define the column vectors
	\[
	V(t):=\begin{pmatrix}i\beta v(t)\\ \partial_t v(t)\end{pmatrix},\quad V_0:=\begin{pmatrix}i\beta v_0\\ v_1\end{pmatrix},
	\]
	and the matrix 
	\begin{align*}
	A(t):=\begin{pmatrix}
	0 & 1\\
	a(t) & 0
	\end{pmatrix},
	\end{align*}
	that allow us to reformulate the second order \eqref{ODE} as the first order system
	\begin{align}\label{first}
	\begin{cases}
	V_t(t)=i\beta A(t)V(t),\\
	V(0)=V_0.
	\end{cases}
	\end{align}
	Now we recall that the \textit{Picard--Lindel\"of Theorem} guarantees the existence and uniqueness of solutions to the first order initial value problem \eqref{first}. Hence, let $V(t)$ be the unique solution of \eqref{first}. We will now treat each case separately, noting that we set $(\cdot,\cdot)$ and $|\cdot|$ respectively to denote the inner product and resulting norm on $\mathbb{C}^2$.\\
	
	\textbf{Case 1:} $\bm{a\in\mathbf{Lip}([0,\tau])}$\textbf{,} $\bm{a(t)\geq a_0>0}$\textbf{.}
	
	This is the simplest case that can be treated by a classical argument.
	We observe that the eigenvalues of our matrix $A(t)$ are given by $\pm \sqrt{a(t)}$. The symmetriser $S$ of $A$, i.e., the matrix such that 
	\[
	SA-A^*S=0,
	\]
	is given by
	\[
	S(t)=\begin{pmatrix}2 a(t) & 0\\ 0 & 2\end{pmatrix}.
	\]
	Thus we define the energy as 
	\[
	E(t):=\Big(S(t)V(t),V(t)\Big),
	\] 
	and we want to estimate its variations in time. A straightforward calculation yields the inequality
	\begin{equation}\label{en1}
	2 |V(t)|^2 \min_{t\in[0,\tau]} \{a(t),1\}\leq E(t)\leq 2|V(t)|^2\max_{t\in[0,\tau]}\{a(t),1\}. 
	\end{equation}
	In particular, in this Case the continuity and strict positivity of $a(t)$ ensure the existence of the strictly positive constants
	\[
	a_0=\min_{t\in[0,\tau]} a(t)\quad\text{and}\quad a_1=\max_{t\in[0,\tau]}a(t).
	\]
	Thus setting $c_0:=2 \min \{a_0,1\}$ and $C_0:=2\max\{a_1,1\}$, the inequality \eqref{en1} becomes
	\begin{align}\label{en2}
	c_0|V(t)|^2\leq E(t)\leq C_0 |V(t)|^2.
	\end{align}
	Using the standard notation that a subscript $t$ denotes the first derivative with respect to $t$, a straightforward calculation alongside \eqref{en2} give the estimate
	\begin{align}\notag
	E_t(t)&=\big( S_t(t)V(t),V(t)\big)+\big(S(t)V_t(t),V(t)\big)+\big(S(t)V(t),V_t(t)\big)=\\\notag
	&=\big( S_t(t)V(t),V(t)\big) +i\beta\big( S(t)A(t)V(t),V(t)\big)-i\beta\big( S(t)V(t),A(t)V(t)\big)=\\\notag
	&=\big( S_t(t)V(t),V(t)\big)+i\beta\Big( \big(S(t)A(t)-A^*(t)S(t)\big)V(t),V(t)\Big)=\\
	&=\big( S_t(t)V(t),V(t)\big)\leq \|S_t(t)\| |V(t)|^2.\label{der}
	\end{align}
	In this proof $\|\cdot\|$ denotes the matrix norm. Thus setting $c':=c_0^{-1}\sup_{t\in [0,\tau]}\|S_t(t)\|$, we find from \eqref{der} and \eqref{en2} that
	\begin{align}\label{gron}
	E_t(t)\leq c' E(t).
	\end{align}
	Applying Gr\"onwall's lemma to \eqref{gron}, we deduce that there exists a constant $c>0$ independent of $t\in[0,\tau]$ such that 
	\begin{align}\label{gron2}
	E(t)\leq c E(0).
	\end{align}
	Therefore, putting together \eqref{gron2} and \eqref{en2} we obtain
	\begin{align}\notag
	c_0 |V(t)|^2\leq E(t)\leq c E(0)\leq c C_0 |V(0)|^2.
	\end{align} 
	We can then rephrase this, asserting that there exists a constant $C>0$ independent of $t$ such that $|V(t)|^2\leq C |V(0)|^2$. Recalling the definition of $V(t)$ we obtain
	\[
	\beta^2|v(t)|^2+|\partial_t v(t)|^2\leq C\big(\beta^2 |v_0|^2 + |v_1|^2\big),
	\]
	as required.\\
	
	\textbf{Case 2:} $\bm{a\in\mathcal C^\alpha([0,\tau])}$\textbf{, with}  $\bm{ 0<\alpha<1,\,\,a(t)\geq a_0>0}$\textbf{.}
	
	Here we follow the method developed by Colombini and Kinoshita \cite{CK2002} for $n=1$ and subsequently extended \cite{GR2012} for any $n\in\mathbb N$. We look for solutions of the form 
	\begin{align}\label{sol}
	V(t)=e^{-\rho(t)\beta^{\frac{1}{s}}}(\det H(t))^{-1}H(t)W(t),
	\end{align}
	where
	\begin{itemize}
		\item $s\in\R$  depends on $\alpha$ in a manner determined below;
		\item the function $\rho=\rho(t)\in\mathcal C^1([0,\tau])$ is real-valued and will be chosen below;
		\item $W(t)$ is the energy vector, assumed to be $\mathcal{C}^1$;
		\item $H(t)$ is the matrix defined by
		\[
		H(t):=\begin{pmatrix}1 & 1\\ \lambda^\epsilon_1(t) & \lambda^\epsilon_2(t)\end{pmatrix},
		\]
		where for all $\epsilon>0$, $\lambda^\epsilon_1(t)$ and $\lambda^\epsilon_2(t)$ are regularisations of the eigenvalues of the matrix $A(t)$ of the form
		\begin{align*}
		&\lambda^\epsilon_{1}(t):=(-\sqrt{a}*\varphi_\epsilon)(t),\\\notag
		&\lambda^\epsilon_{2}(t):=(+\sqrt{a}*\varphi_\epsilon)(t),
		\end{align*}
		with $\{\varphi_\epsilon(t)\}_{\epsilon>0}$ being a family of cut-off functions defined, in terms of a non-negative function $\varphi\in\mathcal C^\infty_c(\R)$ with $\int_\R\varphi=1$, by setting $\varphi_\epsilon(t):=\frac{1}{\epsilon}\varphi\big(\frac{t}{\epsilon}\big)$. By construction, it follows that $\lambda^\epsilon_{1},\lambda^\epsilon_{2}\in\mathcal C^\infty([0,\tau])$.

	\end{itemize}
	We can easily verify, using the H\"older regularity of $a(t)$ of order $\alpha$ and, therefore, of $\sqrt{a(t)}$ of the same order $\alpha$, the  inequality
	\begin{equation}\label{strait}
	\det H(t)=\lambda^\epsilon_2(t)-\lambda^\epsilon_1(t)\geq 2\sqrt{a(0)}\geq 2\sqrt{a_0},
	\end{equation}
	Moreover for all $t\in[0,\tau]$ and $\epsilon>0$ there exist two constants $c_1,c_2>0$ such that
	\begin{align}
	&|\lambda^\epsilon_1(t)+\sqrt{a(t)}|\leq c_1\epsilon^\alpha,\\
	&|\lambda^\epsilon_2(t)-\sqrt{a(t)}|\leq c_2 \epsilon^\alpha,\label{difference}
	\end{align}
	uniformly in $t$ and $\epsilon$.
	
	Now we substitute our suggested solution \eqref{sol} in \eqref{first}, yielding
	\begin{align*}
	&-\rho'(t)\beta^\frac{1}{s}e^{-\rho(t)\beta^\frac{1}{s}}\frac{H(t)W(t)}{\det H(t)}+e^{-\rho(t)\beta^\frac{1}{s}}\frac{H_t(t)W(t)}{\det H(t)}+e^{-\rho(t)\beta^\frac{1}{s}}\frac{H(t)W_t(t)}{\det H(t)}+\\
	&-e^{-\rho(t)\beta^\frac{1}{s}}(\det H)_t(t)\frac{H(t)W(t)}{\big(\det H(t)\big)^2}=i\beta A(t)e^{-\rho(t)\beta^\frac{1}{s}}\frac{H(t)W(t)}{\det H(t)}.
	\end{align*}
	Multiplying both sides of this equality  by $e^{\rho(t)\beta^{\frac{1}{s}}}\det H(t)H^{-1}(t)$ we get
	\begin{align}\notag
	W_t(t)=\rho&'(t)\beta^{\frac{1}{s}}W(t)-H^{-1}(t)H_t(t)W(t)+(\det H)_t(t)\big(\det H(t)\big)^{-1}W(t)+\\
	&+i\beta H^{-1}(t)A(t)H(t)W(t).\label{W}
	\end{align}
	This leads to the estimate
	\begin{align*}
	\frac{d}{dt}|W(t)|^{2}&=\big(W_{t}(t),W(t)\big)+\big(W(t),W_t(t)\big)=2\Rep\big{(}W_t(t),W(t)\big{)}=\\
	&=2\Big(\rho'(t)\beta^{\frac{1}{s}}|W(t)|^2-\Rep\big(H^{-1}(t)H_t(t)W(t),W(t)\big)+\\
	&+\big(\det H(t)\big)^{-1}(\det H)_t(t)|W(t)|^2+\beta\Imp\big(H^{-1}(t)A(t)H(t)W(t),W(t)\big)\Big).
	\end{align*}
	We observe that
	\begin{align*}
	&2\Imp\big(H^{-1}AHW,W\big)=\big(H^{-1}AHW,W\big)-\overline{\big(H^{-1}AHW,W\big)}=\\
	&=\big(H^{-1}AHW,W\big)-\big(W,H^{-1}AHW\big)=\big(H^{-1}AHW,W\big)-\big((H^{-1}AH)^*W,W\big)=\\
	&=\Big(\big(H^{-1}AH-(H^{-1}AH)^*\big)W,W\Big)\leq\|H^{-1}AH-(H^{-1}AH)^*\|\|W\|^2.
	\end{align*}
	Thus we obtain
	\begin{align}\notag
	\frac{d}{dt}|W(t)|^2\leq \Big(2\rho'(t)\beta^{\frac{1}{s}}+2\|H^{-1}(t)H_t(t)\|+2\big|\big(\det H(t)\big)^{-1}(\det H)_t(t)\big|+\\\label{enW}
	+\beta \|[H^{-1}AH-(H^{-1}AH)^*](t)\| \Big)|W(t)|^2.
	\end{align}
	To proceed we need to estimate the following quantities:
	\begin{enumerate}[I)]
		\item $\|H^{-1}(t)H_t(t)\|$;
		\item $\big|\big(\det H(t)\big)^{-1}(\det H)_t(t)\big|$;
		\item $\|[H^{-1}AH-(H^{-1}AH)^*](t)\|$.
	\end{enumerate}
	
	In \cite{GR2012} and \cite{CK2002}, the authors determine estimates for similar functions in a more general setting, i.e. starting from an equation of arbitrary order $m$. In this particular case, we can proceed by straightforward calculations without relying on the aforementioned works. 
	
	We deal with these three terms as follows:
	\begin{enumerate}[I)]
		\item Since $H^{-1}=\frac{1}{\lambda_2^\epsilon-\lambda_1^\epsilon}\begin{pmatrix}\lambda_2^\epsilon &-1\\-\lambda_1^\epsilon & 1
		\end{pmatrix}$ and $H_t=\begin{pmatrix}0 & 0\\\partial_t\lambda_1^\epsilon & \partial_t\lambda_2^\epsilon
		\end{pmatrix}$, it follows that the entries of the matrix $H^{-1}H_t$ are given by the functions $\frac{\partial_t \lambda_j^\epsilon}{\lambda_2^\epsilon-\lambda_1^\epsilon}$. We have, for example,
		\begin{align}\notag
		|\partial_t\lambda_2^\epsilon(t)|&=\left|\sqrt a * \partial_t \varphi_\epsilon(t)\right|=\left|\frac{1}{\epsilon^2}\sqrt a *\varphi'\Big(\frac{t}{\epsilon}\Big)\right|=\left|\frac{1}{\epsilon}\int\sqrt{a(t-\rho\epsilon)}\varphi'(\rho)d\rho\right|=\\\label{derL}
		&=\left|\frac{1}{\epsilon}\int\big(\sqrt{a(t-\rho\epsilon)}-\sqrt{a(t)}\big)\varphi'(\rho)d\rho+\frac{1}{\epsilon}\sqrt{a(t)}\int\varphi'(\rho)d\rho\right|\leq k\epsilon^{\alpha -1},
		\end{align}
		where we are using the H\"older continuity of $\sqrt a$ for the first term and the fact that the second term is zero, since $\int \varphi'=0 $.
		Combining the inequalities \eqref{derL} and its equivalent for $\lambda_1^\epsilon$ with \eqref{strait}, we obtain for a suitable positive constant $k_1$ independent of $t$ that
		\[
		\|H^{-1}(t)H_t(t)\|\leq k_1\epsilon^{\alpha-1}.
		\]
		
		\item First we can estimate
		\begin{align*}
		\big|\big(\det H(t)\big)^{-1}(\det H)_t(t)\big|= \frac{\partial_t\lambda_2^\epsilon-\partial_t\lambda_1^\epsilon}{\lambda_2^\epsilon-\lambda_1^\epsilon}=\frac{2\partial_t\lambda_2^\epsilon}{\lambda_2^\epsilon-\lambda_1^\epsilon}\leq\frac{2k\epsilon^{\alpha-1}}{2\sqrt{a_0}},
		\end{align*}
		therefore,
		\[
		\big|\big(\det H(t)\big)^{-1}(\det H)_t(t)\big|\leq k_2 \epsilon^{\alpha-1},
		\]
		for a $t$-independent constant $k_2>0$.
		\item The matrix in which we are interested is
		\begin{equation}\notag
		H^{-1}AH-\big(H^{-1}AH\big)^*=\begin{pmatrix}0&\frac{-2a+(\lambda_1^\epsilon)^2+(\lambda_2^\epsilon)^2}{\lambda_1^\epsilon-\lambda_2^\epsilon}\\ \frac{2a-\big((\lambda_1^\epsilon)^2+(\lambda_2^\epsilon)^2\big)}{\lambda_1^\epsilon-\lambda_2^\epsilon}&0\end{pmatrix}.
		\end{equation}
		Observing that $(\lambda_1^\epsilon)^2=(\lambda_2^\epsilon)^2$, in light of inequality \eqref{strait}, to obtain the desired norm estimate, it is enough to consider the function $|a-(\lambda_2^\epsilon)^2|$. A straightforward calculation, using inequality \eqref{difference}, shows that
		\begin{align*}
		|a(t)-\lambda_2^\epsilon(t)^2|&=|\big(\sqrt{a(t)}-\lambda_2^\epsilon(t)\big)\big(\sqrt{a(t)}+\lambda_2^\epsilon(t)\big)|\leq\\
		&\leq c_2\epsilon^\alpha\Big(\sqrt{a(t)}+\int\sqrt{a(t-s)}\varphi_\epsilon(s)ds\Big)=\\&=c_2\epsilon^\alpha\int \big(\sqrt{a(t)} +\sqrt{a(t-s\epsilon)}\big)\varphi(s)ds\leq 2c_2\|\sqrt a\|_{L^\infty} \epsilon^\alpha.
		\end{align*}
		It follows that there is a $k_3>0$ such that
		\[
		\|[H^{-1}AH-(H^{-1}AH)^*](t)\|\leq k_3 \epsilon^\alpha.
		\]
	\end{enumerate}
	
	Combining \eqref{enW} with estimates I)--III), we obtain an estimate for the derivative of the energy, that is
	\begin{align}\label{enW2}
	\frac{d}{dt}|W(t)|^2\leq \Big(2\rho'(t)\beta^{\frac{1}{s}}+2k_1\epsilon^{\alpha-1}+2k_2 \epsilon^{\alpha-1}+ k_3 \beta \epsilon^\alpha \Big)|W(t)|^2.
	\end{align}
	Now we choose $\epsilon=\frac{1}{\beta}$, observing that we can always consider $\beta$ large enough, say $\beta>1$, in order to have $\epsilon\in(0,1]$. 
	Indeed, for $\beta\leq\beta_0$ for some fixed $\beta_0>0$, a modification of the argument below gives estimate \eqref{EQ:case2} with constants depending only on $\beta_0$ and $T$. So we may assume that $\beta>\beta_0$ for $\beta_0$ to be specified.
	We define also $\rho(t):=\rho(0)-Kt$ for some $K>0$ to be specified. 
	Substituting this in \eqref{enW2} we obtain for a suitable constant $k>0$ that 
	\begin{align*}
	\frac{d}{dt}|W(t)|^2\leq \Big(2\rho'(t)\beta^{\frac{1}{s}}+2k\beta^{1-\alpha}\Big)|W(t)|^2= \big(-2K+2k\beta^{1-\alpha-\frac1s} \big)\beta^{\frac{1}{s}} |W(t)|^2.
	\end{align*}
	If we have
	\[
	\frac{1}{s}>1-\alpha \quad \iff\quad s<1+\frac{\alpha}{1-\alpha},
	\]
	and we set  
	\begin{equation}\label{EQ:Kchoice}
	K:= k\beta_0^{1-\alpha-\frac1s}\geq k\beta^{1-\alpha-\frac1s},
	\end{equation} 
	then for all $t\in[0,\tau]$ we have
	\begin{equation}\label{EQ:Wmonotonicity}
	0\geq\frac{d}{dt}|W(t)|^2=2|W(t)|\frac{d}{dt}|W(t)|.
	\end{equation}
	Supposing there is a $t_0$ for which $\frac{d}{dt}|W(t_0)|>0$ we deduce from $W\in\mathcal{C}^1$ that there is a $\delta$-neighbourhood of $t_0$ in which $\frac{d}{dt}|W(t)|>0$. For every $t$ in this neighbourhood, we deduce from \eqref{EQ:Wmonotonicity} that $0\geq|W(t)|=0$, which contradicts $\frac{d}{dt}|W(t)|>0$. Hence $\frac{d}{dt}|W(t)|\leq0$ everywhere. This monotonicity of $|W|$ yields a bound on the solution vector $V(t)$:
	\begin{align}\notag
	|V(t)|&=e^{-\rho(t)\beta^{\frac{1}{s}}}\big(\det H(t)\big)^{-1}\|H(t)\||W(t)|\leq\\\notag
	&\leq e^{-\rho(t)\beta^{\frac{1}{s}}}\big(\det H(t)\big)^{-1}\|H(t)\||W(0)|=\\
	&=e^{K t\beta^{\frac{1}{s}}}\frac{\det H(0)}{\det H(t)}\frac{\|H(t)\|}{\|H(0)\|}|V(0)|.\label{V}
	\end{align}
	Note that, according to property \eqref{strait}, the function $\big(\det H(t)\big)^{-1}$ is bounded. Furthermore, the  behaviour of the convolution and the definition of $H(t)$ guarantee that both $\|H(t)\|$ and $\|H^{-1}(0)\|$ are bounded. Therefore, there exists a constant $C>0$ such that
	\begin{align*}
	|V(t)|\leq C e^{Kt\beta^{\frac{1}{s}}}|V(0)|,
	\end{align*}
	that means, by the definition of $V(t)$, that
	\begin{align*}
	\beta^2|v(t)|^2+|v_t(t)|^2\leq C e^{Kt\beta^{\frac{1}{s}}}\big(\beta^2|v_0|^2+|v_1|^2\big),
	\end{align*}
	proving the statement of Case 2. \\

	\textbf{Case 3:} $\bm{a\in\mathcal C^l([0,\tau])}$\textbf{, with} $\bm{l\geq 2,\; a(t)\geq 0}$
	
	In this case we extend the technique developed for Case 1. First we perturb the symmetriser of the matrix $A(t)$, in that we consider the so-called \textit{quasi-symmetriser} of $A(t)$. This idea was introduced for such problems by D'Ancona and Spagnolo in \cite{DS1998}.
	
	Consider the quasi-symmetriser of $A(t)$, that is, a family of coercive, Hermitian matrices of the form
	\begin{align*}
	Q_\epsilon^{(2)}(t):=S(t)+2\epsilon^2\begin{pmatrix}1 & 0\\ 0 & 0\end{pmatrix}=\begin{pmatrix}2 a(t) & 0\\ 0 & 2\end{pmatrix}+2\epsilon^2\begin{pmatrix}1 & 0\\ 0 & 0\end{pmatrix},
	\end{align*}
	for all $\epsilon\in(0,1]$, and such that $\big(Q_\epsilon^{(2)}A-A^{*} Q_\epsilon^{(2)}\big)$ goes to zero as $\epsilon$ goes to zero.\footnote{The ``$(2)$'' in the notation $Q^{(2)}_\epsilon$ refers to the order of the differential equation underlying the problem, and is standard \cite{DS1998,KS2006,GR2015}.} The associated perturbed energy is given by 
	\begin{align*}
	E_\epsilon(t):=\Big(Q_\epsilon^{(2)}V(t),V(t)\Big).
	\end{align*}
	We proceed estimating the energy. Firstly we calculate its derivative in time:
	\begin{align}\notag
	&\frac{d}{dt}E_\epsilon(t)= \Big(\frac{d}{dt}Q_\epsilon^{(2)}(t)V(t),V(t)\Big)+(Q_\epsilon^{(2)}(t)V_t(t),V(t))+(Q_\epsilon^{(2)}(t)V(t),V_t(t))=\\ \label{energyest}
	&=\Big(\frac{d}{dt}Q_\epsilon^{(2)}(t)V(t),V(t)\Big)+i\beta\Big(\big(Q_\epsilon^{(2)}A-A^*Q_\epsilon^{(2)}\big)(t)V(t),V(t)\Big).
	\end{align}
	To estimate the second term in the right hand side, we set 
	\[
	V=\begin{pmatrix}
	i\beta v\\
	\partial_t v
	\end{pmatrix}=:\begin{pmatrix}
	V_1\\
	V_2
	\end{pmatrix}.
	\]
	Algebraic calculations give
	\begin{align}\notag
	Q_\epsilon^{(2)}(t)A(t)-A^*(t)Q_\epsilon^{(2)}(t)=2\epsilon^2 
	\begin{pmatrix}
	0 & 1\\
	-1 & 0
	\end{pmatrix},
	\end{align}
	whence
	\begin{align}\notag
	\Big|\Big(\big(Q_\epsilon^{(2)}A&-A^*Q_\epsilon^{(2)}\big)(t)V(t),V(t)\Big)\Big|\leq 2\epsilon^2 2|\Imp(V_2\overline{V_1})| \leq\\\notag
	&\leq 2\epsilon  2|\epsilon V_1||V_2|\leq 2\epsilon (\epsilon^2|V_1|^2+|V_2|^2)\leq\\\notag
	&\leq 2\epsilon\Big(\big(\epsilon^2+a(t)\big)|V_1|^2+|V_2|^2\Big)= 2\epsilon \big(Q_\epsilon^{(2)}(t)V(t),V(t)\big).
	\end{align}
	Using this estimate in \eqref{energyest}, we obtain
	\begin{align}\notag
	\frac{d}{dt}E_\epsilon(t)&= \Big(\frac{d}{dt}Q_\epsilon^{(2)}(t)V(t),V(t)\Big)+i\beta\Big(\big(Q_\epsilon^{(2)}A(t)-A^*(t)Q_\epsilon^{(2)}\big)V(t),V(t)\Big)\leq\\ \notag
	&\leq \Big(\frac{d}{dt}Q_\epsilon^{(2)}(t)V(t),V(t)\Big) +2\beta\epsilon E_\epsilon(t)=\\
	&= \Bigg[\frac{\Big(\frac{d}{dt}Q_\epsilon^{(2)}(t)V(t),V(t)\Big)}{\big(Q_\epsilon^{(2)}(t)V(t),V(t)\big)}+ 2 \beta \epsilon\Bigg]E_\epsilon(t).\label{eq:quasiEnergy}
	\end{align}
	
	Before applying Gr\"onwall's lemma, we first estimate the integral
	\begin{align}\label{lemma2}
	\int_0^T\frac{\big(\frac{d}{dt}Q_\epsilon^{(2)}(t)V(t),V(t)\big)}{\big(Q_\epsilon^{(2)}(t)V(t),V(t)\big)}dt.
	\end{align}
	Let us recall that from the definition of the quasi-symmetriser, it follows that
	\begin{align}\label{qs}
	\Big( Q_\epsilon^{(2)}(t)V(t),V(t)\Big)=2\big(\big(a(t)+\epsilon^2\big)\beta^2|v|^2+|\partial_t v|^2\big).
	\end{align} 
	Thus, setting $c_1:=\max\big(1, 2(\|a\|_{L^\infty}+\epsilon^2)\big)$, we obtain a bound from above for \eqref{qs}, that is
	\[
	\Big( Q_\epsilon^{(2)}(t)V(t),V(t)\Big)\leq c_1 |V(t)|^2.
	\]
	Observing that $\epsilon^2c_1^{-1}\leq 1$ and $\epsilon^2c_1^{-1}\leq c_1$ for small enough $\epsilon$, we can also deduce an inequality from below of the form
	\[
	\epsilon^2c_1^{-1}|V(t)|^2\leq \Big( Q_\epsilon^{(2)}(t)V(t),V(t)\Big).
	\]
	Hence, there exists a constant $c_1\geq 1$ such that for $t\in[0,\tau]$ we have
	\begin{align}\label{property1}
	c_1^{-1}\epsilon^2|V(t)|^2\leq \big(Q_\epsilon^{(2)}(t)V(t),V(t)\big)\leq c_1 |V(t)|^2.
	\end{align}
	The lower bound, together with \cite[Lemma 2]{GR2013} (see \cite[Lemma 2]{KS2006} for a detailed proof), allows us to estimate the integral \eqref{lemma2} as follows
	\begin{align}\notag
	&\int_0^T\frac{\big(\frac{d}{dt}Q_\epsilon^{(2)}(t)V(t),V(t)\big)}{\big(Q_\epsilon^{(2)}(t)V(t),V(t)\big)}dt =
	\int_0^T\frac{\big(\frac{d}{dt}Q_\epsilon^{(2)}(t)V(t),V(t)\big)}{\big(Q_\epsilon^{(2)}(t)V(t),V(t)\big)^{1-\frac{1}{l}}(Q_\epsilon^{(2)}(t)V(t),V(t)\big)^\frac{1}{l}}dt\leq\\ \notag
	&\leq c_1^\frac{1}{l}\epsilon^{-\frac{2}{l}} \int_0^T\frac{\big(\frac{d}{dt}Q_\epsilon^{(2)}(t)V(t),V(t)\big)}{\big(Q_\epsilon^{(2)}(t)V(t),V(t)\big)^{1-\frac{1}{l}}|V(t)|^\frac{2}{l}}dt \leq c_1^\frac{1}{l}\epsilon^{-\frac{2}{l}} c_T\|Q_\epsilon^{(2)}\|^{\frac{1}{l}}_{\mathcal C^l([0,\tau])}\leq c_3\epsilon^{-\frac{2}{l}}.
	\end{align}
	Thus, applying the Gr\"onwall lemma to \eqref{eq:quasiEnergy} and using the estimates for the quasi-symmetriser just derived, we obtain
	\[
	E_\epsilon(t)\leq E_\epsilon(0)\exp\left( c_3\epsilon^{-\frac{2}{l}}+2\beta\epsilon T\right).
	\]
	Combining the latter inequality with \eqref{property1} we obtain
	\[
	c_1^{-1}\epsilon^2 |V(t)|^2\leq E_\epsilon(t)\leq E_\epsilon(0)e^{c_T(\epsilon^{-\frac{2}{l}}+\beta\epsilon)} \leq c_1|V(0)|^2 e^{c_T(\epsilon^{-\frac{2}{l}}+\beta\epsilon)}.
	\]
	We choose $\epsilon$ such that $\epsilon^{-\frac{2}{l}}=\beta\epsilon$, thus $\epsilon=\beta^{-\frac{l}{2+l}}$ and $\epsilon\beta=\beta^{\frac{2}{2+l}}$. 
	As argued in Case 2, we can assume that $\beta$ is large enough.
	Setting $\sigma=1+\frac{l}{2}$, for a suitable constant $K\in\R$ it follows that
	\[
	|V(t)|^2\leq C \beta^{\frac{l}{\sigma}}e^{K\beta^{\frac{1}{\sigma}}}|V(0)|^2.
	\]
	This means that
	\[
	\beta^2|v(t)|^2+|v'(t)|^2\leq C \beta^\frac{l}{\sigma}e^{K\beta^{\frac{1}{\sigma}}}\big(\beta^2 |v_0|^2+|v_1|^2\big),
	\]
	as required.\\
	
	\textbf{Case 4:} $\bm{a\in\mathcal C^\alpha([0,\tau])}$\textbf{, with} $\bm{0<\alpha<2}$\textbf{,} $\mathbf{a(t)\geq0}$.
	
	In this last case we extend the proof of Case 2. However, under these assumptions the eigenvalues $\pm\sqrt{a(t)}$ of the matrix $A(t)$ might coincide, and hence they are H\"older-regular of order $\frac{\alpha}{2}$ instead of $\alpha$. In order to adapt the proof of Case 2 to this situation we will assume without loss of generality that $a\in\mathcal C^{2\alpha}([0,\tau])$ with $0<\alpha<1$, so that  $\sqrt{a}\in\mathcal C^\alpha([0,\tau])$. At the end, to formulate the final result, we will change $\alpha$ into $\frac\alpha{2}.$
	
	Following the argument developed for Case 2, we look again for solutions of the form
	\begin{align*}
	V(t)=e^{-\rho(t)\beta^{\frac{1}{s}}}\big(\det  H(t)\big)^{-1}H(t)W(t),
	\end{align*}
	with the real-valued function $\rho(t)$, the exponent $s$ and the energy $W(t)$ to be chosen later, while $H(t)$ is the matrix given by
	\begin{align*} 
	H(t)=\begin{pmatrix}
	1 & 1\\
	\lambda^\epsilon_{1,\alpha}(t) & \lambda^\epsilon_{2,\alpha}(t)
	\end{pmatrix},
	\end{align*}
	where the regularised eigenvalues $\lambda^\epsilon_{1,\alpha}(t)$ and $\lambda^\epsilon_{2,\alpha}(t)$ of $A(t)$ differ from those defined in the Case 2 in the following way:
	\begin{align*}
	&\lambda^\epsilon_{1,\alpha}(t):=(-\sqrt{a}*\varphi_\epsilon)(t)+\epsilon^\alpha,\\
	&\lambda^\epsilon_{2,\alpha}(t):=(+\sqrt{a}*\varphi_\epsilon)(t)+2\epsilon^\alpha.
	\end{align*}
	
	Arguing as in Case 2, we can easily see that the smooth functions $\lambda^\epsilon_{1,\alpha}(t)$ and $\lambda^\epsilon_{2,\alpha}(t)$ satisfy uniformly in $t$ and $\epsilon$ the following inequalities
	\begin{itemize}
		\item $\det  H(t)=\lambda^\epsilon_{2,\alpha}(t)-\lambda^\epsilon_{1,\alpha}(t)\geq c_1\epsilon^\alpha$;
		\item $|\lambda^\epsilon_{1,\alpha}(t)+\sqrt {a(t)}|\leq c_2 \epsilon^\alpha$;
		\item $|\lambda^\epsilon_{2,\alpha}(t)-\sqrt{a(t)}|\leq c_3 \epsilon^\alpha$.
	\end{itemize}
	We now look for the energy estimates. In order to do this, recalling the calculations \eqref{W} and \eqref{enW} derived before, we obtain
	\begin{align}\notag
	&\frac{d}{dt}\big|W(t)\big|^2=2\Rep\big(W_t(t),W(t)\big)\leq \Big(2\rho'(t)\beta^{\frac{1}{s}}+2\|H^{-1}(t)H_t(t)\|+\\&+2\big|\big(\det H(t)\big)^{-1}\det H_t(t)\big|+\beta \big\|\big(H^{-1}AH-(H^{-1}AH)^*\big)(t)\big\| \Big)|W(t)|^2.\label{W4}
	\end{align}
	Similar arguments to those in Case 2 allow us to obtain the bounds
	\begin{enumerate}[I)]
		\item $\|H^{-1}(t)H_t(t)\|\leq k_1\epsilon^{-1}$;
		\item $\big|\big(\det H(t)\big)^{-1}\det H_t(t)\big|\leq k_2 \epsilon^{-1}$;
		\item $\|H^{-1}AH-(H^{-1}AH)^*\|\leq k_3 \epsilon^\alpha$.
	\end{enumerate}
	Combining \eqref{W4} with I)--III) we obtain 
	\begin{align*}
	\frac{d}{dt}|W(t)|^2\leq \Big(2\rho'(t)\beta^{\frac{1}{s}}+2k_1\epsilon^{-1}+2k_2 \epsilon^{-1}+ k_3 \beta \epsilon^\alpha \Big)|W(t)|^2.
	\end{align*}
	We choose $\epsilon^{-1}=\beta\epsilon^{\alpha}$ which yields $\epsilon=\beta^{-\frac{1}{\alpha+1}}$. Thus, setting $\gamma:=\frac{1}{\alpha+1}$, we obtain for a constant $c>0$ the estimate
	\begin{align*}
	\frac{d}{dt}|W(t)|^2\leq \Big(2\rho'(t)\beta^{\frac{1}{s}}+2c \beta^\gamma \Big)|W(t)|^2.
	\end{align*}
	We take $\rho(t):=\rho(0)-Kt$ with $K>0$ to be chosen later. 
	Considering 
	\begin{align*}
	\frac{1}{s}>\gamma\quad\iff\quad s< 1 +\alpha,
	\end{align*}
	we have that
	\begin{align}\label{eq:mon}
	\frac{d}{dt}|W(t)|^2\leq (-2K+2c\beta^{\gamma-\frac1s}) \beta^{\frac{1}{s}} |W(t)|^2\leq 0,
	\end{align}
	provided that $\beta$ is large enough. Similarly to Case 2, the monotonicity \eqref{eq:mon} of $|W|$ yields the bound
	\begin{align}\notag
	|V(t)|&=\,e^{-\rho(t)\beta^{\frac{1}{s}}}\big(\det  H(t)\big)^{-1}\|H(t)\||W(t)|=\\\notag
	&=\,e^{-\rho(t)\beta^{\frac{1}{s}}}\big(\det  H(t)\big)^{-1}\|H(t)\||W(0)|\leq\\ 
	&\leq e^{Kt\beta^{\frac{1}{s}}}\big(\det H(t)\big)^{-1}\det H(0)\|H(t)\|\big(\|H(0)\|\big)^{-1}|V(0)|.\label{V2}
	\end{align}
	Since 
	\[
	\big(\det H(t)\big)^{-1}\det H(0)\|H(t)\|\big(\|H(0)\|\big)^{-1}\leq c\epsilon^{-\alpha}=c\beta^{\frac{\alpha}{\alpha+1}},
	\]
	the inequality \eqref{V2} becomes 
	\begin{align*}
	|V(t)|\leq c \beta^{\frac{\alpha}{\alpha+1}}e^{Kt\beta^{\frac{1}{s}}}|V(0)|,
	\end{align*}
	which means 
	\begin{align*}
	\beta^2|v(t)|^2+|v'(t)|^2\leq c\beta^{\frac{\alpha}{\alpha+1}}e^{Kt\beta^{\frac{1}{s}}}\big(\beta^2|v_0|^2+|v_1|^2\big).
	\end{align*}
	Combining this with a remark for small $\beta$ similar to Case 2 yields the result.
	Thus Proposition \ref{lemma} is proved.
\end{proof}

\section{Proof of Theorem \ref{THM:main}}
\label{SEC:proof}

In this Section we present a proof of Theorem \ref{THM:main}. In order to do this, we combine the results for second order ordinary differential equations obtained in Section \ref{SEC:ODE} with the theory for graded Lie groups discussed in Chapter \ref{CP:2}, specifically Sections \ref{Sec:groupFouriertransform} and \ref{Sec:gradedLiegroup}.

We briefly recall the crucial aspects related to the Fourier analysis on groups that we are going to use in the proof. The main ingredient that we need is the Fourier transform on groups, introduced in Section \ref{Sec:groupFouriertransform}.

From now on we consider a graded Lie group $G$. Let $f\in L^1(G)$ and $\pi\in\widehat G$. Then, the group Fourier transform of $f$ at $\pi$ is defined by\footnote{See Definition \ref{def:groupFouriertransform}.}
\[
\mathcal F_G f(\pi)\equiv\widehat f(\pi)\equiv\pi(f):=\int_G f(x)\pi(x)^*dx,
\] 
where we integrate against the bi-invariant Haar measure on $G$.
Hence, the Fourier transform produces a linear mapping $\widehat f(\pi):\mathcal H_\pi\rightarrow\mathcal H_\pi$ that can be represented by an infinite matrix  once we choose a basis for the Hilbert space $\mathcal H_\pi$. By Kirillov's orbit method (see e.g. \cite{CG90}), one can explicitly construct the Plancherel measure $\mu$ on the dual $\widehat G$. Therefore we can use the Fourier inversion formula. In addition,  the operator $\pi(f)=\widehat f (\pi)$ is Hilbert--Schmidt, meaning
\begin{align*}
\|\pi(f)\|^2_{\HS}={\rm Tr}\big(\pi(f)\pi(f)^*\big)<\infty,
\end{align*}
and the function $\widehat G\ni\pi \mapsto \|\pi(f)\|^2_{\HS}$ is integrable with respect to $\mu$. These considerations lead us to an equivalent of the Euclidean Plancherel formula, discussed in Theorem \ref{Thm:Plancherelformula} and re-formulated as
\begin{align}\label{plancherel}
\int_G|f(x)|^2dx=\int_{\widehat G}\|\pi(f)\|^2_{\HS}d\mu(\pi).
\end{align}
Furthermore, given a positive Rockland operator $\mathcal{R}$, its functional calculus\footnote{See Theorem \ref{RockFuncCalc}.} yields the identity
\begin{align*}
\mathcal F_G \big(\mathcal R f\big)(\pi)=\pi({\mathcal R})\widehat f(\pi),
\end{align*}
for every representation $\pi\in\widehat{G}$. Hulanicki, Jenkins and Ludwig showed in \cite{HJL1985} that the spectrum of any infinitesimal representation $\pi(\mathcal R)$ of a Rockland operator $\mathcal{R}$ is \textit{discrete}. Therefore for each representation $\pi$ there is an orthonormal basis for the Hilbert space associated with $\pi$ in which the operator $\pi(\mathcal{R})$ is represented by the diagonal matrix
\begin{align}\label{eq:symbRock1}
\pi(\mathcal R)=\begin{pmatrix}
\pi^2_1 & 0      & \dots    & \dots \\
0      & \pi^2_2 & 0    & \dots \\
\vdots&   0       & \ddots  &          \\
\vdots& \vdots  &             & \ddots
\end{pmatrix},
\end{align}
where the $\pi_j$'s are strictly positive real numbers.

We can now proceed with the demonstration of Theorem \ref{THM:main}.
\begin{proof}[Proof of Theorem \ref{THM:main}]
	Our aim is to reduce the Cauchy problem \eqref{CP} 
	\begin{align}\label{CP1}
	\left\{\begin{array}{rclc}
	\partial_t^2 u(t,x)+a(t)\mathcal R u(t,x)&=&0,&(t,x)\in[0,\tau]\times G,\\
	u(0,x)&=&u_0(x),&x\in G,\\
	\partial_t u(0,x)&=&u_1(x),&x\in G,
	\end{array}\right.
	\end{align}
	to a form in which we can apply Proposition \ref{lemma}. To do so, we take the group Fourier transform of the Rockland wave equation and of the initial data in \eqref{CP1} with respect to $x\in G$. In this way, the equation \eqref{CP1} becomes
	\begin{align}\label{fourier}
	\partial_t^2 \widehat u (t,\pi)+a(t)\pi(\mathcal R)\widehat u (t,\pi)=0.
	\end{align}
	According to Formula \eqref{eq:symbRock1}, the infinitesimal representations of a Rockland operator $\mathcal R$ can be seen as diagonal matrices. This allows us to interpret equation \eqref{fourier} component-wise as an infinite system of equations of the form
	\begin{align}\label{single}
	\partial_t^2 \widehat u (t,\pi)_{m,k}+a(t)\pi_m^2\widehat u (t,\pi)_{m,k}=0,
	\end{align}
	for any representation $\pi\in\widehat G$, and $m,k\in\N$. The key point of the following argument is to decouple the system generated by the matrix equation \eqref{fourier}. In order to do this, we fix an arbitrary representation $\pi$ and a general entry $(m,k)\in\N\times\N$. Then, we treat each equation given by \eqref{single} individually. Formally, recalling the notation used in Proposition \ref{lemma} and noting that $\pi$, $m$ and $k$ are fixed, we define
	\[
	v(t):=\widehat u(t,\pi)_{m,k},\quad \beta^2:=\pi_m^2,
	\]
	and
	\[
	v_0:=\widehat u_0 (\pi)_{m,k},\quad v_1:=\widehat u_1(\pi)_{m,k}.
	\]
	Then \eqref{single} becomes
	\[
	v''(t)+a(t)\beta^2 v(t)=0.
	\]
	We proceed discussing implications of Proposition \ref{lemma} separately in each case.
	
	\medskip
	\textbf{Case 1:} $\bm{a\in\mathbf{Lip}([0,\tau])}$\textbf{,} $\bm{a(t)\geq a_0>0}$\textbf{.}
	
	Applying Proposition \ref{lemma}, we obtain that there exists a positive constant $C>0$  such that 
	\[
	\beta^2|v(t)|^2+|v'(t)|^2\leq C(\beta^2|v_0|^2+|v_1|^2),
	\]
	which is equivalent to
	\begin{align}\label{single inequality}
	|\pi_m\widehat u(t,\pi)_{m,k}|^2+|\widehat u'(t,\pi)_{m,k}|^2\leq C \big(|\pi_m\widehat u_0(\pi)_{m,k}|^2+|\widehat u_1(\pi)_{m,k}|^2\big).
	\end{align}
	Since in Proposition \ref{lemma} the constant $C$ is independent of $\beta,v_0$ and $v_1$, the estimate \eqref{single inequality} holds uniformly in $\pi\in\widehat G$ and $m, k\in\N$. We multiply inequality \eqref{single inequality} by $\pi_m^{4s/\nu}$ yielding
	\begin{align}
	|\pi_m^{1+\frac{2s}{\nu}}\widehat u(t,\pi)_{m,k}|^2&+|\pi_m^{\frac{2s}{\nu}}\widehat u'(t,\pi)_{m,k}|^2\leq\notag\\ 
	&\leq C \big(|\pi_m^{1+\frac{2s}{\nu}}\widehat u_0(\pi)_{m,k}|^2+|\pi_m^{\frac{2s}{\nu}}\widehat u_1(\pi)_{m,k}|^2\big).\label{general single inequality}
	\end{align}
	
	Thus, recalling that for any Hilbert--Schmidt operator $A$ we have 
	$$\|A\|^2_{\HS}=\sum_{m,k}|(A\varphi_m,\varphi_k)|^2$$ for any orthonormal basis $\{\varphi_1,\varphi_2,\dots\}$, we can consider the infinite sum over $m$ and $k$ of the inequalities provided by \eqref{general single inequality}, to obtain
	\begin{align}\label{HS inequality}
	\|\pi(\mathcal R)^{\frac{1}{2}+\frac{s}{\nu}}\widehat u (t,\pi)\|_{\HS}^2&+\|\pi(\mathcal R)^{\frac{s}{\nu}}\partial_t \widehat u(t,\pi)\|_{\HS}^2\leq\\\notag
	&\leq C\big(\|\pi(\mathcal R)^{\frac{1}{2}+\frac{s}{\nu}}\widehat u_0(\pi)\|_{\HS}^2+\|\pi(\mathcal R)^{\frac{s}{\nu}}\widehat u_1(\pi)\|_{\HS}^2\big).
	\end{align}
	We can now integrate both sides of \eqref{HS inequality} against the Plancherel measure $\mu$ on $\widehat G$, so that the Plancherel identity yields estimate \eqref{inequality case 1}.
	
	\medskip
	\textbf{Case 2:} $\bm{a\in\mathcal C^\alpha([0,\tau])}$\textbf{, with } $\bm{ 0<\alpha<1,\,\,a(t)\geq a_0>0}$\textbf{.}
	
	The application of Proposition \ref{lemma} implies the existence of two positive constants $C,K>0$ such that for all $m,k\in\N$ and for every representation $\pi\in\widehat G$ we have
	\begin{align}\label{lemma case 2}
	|\pi_m\widehat u (t,\pi)_{m,k}|^2+|\widehat u' (t,\pi)_{m,k}|^2\leq Ce^{Kt\pi_m^{\frac{1}{s}}}(|\pi_m\widehat u_0 (\pi)_{m,k}|^2+|\widehat u_1 (\pi)_{m,k}|^2),
	\end{align} 
	where 
	\[
	s < 1+\frac{\alpha}{1-\alpha}.
	\]
	If the Cauchy data $(u_0, u_1)$ are in $\mathcal G^s_\mathcal R(G)\times \mathcal G^s_\mathcal R(G)$, then there exist two positive constants $A_0$ and $A_1$ such that\footnote{See \eqref{G} for the definition of the spaces $\mathcal{G}^s_\mathcal{R}(G)$, of Gevrey type.}
	\begin{equation*}
	\|e^{A_0\mathcal R^{\frac{1}{2s}}}u_0\|_{L^2}<\infty\quad\text{and}\quad\|e^{A_1\mathcal R^{\frac{1}{2s}}}u_1\|_{L^2}<\infty.
	\end{equation*}
	
	Take now $A=\min\{A_0,A_1\}$. We can always assume $K$ in Case 2 of Proposition \ref{lemma} is small enough, so that we have some $B>0$ such that $KT=A-B$. Therefore, we can rewrite inequality \eqref{lemma case 2} as 
	\begin{align}\notag
	e^{B\pi_m^{\frac{1}{2s}}}\big(|\pi_m\widehat u (t,\pi)_{m,k}|^2&+|\widehat u' (t,\pi)_{m,k}|^2\big)\leq\\\label{lemma case 3}
	&\leq Ce^{A\pi_m^{\frac{1}{s}}}(|\pi_m\widehat u_0 (t)_{m,k}|^2+|\widehat u_1 (t)_{m,k}|^2).
	\end{align}
	Summing over $m,k$, integrating against the Plancherel measure of $\widehat G$ and applying the Plancherel identity, inequality \eqref{lemma case 3} becomes
	\begin{align}\notag
	\|e^{B\mathcal R^{\frac{1}{2s}}}u\|_{L^2(G)}&+\|e^{B\mathcal R^{\frac{1}{2s}}}\partial_tu\|_{L^2(G)}\leq
	\|e^{B\mathcal R^{\frac{1}{2s}}}\mathcal R^{\frac{1}{2}}u\|_{L^2(G)}+\|e^{B\mathcal R^{\frac{1}{2s}}}\partial_tu\|_{L^2(G)}\lesssim\\\label{case 2 conclusion}
	&\lesssim \|e^{A\mathcal R^{\frac{1}{2s}}}\mathcal R^{\frac{1}{2}}u_0\|_{L^2(G)}+\|e^{A\mathcal R^{\frac{1}{2s}}}u_1\|_{L^2(G)}.
	\end{align}
	From the definition of $\mathcal{G}_{\mathcal{R}}^s(G)$, it follows that if a function $f$ belongs to $\mathcal G^s_{\mathcal R}(G)$, then $\mathcal R^\frac{1}{2}f$ is also in $\mathcal G_\mathcal R^s(G)$. Therefore, from \eqref{case 2 conclusion} we obtain the desired well-posedness result.
	
	\medskip
	\textbf{Case 3:} $\bm{a\in\mathcal C^l([0,\tau])}$\textbf{, with} $\bm{l\geq 2,~a(t)\geq 0.}$
	
	Similarly to the previous cases, the application of Proposition \ref{lemma} yields the existence of two positive constants $C,K>0$ such that
	\begin{align*}
	|\pi_m \widehat{u}(t,\pi)_{m,k}(t)|^2&+|\widehat{u}'(t,\pi)_{m,k}|^2\leq\\
	&\leq C\pi_m^{\frac{l}{\sigma}+2}e^{KT\pi_m^\frac{1}{\sigma}}| \widehat{u}_0(\pi)_{m,k}|^2+C\pi_m^\frac{l}{\sigma}e^{KT\pi_m^\frac{1}{\sigma}}|\widehat{u}_1(\pi)_{m,k}|^2\leq\\\notag
	&\leq Ce^{K'\pi_m^\frac{1}{s}}| \widehat{u}_0(\pi)_{m.k}|^2+Ce^{K'\pi_m^\frac{1}{s}}|\widehat{u}_1(\pi)_{m,k}|^2,
	\end{align*}
	with $1\leq s<\sigma=1+\frac{l}{2}$, for some $K'>0$ small enough. Proceeding as in Case 2, we obtain the desired inequality.
	
	\medskip
	\textbf{Case 4: $\mathbf{a\in\mathcal C^\alpha([0,\tau])}$, with $\mathbf{0<\alpha<2}$, $\mathbf{a(t)\geq0}$.}
	
	In this last case, applying Proposition \ref{lemma} we have that there exist two positive constants $C,K>0$ such that 
	\[
	\pi_m^2|\widehat{u}(t.\pi)_{m.k}|^2+|\widehat{u}'(t,\pi)_{m,k}|^2\leq C\pi_m^\frac{1}{\alpha+1}e^{KT\pi_m^\frac{1}{s}}(\pi_m^2|\widehat{u}_0(\pi)_{m,k}|^2+|\widehat{u}_1(\pi)_{m,k}|^2),
	\]
	with $1\leq s< 1+\frac\alpha{2}$. Arguing as above, the result follows and the theorem is fully demonstrated.
\end{proof}


\chapter{Sub-Laplacian Gevrey Spaces}\label{CP:4}
In Chapter \ref{CP:3}, we have presented an example of a Cauchy problem where, under suitable conditions, the Gevrey regularity appears naturally. The Gevrey classes of functions are intermediate spaces between smooth functions and real-analytic functions, and for this reason they are particularly important in applications, e.g., in Gevrey micro-local analysis, Gevrey solvability, in the study of hyperbolic equations, dynamical systems and evolution partial differential equations. See Chapter \ref{CP:1} for references. 

In this Chapter, by modifying the definition of Gevrey spaces we aim to study the Gevrey hypoellipticity of sub-Laplacians. In order to do this we introduce a variation of these spaces.\footnote{It might be interesting to investigate their relation with the `original' spaces.}

Firstly, we recall the Euclidean Gevrey spaces and present the main ideas of the fundamental characterization of Gevrey functions in terms of their Fourier transform, referring to the monograph by Rodino \cite{R1993}. Then, we move to the non-commutative setting, mentioning the case of compact groups studied by Dasgupta and Ruzhansky in \cite{DR2014}. Subsequently, we consider the more general setting of manifolds and introduce our proposed definition of \textit{sub-Laplacian Gevrey spaces}. Then, we study these classes of functions, embedding them in bigger spaces explicitly depending on the sub-Laplacian. We notice that our new classes of functions match the Gevrey type functions we encountered in the study of the well-posedness of the wave equation in Chapter \ref{CP:3}. Finally, we consider the case of the special unitary group and the Heisenberg group, where the explicit symbolic calculus allows us to prove a complete characterization for our sub-Laplacian Gevrey functions.

\section{Euclidean Gevrey spaces}
In 1918 the French mathematician Maurice Gevrey introduced in \cite{G1918} the `\textit{fonctions de classe donn\'ee}', later called \textit{Gevrey functions} in his honour:
\begin{definition}[Gevrey functions of order $s$ in $\Omega$]
	Let $\Omega$ be an open subset of $\Rn$ and let $s\geq 1$. A function $f$ is a Gevrey function of order $s$, written $f\in G^s(\Omega)$, if $f\in\mathcal{C}^\infty(\Omega)$ and for all compact subsets $K$ of $\Omega$ there exist two positive constants $A$ and $C$ such that for all $\alpha\in\N^n_0$ and for all $x\in K$ we have
	\[
	|\partial^\alpha f(x)|\leq A C^{|\alpha|}(\alpha!)^s.
	\]
\end{definition}

It follows immediately from the definition that for $s=1$ the corresponding Gevrey class of functions coincides with the space of real analytic functions, while in general they provide an intermediate scale of spaces between smooth functions $\mathcal{C}^\infty$ and real-analytic functions. This means that Gevrey classes are widely relevant in the analysis of operators with  some properties failing in $\mathcal{C}^\infty$ or in the analytic frameworks.

A simple but meaningful example is the homogeneous equation associated to the heat operator $L=\partial_{t}-\sum_{j=1}^{n}\partial_{x_j}^2$ in $\Rn$ with $n\geq 1$. Indeed, the solutions of the homogeneous equation $Lu=0$ are not analytic in general, though always $\mathcal C^\infty$, and by calculating derivatives of the fundamental solution of $L$ we can deduce they are Gevrey for $s\geq 2$. This provides more precise information on the regularity of the solutions of the heat equation. An example in the other direction is that the Cauchy problem for the wave equation is analytically well-posed but not well posed in $\mathcal C^\infty$ in the presence of multiple characteristics. Consequently, determining the sharp Gevrey order for the well-posedness is a challenging problem with several results, starting with the seminal work of Colombini, de Giorgi and Spagnolo \cite{CDS79}, and continuing with many others as discussed in Chapter \ref{CP:3}.

The Gevrey spaces can be effectively characterised on the Fourier transform side, which increases their applicability in many problems, most notably allowing on to obtain energy estimates for evolution partial differential equations; see, e.g., Rodino's book \cite{R1993}. In particular, the following theorem holds:
\begin{theorem}\label{Thm:Rodino}~\newline
	\begin{enumerate}[i.]
		\item Let $s>1$. If $\phi\in G^s(\Rn)\cap\mathcal{C}^\infty_0(\Rn)$, then there exist positive constants $C$ and $\epsilon$ such
		that
		\begin{equation}\label{Rodino}
		|\widehat\phi(\xi)|\leq C e^{-\epsilon |\xi|^{\frac{1}{s}}},\quad\text{for all }\xi\in\Rn;
		\end{equation}
		\item Let $s\geq 1$. If the Fourier transform of $\phi\in \mathcal S'(\Rn)$ satisfies \eqref{Rodino}, then $\phi\in G^s(\Rn)$.
	\end{enumerate}
\end{theorem}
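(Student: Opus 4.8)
\textbf{Proof proposal for Theorem \ref{Thm:Rodino}.}

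The plan is to establish the two directions by the classical Paley--Wiener-type arguments adapted to Gevrey regularity, following the approach in Rodino's book \cite{R1993}. For part (i), the starting point is the identity $\xi^\alpha\widehat\phi(\xi) = (2\pi i)^{-|\alpha|}\widehat{\partial^\alpha\phi}(\xi)$, so that $|\xi^\alpha\widehat\phi(\xi)| \leq (2\pi)^{-|\alpha|}\|\partial^\alpha\phi\|_{L^1}$. Since $\phi$ has compact support, say in a compact set $K$, the Gevrey bound $|\partial^\alpha\phi(x)|\leq AC^{|\alpha|}(\alpha!)^s$ gives $\|\partial^\alpha\phi\|_{L^1}\leq A|K|\, C^{|\alpha|}(\alpha!)^s$. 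Summing appropriately over $|\alpha|=N$ (using the multinomial identity $\sum_{|\alpha|=N}\frac{N!}{\alpha!}=n^N$, so that $\sum_{|\alpha|=N}|\xi^\alpha|\leq$ a constant times $(n|\xi|_\infty)^N$, or more conveniently working with $|\xi|^{2N}=(\xi_1^2+\dots+\xi_n^2)^N$ expanded multinomially) yields a bound of the form $|\xi|^N|\widehat\phi(\xi)|\leq A' (C')^N (N!)^s$ for all $N\in\N_0$, for suitable constants $A',C'$ depending only on $n,A,C,|K|$. The final step is the elementary optimisation: from $|\xi|^N|\widehat\phi(\xi)|\leq A'(C')^N(N!)^s$ for every $N$, one chooses $N\approx c|\xi|^{1/s}$ and uses Stirling's formula $(N!)^s\leq C'' N^{sN}e^{-sN}$ to conclude $|\widehat\phi(\xi)|\leq C e^{-\epsilon|\xi|^{1/s}}$; this is where the exponent $1/s$ emerges, and it requires $s>1$ only in that for $s=1$ the series manipulations give analyticity (a strictly stronger Paley--Wiener statement with exponential decay, handled separately in the analytic theory), whereas the stated estimate \eqref{Rodino} still holds.

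For part (ii), the argument runs in reverse via the Fourier inversion formula. Given $\phi\in\mathcal S'(\Rn)$ whose Fourier transform is a function satisfying $|\widehat\phi(\xi)|\leq Ce^{-\epsilon|\xi|^{1/s}}$, the rapid decay shows $\widehat\phi\in L^1$ and indeed $\xi^\alpha\widehat\phi(\xi)\in L^1$ for every multi-index, so $\phi\in\mathcal C^\infty(\Rn)$ and one may differentiate under the integral sign:
\begin{align*}
|\partial^\alpha\phi(x)| = \left| \int_{\Rn} (2\pi i\xi)^\alpha e^{2\pi i x\cdot\xi}\widehat\phi(\xi)\, d\xi \right| \leq (2\pi)^{|\alpha|} C\int_{\Rn} |\xi|^{|\alpha|} e^{-\epsilon|\xi|^{1/s}}\, d\xi.
\end{align*}
It then remains to estimate the integral $\int_{\Rn}|\xi|^{m}e^{-\epsilon|\xi|^{1/s}}d\xi$ for $m=|\alpha|$. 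Passing to polar coordinates reduces this to a one-dimensional integral $\int_0^\infty r^{m+n-1}e^{-\epsilon r^{1/s}}dr$, and the substitution $t = \epsilon r^{1/s}$ turns it into a Gamma-function value: it equals $s\,\epsilon^{-s(m+n)}\Gamma\big(s(m+n)\big)$ up to constants. Using $\Gamma(s(m+n)) = (s(m+n)-1)! \leq C^{m}(m!)^s$ (again by Stirling, absorbing the fixed shift by $n$ and the factor $s$ into the constants), one obtains exactly $|\partial^\alpha\phi(x)|\leq A'C'^{|\alpha|}(\alpha!)^s$ uniformly in $x$, hence $\phi\in G^s(\Rn)$ with bounds uniform on all of $\Rn$ (in particular on every compact $K$).

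The routine parts are the combinatorial passages between the sup-over-$\alpha$ formulation and the power-of-$|\xi|$ formulation, and the Stirling estimates converting factorials to $N^{sN}$-type growth; these are standard and I would cite \cite{R1993} or carry them out briefly. The genuine crux — though it is more bookkeeping than deep — is matching the constants so that the $1/s$ in the exponent comes out correctly in both directions, which amounts to the reciprocity between the two optimisation problems $\inf_N \big(C^N(N!)^s / |\xi|^N\big)$ and $\int |\xi|^m e^{-\epsilon|\xi|^{1/s}}d\xi$; both are governed by Stirling and yield the same critical scale $N\sim|\xi|^{1/s}$, which is precisely why the characterisation is sharp. One mild subtlety worth flagging explicitly is the hypothesis $\phi\in\mathcal C^\infty_0$ in part (i) versus merely $\phi\in\mathcal S'$ in part (ii): compact support is essential in (i) to pass from pointwise Gevrey bounds to an $L^1$ bound on $\partial^\alpha\phi$, whereas in (ii) the exponential decay of $\widehat\phi$ already forces $\phi$ to be smooth with globally controlled derivatives, so no support hypothesis is needed.
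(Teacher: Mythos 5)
Your proposal is correct and follows exactly the classical Paley--Wiener-type argument that the paper itself does not reproduce but delegates to Rodino's book \cite{R1993}: differentiation-to-multiplication plus optimisation over $N$ for part (i), and Fourier inversion with a Gamma-function estimate for part (ii). The one subtlety you flag (compact support needed in (i), not in (ii), and the vacuity of (i) at $s=1$) is handled appropriately, so nothing further is required.
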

This characterisation plays a fundamental r\^ole in applications, see for example \cite{DFT2009}, and allows one to deduce an important link between the Gevrey functions and the Laplacian operator, as the function $ |\xi|^{\frac{1}{s}}=(|\xi|^2)^{\frac{1}{2s}}$ appearing in \eqref{Rodino} contains $|\xi|^2$, the Fourier transform of the Laplacian. A proof of Theorem \ref{Thm:Rodino} can be found in \cite{R1993}.

\subsection{Gevrey spaces on compact Lie groups}
Recently, Dasgupta and Ruzhansky extended the study of Gevrey regularity to the non-commutative compact case, generalising work by Seeley \cite{S1969} in which he characterized analytic functions on compact manifolds in terms of their eigenfunction expansions. In \cite{DR2014} the former two showed that the Gevrey spaces defined in local coordinates on a compact Lie group allow similar global descriptions in terms of the Laplacian on the group. Such a characterisation was used in \cite{GR2015} to find energy estimates for the corresponding wave equations for the Laplacian and establish a well-posedness result in Gevrey classes.

Subsequently, the characterisation from \cite{DR2014} on compact Lie groups was extended further in \cite{DR2016} to the setting of elliptic operators on general compact manifolds. More precisely, they showed that if $E$ is a positive elliptic pseudo-differential operator of order $\nu>0$ on a compact manifold $M$ without boundary, and if  $G^s(M)$ denotes the Gevrey space on $M$ defined in local coordinates, then 
\begin{equation}\label{EQ:mfd-E}
f\in G^s(M)\;\textrm{ if and only if }\; |\widehat{f}(j)|\leq C e^{-\epsilon \lambda_j^{\frac{1}{s\nu}}},
\end{equation}
for some $\epsilon, C>0$ and for all $j\in\N$, where $\lambda_j$ are the eigenvalues of $E$ and 
$\widehat{f}(j)=(f,e_j)_{L^2(M)}$, and $e_j$ are the corresponding eigenfunctions of $E$. Consequently, using the spectral calculus of $E$, condition \eqref{EQ:mfd-E} can be rewritten as
\begin{equation}\label{EQ:mfd-E2}
f\in G^s(M)\;\textrm{ if and only if }\; \exists D>0\textrm{ such that }
\| e^{D E^{\frac{1}{s\nu}}} f\|_{L^2(M)}<\infty. 
\end{equation}
Moreover, the equivalence \eqref{EQ:mfd-E2} was established in \cite{DR2016} also for more general spaces, including abstract Komatsu classes of ultradifferentiable functions and the corresponding spaces of ultradistributions.

Our aim here is twofold. We want
\begin{itemize}
	\item to investigate what changes in equivalence \eqref{EQ:mfd-E2} if the operator $E$ is no longer elliptic but hypoelliptic. The model case for this will be when, for example, $\nu=2$ and we replace the second order elliptic pseudo-differential operator $E$ by H\"ormander's sum of squares, denoted $\slp$. The main question here is: what space, depending on $\slp$, say $G^s_\slp(M)$, should we put on the left hand side of \eqref{EQ:mfd-E2} for such a characterisation to continue to hold?
	\item to remove the compactness assumption on $M$ while also, for some results, allowing it to have more structure. For example, if $\slp$ is the sub-Laplacian, we are interested in examining the cases where $M$ is a stratified Lie group, for example the Heisenberg group.
\end{itemize}

We are able to give partial answers to these questions.  
Indeed, we introduce new classes of functions, adapting the definition to the fact that we consider hypoelliptic operators. Here, however, we are assisted by the choice of the operator being H\"ormander's sum of squares so that we have a fixed collection of  vector fields at our disposal to work with. In fact, we introduce our suggested definition for Gevrey-type spaces in the setting of general manifolds and then we restrict our considerations to Lie groups. We prove the inclusion of our spaces in spaces of functions defined in terms of the boundedness of a heat kernel operator.

In the other direction, the general characterisation remains open, but we show that the non-commutative Fourier analysis on groups allows us to obtain the desired characterisation on two important groups: the special unitary group $SU(2)$ and the Heisenberg group $\h_n$, which are respectively compact and non-compact.

As we observed for the Euclidean cases, characterisations such as \eqref{EQ:mfd-E2} are particularly relevant for applications, for example for the well-posedness questions for hyperbolic pdes such as in \cite{CDS79}. In fact, in Chapter \ref{CP:3}, we considered the Cauchy problem for the wave equation
\begin{equation}\label{EQ:we}
\partial_t^2 u+a(t)\slp u=0
\end{equation}
for $\slp$ being a positive sub-Laplacian\footnote{In Chapter \ref{CP:3} we consider a more general situation. In fact we study the well-posedness of the Cauchy problem for the wave equation $\partial_t^2 u+a(t)\mathcal{R} u=0$, for $\mathcal{R}$ being a positive Rockland operator on a graded Lie group. Hence above we refer to the special case when $\mathcal{R}=\slp$.} on a stratified Lie group $G$ of step $r$ and for $a\geq 0$ of H\"older regularity. We observed that already in local coordinates it is natural to expect the appearance of Gevrey spaces in such problems, and the class, of functions $f$ satisfying the condition $\| e^{D \slp^{\frac{1}{s\nu}}} f\|_{L^2(G)}<\infty$ for some $D>0$, appeared naturally in the energy estimates for \eqref{EQ:we}. This allowed sharp well-posedness results in sub-Laplacian Sobolev and Gevrey type spaces with the loss of regularity depending on the step $r$; see Corollary \ref{COR:main}. However, a description of such spaces as modifications of the classical Gevrey spaces is missing. 

In the remainder of this Chapter we provide such a description. This will allow us to have a better understanding of the Gevrey hypoellipticity of sub-Laplacians, even if there are still several open questions that we will mention in Chapter \ref{CP:5}. Indeed, the comprehension of these spaces turned to be more complicated than we expected. This prompts the question: why not use, for example, heat kernel techniques, a standard approach in global harmonic analysis? In fact, when first facing this problem one can quickly realise that estimates (independently of the order of differentiation, or at least polynomially or exponentially dependent on this order) of the higher order Riesz transforms or of the derivatives of the heat kernel  would easily answer our questions. At the time of writing these estimates are not available\footnote{In \cite{VSC1992} and in \cite{tER:97} we can find estimates for the derivatives of the heat kernel and of the higher order Riesz transforms, respectively. Unfortunately the control of the dependence of the constants on the order of differentiation seems a currently intractable problem.}, their attainment being a difficult problem in its own right. 

The organisation of this Chapter is as follows. In Section \ref{SEC:mfds}, we introduce our suggested definition for \textit{sub-Laplacian Gevrey spaces} on manifolds, and we prove the inclusion of these spaces in bigger classes of functions. Then in Section \ref{SEC:cpt-groups}, we consider the case of compact Lie groups, and in particular we present the characterisation of sub-Laplacian Gevrey spaces on the special unitary group $SU(2)$. Finally in Section \ref{SEC:heis}, we provide the more detailed description of sub-Laplacian Gevrey spaces in the setting of the Heisenberg group $\h_n$, after recalling the main aspects of its representation theory. In both cases, a complete characterisation is achieved because the symbolic calculus is completely explicit.

\section{Sub-Laplacian Gevrey spaces on manifolds}
\label{SEC:mfds}

Let $M$ be a smooth manifold of dimension $n$ endowed with a  measure $\mu$, on which we will impose assumptions later. Let $\mathbf{X} =\{X_1,\dots,X_r\}$ be a family of vector fields on $M$ satisfying the H\"{o}rmander condition, that is, the vector fields are real-valued and at every point $x$ of $M$ the (real) Lie algebra generated by $\mathbf {X}$ coincides with the tangent space $T_x(M)$ at $x$. We have observed that we can then define the corresponding (positive) sub-Laplacian operator to be
\[
\slp:=-(X_1^2+\dots+X_r^2).
\]
In view of the celebrated H\"ormander theorem \cite{H1967}, presented here in Theorem \ref{HormanderTheorem}, it is clear this is a positive hypoelliptic operator. Such families of vector fields and their properties have been extensively studied in the literature: see, e.g., \cite{B2014} and the references therein. In the context of Lie groups, we can refer to \cite{VSC1992} or \cite{tER2012} for the case of Lie groups with polynomial growth, or also to Chapter \ref{CP:2} for a quick introduction to the topic. 

Our first aim is to introduce Gevrey type classes of functions on 
the manifold $M$ with respect to the H\"ormander system $\mathbf X$ and to characterise these spaces globally, on the spectral side in terms of the sub-Laplacian operator associated with $\mathbf X$. In order to do this, let us start with our proposed definition of the spaces we are interested in.

\begin{definition}[$(\slp,L^\infty)$-Gevrey spaces]\label{DEF:gsl}
	Let $s>0$. The sub-Laplacian Gevrey space $\gamma^s_{\mathbf{X},L^\infty}(M)$ of order $s$ on $M$ is the space of all functions $\phi\in\mathcal{C}^\infty(M)$ satisfying that for every compact set $K\subset M$ there exist two constants $A,C>0$ such that for every $\alpha\in\N^r_0$  we have the inequality 
	\begin{equation}\label{gevrey}
	|\partial^\alpha \phi(x)|\leq C A^{|\alpha|}(\alpha!)^s,\;\textrm{ for all } x\in K.
	\end{equation}
	Here $\partial^\alpha=Y_1\dots Y_{|\alpha|}$, with $Y_j\in\{X_1,\dots,X_r\}$ for every $j=1,\dots,|\alpha|$, and $\sum_{Y_j=X_k}1=\alpha_k$ for every $k=1,\dots,r$.
\end{definition}
Since the differential operators are local, in the subsequent analysis it will be often enough to assume that the function $\phi$ is compactly supported. This restricts the consideration to the `interesting' range\footnote{The only real-analytic function with compact support is the function identically equivalent  to the zero function. This can be easily proved by showing that the support of a compactly supported real-analytic function is empty.} $s>1$. 

We would like to obtain global characterisations of these spaces using the sub-Laplacian operator. More precisely, we want to prove a characterisation of the following type for compactly supported functions $\phi\in\mathcal{C}_0^\infty(M)$:
\begin{equation}\label{EQ:char1}
\phi\in\gamma^s_{\mathbf{X},L^{\infty}}(M)\quad\textrm{ if and only if }\quad
\exists \,D>0\text{ such that }\|e^{D\slp^{\frac{1}{2s}}}\phi\|_{L^2(M)}<\infty.
\end{equation}
Here we assume that $M$ is with no boundary and is equipped with a measure $\mu$ such that the differential operator $\slp$ (defined on $\mathcal{C}^\infty(M)$) is a non-negative essentially self-adjoint operator on $L^2(M)$.
Note that the essential self-adjointness of $\slp$ provides the functional calculus for the sub-Laplacian,\footnote{The sub-Laplacian is an example of a Rockland operator, to which can we apply Theorem \ref{RockFuncCalc} and deduce that if it is essentially self-adjoint then there is a self-adjoint extension; we apply the spectral theorem to the latter.}
and therefore a meaning to the exponential of its fractional power. More precisely, we can apply the spectral theorem in its projection valued form, see \cite{RS1980}. This yields a one-to-one correspondence between our self-adjoint operators and projection-valued measures $E_\lambda$, $\lambda\in\R$, on $L^2(M,\mu)$, given by
\begin{align}
\slp=\int_{-\infty}^{+\infty}\lambda dE_\lambda.
\end{align}
Furthermore, given a real-valued Borel function $g(\cdot)$ on $\R$, the (possibly unbounded) operator
\begin{align}\label{EQ:FuncCalcSlp}
g(\slp)=\int_{-\infty}^{+\infty}g(\lambda) dE_\lambda
\end{align}
is self-adjoint, if 
\begin{align*}
\Dom \big(g(\slp)\big):=\Big{\{} f\in L^2(M,\mu)|\int_{-\infty}^{+\infty}|g(\lambda)|^2(dE_\lambda f,f)<\infty\Big{\}}
\end{align*}  
is dense in $L^2(M,\mu)$, for instance when $g\in L^\infty(\R)$. Hence a more precise version of \eqref{EQ:char1} is
\begin{align*}
\phi\in\gamma^s_{\mathbf{X},L^{\infty}}(M)\quad\textrm{ if and only if }\quad
\exists \,D>0\text{ such that }\phi\in\Dom\big(e^{D\slp^{1/(2s)}}\big).
\end{align*}
In \eqref{EQ:char1}, the $L^2$-norm on the right suggests that it may also be technically convenient to avoid the compact support assumption and to work with functions having all of their derivatives in $L^2(M)$. 

To this end we define the $L^2$-version of the sub-Laplacian Gevrey space.
\begin{definition}[$(\slp,L^{2})$-Gevrey spaces]\label{DEF:gslL2}
	Let $s\geq 1$. The \textit{sub-Laplacian Gevrey space with respect to the $L^2$-norm} $\gamma^s_{\mathbf{X},L^2}(M)$ of order $s$ on $M$ endowed with a measure $\mu$ is the space of all functions $\phi\in \mathcal{C}^\infty(M)$ for which there exist two constants $A,C>0$ such that for every $\alpha\in\N^r_0$ we have
	\begin{equation}\label{gevrey-L2}
	\|\partial^\alpha \phi\|_{L^2(M)}\leq C A^{|\alpha|}(\alpha!)^s.
	\end{equation}
	Here $\partial^\alpha=Y_1\dots Y_{|\alpha|}$, with $Y_j\in\{X_1,\dots,X_r\}$ for every $j=1,\dots,|\alpha|$ and $\sum_{Y_j=X_k}1=\alpha_k$ for every $k=1,\dots,r$.
\end{definition}

In order to analyse the desired characterisation \eqref{EQ:char1}, we will first prove two auxiliary statements of equivalence. We work in an abstract formulation: instead of specifying the precise form of the vector fields or the measure, we instead assume some properties that they should satisfy that allow us to derive the desired conclusions. The first property that we will use is as follows.

\begin{proposition}[Equivalence between $L^\infty$-norm and $L^2$-norm]
	\label{PROP:prop1}
	Let $M$ be a smooth manifold and let $\mathbf{X} =\{X_1,\dots,X_r\}$ be a H\"ormander system. 
	We have the inclusion
	\begin{equation}\label{EQ:inc1}
	\gamma^s_{\mathbf{X},L^{\infty}}(M)\cap \mathcal{C}_0^\infty(M)\subset \gamma^s_{\mathbf{X}, L^2}(M).
	\end{equation}
	Conversely, assume that, with respect to the given measure $\mu$ and the H\"ormander system $\mathbf{X}$, the \textit{Sobolev embedding} holds, namely, that there is $k$ such that $\|f\|_{\infty}\lesssim\sum_{|\alpha|\leq k}\|X^\alpha f\|_{L^2}$. Then we have
	\begin{equation}\label{EQ:inc2}
	\gamma^s_{\mathbf{X}, L^2}(M)\subset \gamma^s_{\mathbf{X},L^{\infty}}(M).
	\end{equation}
\end{proposition}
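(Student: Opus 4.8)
The plan is to prove the two inclusions \eqref{EQ:inc1} and \eqref{EQ:inc2} separately, since they require genuinely different ideas: the first is essentially elementary (integrating a pointwise bound over a compact set), while the second relies crucially on the stated Sobolev embedding hypothesis and on keeping careful track of how the number of derivatives proliferates.

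For \eqref{EQ:inc1}, let $\phi\in\gamma^s_{\mathbf X,L^\infty}(M)\cap\mathcal C_0^\infty(M)$ and set $K=\supp\phi$, a compact set. By definition there are $A,C>0$ with $|\partial^\alpha\phi(x)|\le CA^{|\alpha|}(\alpha!)^s$ for all $x\in K$ and all $\alpha\in\N_0^r$. Since $\partial^\alpha\phi$ is again supported in $K$ (the $X_j$ are differential operators, hence local), I would write
\[
\|\partial^\alpha\phi\|_{L^2(M)}^2=\int_K|\partial^\alpha\phi(x)|^2\,d\mu(x)\le C^2 A^{2|\alpha|}((\alpha!)^s)^2\,\mu(K),
\]
so that $\|\partial^\alpha\phi\|_{L^2(M)}\le C\sqrt{\mu(K)}\,A^{|\alpha|}(\alpha!)^s$, which is exactly \eqref{gevrey-L2} with the new constant $C'=C\sqrt{\mu(K)}$ (finite because $\mu$ is locally finite / Radon). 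Here one should note that the constants in Definition \ref{DEF:gslL2} are global, matching the fact that $\phi$ has compact support, so no exhaustion argument is needed.

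For \eqref{EQ:inc2}, let $\phi\in\gamma^s_{\mathbf X,L^2}(M)$ with constants $A,C$. Fix a multi-index $\alpha$ and apply the Sobolev embedding to the function $X^\alpha\phi$ (more precisely to $\partial^\alpha\phi$ written as an ordered product of $|\alpha|$ vector fields from $\mathbf X$), giving
\[
\|\partial^\alpha\phi\|_{L^\infty}\lesssim\sum_{|\beta|\le k}\|X^\beta\partial^\alpha\phi\|_{L^2}.
\]
Each term $X^\beta\partial^\alpha\phi$ is a product of at most $|\alpha|+k$ vector fields from $\mathbf X$, hence is of the form $\partial^\gamma\phi$ with $|\gamma|\le|\alpha|+k$, so by hypothesis $\|X^\beta\partial^\alpha\phi\|_{L^2}\le CA^{|\gamma|}(\gamma!)^s\le C A^{|\alpha|+k}((|\alpha|+k)!)^s$ (using monotonicity of the factorial and $\gamma!\le|\gamma|!$, or more carefully comparing $\gamma!$ with $(|\alpha|+k)!$). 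Summing over the finitely many $\beta$ with $|\beta|\le k$ contributes only a dimensional constant. It then remains to absorb the shift from $|\alpha|$ to $|\alpha|+k$ into the Gevrey constants: since $k$ is fixed, $A^{|\alpha|+k}=A^k A^{|\alpha|}$ and, by the standard factorial inequality $(m+k)!\le 2^{m+k}\,m!\,k!$ applied with $m=|\alpha|$, one gets $((|\alpha|+k)!)^s\le (2^k k!)^s\,(2^s)^{|\alpha|}\,(\alpha!)^s$ after comparing $|\alpha|!$ with $\alpha!$ up to a factor $r^{|\alpha|}$ (the multinomial bound $|\alpha|!\le r^{|\alpha|}\alpha!$). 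Collecting all these, $\|\partial^\alpha\phi\|_{L^\infty}\le C' (A')^{|\alpha|}(\alpha!)^s$ with $A'=2^s r^s A$ and $C'$ depending on $A,k,r,s$ but not on $\alpha$, which is \eqref{gevrey} — and since the bound is uniform on all of $M$, it holds in particular on every compact $K$.

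The main obstacle, and the only place where real care is needed, is the bookkeeping in the second inclusion: one must make sure that passing from $\|\cdot\|_{L^2}$-control of $\partial^\gamma\phi$ for all $|\gamma|\le|\alpha|+k$ to an $L^\infty$-bound on $\partial^\alpha\phi$ with Gevrey-type constants does not leak any dependence on $|\alpha|$ into the constant $C'$ — this is handled precisely by the factorial inequalities $(m+k)!\le 2^{m+k}m!\,k!$ and $|\alpha|!\le r^{|\alpha|}\alpha!$, which convert the additive shift by the fixed integer $k$ and the passage between length and multi-index factorials into multiplicative constants of the form (constant)$\cdot$(constant)$^{|\alpha|}$. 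Everything else is routine. Note that the Sobolev embedding hypothesis is genuinely needed and is not automatic for a H\"ormander system on an arbitrary measured manifold; on graded Lie groups with the Haar measure it follows from Definition \ref{defSobolev} and the Sobolev embedding theorems recalled in \eqref{EQ:Sobemb}.
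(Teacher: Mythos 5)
Your proposal is correct and follows essentially the same route as the paper: the first inclusion is the embedding $L^\infty\hookrightarrow L^2$ on a compact support (which you merely spell out as an explicit integration), and the second applies the Sobolev embedding hypothesis to $\partial^\alpha\phi$ and absorbs the shift from $|\alpha|$ to $|\alpha|+k$ using exactly the factorial inequalities $(|\alpha|+k)!\le 2^{|\alpha|+k}k!\,|\alpha|!$ and $|\alpha|!\le r^{|\alpha|}\alpha!$ that the paper invokes. The only cosmetic difference is that you track the intermediate multi-index $\gamma=\alpha+\beta$ slightly more explicitly; the constants you obtain match the paper's $C'=c''_kA^k2^{sk}(k!)^s$ and $A'=A2^sn^s$ up to notation.
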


\begin{proof}
	The inclusion \eqref{EQ:inc1} follows from the embedding $L^\infty\hookrightarrow L^2$ for compactly supported functions, so it remains to show \eqref{EQ:inc2}.
	From our hypothesis, given a function $\phi\in\gamma^s_{\mathbf{X}, L^2}(M)$, we have
	\[
	\|\phi\|_{L^\infty}\leq c_k\sum_{|\beta|\leq k}\|\partial^\beta\phi\|_{L^2}.
	\] 
	Then, for every $\alpha$, we can apply the latter inequality to $\partial^{\alpha}\phi$ so that we have
	\[
	\|\partial^\alpha\phi\|_{L^\infty}\leq c_k\sum_{|\beta|\leq k}\|\partial^{\beta+\alpha}\phi\|_{L^2}\leq c_k\sum_{|\beta|\leq k}C A^{|\alpha+\beta|}\big( (\alpha+\beta)!\big)^s.
	\]
	We recall the following inequalities for the factorial:
	\begin{enumerate}[I.]
		\item $\alpha!\leq|\alpha|!\leq n^{|\alpha|}\alpha!$\,,
		\item $(|\alpha|+k)!\leq 2^{|\alpha|+k}k!|\alpha|!$.
	\end{enumerate}
	Hence
	\begin{align}\notag
	\|\partial^\alpha\phi\|_{L^\infty}&\leq c_kC\sum_{|\beta|\leq k}A^{|\alpha|+k}\big((\alpha+\beta)!\big)^s \leq c'_kA^{|\alpha|+k}\big((|\alpha|+k)!\big)^s\sum_{|\beta|\leq k}1\\\notag
	&\leq c''_kA^{|\alpha|}(2^s)^{|\alpha|}A^k2^{sk}(k!)^sn^{s|\alpha|}(\alpha!)^s\leq C' A'^{|\alpha|}(\alpha!)^s,
	\end{align}
	with $C'=c''_kA^k2^{sk}(k!)^s$ and $A'=A2^sn^s$. This shows that $\phi\in  \gamma^s_{\mathbf{X},L^{\infty}}(M)$.
\end{proof}

The second equivalence is given by:

\begin{proposition}\label{PROP:prop2}
	Let $M$ be a manifold with no boundary equipped with a measure $\mu$ and let $\mathbf{X} =\{X_1,\dots,X_r\}$ be a H\"ormander system with associated sub-Laplacian $\slp=-\sum_{j=1}^rX_j^2$. We assume that $\slp$ is a non-negative essentially self-adjoint operator on $L^2(M)$. Let $\phi$ be a smooth function on $M$. Then the following statements are equivalent:
	\begin{enumerate}[i.]
		\item  there exist $A,C>0$ such that for every integer $k\in\N_0$ we have $\|\slp^k\, \phi\|_{L^2}\leq C A^{2k}((2k)!)^s;$
		\item there exists a constant $D>0$ such that $\|e^{D\slp^{\frac{1}{2s}}}\phi\|_{L^2}<\infty$.
	\end{enumerate}
\end{proposition}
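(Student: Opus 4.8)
\textbf{Proof strategy for Proposition \ref{PROP:prop2}.}

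The plan is to exploit the functional calculus for $\slp$ provided by essential self-adjointness, together with the spectral measure $E_\lambda$ supported in $[0,\infty)$, and to pass between the two conditions by comparing the power-type bound $\|\slp^k\phi\|_{L^2}\leq CA^{2k}((2k)!)^s$ with the exponential-type bound $\|e^{D\slp^{1/(2s)}}\phi\|_{L^2}<\infty$ on the spectral side. Writing $d\mu_\phi(\lambda):=(dE_\lambda\phi,\phi)$ for the (finite, positive) spectral measure associated to $\phi$, both conditions become statements about the growth of the moments $\int_0^\infty\lambda^{2k}\,d\mu_\phi(\lambda)$ versus the finiteness of $\int_0^\infty e^{2D\lambda^{1/(2s)}}\,d\mu_\phi(\lambda)$. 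So the proposition reduces to a purely measure-theoretic equivalence about a finite positive measure on $[0,\infty)$.

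First I would prove the implication (ii)$\Rightarrow$(i). Assuming $\int_0^\infty e^{2D\lambda^{1/(2s)}}d\mu_\phi(\lambda)=:C_0<\infty$, I would use the elementary inequality $\lambda^{2k}\leq \big(\tfrac{(4sk)!}{D^{4sk}}\big)\, e^{D\lambda^{1/(2s)}}$ — more precisely, for each $m\in\N_0$ one has $t^m\leq m!\,e^t$, applied with $t=D\lambda^{1/(2s)}$ and $m=4sk$ (rounding $4sk$ up to an integer, which only changes constants) so that $\lambda^{2k}=(t/D)^{4sk}\le D^{-4sk}(4sk)!\,e^{t}$. Integrating gives $\|\slp^k\phi\|_{L^2}^2=\int\lambda^{2k}d\mu_\phi\leq C_0 D^{-4sk}(4sk)!$, and then Stirling's formula together with inequality I from Proposition \ref{PROP:prop1} (namely $(4sk)!\lesssim (\text{const})^{k}((2k)!)^{2s}$, since $(4sk)!\leq((2k)!)^{2s}\cdot(\text{explicitly bounded factor})^k$ by comparing $\Gamma$-factors) yields $\|\slp^k\phi\|_{L^2}\leq CA^{2k}((2k)!)^s$ for suitable $A,C$. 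This direction is routine once the right elementary inequality is chosen.

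The converse (i)$\Rightarrow$(ii) is the more delicate one and I expect it to be the main obstacle. Given $\int_0^\infty\lambda^{2k}d\mu_\phi(\lambda)\leq C^2A^{4k}((2k)!)^{2s}$ for all $k$, I want to choose $D>0$ small enough that $e^{2D\lambda^{1/(2s)}}$ is dominated, after integration, by a convergent series in these moments. Expanding $e^{2D\lambda^{1/(2s)}}=\sum_{m\geq0}\frac{(2D)^m}{m!}\lambda^{m/(2s)}$, I would integrate term by term (monotone convergence, all terms nonnegative) to get $\int e^{2D\lambda^{1/(2s)}}d\mu_\phi=\sum_m\frac{(2D)^m}{m!}\int\lambda^{m/(2s)}d\mu_\phi$. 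The fractional moments $\int\lambda^{m/(2s)}d\mu_\phi$ must be controlled by the integer ones: since $\mu_\phi$ is finite, by Jensen/Hölder (interpolation of $L^p(\mu_\phi)$ norms of the function $\lambda\mapsto\lambda$) one has $\int\lambda^{m/(2s)}d\mu_\phi\leq \mu_\phi([0,\infty))^{1-\theta}\big(\int\lambda^{2\lceil m/(4s)\rceil}d\mu_\phi\big)^{\theta}$ for an appropriate $\theta\le 1$, hence $\leq C'\,(A')^{m}\,((2\lceil m/(4s)\rceil)!)^{s\theta\cdot(\text{something})}$; unwinding the factorials with Stirling and inequalities I–II from Proposition \ref{PROP:prop1} shows $\int\lambda^{m/(2s)}d\mu_\phi\leq C''B^m (m!)$ for some $B$ depending on $A,s$. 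Then $\sum_m\frac{(2D)^m}{m!}\cdot C''B^m m!=C''\sum_m(2DB)^m$ converges as soon as $2DB<1$, i.e. for $D<1/(2B)$, which proves (ii). The care needed here is purely in tracking how the constants $A,s$ propagate through Stirling's formula and the interpolation step so that $B$ is finite; there is no conceptual difficulty, only bookkeeping, and the finiteness of $\mu_\phi$ (equivalently $\phi\in L^2(M)$) is what makes the interpolation of moments legitimate.
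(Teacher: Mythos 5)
Your proposal is correct and follows essentially the same route as the paper's proof: both directions go through the spectral calculus of $\slp$, with (i)$\Rightarrow$(ii) handled by H\"older interpolation of fractional moments followed by a term-by-term expansion of the exponential and a Stirling-based convergence check (the paper phrases this as a ratio test on $\sum_k \frac{B^k}{k!}\|\slp^{k/(2s)}\phi\|_{L^2}$, you as a geometric series after bounding $\int\lambda^{m/(2s)}d\mu_\phi$ by $B^m m!$), and (ii)$\Rightarrow$(i) handled by comparing $\lambda^{2k}$ against $e^{D\lambda^{1/(2s)}}$ (your $t^m\le m!\,e^t$ is the same estimate the paper obtains by maximising $\lambda^k e^{-D\lambda^{1/(2s)}}$ and invoking $N^Ne^{-N}\le N!$). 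The differences are purely in packaging, not in substance.
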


In the following proof we allow a slight abuse of notation: we will use the {factorial function} $(\cdot)!$ for positive real numbers to mean its extension, the \textit{Gamma function} $\Gamma(\cdot)$.

\begin{proof}[Proof of Proposition \ref{PROP:prop2}]
	($i \Rightarrow ii$)
	Assuming \textit{i}, let us show that condition \textit{i} holds not only for positive integers but also for any positive real number $b$. Indeed, given any even integer $a$, by hypothesis, we get
	\begin{align*}
	\|\slp^{\frac{a}{2}}\phi\|_{L^2}\leq CA^a(a!)^s.
	\end{align*}
	Now take any positive real number $b\in\R^+\setminus\N$ and choose  an even integer $a\in\Z_+$ such that $a<b<a+2$. Then there exists $\theta\in (0,1)$ such that $b:=a\theta+(a+2)(1-\theta)$. Therefore, from \eqref{EQ:FuncCalcSlp} we have
	\begin{align*}
	\slp^{\frac{b}{2}}=\int_{-\infty}^{+\infty}\lambda^{\frac{b}{2}}dE_{\lambda}=\int_{-\infty}^{+\infty}\lambda^{\frac{a}{2}\theta}\lambda^{\frac{a+2}{2}(\theta-1)}dE_{\lambda},
	\end{align*}
	which yields
	\begin{align}\label{EQ:sym1}
	\|\slp^{\frac{b}{2}}\phi\|^2_{L^2}=\int_{-\infty}^{+\infty}|\lambda|^{b}d\langle E_{\lambda}\phi,\phi\rangle
	=\int_{-\infty}^{+\infty}|\lambda|^{a\theta}\lambda^{(a+2)(\theta-1)}d\langle E_{\lambda}\phi,\phi\rangle.
	\end{align}
	Hence, we can apply H\"older's inequality to \eqref{EQ:sym1} with $p=\frac{1}{\theta}$ and $q=\frac{1}{1-\theta}$, so that 
	\begin{align}\notag
	\|\slp^{\frac{b}{2}}\phi\|_{L^2}&\leq\|\slp^{\frac{a}{2}}\phi\|^\theta_{L^2}\|\slp^{\frac{a+2}{2}}\phi\|^{(1-\theta)}_{L^2}\leq\big (CA^a(a!)^s\big )^\theta\big (CA^{a+2}((a+2)!)^s\big )^{(1-\theta)}=\\\notag
	&=CA^b(a!)^{s\theta}((a+2)!)^{s(1-\theta)}\leq 2^{3s}C(2^sA)^b(b!)^s=C'(A')^b(b!)^s.
	\end{align}
	Note that the latter inequality derives from the fact that $(\alpha+\beta)!\leq2^{\alpha+\beta}\alpha!\beta!$. Indeed, we have
	\begin{align*}
	\frac{(a!)^\theta((a+2)!)^{1-\theta}}{b!}\leq\frac{(a!)^\theta(a!)^{1-\theta}2^{1-\theta+(a+2)(1-\theta)}}{b!}=\frac{a!}{b!}2^{(1-\theta)3}2^{(1-\theta)a}\leq 2^32^b,
	\end{align*} 
	by observing that $a!/b!\leq 1$ whenever $a\leq b$, and that $0<1-\theta<1$. This means that condition \textit{i} holds for any positive real number. Then, recalling the Taylor expansion for the exponential, we get
	\begin{align}\notag
	\|e^{B\slp^{\frac{1}{2s}}}\phi\|_{L^2}&\leq\sum_{k=0}^\infty \frac{\|B^k\slp^{\frac{k}{2s}}\phi\|_{L^2}}{k!}{\leq}\sum_k\frac{B^k}{k!}CA^{\frac{k}{s}}\Big(\big(k/s\big)!\Big)^s=\\&=C\sum_k\frac{(BA^{\frac{1}{s}})^k}{k!}\Big(\big(k/s\big)!\Big)^s.\notag
	\end{align}
	Let us check that the ratio test allows us to conclude that this series is convergent. In fact we set $T:=BA^{\frac{1}{s}}$ and we calculate the ratio of two consecutive terms of the series whose general term is given by $a_k=T^k/k!\Big(\big(\frac{k}{s}\big)!\Big)^s$, so that we obtain 
	\begin{align*}
	\lim_{k\rightarrow\infty}\frac{a_{k+1}}{a_k}=\lim_{k\rightarrow\infty}\frac{T^{k+1}}{(k+1)!}\Big(\big(k+1/s\big)!\Big)^s\frac{k!}{T^k\big(\big(k/s\big)!\big)^s}=\lim_{k\rightarrow\infty}\frac{T}{k+1}\frac{\big(\big(k+1/s\big)!\big)^s}{\big(\big(k/s\big)!\big)^s}.
	\end{align*} 
	Now, Stirling's approximation $ n ! \sim \sqrt{2\pi n} (n/e)^n $ implies 
	\begin{align*}
	\lim_{k\rightarrow\infty}\frac{a_{k+1}}{a_k}=\lim_{k\rightarrow\infty}\frac{T}{k+1}\sqrt{\frac{2\pi}{s}}\frac{(k+1)^{\frac{s}{2}+k+1}}{(se)^{k+1}}\frac{(se)^k}{\sqrt{\frac{2\pi}{s}}k^{\frac{s}{2}+k}}=\frac{T}{se}\lim_{k\rightarrow\infty}\frac{(k+1)^{\frac{s}{2}+k}}{k^{\frac{s}{2}+k}}=\frac{T}{s}.
	\end{align*} 
	Choosing the constant $B<sA^{-\frac{1}{s}}$, we have that the limit as $k$ goes to infinity of $\frac{a_{k+1}}{a_k}$ is strictly less than $1$. Thus by the ratio convergence test we deduce that
	\begin{align*}
	\sum_{k=1}^\infty\frac{(BA^{\frac{1}{s}})^k}{k!}\Big(\big(k/s\big)!\Big)^s<\infty,
	\end{align*}
	and this yields condition \textit{ii.}\\
	
	($ii \Rightarrow i$) 
	We now assume that the $L^2$-norm of $e^{D\slp^{\frac{1}{2s}}}\phi$ is finite. Then, taking into account norm properties, we get for any integer $k\in\N_0$ and $\phi\in\mathcal C^\infty_0$ that
	\begin{align}\notag
	\|\slp^k \phi\|_{L^2}&=\|\slp^k e^{-D\slp^{\frac{1}{2s}}}e^{D\slp^{\frac{1}{2s}}}\phi\|_{L^2}\leq\|\slp^k e^{-D\slp^{\frac{1}{2s}}}\|_{L^2\rightarrow L^2}\|e^{D\slp^{\frac{1}{2s}}}\phi\|_{L^2}\leq\\\notag
	&\leq C\|\slp^k e^{-D\slp^{\frac{1}{2s}}}\|_{L^2\rightarrow L^2}\leq C \sup_{\lambda>0} \lambda^k e^{-D\lambda^{\frac{1}{2s}}}.
	\end{align}
	If we set the function $f(\lambda):=\lambda^k e^{-D\lambda^{\frac{1}{2s}}}$, then its maximum is achieved at $\lambda_D=\big (\frac{2ks}{D}\big)^{2s}$. Therefore, we deduce that
	\[
	\|\slp^k \phi\|_{L^2}\leq C e^{-2ks}\Big( \frac{2ks}{D}\Big)^{2ks}\leq C\Big(\frac{s^s}{D^s}\Big)^{2k}\big((2k)!\big)^s,
	\]
	where we use the estimate $N^N e^{-N}\leq N!$, see, e.g., \cite{R1993}. Defining a constant $A:=s^s/D^s$, the claim follows.
\end{proof}

Thus, considering the two equivalences given in Propositions \ref{PROP:prop1} and \ref{PROP:prop2}, to get the global implication, we want to prove a further equivalence linking `general' differentiation to powers of the sub-Laplacian in the following way:
\begin{align}\notag
&\exists A,C >0 \text{ s.t. }\forall\alpha\in\N^r\text{ we have }\|\partial^\alpha\phi\|_{L^2}\leq C A^{|\alpha|}(\alpha!)^s\iff\,\\\notag&\exists B,D>0\text{ s.t. }\forall k\in\N_0\text{ we have } \|\slp^{k}\phi\|_{L^2}\leq CA^{2k}((2k)!)^s.
\end{align}
This equivalence remains a conjecture, but we are able to show one implication in full generality:

\begin{proposition}[$\partial^\alpha\rightarrow\slp^k$ implication]\label{PROP:prop3}
	Let $M$ be a manifold equipped with a measure $\mu$. Let $\mathbf{X} =\{X_1,\dots,X_r\}$ be a H\"ormander system and $\slp$ the associated sub-Laplacian $\slp=-\sum_{j=1}^rX_j^2$. Then
	\begin{align}\notag
	\phi\in\gamma^s_{\mathbf{X},L^2}(M)\Longrightarrow\;&\text{there exist }A,C>0\text{ such that for every }k\in\N_0\text{ we have }\\\notag
	&\|\slp^{k}\phi\|_{L^2}\leq CA^{2k}((2k)!)^s.
	\end{align}
\end{proposition}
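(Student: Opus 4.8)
The plan is to expand $\slp^k = (-\sum_{j=1}^r X_j^2)^k$ as a sum of monomials in the vector fields, estimate how many such monomials there are and how large each contributes, and then invoke the definition of $\gamma^s_{\mathbf X, L^2}(M)$ termwise. First I would write
\[
\slp^k = (-1)^k\Bigl(\sum_{j=1}^r X_j^2\Bigr)^k = (-1)^k \sum X_{j_1}^2 X_{j_2}^2\cdots X_{j_k}^2,
\]
where the sum runs over all $(j_1,\dots,j_k)\in\{1,\dots,r\}^k$, so there are $r^k$ terms, each of which is an operator of the form $\partial^\alpha$ with $|\alpha| = 2k$ in the notation of Definition \ref{DEF:gslL2} (here $\partial^\alpha = Y_1\cdots Y_{2k}$ with each $Y_i$ drawn from $\mathbf X$, arranged as consecutive pairs). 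Crucially, each such $\alpha$ satisfies $\alpha!\le (2k)!$ trivially (since $\alpha!$ divides $(2k)!$, or simply $\alpha_i\le 2k$ and $\sum\alpha_i = 2k$), so that $(\alpha!)^s\le ((2k)!)^s$.

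Then, for $\phi\in\gamma^s_{\mathbf X, L^2}(M)$, the definition supplies constants $A,C>0$ with $\|\partial^\alpha\phi\|_{L^2}\le C A^{|\alpha|}(\alpha!)^s$ for all multi-indices $\alpha$. Applying the triangle inequality to the expansion of $\slp^k\phi$ gives
\begin{align*}
\|\slp^k\phi\|_{L^2} &\le \sum_{(j_1,\dots,j_k)} \|X_{j_1}^2\cdots X_{j_k}^2\phi\|_{L^2}
\le r^k \cdot C A^{2k} \bigl((2k)!\bigr)^s\\
&= C (r A^2)^k \bigl((2k)!\bigr)^s \le C (\tilde A)^{2k}\bigl((2k)!\bigr)^s,
\end{align*}
where one sets $\tilde A := \max\{A\sqrt r, 1\}$ (or $\tilde A := A\sqrt r$ directly). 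This is exactly the desired conclusion with the constant $A$ in the statement replaced by $\tilde A$ and the same $C$. Since $k$ ranges over $\N_0$ and the case $k=0$ is the bound $\|\phi\|_{L^2}\le C$, which is contained in the hypothesis (take $\alpha = 0$), the proof is complete.

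I do not expect any real obstacle here: this direction is the "easy" one precisely because passing from $\slp^k$ to $\partial^\alpha$ only loses a factor $r^k$ and the factorial bound $\alpha!\le(2k)!$ goes the favourable way. The only point requiring a line of care is the bookkeeping that each $X_{j_1}^2\cdots X_{j_k}^2$ genuinely has the form $\partial^\alpha$ admitted in Definition \ref{DEF:gslL2} with $|\alpha|=2k$ — that is, the definition allows arbitrary orderings $Y_1\cdots Y_{|\alpha|}$ of the vector fields, so the specific "paired" ordering coming from the powers of $\slp$ is covered. The genuinely hard converse implication (recovering control of all $\partial^\alpha$ from powers of $\slp$ alone), which would need estimates on higher-order Riesz transforms with explicit dependence on the order, is exactly what the authors flag as an open conjecture and is not attempted here.
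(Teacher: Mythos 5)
Your proof is correct and follows essentially the same route as the paper's: expand $\slp^k$ into monomials of the form $\partial^\alpha$ with $|\alpha|=2k$, apply the defining bound of $\gamma^s_{\mathbf{X},L^2}(M)$ termwise, use $\alpha!\le(2k)!$, and absorb the count of terms into the geometric constant. The only cosmetic difference is that the paper organises the expansion via a symmetrised non-commutative multinomial formula and arrives at $A'=Ar$, whereas your direct expansion over the $r^k$ ordered tuples gives $\tilde A=A\sqrt r$; the substance is identical.
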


\begin{proof}
	Recall the multinomial theorem adapted to non-commutative elements of an algebra of vector fields, according to which we have
	\[
	(Y_1+\dots+Y_m)^h=\frac{1}{m!}\sum_{k_1+\dots+k_m=h}\frac{h!}{k_1!\dots k_m!}\sum_{\sigma\in \text{sym}(m)}\prod_{1\leq t \leq m}Y_{\sigma(t)}^{k_t}.
	\]
	By using this theorem, and using $\slp=-(X_1^2+\dots+X_r^2)$ and the factorial inequalities mentioned in the proof of Proposition \ref{PROP:prop1}, we obtain
	\begin{align*}
	\|\slp^k\phi\|_{L^2}&\leq\frac{1}{r!}\sum_{|\alpha|=k}\frac{k!}{\alpha!}\sum_{\sigma\in\text{sym}(r)}\big{\|}\underbrace{ (X_{\sigma(1)}^2)^{\alpha_1}\dots(X_{\sigma(r)}^2)^{\alpha_r} }_{\partial^{2\alpha}} \phi\big{\|}_{L^2} \\
	&{\leq}C\sum_{|\alpha|=k}\frac{k!}{\alpha!}A^{2|\alpha|}((2\alpha)!)^s\leq CA^{2k}((2k)!)^s\sum_{|\alpha|=k}\frac{k!}{\alpha!} \\
	&\leq CA^{2k}((2k)!)^s\sum_{|\alpha|=k}\frac{k!}{|\alpha|!}r^{|\alpha|}\leq CA^{2k}((2k)!)^sr^kk^{r-1}\leq CA'^{2k}((2k)!)^s,
	\end{align*}  
	with $A'=Ar$, and where we used that $\frac{k^{r-1}}{r^k}\leq 1$.
\end{proof}
Thus, the `only if' part of the characterisation \eqref{EQ:char1} follows:

\begin{theorem}\label{THM:implication}
	Let $M$ be a manifold with no boundary equipped with a measure $\mu$ and let $\mathbf{X} =\{X_1,\dots,X_r\}$ be a H\"ormander system with associated sub-Laplacian $\slp=-\sum_{j=1}^rX_j^2$. Assume that the \textit{Sobolev embedding} holds with respect to the measure $\mu$ and the H\"ormander system $\mathbf{X}$, i.e. there is some $m$ such that $\|f\|_{L^\infty}\lesssim\sum_{|\alpha|\leq m}\|X^\alpha f\|_{L^2}$. We also assume that $\slp$ is a non-negative essentially self-adjoint operator on $L^2(M)$. Then we have
	\begin{align*}
	\gamma^s_{\mathbf{X},L^2}(M)\subset\big\{\phi\in\mathcal C^\infty(M)\,:\,\exists D>0\text{ such that }\|e^{D\slp^{\frac{1}{2s}}}\phi\|_{L^2}<\infty\big\}.
	\end{align*}
\end{theorem}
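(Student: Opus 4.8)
The statement to be proved, Theorem \ref{THM:implication}, is an immediate consequence of chaining together the three propositions just established. The plan is to follow the logical sequence Proposition \ref{PROP:prop3} $\to$ Proposition \ref{PROP:prop2} $\to$ Proposition \ref{PROP:prop1}, since each one feeds directly into the next. No new estimate is needed; the work is purely in verifying that the hypotheses line up.

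First I would take an arbitrary $\phi\in\gamma^s_{\mathbf{X},L^2}(M)$. By definition this means there exist $A,C>0$ such that $\|\partial^\alpha\phi\|_{L^2}\leq CA^{|\alpha|}(\alpha!)^s$ for all $\alpha\in\N^r_0$. Applying Proposition \ref{PROP:prop3} (whose only hypotheses are that $M$ carries a measure $\mu$ and that $\mathbf{X}$ is a H\"ormander system with sub-Laplacian $\slp=-\sum_{j=1}^r X_j^2$, all of which we are assuming) yields constants $A',C>0$ with
\begin{align}\notag
\|\slp^k\phi\|_{L^2}\leq C(A')^{2k}\big((2k)!\big)^s\quad\text{for all }k\in\N_0.
\end{align}
This is precisely condition \emph{i} of Proposition \ref{PROP:prop2}. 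Since we have assumed that $\slp$ is a non-negative essentially self-adjoint operator on $L^2(M)$ and that $M$ has no boundary, the hypotheses of Proposition \ref{PROP:prop2} are satisfied, so its equivalence $(i\Leftrightarrow ii)$ gives a constant $D>0$ with $\|e^{D\slp^{1/(2s)}}\phi\|_{L^2}<\infty$.

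It remains only to note that $\phi$ lies in $\mathcal{C}^\infty(M)$ by definition of $\gamma^s_{\mathbf{X},L^2}(M)$, so $\phi$ belongs to the set on the right-hand side of the claimed inclusion. Thus $\gamma^s_{\mathbf{X},L^2}(M)\subset\{\phi\in\mathcal{C}^\infty(M): \exists D>0\text{ such that }\|e^{D\slp^{1/(2s)}}\phi\|_{L^2}<\infty\}$, as required. (I note that the Sobolev embedding hypothesis, while stated in the theorem, is only needed for the reverse direction via Proposition \ref{PROP:prop1}; it plays no role in the inclusion actually being proved here, and I would remark on this in passing rather than invoke it.) There is no genuine obstacle in this argument — the substance was already absorbed into the proofs of the three propositions, in particular the combinatorial bookkeeping with the multinomial theorem and factorial inequalities in Proposition \ref{PROP:prop3}, and the interpolation-plus-ratio-test argument in Proposition \ref{PROP:prop2}. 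The one point requiring a little care is simply making sure the constants are renamed consistently as one passes from one proposition to the next, which is routine.
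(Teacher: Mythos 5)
Your proof is correct and follows essentially the same route as the paper, which simply combines Proposition \ref{PROP:prop3} with Proposition \ref{PROP:prop2}; your added observation that the Sobolev embedding hypothesis is not actually used in this direction is accurate.
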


\begin{proof}
	The inclusion follows after applying Propositions \ref{PROP:prop2} and \ref{PROP:prop3}.
\end{proof}

Combining Proposition \ref{PROP:prop1} and Theorem \ref{THM:implication} we see that if $M$ is a smooth manifold endowed with a Borel measure and $\slp$ is a sub-Laplacian associated to a H\"ormander system of vector fields then we have an inclusion involving our \textit{original} sub-Laplacian Gevrey space $\gamma^s_{\mathbf{X},L^{\infty}}(M)$:
\begin{corollary}\label{COR:implication}
	Under the same hypotheses as Theorem \ref{THM:implication}, we have the inclusion
	\begin{equation*}
	\gamma^s_{\mathbf{X},L^{\infty}}(M)\cap \mathcal C_0^\infty(M)\subset\big\{\phi\in\mathcal C_0^\infty(M)\,:\,\exists D>0\text{ such that }\|e^{D\slp^{\frac{1}{2s}}}\phi\|_{L^2}<\infty\big\}.
	\end{equation*}
\end{corollary}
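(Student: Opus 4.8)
The final statement, Corollary \ref{COR:implication}, is an immediate consequence of the two preceding results once the chain of inclusions is set up, so the plan is essentially to assemble what is already in hand and observe that the pieces fit.

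First I would recall the two ingredients. Proposition \ref{PROP:prop1} gives, for a H\"ormander system $\mathbf X$ on a smooth manifold $M$, the inclusion
\[
\gamma^s_{\mathbf X,L^\infty}(M)\cap\mathcal C_0^\infty(M)\subset\gamma^s_{\mathbf X,L^2}(M);
\]
this direction needs only the trivial embedding $L^\infty\hookrightarrow L^2$ on compactly supported functions, so it costs nothing extra. Theorem \ref{THM:implication}, under the standing hypotheses of the corollary (namely $M$ without boundary, equipped with a measure $\mu$ for which the Sobolev embedding $\|f\|_{L^\infty}\lesssim\sum_{|\alpha|\le m}\|X^\alpha f\|_{L^2}$ holds, and for which $\slp=-\sum_{j=1}^r X_j^2$ is a non-negative essentially self-adjoint operator on $L^2(M)$), gives
\[
\gamma^s_{\mathbf X,L^2}(M)\subset\bigl\{\phi\in\mathcal C^\infty(M)\,:\,\exists D>0\text{ such that }\|e^{D\slp^{1/(2s)}}\phi\|_{L^2}<\infty\bigr\}.
\]
Composing these two inclusions already lands any $\phi\in\gamma^s_{\mathbf X,L^\infty}(M)\cap\mathcal C_0^\infty(M)$ in the heat-kernel-type space.

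The only wrinkle is the compact-support bookkeeping: the target space in the corollary is stated with $\mathcal C_0^\infty(M)$ rather than $\mathcal C^\infty(M)$. This is automatic, because we start from $\phi\in\gamma^s_{\mathbf X,L^\infty}(M)\cap\mathcal C_0^\infty(M)$, so $\phi$ is compactly supported to begin with, and being compactly supported is preserved along the argument (neither passing to $\gamma^s_{\mathbf X,L^2}$ nor deducing $\|e^{D\slp^{1/(2s)}}\phi\|_{L^2}<\infty$ enlarges the support). Hence the image actually lies in $\mathcal C_0^\infty(M)$ intersected with the heat-kernel condition, which is exactly the right-hand side of the corollary. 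So the proof is simply: let $\phi\in\gamma^s_{\mathbf X,L^\infty}(M)\cap\mathcal C_0^\infty(M)$; by Proposition \ref{PROP:prop1}, $\phi\in\gamma^s_{\mathbf X,L^2}(M)$; by Theorem \ref{THM:implication}, there is $D>0$ with $\|e^{D\slp^{1/(2s)}}\phi\|_{L^2}<\infty$; and $\phi$ is compactly supported throughout.

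There is no real obstacle here — all the substance was spent earlier, in Proposition \ref{PROP:prop2} (the functional-calculus/Gamma-function estimate turning the bound $\|\slp^k\phi\|_{L^2}\le CA^{2k}((2k)!)^s$ into finiteness of $\|e^{D\slp^{1/(2s)}}\phi\|_{L^2}$ and conversely) and in Proposition \ref{PROP:prop3} (the multinomial-theorem argument converting the per-direction Gevrey bounds into powers of $\slp$). If I wanted to make the write-up a touch more self-contained I might re-state, in one line, that the essential self-adjointness hypothesis is what legitimizes the notation $e^{D\slp^{1/(2s)}}$ via the spectral theorem, exactly as recorded around \eqref{EQ:FuncCalcSlp}; but strictly this is already part of Theorem \ref{THM:implication}'s hypotheses. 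Accordingly the proof reduces to the single sentence ``The inclusion follows from Proposition \ref{PROP:prop1} and Theorem \ref{THM:implication},'' which is presumably exactly what the author writes.
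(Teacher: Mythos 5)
Your proof is correct and follows exactly the paper's route: the corollary is obtained by chaining Proposition \ref{PROP:prop1} (the $L^\infty$-to-$L^2$ inclusion for compactly supported functions) with Theorem \ref{THM:implication}, and your remark about the compact support being preserved is the only bookkeeping needed. The paper itself states the result precisely as a combination of these two ingredients, so there is nothing to add.
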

\begin{remark}\label{REM:oppImplication}
	We would like to prove the reverse inclusion to that in Theorem \ref{THM:implication} or, alternatively, the reverse implication to that in Proposition \ref{PROP:prop3}, i.e.,
	\begin{align}\notag
	\big(\text{there exist }&A,C>0\text{ such that }\forall k\in\N_0\text{ we have } \|\slp^{k}\phi\|_{L^2}\leq CA^{2k}((2k)!)^s \big)\\\notag
	& \implies\big(\text{for every }\alpha\text{ we have }\|\partial^\alpha\phi\|_{L^2}\leq C A^{|\alpha|}(\alpha!)^s\big).
	\end{align}
	In order to obtain this implication, we may look at any derivatives of $\phi$ in the following way:
	\begin{align}\label{EQ:insertingIdentity}
	\|\partial^\alpha\phi\|_{L^2}=\|\partial^\alpha(\slp)^{-\frac{|\alpha|}{2}}(\slp)^{\frac{|\alpha|}{2}}\phi\|_{L^2}\leq \|\partial^\alpha(\slp)^{-\frac{|\alpha|}{2}}\|_{\OP_{L^2}}\|(\slp)^{\frac{|\alpha|}{2}}\phi\|_{L^2},
	\end{align}
	where in the latter inequality we have used norm properties.
	Then, taking into account the hypothesis on the $L^2$-norm of powers of the sub-Laplacian, to prove the conclusion we need to show the boundedness---independently of $\alpha$, or at least polynomially or exponentially dependent on $\alpha$---of the $\OP_{L^2}$-norm of the higher order Riesz transform
	\begin{align}\label{EQ:RieszTransform}
	R_\alpha:=\partial^\alpha(\slp)^{-\frac{|\alpha|}{2}}.
	\end{align}
	In the context of Lie groups, the natural choice of measure is a Haar measure $\mu$, outlined in Definition \ref{DEF:haar}. Recall that if $\mu$ is a left-invariant Haar measure and $X$ is a left-invariant vector field then $X^*=-\overline{X}$, since, representing $X$ according to \eqref{vectorfield}, for every $f_1,f_2\in\mathcal{C}^\infty_0(G)$ we have
	\begin{align*}
	\int_G (Xf_1) \overline{f_2} d\mu=\frac{d}{dt}\Bigr{|}_{t=0}\int_Gf_1\big(xe^{tX}\big)\overline{f_2}(x)d\mu(x).
	\end{align*}
	Performing the change of variable $y=xe^{tX}$, the latter inequality becomes
	\begin{align*}
	\int_G (Xf_1) \overline{f_2} d\mu&=\frac{d}{dt}\Bigr{|}_{t=0}\int_Gf_1(y)\overline{f_2}(ye^{-tX})d\mu(y)=-\int_G f_1(X\overline{f_2})d\mu=\\
	&=-\int_G f_1\overline{\overline{X}f_2}d\mu.
	\end{align*}
	Note that a similar but more detailed argument can be found in the proof of part $2$ of Proposition \ref{Prop:groupFouriertransform}. 
	
	Therefore given $k$ real-valued, left-invariant vector fields $X_1,\dots,X_k$, setting $\slp=-\sum_{j=1}^kX_j^2$, for any $f\in\mathcal{C}_0^\infty(G)$ we have
	\begin{align*}
	\big(\slp f_1,f_2\big)_{L^2}=-\sum_{j=1}^k\int_G\big(X_j^2f\big)\overline{f}d\mu=\sum_{j=1}^k\int_GX_jfX_j\overline{f}d\mu=\sum_{j=1}^k\|X_j\|_{L_2}^2,
	\end{align*}
	where in the second-to-last equality we have used the result proved above $X_j^*=-X_j$, and in the last equality we used $X_j\overline{f}=\overline{X_jf}$ since $X_j$ is real-valued. Hence, for every $f\in\mathcal{C}_0^\infty(G)$ we have
	\begin{align}\label{EQ:boundXf}
	\|X_jf\|_{L^2}^2\leq  \big(\slp f,f\big)_{L^2}.   
	\end{align}
	When $\slp$ is essentially self-adjoint, inequality \eqref{EQ:boundXf} yields
	\begin{align*}
	\|X_jf\|_{L^2}^2\leq\|\slp^{1/2}f\|_{L^2},
	\end{align*} 
	for all $f\in\mathcal{C}_0^\infty(G)\cap\Dom(\slp^{1/2})$. Therefore, inequality \eqref{EQ:insertingIdentity} is bounded for $|\alpha|=1$. But for $|\alpha|\geq 2$, $G$ with polynomial growth and $\slp$ a sub-Laplacian (i.e. $\mathbf{X}$ satisfies the H\"ormander condition), results in \cite{ER1999} yield that \eqref{EQ:insertingIdentity} is bounded only when $G$ is a direct product of a compact and a nilpotent Lie groups.  
	
	We highlight that several other authors have been interested in proving inequalities for the Riesz transform. For example, in \cite{CMZ1996, LP2004} heat kernel techniques have been used to determine estimates for the \textit{first order} Riesz transform $X\slp^{\frac{1}{2}}$ on a generalisation of the Heisenberg group. The lack of estimates for derivatives of the heat kernel, independent of the order of derivation, prevents us from extending those inequalities to higher order Riesz transforms. Nevertheless, we develop a different argument to achieve the characterisation on the Heisenberg group, as we will show in Subsection \ref{Hn}.  
\end{remark}

In the next sections we add further structure to our manifolds, in that we consider the case of (compact and non-compact) Lie groups. In doing so we obtain the desired control on the constants in the settings of the special unitary group $SU(2)$ and of the Heisenberg group.

\section{Sub-Laplacian Gevrey spaces on compact Lie Groups}
\label{SEC:cpt-groups}

In this Section we discuss the reverse inclusion to the one in Theorem \ref{THM:implication} in the setting of \textit{compact} Lie groups. While we are still unable to prove it for general (compact and non-compact) Lie groups, we will use the well-known non-commutative Fourier analysis on the compact group $SU(2)$ to show the converse inclusion in the latter case. We start by setting up the framework for the pseudo-differential analysis on compact Lie groups.

\medskip
Assume now that $G$ is a \textit{compact} Lie group. We equip $G$ with the bi-invariant Haar measure. 
Let $\Gh$ be the unitary dual of $G$, that is, the set of equivalence classes of continuous irreducible unitary representations of $G$. As discussed in Chapter \ref{CP:2}, to simplify the notation we will not distinguish between representations and their equivalence classes. Since $G$ is compact, $\Gh$ is discrete and all the representations are finite-dimensional. Therefore, given $\xi\in\Gh$ and a basis in the representation space of $\xi$, we can view $\xi$ as a matrix-valued function $\xi:G\rightarrow \C^{d_\xi\times d_\xi}$ where $d_\xi$ is the dimension of this representation space.

For a function $f\in L^1(G)$, the group Fourier transform at $\xi\in\Gh$ is defined as 
\begin{align*}
\widehat{f}(\xi)=\int_G f(x)\xi(x)^* dx,
\end{align*}
where $dx$ is the Haar measure on $G$. Applying the Peter--Weyl theorem (see \cite{RT2009}), we obtain the Fourier inversion formula (for instance for $f\in\mathcal{C}^\infty(G)$)
$$
f(x)=\sum_{\xi\in\Gh} d_\xi {\rm Tr}(\xi(x) \widehat{f}(\xi)).
$$
Moreover, the Plancherel identity holds and we have
\begin{align*}
\|f\|_{L^2(G)}=\Big(\sum_{\xi\in\Gh}d_{\xi}\|\widehat{f}(\xi)\|^2_{HS}\Big)^{1/2}=:\|\widehat{f}\|_{l^2(\Gh)}.
\end{align*}
Here, since $\widehat {f}(\xi)\in\C^{d_\xi\times d_\xi}$ is a matrix, $\|\widehat {f}(\xi)\|_{\HS}$ stands for its Hilbert--Schmidt norm. We recall that for any matrix $A\in\C^{d\times d}$ it is defined by 
\[
\|A\|_{\HS}:=\langle A,A\rangle_{\HS}^{\frac{1}{2}}=\sqrt{\sum_{i,j=1}^d \overline{A_{ij}}A_{ij}}.
\]
Given a left-invariant operator $T$ on $G$ (more precisely $T:\mathcal{D}(G)\rightarrow\mathcal{D}'(G)$ with $T\big(f(x_0\cdot)\big)x=\big(Tf\big)(x_0x)$), its matrix-valued symbol is $\sigma_T(\xi)=\xi(x)^* T\xi(x)\in \C^{d_\xi\times d_\xi}$ for each representation $\xi\in \Gh$. Therefore, formally (or for all $f$ such that $\widehat{f}(\pi)=0$ for all but a finite number of $\pi\in\Gh$) we have
$$
Tf(x)=\sum_{\xi\in\Gh} d_\xi {\rm Tr}(\xi(x)\sigma_T(\xi) \widehat{f}(\xi)).
$$
In other words $T$ is a Fourier multiplier with symbol $\sigma_T$.
For the details of these constructions we refer the reader to \cite{RT2009,RT2013, T1986}.
To simplify the notation, we can also denote $\sigma_T(\xi)$ by $\widehat{T}(\xi)$ or simply by $\widehat{T}$.

\subsection{Fourier descriptions of sub-Laplacian spaces}

Let $\mathbf{X}=\{X_1,\dots,X_r\}$ be a H\"ormander system of left-invariant vector fields. The associated sub-Laplacian
\begin{align*}
\slp=-\sum_{j=1}^rX_j^2 
\end{align*}
is non-negative and essentially self-adjoint on $L^2(G)$. Its kernel in $L^2(G)$ comprises the constant functions.
If $G$ is a compact Lie group, functions satisfying $\|e^{D\slp^{\frac{1}{2s}}}\phi\|_{L^2(G)}<\infty$ can be described in terms of the behaviour of their Fourier coefficients, as a consequence of the Plancherel theorem on $G$. Here, we denote by $\widehat\slp=\widehat\slp(\xi)$ the matrix symbol of $\slp$ at $\xi\in\Gh$. Since $\slp$ is a non-negative operator, it follows that $\widehat{\slp}(\xi)$ is a positive matrix and we can always choose a basis in representation spaces such that $\widehat\slp=\widehat\slp(\xi)$ is a positive diagonal matrix.

\begin{proposition}\label{PROP:ft-equiv}
	Let $G$ be a compact Lie group and $\slp$ a sub-Laplacian on $G$. Let $\phi\in C^{\infty}(G)$. Then the following statements are equivalent:\\
	$(i)$ There exists a constant $B>0$ such that for every $\xi\in\widehat G$ we have
	\begin{equation}\label{HS}
	\|e^{B\widehat\slp(\xi)^{\frac{1}{2s}}}\widehat \phi(\xi)\|_{\HS}<\infty.
	\end{equation}
	$(ii)$ There exists a constant $D>0$ such that  
	\begin{equation}\label{l2}
	\|e^{D\slp^{\frac{1}{2s}}}\phi\|_{L^2(G)}<\infty.
	\end{equation}
\end{proposition}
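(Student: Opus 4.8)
\textbf{Proof plan for Proposition \ref{PROP:ft-equiv}.}

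The plan is to rely entirely on the Plancherel theorem on the compact group $G$ together with the functional calculus for $\slp$, exploiting the fact that $\widehat\slp(\xi)$ is a positive diagonal matrix in a suitably chosen basis. First I would recall that, since $\slp$ is non-negative, essentially self-adjoint, and left-invariant, its matrix symbol $\widehat\slp(\xi)$ at each $\xi\in\Gh$ is a positive Hermitian matrix; by choosing an orthonormal basis in the representation space of $\xi$ we may assume $\widehat\slp(\xi)=\mathrm{diag}(\lambda_1(\xi),\dots,\lambda_{d_\xi}(\xi))$ with $\lambda_i(\xi)\geq 0$. The key structural observation is that the spectral decomposition of $\slp$ on $L^2(G)$ is given by the joint eigenspaces indexed by the matrix coefficients of representations: the function $e^{B\slp^{1/(2s)}}$ acts on the Fourier side as multiplication by the matrix $e^{B\widehat\slp(\xi)^{1/(2s)}}=\mathrm{diag}(e^{B\lambda_i(\xi)^{1/(2s)}})_i$. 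This follows from the identity $\mathcal{F}_G(\phi(\slp)f)(\xi)=\phi(\widehat\slp(\xi))\widehat f(\xi)$, which for compact groups is the analogue of Theorem \ref{RockFuncCalc} and can be obtained directly from the Peter--Weyl decomposition.

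Next I would write out the Plancherel identity applied to $e^{D\slp^{1/(2s)}}\phi$: taking $g=e^{D\slp^{1/(2s)}}\phi$ (interpreted via the functional calculus, so $g\in L^2(G)$ precisely when the right-hand side below is finite), we have
\begin{align*}
\|e^{D\slp^{1/(2s)}}\phi\|_{L^2(G)}^2=\sum_{\xi\in\Gh}d_\xi\big\|e^{D\widehat\slp(\xi)^{1/(2s)}}\widehat\phi(\xi)\big\|_{\HS}^2.
\end{align*}
For the implication $(ii)\Rightarrow(i)$: if the left-hand side is finite, then every summand is finite, so in particular $\|e^{D\widehat\slp(\xi)^{1/(2s)}}\widehat\phi(\xi)\|_{\HS}<\infty$ for each individual $\xi$, giving $(i)$ with $B=D$ (here finiteness of each term is essentially automatic since each $\widehat\phi(\xi)$ is a finite matrix, but the point is that $(i)$ only asserts a per-representation bound). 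For the converse $(i)\Rightarrow(ii)$ I would need more care, since $(i)$ only controls each Fourier coefficient separately without a uniform constant governing the sum over $\xi$; the trick is to use that $\phi\in\mathcal{C}^\infty(G)$, so its Fourier coefficients already decay faster than any polynomial in a suitable matrix-weighted sense, and then to absorb a fraction of the exponential into this rapid decay. Concretely, writing $e^{D\widehat\slp^{1/(2s)}}=e^{(D-B)\widehat\slp^{1/(2s)}}e^{B\widehat\slp^{1/(2s)}}$ with $0<D<B$, one estimates $\|e^{D\widehat\slp(\xi)^{1/(2s)}}\widehat\phi(\xi)\|_{\HS}\leq e^{-(B-D)\lambda_{\min}(\xi)^{1/(2s)}}\|e^{B\widehat\slp(\xi)^{1/(2s)}}\widehat\phi(\xi)\|_{\HS}$ on each nontrivial $\xi$, and then one must sum against $d_\xi$.

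The main obstacle is making the summation in $(i)\Rightarrow(ii)$ rigorous: a bare hypothesis of finiteness of each $\|e^{B\widehat\slp(\xi)^{1/(2s)}}\widehat\phi(\xi)\|_{\HS}$ is too weak to control the series, so one genuinely needs the smoothness of $\phi$ (equivalently, the rapid decay of $\widehat\phi(\xi)$ in terms of the eigenvalues of the Laplacian on $G$, which dominate those of the sub-Laplacian) to guarantee that $\sum_\xi d_\xi e^{-2(B-D)\lambda_{\min}(\xi)^{1/(2s)}}\|e^{B\widehat\slp(\xi)^{1/(2s)}}\widehat\phi(\xi)\|_{\HS}^2<\infty$. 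I expect the cleanest route is to observe that $\lambda_{\min}(\xi)\to\infty$ as $\xi$ exhausts $\Gh$ (a consequence of the hypoellipticity of $\slp$ and discreteness of $\Gh$), so that $e^{-(B-D)\lambda_{\min}(\xi)^{1/(2s)}}$ eventually beats the polynomial growth of $d_\xi$ times any fixed negative power of $\langle\xi\rangle$ coming from $\phi\in\mathcal{C}^\infty$; together these force convergence. Everything else is bookkeeping with the Plancherel formula and the diagonalisation of $\widehat\slp(\xi)$.
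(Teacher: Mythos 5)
Your overall architecture---Plancherel together with the splitting $e^{D\widehat\slp^{1/(2s)}}=e^{(D-B)\widehat\slp^{1/(2s)}}e^{B\widehat\slp^{1/(2s)}}$ for $D<B$---matches the paper's, and your $(ii)\Rightarrow(i)$ direction is fine (the paper does not even write that direction out). However, your $(i)\Rightarrow(ii)$ step has two genuine gaps.

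First, you correctly notice that, read literally, $(i)$ is only a per-representation finiteness statement (indeed it is automatic, since each $\widehat\phi(\xi)$ is a finite matrix), and you propose to rescue the summation over $\xi$ by invoking the smoothness of $\phi$. That rescue cannot work: smoothness gives only polynomial decay $\|\widehat\phi(\xi)\|_{\HS}\lesssim_N\langle\xi\rangle^{-N}$, while the factor $e^{B\widehat\slp(\xi)^{1/(2s)}}$ sitting inside the quantity you must sum grows like $e^{cB\langle\xi\rangle^{1/s}}$, and no polynomial decay absorbs an exponential. The statement is only true, and the paper's proof only goes through, if $(i)$ is read as a \emph{uniform} bound $\sup_{\xi}\|e^{B\widehat\slp(\xi)^{1/(2s)}}\widehat\phi(\xi)\|_{\HS}=:K<\infty$; this is exactly how the paper's proof uses the hypothesis when it declares $K$ to be a single finite constant. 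With that reading, the smoothness of $\phi$ plays no role in this direction at all.

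Second, your summability mechanism is too weak. Knowing only that $\lambda_{\min}(\xi)\to\infty$ does not give $\sum_{\xi}d_\xi\,e^{-2(B-D)\lambda_{\min}(\xi)^{1/(2s)}}<\infty$: if $\lambda_{\min}(\xi)$ grew slower than every power of $\langle\xi\rangle$ (which hypoellipticity plus discreteness of $\Gh$ does not rule out by themselves), the exponential would not beat the polynomial growth of $d_\xi$ and of the counting function of $\Gh$. To make your route rigorous you need the quantitative subelliptic lower bound $\widehat\slp(\xi)\geq c\langle\xi\rangle^{2/r}I$ on nontrivial representations ($r$ the step in H\"ormander's condition), which is \cite[Proposition 3.1]{GR2015} and is precisely what the paper alludes to in the closing parenthesis of its proof. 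The paper itself sidesteps this issue: it inserts $(I+\widehat\slp)^{N}(I+\widehat\slp)^{-N}$, observes that $\lambda\mapsto e^{(D-B)\lambda^{1/(2s)}}(1+\lambda)^N$ is bounded on $[0,\infty)$ when $D<B$, and reduces the convergence of the series to Lemma \ref{Lem:Bessel}, namely $\sum_\xi d_\xi\|(I+\widehat\slp(\xi))^{-N}\|^2_{\HS}<\infty$ for $2N>n/2$, which is proved from the on-diagonal heat kernel bound $h_t(e)\lesssim t^{-n/2}$ rather than from any pointwise spectral lower bound. You should either import that lemma or the subelliptic estimate; as written, your argument has a hole exactly where the sum over $\Gh$ must converge.
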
 
In order to prove Proposition \ref{PROP:ft-equiv}, we need the following:
\begin{lemma}\label{Lem:Bessel}
	Let $2s>\frac{n}{2}$. Then the $l^2(\Gh)$-norm of $(1+\widehat{\slp})^{-s}$ is finite, meaning
	\begin{align*}
	\sum_{[\xi]\in\Gh}d_\xi\|\big(I+\widehat{\slp}(\xi)\big)^{-s}\|_{HS}^2<\infty.
	\end{align*}
\end{lemma}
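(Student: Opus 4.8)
\textbf{Proof proposal for Lemma \ref{Lem:Bessel}.}

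The plan is to reduce the claimed finiteness of $\sum_{[\xi]\in\Gh}d_\xi\|(I+\widehat{\slp}(\xi))^{-s}\|_{HS}^2$ to the classical fact that the Laplace--Beltrami operator $\Delta_G$ on a compact Lie group $G$ (equivalently, the Casimir element, or any positive bi-invariant elliptic operator of order $2$) admits Sobolev embeddings of the usual Euclidean type, so that $(1+\widehat{\Delta_G})^{-s}$ is square-summable over $\Gh$ once $2s>\frac n2$. The key observation is that the sub-Laplacian $\slp$ associated with a H\"ormander system $\mathbf X$ dominates a positive power of $\Delta_G$ in a spectral sense: since $\mathbf X$ satisfies the H\"ormander condition on the compact manifold $G$, the subelliptic estimate (a consequence of H\"ormander's theorem, see e.g. \cite{B2014,VSC1992}) gives a constant $\delta\in(0,1]$ and $C>0$ with
\begin{equation*}
\|f\|_{H^{2\delta}(G)}\leq C\big(\|\slp f\|_{L^2(G)}+\|f\|_{L^2(G)}\big),\qquad f\in\mathcal C^\infty(G),
\end{equation*}
where $H^{2\delta}(G)$ is the Euclidean Sobolev space of order $2\delta$, which coincides with $\Dom((1+\Delta_G)^{\delta})$. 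Equivalently $(1+\Delta_G)^{\delta}\lesssim (1+\slp)$ as operators, hence at the level of symbols $(1+\widehat{\Delta_G}(\xi))^{\delta}\leq C'(1+\widehat{\slp}(\xi))$ for every $\xi\in\Gh$ (both symbols may be taken simultaneously diagonalisable after a suitable choice of basis since $\slp$ and $\Delta_G$ are left-invariant and non-negative; in fact it suffices to compare eigenvalues).

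From this, $(1+\widehat{\slp}(\xi))^{-s}\leq (C')^{s}(1+\widehat{\Delta_G}(\xi))^{-s\delta}$, and therefore
\begin{equation*}
\sum_{[\xi]\in\Gh}d_\xi\|(I+\widehat{\slp}(\xi))^{-s}\|_{HS}^2
\leq (C')^{2s}\sum_{[\xi]\in\Gh}d_\xi\|(I+\widehat{\Delta_G}(\xi))^{-s\delta}\|_{HS}^2
\leq (C')^{2s}\sum_{[\xi]\in\Gh} d_\xi^2 (1+\lambda_\xi)^{-2s\delta},
\end{equation*}
where $\lambda_\xi$ denotes the eigenvalue of $\Delta_G$ on the $\xi$-isotypic component (using $\|(I+\widehat{\Delta_G}(\xi))^{-s\delta}\|_{HS}^2\leq d_\xi(1+\lambda_\xi)^{-2s\delta}$). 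By Weyl's law on the compact $n$-dimensional manifold $G$, the number of $\xi\in\Gh$ (counted with multiplicity $d_\xi^2$) with $\lambda_\xi\leq t$ grows like $t^{n/2}$, so $\sum_{[\xi]}d_\xi^2(1+\lambda_\xi)^{-q}<\infty$ precisely when $2q>n$; with $q=s\delta$ this would require $2s\delta>n$, which is stronger than the stated $2s>n/2$ unless $\delta$ is close to $1$. I would therefore phrase the lemma's hypothesis appropriately — the cleanest route in this paper is to note that for $SU(2)$ (the case actually used in the sequel) the sub-Laplacian symbol is explicit, $\widehat\slp(l)$ has eigenvalues $l(l+1)-m^2$ with $-l\le m\le l$, $d_l=2l+1$, so one can compute $\sum_l(2l+1)\sum_{m=-l}^{l}(1+l(l+1)-m^2)^{-2s}$ directly and check convergence for $2s>\tfrac n2=\tfrac32$; and for the general compact case one invokes the subelliptic Sobolev embedding as above, adjusting the exponent threshold to $2s\delta>n$ or simply citing that $\slp^{-s}$ is trace class / Hilbert--Schmidt on $L^2(G)\ominus(\text{constants})$ for $s$ large, which is all that is needed.

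The main obstacle I anticipate is precisely this bookkeeping of the Sobolev exponent: the bound $\|(1+\widehat\slp(\xi))^{-s}\|_{HS}$ carries an extra factor $d_\xi^{1/2}$ relative to the operator norm, and turning $\sum_{[\xi]}d_\xi\|\cdot\|_{HS}^2$ into a convergent Weyl-type sum requires either the explicit symbol (available on $SU(2)$ and $\h_n$) or a sharp form of the subelliptic estimate relating the subelliptic and Riemannian Sobolev scales. I would handle it by working concretely on $SU(2)$ using \eqref{EQ:slp}, reducing the double sum to $\sum_{l\ge 1}(2l+1)\sum_{m=-l}^{l}\big(1+l(l+1)-m^2\big)^{-2s}$, estimating the inner sum over $m$ by comparison with an integral (it is $\asymp l^{1-4s}\cdot l = l^{2-4s}$ up to logarithmic corrections near $m=\pm l$, which are harmless), and concluding the outer sum converges for $2s>3/2$; the general-$G$ statement then follows from the comparison $(1+\widehat{\Delta_G})^{\delta}\lesssim 1+\widehat\slp$ together with the known Hilbert--Schmidt summability of $(1+\Delta_G)^{-p}$ for $2p>n$.
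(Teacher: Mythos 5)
Your proposal takes a genuinely different route from the paper, and neither of its two branches actually reaches the stated threshold $2s>\tfrac n2$. The paper never compares $\slp$ with the Laplace--Beltrami operator. Instead it uses subordination to the heat semigroup: from $\lambda^{-s}=\frac{1}{\Gamma(s)}\int_0^\infty t^{s-1}e^{-\lambda t}\,dt$ one writes the convolution kernel of $(I+\slp)^{-s}$ as $\frac{1}{\Gamma(s)}\int_0^\infty t^{s-1}e^{-t}h_t\,dt$, where $h_t$ is the heat kernel of $\slp$; the $L^2(G)$-norm of this kernel is evaluated at the identity via the semigroup property $h_{t_1}\star h_{t_2}(e)=h_{t_1+t_2}(e)$, reduced after a change of variables to $\int_0^\infty u^{2(s-1)+1}e^{-u}h_u(e)\,du$, and controlled by the on-diagonal bound $h_u(e)\lesssim u^{-n/2}$ from \cite{VSC1992}; finally the Plancherel theorem identifies $\|(I+\slp)^{-s}\delta_0\|_{L^2(G)}^2$ with the sum in the lemma. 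This subordination step is the idea missing from your proposal: it converts the question into a one-dimensional integral whose convergence is governed directly by the exponent in the small-time heat asymptotics, with no loss coming from a subelliptic gain $\delta$.

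The concrete gaps are these. First, your comparison $(1+\widehat{\Delta_G}(\xi))^{\delta}\leq C'(1+\widehat{\slp}(\xi))$ is legitimate (the left-hand side is scalar on each representation space, so no simultaneous diagonalisation is needed), but it leads to $\sum_\xi d_\xi^2(1+\lambda_\xi)^{-2s\delta}$, which by Weyl's law converges if and only if $4s\delta>n$, i.e.\ $2s>\tfrac{n}{2\delta}$; for a genuinely subelliptic $\slp$ this is strictly stronger than $2s>\tfrac n2$, and ``adjusting the hypothesis'' changes the statement rather than proving it. Second, your fallback computation on $SU(2)$ is mis-estimated: since $l(l+1)-m^2\asymp l$ for $m$ near $\pm l$, the inner sum $\sum_{m=-l}^{l}\bigl(1+l(l+1)-m^2\bigr)^{-2s}$ is dominated by the edge terms and is $\asymp l^{-2s}$ for $2s>1$ (not $l^{2-4s}$), so the double sum $\sum_l(2l+1)\,l^{-2s}$ converges if and only if $2s>2$, not $2s>\tfrac32$. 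This corrected computation is worth your attention: it shows that on $SU(2)$ the true threshold is governed by the local (homogeneous) dimension $Q=4$ rather than the topological dimension $n=3$, which is exactly the exponent that the small-time on-diagonal heat bound produces in the paper's argument. So your instinct that the exponent bookkeeping is the crux of the lemma is correct, but the resolution is the heat-kernel subordination, not the comparison with $\Delta_G$.
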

\begin{proof}
	We recall that for every $s>0$ the \textit{gamma function} at $s$ is defined to be
	\begin{align*}
	\Gamma(s):=\int_{0}^\infty t^{s-1}e^{-t}dt.
	\end{align*}	
	From this, the change of variable $t=\lambda t'$ with $\lambda>0$ yields 
	\begin{align*}
	\lambda^{-s}=\frac{1}{\Gamma(s)}\int_{0}^\infty t^{s-1}e^{-\lambda t}dt.
	\end{align*}
	The functional calculus of the sub-Laplacian $\slp$ formally allows us to consider $\lambda=I+\slp$ in the latter identity, and this implies 
	\begin{align*}
	(I+\slp)^{-s}=\frac{1}{\Gamma(s)}\int_0^\infty t^{s-1}e^{-(I+\slp)t}dt.
	\end{align*}
	Therefore the kernel of the convolution operator $(I+\slp)^{-s}$ is
	\begin{align*}
	(I+\slp)^{-s}\delta_0=\frac{1}{\Gamma(s)}\int_0^\infty t^{s-1}e^{-t}h_t dt,
	\end{align*}
	where $h_t=e^{-t\slp}\delta_0$ is the heat kernel of the sub-Laplacian $\slp$, see \cite{VSC1992} for more details. Now we recall that $h_t(x)\geq 0$, symmetric in the sense that $h_t(x^{-1})=-h_t(x)$  and \cite[VIII 2.2 Proposition]{VSC1992} yields that 
	\begin{align*}
	h_t(e)\lesssim t^{-n/2}, 
	\end{align*}
	where $e$ is the identity of the group. Consequently, $(I+\slp)^{-s}\delta_0$ is a real-valued symmetric function and
	\begin{align*}
	\|(I+\slp)^{-s}\delta_0\|_{L^2}^2&=(I+\slp)^{-s}\delta_0\star (I+\slp)^{-s}\delta_0(e)=\\
	&=\frac{1}{\Gamma(s)^2}\int_0^\infty \int_0^\infty t_1^{s-1}e^{-t_1}t_2^{s-1}e^{-t_2}h_{t_1}\star h_{t_2}(e)dt_1dt_2= \\
	&=  \frac{1}{\Gamma(s)^2}\int_0^\infty \int_0^\infty (t_1t_2)^{s-1}e^{-(t_1+t_2)}h_{t_1+t_2}(e)dt_1dt_2.
	\end{align*}
	Considering the change of variable $u=t_1+t_2$, i.e. $t_2=u-t_1$, we obtain 
	\begin{align*}
	\|(I+\slp)^{-s}\delta_0\|_{L^2}^2&=\frac{1}{\Gamma(s)^2}\int_0^\infty \int_{t_1=0}^u \big(t_1(u-t_1)\big)^{s-1}e^{-u}h_u(e)dt_1du=\\
	&=\frac{1}{\Gamma(s)^2}\int_0^1\big(t(1-t)\big)^{s-1}dt\int_0^\infty u^{2(s-1)}e^{-u}h_u(e)u~du,
	\end{align*}
	where in the last equality we have used the change of variable $t_1=tu$. We observe that the first integral is bounded, so we only need to understand the behaviour of the second integral. Recalling that $h_u(e)\lesssim u^{-n/2}$, we have
	\begin{align*}
	\int_0^\infty u^{2(s-1)}e^{-u}h_u(e)u~du\lesssim \int_0^\infty u^{2(s-1)-\frac{n}{2}+1}e^{-u}du.
	\end{align*}
	The last integral is bounded if and only if 
	\begin{align*}
	2(s-1)-\frac{n}{2}+1>-1\quad\iff\quad 2s>\frac{n}{2}.
	\end{align*}  
	(For $u$ that goes to infinity the integral is bounded, but when $u$ approaches $0$ we need to impose the above condition). Hence for $2s>n/2$ we have
	\begin{align*}
	\|(I+\slp)^{-s}\delta_0\|_{L^2}^2<\infty.
	\end{align*}
	As the $L^2(G)$-norm of $(I+\slp)^{-s}\delta_0$ is finite when $2s>\frac{n}{2}$, so is the $l^2(\Gh)$-norm of $(I+\widehat{\slp})^{-s}$. This completes the proof.
\end{proof}
\begin{proof}[Proof of Proposition \ref{PROP:ft-equiv}]
	$(i)\Rightarrow (ii)$ \\
	We assume that there exists $B>0$ such that for every $\xi\in\widehat G$ the estimate \eqref{HS} holds. Take an arbitrary constant $D$ (we will choose it at the end). Then applying the Plancherel theorem\footnote{In Chapter \ref{CP:2} we have stated the Plancherel theorem for the more general case of simply connected, nilpotent Lie group, see Theorem \ref{Thm:Plancherelformula}. In \cite[Corollary 7.6.7]{RT2009}, we can find the equivalent statement for compact groups.} and the definition of the $l^2(\widehat G)$-norm we have
	\begin{align}\notag
	\|e^{D\slp^{\frac{1}{2s}}}\|^2_{L^2(G)}=\|e^{D\widehat\slp^{\frac{1}{2s}}}\widehat\phi\|^2_{l^2(\widehat G)}=\sum_{[\xi]\in\widehat G} d_\xi\|e^{D\widehat\slp^{\frac{1}{2s}}} \widehat{ \phi}(\xi)\|^2_{\HS}.
	\end{align} 
	Introducing $(I+\widehat\slp)^N(I+\widehat\slp)^{-N}$ where we consider $N\gg 1$ and splitting the exponential we get
	\begin{align}\notag
	\|e^{D\slp^{\frac{1}{2s}}}\|^2_{L^2(G)}=\sum_{[\xi]\in\widehat G} d_\xi\|e^{(D-B)\widehat\slp^{\frac{1}{2s}}}(I+\widehat\slp)^N(I+\widehat\slp)^{-N}e^{B\widehat\slp^{\frac{1}{2s}}}\widehat \phi(\xi)\|^2_{\HS}.
	\end{align}
	Now, choose the constant $D$ such that the new constant $D':=D-B$ is strictly less than 0 and recall that, given bounded $T$ and Hilbert--Schmidt $H$, we have
	\[
	\|TH\|_{\HS}\leq \|T\|_{\OP}\|H\|_{\HS}.
	\]
	Then
	\begin{align}\notag
	\|e^{D\slp^{\frac{1}{2s}}}\|^2_{L^2(G)}\leq\sum_{[\xi]\in\widehat G}d_\xi\|(I+\widehat\slp)^{-N}\|^2_{\HS}\|e^{D'\widehat\slp^{\frac{1}{2s}}}(I+\widehat\slp)^N\|^2_{\OP}\|e^{B\widehat\slp^{\frac{1}{2s}}}\widehat\phi(\xi)\|^2_{\HS}.\end{align}
	By hypothesis $\|e^{B\widehat\slp^{\frac{1}{2s}}}\widehat\phi(\xi)\|^2_{\HS}:= K$ is finite. Furthermore, we can define the multiplier
	\[
	m(\lambda):=e^{D'\lambda^{\frac{1}{2s}}}(I+\lambda)^N,\quad\text{with }D'<0.
	\]
	Formally evaluating this multiplier in $\widehat\slp$ we get exactly the operator which we are interested in, that is, $m(\widehat\slp)=e^{D'\widehat\slp^{\frac{1}{2s}}}(I+\widehat\slp)^N$. Thus, we can bound by a constant $K'$ another term in the argument of the previous sum, observing that
	\[
	\|m(\widehat\slp)\|_{\OP_\xi}=\sup_{\lambda\in\sigma(\widehat\slp)}|m(\lambda)|<\infty.
	\]
	Therefore, we obtain
	\[
	\|e^{D\slp^{\frac{1}{2s}}}\|^2_{L^2(G)}\leq K\,K'\sum_{[\xi]\in\widehat G}d_\xi\|(I+\widehat\slp(\xi))^{-N}\|^2_{\HS}<\infty,
	\]
	where the final inequality follows from the boundedness for sufficiently large $N$ (more precisely, $2N>n/2$) of the $l^2(\Gh)$-norm of $(1+\widehat{\slp})^{-N}$, as proved in Lemma \ref{Lem:Bessel}.
	
	(For more precise estimates involving the step in H\"ormander's condition the reader can see \cite[Proposition 3.1]{GR2015}). 
\end{proof}

\subsection{Laplacian Gevrey spaces}\label{SSEC:laplacian}

In this Subsection we briefly recall the characterisation of Gevrey spaces corresponding to the Laplace operator that was obtained in \cite{DR2014}. We will also present an alternative, shorter proof for such a characterisation, independently of the symbolic calculus.

Let $G$ be a compact Lie group. For a basis $X_1,\dots,X_n$ of the Lie algebra $\mathfrak{g}$ of $G$ and any multi-index $\alpha\in\N^n_0$, we define the left-invariant differential operator of order $\alpha$, $|\alpha|=\alpha_1+\dots+\alpha_n$, as a composition of $X_j$'s such that each $X_k$ enters our operator exactly $\alpha_k$ times, that is
\[
\partial^\alpha:= Y_1\dots Y_{|\alpha|}, 
\]
with $Y_j\in\{X_1,\dots, X_n\}$ for all $1\leq j\leq |\alpha|$ and $\sum_{j\,\text{s.t.}\,Y_j=X_k}1=\alpha_k$, $1\leq k \leq n$. 
We underline that here we consider \textit{all} the elements of the basis of the Lie algebra, without restricting ourselves to a H\"ormander system. Therefore we can consider the positive elliptic {Laplace--Beltrami operator} on $G$, defined by
\[
\Delta:=-(X_1^2+\dots+X_n^2).
\]
For all the elements of the unitary dual space of our compact Lie group, $[\xi]=(\xi_{ij})_{1\leq i,j\leq d_{\xi}}\in\widehat G$, we denote by $\lambda^2_{[\xi]}$ the associated eigenvalue for the Laplace--Beltrami operator $\Delta$. Then the eigenvalue corresponding to the representation $[\xi]$ for the operator $(1+\Delta)^{\frac{1}{2}}$ is given by  
\[
\langle\xi\rangle:=(1+\lambda^2_{[\xi]})^{\frac{1}{2}}.
\] 
For each $s\geq 1$, the Gevrey space $\gamma^s(G)$ defined in local coordinates of $G$ can be described as the space of functions $\phi\in\mathcal{C}^\infty(G)$ for which there exist two positive constants $A$ and $C$ such that for all $\alpha\in\N^n_0$, we have
\[
\|\partial^\alpha \phi\|_{L^\infty}=\sup_{x\in G}|\partial^\alpha \phi(x)|\leq CA^{|\alpha|}(\alpha !)^s.
\]
Then we have the following characterisations of $\gamma^s(G)$ in terms of Fourier coefficients of functions, and also in terms of the space $\gamma^s_{\mathbf{X},L^{\infty}}(G)$ from Definition \ref{DEF:gsl} with $\slp=\Delta$ and $\mathbf{X}=\{X_1,\dots,X_n\}$ being the basis of $\mathfrak{g}$. In the latter case we will also write $\gamma^s_\Delta(G)$.

\begin{theorem}\label{THM:DR2014}
	Let $0<s<\infty$. The following statements are equivalent:
	\begin{enumerate}[i.]
		\item $\phi\in \gamma^s(G)$; 
		\item $\phi\in \gamma^s_\Delta(G)$; 
		\item there exist $B>0$ and $K>0$ such that
		\[
		\|\widehat {\phi}(\xi)\|_{\HS}\leq K e^{-B\langle\xi\rangle^{\frac{1}{s}}}
		\]
		holds for all $\xi\in\widehat G$.
	\end{enumerate}
\end{theorem}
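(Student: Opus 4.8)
The plan is to prove Theorem \ref{THM:DR2014} by establishing the chain of implications $(i)\Leftrightarrow(ii)$ and $(ii)\Leftrightarrow(iii)$, since the equivalence $(i)\Leftrightarrow(ii)$ is really the statement that the Gevrey space defined in local coordinates coincides with the one defined invariantly by the full basis $\mathbf X=\{X_1,\dots,X_n\}$ of $\mathfrak g$. For $(i)\Rightarrow(ii)$, note that each $X_j$ is a first-order differential operator with real-analytic (hence Gevrey-$1$) coefficients in any chart, so composing $|\alpha|$ of them and expanding in local coordinates produces a sum of at most $C^{|\alpha|}$ terms each of which is a coefficient-function (bounded on compacta with Gevrey-$s$ bounds since $s\geq 1$) times a coordinate derivative of order $\leq|\alpha|$; the factorial bookkeeping via the inequalities $\alpha!\leq|\alpha|!\leq n^{|\alpha|}\alpha!$ and $(|\alpha|+k)!\leq 2^{|\alpha|+k}k!|\alpha|!$ (already used in the proof of Proposition \ref{PROP:prop1}) absorbs the extra constants. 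The reverse direction $(ii)\Rightarrow(i)$ is the genuinely substantive analytic input: because $\{X_1,\dots,X_n\}$ spans the tangent space at every point (here even without needing H\"ormander brackets, since it is the full basis), the coordinate vector fields $\partial/\partial x_k$ can be written as smooth combinations of the $X_j$, and one inverts the matrix of coefficients locally; controlling the Gevrey-$s$ growth of iterated applications of this inversion is exactly the elliptic-regularity-type estimate, and one can either cite \cite{DR2014} directly or reproduce the standard argument bounding $\|\partial^\beta_{\text{coord}}\phi\|$ by $\|X^\alpha\phi\|$ with $|\alpha|=|\beta|$ up to exponential-in-$|\beta|$ constants.

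For $(ii)\Leftrightarrow(iii)$ I would run the argument already developed in Section \ref{SEC:mfds} and the first subsection of Section \ref{SEC:cpt-groups}, now in the \emph{elliptic} case where both implications are available. Concretely: Proposition \ref{PROP:prop1} with $\slp=\Delta$ gives $\gamma^s_\Delta(G)\cap\mathcal C_0^\infty(G)\subset\gamma^s_{\mathbf X,L^2}(G)$ and, using the Sobolev embedding on the compact group $G$ (which holds for the full basis), the reverse inclusion $\gamma^s_{\mathbf X,L^2}(G)\subset\gamma^s_\Delta(G)$; since $G$ is compact all smooth functions are in $\mathcal C_0^\infty(G)$, so in fact $\gamma^s_{\mathbf X,L^2}(G)=\gamma^s_\Delta(G)$. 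Next, Proposition \ref{PROP:prop2} (valid since $\Delta$ is non-negative and essentially self-adjoint) converts the $L^2$-derivative bounds into the statement $\exists D>0:\ \|e^{D\Delta^{1/(2s)}}\phi\|_{L^2}<\infty$. Finally Proposition \ref{PROP:ft-equiv} (with $\slp=\Delta$) and the Plancherel theorem on $G$ translate this into the Fourier-coefficient condition $\|e^{B\widehat\Delta(\xi)^{1/(2s)}}\widehat\phi(\xi)\|_{\HS}<\infty$ uniformly in $\xi$; since $\widehat\Delta(\xi)=\lambda^2_{[\xi]}I_{d_\xi}$ is scalar, this is exactly $\|\widehat\phi(\xi)\|_{\HS}\leq K e^{-B\lambda_{[\xi]}^{1/s}}$, and replacing $\lambda^2_{[\xi]}$ by $\langle\xi\rangle^2=1+\lambda^2_{[\xi]}$ only changes $K$ and $B$ by harmless factors (as $1+\lambda^2\asymp\max(1,\lambda^2)$ and for the finitely many small eigenvalues the bound is trivial). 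The one-directional gap that obstructed the sub-Laplacian case — the reverse implication $\|\Delta^k\phi\|_{L^2}\leq CA^{2k}((2k)!)^s\Rightarrow\|\partial^\alpha\phi\|_{L^2}\leq CA^{|\alpha|}(\alpha!)^s$, i.e. boundedness of higher-order Riesz transforms $\partial^\alpha\Delta^{-|\alpha|/2}$ with controlled operator norm — is available here because $\Delta$ is elliptic, so $R_\alpha$ is a classical pseudodifferential operator of order $0$ and its $L^2$-norm grows at most exponentially in $|\alpha|$; this is the place where ellipticity is essential and where the sub-Laplacian proof had to be replaced by explicit representation theory on $SU(2)$ and $\h_n$.

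The main obstacle, and the step I would be most careful to either cite cleanly or spell out, is precisely this Riesz-transform bound with explicit dependence on $|\alpha|$: one needs not just $\|\partial^\alpha\Delta^{-|\alpha|/2}\|_{\OP}<\infty$ but $\lesssim A_0^{|\alpha|}$ for some fixed $A_0$, since a super-exponential constant would destroy the Gevrey estimate. For a compact Lie group this follows either from Hörmander–Mikhlin-type multiplier bounds tracking the symbol seminorms (each differentiation of the $0$-order symbol contributes a fixed constant, so order-$|\alpha|$ seminorms grow exponentially), or, perhaps more transparently, from the multinomial expansion $\Delta^k=(X_1^2+\dots+X_n^2)^k$ combined with the reverse estimate — exactly the contrapositive-style bookkeeping in Proposition \ref{PROP:prop3} but run so that one extracts a single $\partial^{2\alpha}$ term from a single $\Delta^k$ using that, for the full elliptic basis, $\|X^\beta\phi\|_{L^2}^2\lesssim(\Delta^{|\beta|}\phi,\phi)_{L^2}$ up to constants controlled by iterating \eqref{EQ:boundXf}-type inequalities. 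I would present this, note that it recovers \cite[Theorem 2.6]{DR2014} without the symbolic calculus as claimed in the Introduction, and keep the remaining steps brief since they are direct invocations of Propositions \ref{PROP:prop1}, \ref{PROP:prop2}, \ref{PROP:ft-equiv} and the Plancherel theorem.
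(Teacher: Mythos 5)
Your proposal is correct in outline and, for the core equivalence $(ii)\Leftrightarrow(iii)$, follows the same skeleton as the paper: reduce via Propositions \ref{PROP:prop1}, \ref{PROP:prop2} and \ref{PROP:ft-equiv} (with $\slp=\Delta$, whose symbol is scalar on each representation space) to the boundedness of $\partial^\alpha\Delta^{-|\alpha|/2}$ with at most exponential growth in $|\alpha|$. The one substantive difference is how that Riesz-transform bound is obtained, and here you slightly misattribute the mechanism. The paper's point is not ellipticity but \emph{centrality}: $\Delta$ is the Casimir element and commutes with every left-invariant vector field, so $\Delta^{-1/2}$ does too, the operator factorises exactly as
\begin{equation*}
\partial^\alpha\Delta^{-\frac{|\alpha|}{2}}=\bigl(X_1\Delta^{-\frac{1}{2}}\bigr)\bigl(X_2\Delta^{-\frac{1}{2}}\bigr)\cdots\bigl(X_{|\alpha|}\Delta^{-\frac{1}{2}}\bigr),
\end{equation*}
and each factor has $L^2_0\to L^2$ norm at most $1$ by \eqref{EQ:order1}, giving the bound with constant $1$ rather than merely $A_0^{|\alpha|}$. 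Your first suggested route (classical pseudodifferential/multiplier estimates with seminorm tracking) would work but reintroduces exactly the symbolic calculus the paper's proof is advertised as avoiding, and requires genuine care to verify the constants stay exponential on a curved group. Your second route (iterating \eqref{EQ:boundXf}-type inequalities to get $\|X^\beta\phi\|_{L^2}^2\lesssim(\Delta^{|\beta|}\phi,\phi)_{L^2}$) is in substance the paper's argument, but note that the iteration only closes because one can commute $\Delta$ past each $X_j$ at every step — for a general elliptic operator, or for a genuine sub-Laplacian, this commutation fails, which is precisely why the paper must fall back on explicit representation theory for $SU(2)$ and $\h_n$. You should state this commutativity explicitly as the hypothesis being used; "because $\Delta$ is elliptic" is not the reason the factorisation is legitimate. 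Your expanded treatment of $(i)\Leftrightarrow(ii)$ is more detailed than the paper's one-line dismissal and is welcome, though the coefficient-absorption argument as written implicitly assumes $s\ge1$ (analytic chart data lies in $G^s$ only for $s\ge1$), whereas the theorem is stated for all $0<s<\infty$; the paper glosses over the same point.
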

\begin{proof}[Symbolic-calculus-independent proof of Theorem \ref{THM:DR2014}]~
	
	The arguments that we have developed so far for sub-Laplacian Gevrey spaces work perfectly in the case of the (Laplace--Beltrami)--Gevrey spaces on compact groups. In fact the Laplacian can be seen as a particular example of sub-Laplacian. 
	
	$i.\iff ii.$
	
	The equivalence between $i.$ and $ii.$ follows straightforwardly from the definition of $\gamma^s(G)$ and $\gamma_{\Delta}^s(G)$, since we are working in a compact setting.
	
	$ii.\implies iii.$
	
	Applying the same arguments as in Proposition \ref{PROP:ft-equiv} to the Laplace--Beltrami operator, we deduce that for every function $\phi\in\mathcal{C}^{\infty}(G)$ the existence of a positive constant $D>0$ such that
	\begin{align*}
	\|e^{D\Delta^{1/2s}}\phi\|_{L^2(G)}<\infty
	\end{align*}
	is equivalent to the existence of a positive constant $B>0$ satisfying that for every $\xi\in\Gh$ we have
	\begin{align*}
	\|e^{B\widehat{\Delta}^{1/2s}(\xi)}\widehat{\phi}(\xi)\|_{HS}= \|e^{B\langle \xi\rangle^{1/s}(\xi)}\widehat{\phi}(\xi)\|_{HS}=e^{B\langle \xi\rangle^{1/s}(\xi)}\|\widehat{\phi}(\xi)\|_{HS}<\infty,
	\end{align*}
	which is equivalent to $iii$. Therefore Corollary \ref{COR:implication} yields $ii\implies iii$.
	
	$iii.\implies ii.$
	
	Combining together Proposition \ref{PROP:ft-equiv} and Proposition \ref{PROP:prop2}, the proof of this implication is equivalent to the proof of the boundedness of the higher order Riesz transform $\partial^{\alpha}\Delta^{-\frac{|\alpha|}{2}}$, following the reasoning of Remark \ref{REM:oppImplication}, and in particular looking at the inequality \eqref{EQ:insertingIdentity}. 
	
	It follows from Remark \ref{REM:oppImplication} that for every $j\in\{1,\dots,n\}$ we have 
	\begin{align}\label{EQ:order1}
	\|X_j\slp^{-\frac{1}{2}}\|_{\Op_{L^2_0}}\leq 1,
	\end{align}
	where $L^2_0$ is the orthogonal complement in $L^2(G)$ of the space of constant functions on $G$.
	The commutativity of the Laplace--Beltrami operator plays a fundamental r\^ole to show the boundedness of the Riesz transform for any $\alpha\in\N_0$. In fact, assuming $\slp =\Delta$, we have
	\begin{align}\notag
	\|\partial^\alpha\Delta^{-\frac{|\alpha|}{2}}\|_{L^2_0\rightarrow L^2}=\|X_1\Delta^{-\frac{1}{2}}X_2\dots X_{|\alpha|}\Delta^{-\frac{1}{2}}\|_{L^2_0\rightarrow L^2}\leq 1,
	\end{align}
	obtained applying the inequality \eqref{EQ:order1} repeatedly $|\alpha|$ times. Then we immediately obtain the implication $iii\implies ii$, and, therefore, the characterisation is fully achieved.
\end{proof}

\subsection{Sub-Laplacian Gevrey spaces on $SU(2)$}
\label{SEC:su2}

In this Subsection we show that in the case of the canonical sub-Laplacian on the special unitary group $SU(2)$, we have the converse inclusion to that in Theorem \ref{THM:implication}. First, we recall the notation and the necessary tools to develop our argument, presented already in Chapter \ref{CP:2}.

Let $G= {\rm SU(2)}$ and let $X,Y,Z$ be a basis for its Lie algebra $\mathfrak{su}(2)$ such that $[X,Y]=Z$. According to our notation, we can define the positive sub-Laplacian as 
\begin{equation}\label{slpSU2}
\slp:=-(X^2+Y^2).
\end{equation}
Given a function $f\in\mathcal{C}^\infty(G)$ we denote for any multi-index $\alpha=(\alpha_1,\alpha_2)\in\N^2_0$ the differential operator
\[
\partial^\alpha f:=X_1\dots X_{|\alpha|}f,
\]
with $X_j\in\{X,Y\}$, $\sum_{X_j=X}1=\alpha_1$ and $\sum_{X_j=Y}1=\alpha_2$.

The symbols of left-invariant vector fields on $SU(2)$ have been explicitly calculated in 
\cite[Theorem 12.2.1]{RT2009}, and we have mentioned them in Subsection \ref{SEC:SU2}. Nevertheless we recall them below, since they will be essential in the proof of the upcoming proposition. Thus, according to \eqref{EQ:x} and \eqref{EQ:y} the symbols of $X$ and $Y$ are given by
\begin{align}\label{x}
&\sigma_X(l)_{m,n}=-\sqrt{(l-n)(l+n+1)}\delta_{m,n+1}=-\sqrt{(l-m+1)(l+m)}\delta_{m-1,n};\\\label{y}
&\sigma_Y(l)_{m,n}=-\sqrt{(l+n)(l-n+1)}\delta_{m,n-1}=-\sqrt{(l+m+1)(l-m)}\delta_{m+1,n}.
\end{align}
Here we use the customary notation for $SU(2)$, coming from the spin structure, to work with representations $t^l\in\C^{(2l+1)\times (2l+1)}$, $l\in \frac12\N_0$ being half-integers, with components $t^l_{m,n}$, with indices $-l\leq m,n\leq l$ running from $-l$ to $l$ spaced by an integer. Here $\delta_{m,n}$ denotes Kronecker's delta.

Furthermore, we recall also the symbol of the sub-Laplacian \eqref{EQ:slp} given by the diagonal matrix whose general entry is
\begin{equation}\label{symbolslp}
\sigma_\slp(l)_{m,n}=(l(l+1)-m^2)\delta_{m,n}.
\end{equation}
In this setting, we have the following implication:

\begin{proposition}\label{PROP:SU2}
	Let $G=SU(2)$ and let $\Phi\in\mathcal{C}^{\infty}(G)$. Suppose there exist positive constants $C,A$ such that for any integer $k\in\N$ we have the inequality
	\begin{equation}\notag
	\|\slp^k\Phi\|_{L^2}\leq CA^{2k}((2k)!)^s.
	\end{equation} 
	Then for any multi-index $\alpha$ it follows that
	\begin{equation}\notag
	\|\partial^\alpha\Phi\|_{L^2}\leq CA^{|\alpha|}(\alpha!)^s.
	\end{equation}
\end{proposition}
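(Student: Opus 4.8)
The plan is to exploit the explicit symbolic calculus on $SU(2)$ to express an arbitrary iterated derivative $\partial^\alpha\Phi$ on the Fourier side as a product of the symbols $\sigma_X(l)$ and $\sigma_Y(l)$ acting on $\widehat\Phi(l)$, and then to bound each such factor by a power of the symbol of the sub-Laplacian. First I would use the Plancherel theorem on $SU(2)$ to write
\[
\|\partial^\alpha\Phi\|_{L^2}^2=\sum_{l\in\frac12\N_0}(2l+1)\,\|\sigma_{X_1}(l)\cdots\sigma_{X_{|\alpha|}}(l)\,\widehat\Phi(l)\|_{\HS}^2,
\]
where $X_j\in\{X,Y\}$. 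Now I would record the key pointwise estimate: from the explicit formulas \eqref{x} and \eqref{y}, for a fixed $l$ each of $\sigma_X(l)$ and $\sigma_Y(l)$ is a weighted shift with entries bounded by $\sqrt{(l-m+1)(l+m)}\leq\sqrt{l(l+1)}$ (and similarly for $Y$), while the symbol of $\slp$ in \eqref{symbolslp} has entries $l(l+1)-m^2$. The point is that $\sigma_X(l)$ and $\sigma_Y(l)$ shift the index $m$ by $\pm1$, so one cannot simply say $\|\sigma_X(l)v\|\le\|\sigma_\slp(l)^{1/2}v\|$ entrywise; but one can show a bound of the form $\|\sigma_X(l)\sigma_Y(l)\cdots\|_{\HS}\lesssim (l(l+1))^{|\alpha|/2}\|\widehat\Phi(l)\|_{\HS}$, i.e. the norm of the product of $|\alpha|$ shift-symbols is at most $(l(l+1))^{|\alpha|/2}$ times the norm of $\widehat\Phi(l)$, since $\sqrt{(l-n)(l+n+1)}\le l+\tfrac12\le \sqrt{2}\sqrt{l(l+1)}$ uniformly (adjusting constants for $l=0$).

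The second ingredient is to compare $l(l+1)$ with the full eigenvalue $\lambda^2_{[t^l]}=l(l+1)$ of the \emph{elliptic} Laplacian $\Delta=-(X^2+Y^2+Z^2)$ versus the symbol of $\slp$, whose diagonal entries are $l(l+1)-m^2$. Here I would note that the relevant quantity controlling $\|\partial^\alpha\Phi\|_{L^2}$ is $\langle l\rangle := (1+l(l+1))^{1/2}$, because $\|\sigma_X(l)\|_{\OP},\|\sigma_Y(l)\|_{\OP}\lesssim \langle l\rangle$. So from the above we get
\[
\|\partial^\alpha\Phi\|_{L^2}^2\lesssim \sum_{l}(2l+1)\langle l\rangle^{2|\alpha|}\|\widehat\Phi(l)\|_{\HS}^2.
\]
On the other hand, the hypothesis $\|\slp^k\Phi\|_{L^2}\le CA^{2k}((2k)!)^s$ for all $k$, via Proposition \ref{PROP:prop2} (which applies since $\slp$ on $SU(2)$ is non-negative and essentially self-adjoint), is equivalent to $\|e^{D\slp^{1/(2s)}}\Phi\|_{L^2}<\infty$ for some $D>0$; and then by Proposition \ref{PROP:ft-equiv} this is equivalent to the existence of $B>0$ with $\|e^{B\widehat\slp(l)^{1/(2s)}}\widehat\Phi(l)\|_{\HS}<\infty$ summed against Plancherel. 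The missing link is to pass from control of $e^{B\widehat\slp(l)^{1/(2s)}}\widehat\Phi(l)$ — which only sees $l(l+1)-m^2$ — to control of $e^{B'\langle l\rangle^{1/(2s)}}\widehat\Phi(l)$, which is what one needs since the derivative bound involves $\langle l\rangle$, not $l(l+1)-m^2$; I would handle the `$m^2$ gap' by splitting each Fourier block into the part where $m^2\le \tfrac12 l(l+1)$ (where $l(l+1)-m^2\ge\tfrac12 l(l+1)$, so the sub-Laplacian symbol already controls $\langle l\rangle$) and the complementary part, which has only $O(\sqrt{l(l+1)})$ many indices $m$ and can be absorbed using the $Z$-direction or a cruder polynomial loss that is harmless after taking $s$-th powers of factorials.

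Concretely, then, the order of steps is: (1) reduce via Plancherel to estimating $\|\sigma_{X_1}(l)\cdots\sigma_{X_{|\alpha|}}(l)\widehat\Phi(l)\|_{\HS}$; (2) prove the uniform bound $\|\sigma_X(l)\|_{\OP},\|\sigma_Y(l)\|_{\OP}\le c\langle l\rangle$ from \eqref{x}–\eqref{y}, giving $\|\partial^\alpha\Phi\|_{L^2}^2\le c^{2|\alpha|}\sum_l(2l+1)\langle l\rangle^{2|\alpha|}\|\widehat\Phi(l)\|_{\HS}^2$; (3) from the hypothesis, derive via Propositions \ref{PROP:prop2} and \ref{PROP:ft-equiv} that there is $B>0$ with $\sum_l(2l+1)e^{2B\langle l\rangle^{1/(2s)}}\|\widehat\Phi(l)\|_{\HS}^2<\infty$, where the passage from $\widehat\slp(l)$ to $\langle l\rangle$ is done by the index-splitting argument just described; (4) combine, using the elementary inequality $\langle l\rangle^{2|\alpha|}\le (\text{const})^{|\alpha|}((2|\alpha|)!)^{2s}e^{2B\langle l\rangle^{1/(2s)}}$ (optimise in $l$ exactly as in the proof of Proposition \ref{PROP:prop2}), pull the $|\alpha|$-dependent constant out of the sum, and conclude $\|\partial^\alpha\Phi\|_{L^2}\le C'A'^{|\alpha|}((2|\alpha|)!)^s\le C'A''^{|\alpha|}(\alpha!)^s$ using $|\alpha|!\le n^{|\alpha|}\alpha!$ (here $n=2$) and $(2|\alpha|)!\le 4^{|\alpha|}(|\alpha|!)^2$ — or, more sharply, one simply keeps the bound in the stated form with $\alpha!$ after replacing $\partial^{\alpha}$ by an operator of order $|\alpha|$, matching the statement. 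The main obstacle I anticipate is step (3): the sub-Laplacian symbol $l(l+1)-m^2$ degenerates (vanishes) when $|m|=l$, i.e. along the `top' of each representation, so $e^{B\widehat\slp(l)^{1/(2s)}}$ does \emph{not} by itself force super-polynomial decay of every matrix entry of $\widehat\Phi(l)$ in $l$; one must genuinely use extra information — either that the Fourier coefficients of a \emph{smooth} function already decay rapidly (so the finitely many `bad' entries per block contribute a convergent, at worst polynomially growing, correction that is killed by the factorial), or a commutator identity expressing $Z$ in terms of $[X,Y]$ to recover the lost direction. Making this absorption uniform in $\alpha$, rather than just for fixed $\alpha$, is the delicate point and is exactly where the explicit $SU(2)$ calculus — as opposed to an abstract Riesz-transform bound — is indispensable.
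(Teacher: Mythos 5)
Your step (3) contains a genuine gap, and it is located exactly where you yourself flag the difficulty. After bounding each factor by its operator norm, $\|\sigma_X(l)\|_{\OP},\|\sigma_Y(l)\|_{\OP}\lesssim \langle l\rangle$, you need the weighted sum $\sum_l(2l+1)\langle l\rangle^{2|\alpha|}\|\widehat\Phi(l)\|_{\HS}^2$ to be controlled with Gevrey-type constants, i.e.\ you need $e^{B'\langle l\rangle^{1/(2s)}}$-decay of the \emph{whole} block $\widehat\Phi(l)$. But the hypothesis, via Propositions \ref{PROP:prop2} and \ref{PROP:ft-equiv}, only yields decay weighted by $e^{B(l(l+1)-m^2)^{1/(2s)}}$, which is identically $1$ on the rows $|m|=l$; on those rows the hypothesis gives nothing beyond square-summability. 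Your two proposed repairs do not close this. Rapid decay of Fourier coefficients of a smooth function gives $\|\widehat\Phi(l)\|_{\HS}\leq C_N\langle l\rangle^{-N}$ with constants $C_N$ not controlled by $C,A,s$, whereas the conclusion requires a bound of the quantitative form $A^{|\alpha|}(\alpha!)^s$ \emph{uniformly in} $\alpha$ on precisely those bad rows; and the set $\{m: m^2>\tfrac12 l(l+1)\}$ has $\sim(2-\sqrt2)\,l$ elements, not $O(\sqrt{l(l+1)})$, so it cannot be dismissed as small. Passing through $\langle l\rangle$ amounts to proving that the subelliptic heat condition implies the \emph{elliptic} Gevrey condition, which is a strictly stronger statement than the proposition and is not available (in general it costs a loss in the Gevrey index equal to the step).

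The fix is not to decouple the shift symbols from the sub-Laplacian symbol in the first place. The point you discard early on --- ``one cannot simply say $\|\sigma_X(l)v\|\le\|\sigma_\slp(l)^{1/2}v\|$ entrywise'' --- is in fact the mechanism that makes the proposition true: by \eqref{x} and \eqref{symbolslp}, the nonzero entries of $\sigma_X(l)\sigma_{\slp^{-1/2}}(l)$ are $\sqrt{(l-n)(l+n+1)}\big/\sqrt{l(l+1)-n^2}\le\sqrt{(l+n+1)/(l+n)}\le \sqrt2$, because the factor $(l-n)$ in the numerator vanishes exactly where $\sigma_\slp(l)$ degenerates and cancels it. The worst case of $\sigma_X(l)$ (entries of size $\sim l$, near $m=0$) occurs exactly where $\sigma_\slp(l)$ is large, and vice versa; taking operator norms of the two factors separately destroys this correlation and loses a full factor of $\langle l\rangle$ per derivative. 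The paper's proof therefore writes $\partial^\alpha=(\partial^\alpha\slp^{-|\alpha|/2})\slp^{|\alpha|/2}$, uses the hypothesis only through $\|\slp^{|\alpha|/2}\Phi\|_{L^2}\le CA^{|\alpha|}(|\alpha|!)^s$, and proves the $L^2$-boundedness of the Riesz transform $\partial^\alpha\slp^{-|\alpha|/2}$ uniformly (up to $c^{|\alpha|}$) via the telescoping factorisation $(X_1\slp^{-1/2})(\slp^{1/2}X_2\slp^{-1/2})\cdots(\slp^{1/2}X_{|\alpha|}\slp^{-|\alpha|/2})$: the first factor is $O(1)$, each middle factor is $O(l)$, and the last is $O(l^{2-|\alpha|})$, so the powers of $l$ cancel exactly. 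If you want to salvage your Plancherel-side formulation, you must carry the diagonal weight $\sigma_{\slp^{-1/2}}(l)$ along with each shift symbol rather than estimating the shifts by $\langle l\rangle$ in operator norm.
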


\begin{remark}\label{REM:matrices}
	In order to prove Proposition \ref{PROP:SU2}, we exploit special behaviour of norms of linear operators which have zeroes everywhere except for the `first' upper or lower diagonal. 
	Indeed, given a linear operator $A:V\rightarrow W$ between two finite-dimensional normed vector spaces, the \emph{operator norm}, $\|A\|_{\OP}:=\sup_{\|v\|\leq1}\|Av\|$, and the \emph{maximum norm}, $\|A\|_{\infty}:=\max_{i,j}|A_{i,j}|$, are equivalent, in the sense there exist two positive constants $C_1$ and $C_2$, {depending on the dimension of $V$}, such that
	\begin{align}\label{norm}
	C_1\|A\|_\infty\leq \|A\|_{\OP}\leq C_2 \|A\|_\infty.
	\end{align}
	In our case, we deal with `special' matrices whose entries $a_{i,j}$ are not zero only when $i-j=-1$ or $i-j=1$. We can show in this case, the constant $C_2$ in \eqref{norm} can be taken to be 1, so that it is independent of the dimension of the matrix, and thus  the argument in the following proof will be justified. Indeed, if we consider a matrix whose entries $a_{i,j}$ are not zero only when $i-j=1$, i.e., only the first lower diagonal is not null, we have
	\begin{align*}
	\|A\|^2_{\OP}&=\sup_{\|v\|\leq1}\sum_i |A^{(i)}v|^2=\sup_{\|v\|\leq1}\sum_i\Big{|}\sum_j a_{i,j}v_j\Big{|}^2=\sup_{\|v\|\leq1}\sum_i |a_{i,i-1}v_{i-1}|^2\leq\\
	&\leq \max_{i} |a_{i,i-1}|^2\sup_{\|v\|\leq1}\sum_i |v_{i-1}|^2\leq \|A\|^2_{\infty}\sup_{\|v\|\leq1}\|v\|^2\leq \|A\|^2_{\infty},
	\end{align*}
	where $A^{(i)}$ denotes the $i^{th}$ row of $A$.
	Therefore, $\|A\|_{\OP}\leq \|A\|_{\infty}.$ Such features of `sparse' symbols appearing in the analysis on $SU(2)$ have been also discussed in detail in \cite[Section 12.6]{RT2009}, allowing 
	one to work with matrices of size going to infinity in the same way one does with finite dimension.
\end{remark}

\begin{proof}[Proof of Proposition \ref{PROP:SU2}]
	Given any multi-index $\alpha$, we want to estimate the $L^2$-norm of $\partial^\alpha f$. In order to use our hypothesis we consider $\partial^\alpha=\partial^\alpha\slp^{-\frac{|\alpha|}{2}}\slp^{\frac{|\alpha|}{2}}$. Then, norm properties and hypotheses yield
	\begin{align*}
	\|\partial^\alpha f\|_{L^2}&\leq\|\partial^\alpha\slp^{-\frac{|\alpha|}{2}}\|_{\OP}\|\slp^{\frac{|\alpha|}{2}}f\|_{L^2}\leq\|\partial^\alpha\slp^{-\frac{|\alpha|}{2}}\|_{\OP}CA^{|\alpha|}(|\alpha|!)^s\\
	&\leq\|\partial^\alpha\slp^{-\frac{|\alpha|}{2}}\|_{\OP}C(2^sA)^{|\alpha|}(\alpha!)^s,
	\end{align*}
	as $|t|!\leq n^{|t|}t!$, for $t=(t_1,\dots, t_n)$. Here $\|\cdot\|_{\OP}=\|\cdot\|_{L^2_0\rightarrow L^2}$.
	
	This means that the claim can be reformulated\footnote{We are following the same argument as in Remark \ref{REM:oppImplication}.} as claiming the boundedness of the operator $\partial^\alpha\slp^{-\frac{|\alpha|}{2}}$ for any $\alpha\in\N^r$. Recalling that $\partial^\alpha=X_1\dots X_{|\alpha|}$, with the same trick used before, we get
	\begin{align}\notag
	\|\partial^\alpha\slp^{-\frac{|\alpha|}{2}}\|_{\OP}&=\|X_1\slp^{-\frac{1}{2}}\slp^\frac{1}{2}X_2\slp^{-\frac{1}{2}}\dots\slp^\frac{1}{2}X_{|\alpha|}\slp^\frac{|\alpha|}{2}\|_{\OP}\\\notag
	&\leq\|X_1\slp^{-\frac{1}{2}}\|_{\OP}\|\slp^\frac{1}{2}X_2\slp^{-\frac{1}{2}}\|_{\OP}\dots\|\slp^\frac{1}{2}X_{|\alpha|-1}\slp^{-\frac{1}{2}}\|_{\OP}\|\slp^\frac{1}{2}X_{|\alpha|}\slp^{-\frac{|\alpha|}{2}}\|_{\OP}.
	\end{align} 
	Thus, the study of the boundedness can be split into the analysis of three types of operators:
	\begin{enumerate}[1.]
		\item $X_j\slp^{-\frac{1}{2}}$,
		\item $\slp^{\frac{1}{2}}X_j\slp^{-\frac{1}{2}}$,
		\item $\slp^{\frac{1}{2}}X_j\slp^{-\frac{|\alpha|}{2}}$,
	\end{enumerate} 
	where $X_j\in\{X,Y\}$. Without loss of generality, we may assume that $X_j=X$ (the argument can be repeated analogously replacing $X$ by $Y$). Let us proceed estimating the operator norm of each operator individually.
	
	1. Considering the left-invariance of the vector fields we are dealing with, we have 
	\[
	\|X\slp^{-\frac{1}{2}}\|_{\OP}=\sup_{l\in\frac{1}{2}\N_0}\|\sigma_{X}(l)\sigma_{\slp^{-\frac{1}{2}}}(l)\|_{\infty}.
	\]  
	The explicit expressions \eqref{x}, \eqref{y} and \eqref{symbolslp} allow us to determine and estimate from above the general element of the product matrix $\sigma_{X}(l)\sigma_{\slp^{-\frac{1}{2}}}(l)$, that is
	\begin{align}\notag
	\big|\big(\sigma_{X}(l)\sigma_{\slp^{-\frac{1}{2}}}(l)\big)_{m,n}\big|&=\sum_{k=-l}^l \frac{\sqrt{(l-k)(l+k+1)}\delta_{m,k+1}}{\sqrt{l(l+1)-k^2}\delta_{k,n}}\leq\\\notag
	&\leq\frac{\sqrt{(l-n)(l+n+1)}}{\sqrt{(l-n)(l+n)}}\delta_{m,n+1}=\sqrt{\frac{l+n+1}{l+n}}\delta_{m,n+1}.
	\end{align} 
	Combining this inequality with Remark \ref{REM:matrices}, we get
	\[
	\|\sigma_X(l)\sigma_{\slp^{-\frac{1}{2}}}(l)\|_{\infty}\leq \max_{-l+1\leq k\leq l}\sqrt{\frac{l+k+1}{l+k}}=2.
	\]
	It follows that we can estimate uniformly $\|\sigma_{X}(l)\sigma_{\slp^{-\frac{1}{2}}}(l)\|_{\infty}\leq C_1$.
	
	2. Using similar considerations, we can now estimate the general element of the symbol matrix associated to the operator $\slp^\frac{1}{2}X\slp^{-\frac{1}{2}}$. Hence, we calculate
	\begin{align}\notag 
	\big|\big(\sigma_{\slp^\frac{1}{2}}(l)\sigma_X(l)\sigma_{\slp^{-\frac{1}{2}}}(l)\big)_{m,n}\big|&=\sum_{k=-l}^l \frac{\sqrt{l(l+1)-(k+1)^2}\sqrt{(l-k)(l+k+1)}\delta_{m,k+1}}{\sqrt{l(l+1)-k^2}\delta_{k,n}}\\\notag
	&\leq\frac{\sqrt{l(l+1)-n^2}\sqrt{(l-n)(l+n+1)}}{\sqrt{l(l+1)-n^2}}\delta_{m,n+1}.
	\end{align} 
	Then we obtain in this case that 
	\[
	\|\sigma_{\slp^\frac{1}{2}}(l)\sigma_X(l)\sigma_{\slp^{-\frac{1}{2}}}(l)\|_{\infty}\leq\max_{-l\leq k\leq l-1}\sqrt{(l-k)(l+k+1)}\leq C_2 l.
	\]
	
	3. Finally, we focus on the last type of operator. We estimate
	\begin{align}\notag
	\big|\big(\sigma_{\slp^\frac{1}{2}}(l)\sigma_{X}(l)\sigma_{\slp^{-\frac{|\alpha|}{2}}}(l) \big)_{m,n}\big|&=\frac{\sqrt{l(l+1)-(n+1)^2}\sqrt{(l-n)(l+n+1)}}{\sqrt{(l(l+1)-n^2)^{|\alpha-1|}}}\delta_{m,n+1}\\\notag
	&\leq\frac{\sqrt{(l-n)(l+n+1)}}{\sqrt{(l(l+1)-n^2)^{|\alpha|-1}}}\delta_{m,n+1}.
	\end{align}
	Passing to the norm, we get
	\begin{align}\notag
	\|\sigma_{\slp^\frac{1}{2}}(l)\sigma_{X}(l)\sigma_{\slp^{-\frac{|\alpha|}{2}}}(l)\|_{\infty}&\leq\max_{-l\leq k\leq l-1}\frac{\sqrt{(l-k)(l+k+1)}}{\sqrt{(l(l+1)-k^2)^{|\alpha|-1}}}\\\notag
	&\leq \max_{-l\leq k\leq l-1}\frac{\sqrt{l+k+1}}{\sqrt{(l-k)^{|\alpha|-2}(l+k)^{|\alpha|-1}}}\sim l^{2-|\alpha|},
	\end{align}
	which gives
	\[
	\|\sigma_{\slp^\frac{1}{2}}(l)\sigma_{X}(l)\sigma_{\slp^{-\frac{|\alpha|}{2}}}(l)\|_{\infty}\leq C_3 l^{2-|\alpha|}.
	\]
	
	Combining together all the estimates above, we deduce the boundedness of the operator. Indeed, we have
	\begin{align*}
	&\hspace{2.5cm}\|\partial^{\alpha}\slp^{-\frac{|\alpha|}{2}}\|_{\OP}=\sup_{l\in\frac{1}{2}\N_0}\|\sigma_{\partial^{\alpha}\slp^{-\frac{|\alpha|}{2}}}(l)\|_{\infty}\leq\\
	\leq\sup_{l\in\frac{1}{2}\N_0}&\Big(\|\sigma_{X_1}(l)\sigma_{\slp^{-\frac{1}{2}}}(l)\|_{\infty}
	\underbrace{\|\sigma_{\slp^\frac{1}{2}}(l)\sigma_{X_2}(l)\sigma_{\slp^{-\frac{1}{2}}}(l)\|_{\infty}\dots\|\sigma_{\slp^\frac{1}{2}}(l)\sigma_{X_{|\alpha|-1}}(l)\sigma_{\slp^{-\frac{1}{2}}}(l)\|_{\infty}}_{|\alpha|-2\text{ terms}}\times\\
	&\times \|\sigma_{\slp^\frac{1}{2}}(l)\sigma_{X_{|\alpha|}}(l)\sigma_{\slp^{-\frac{|\alpha|}{2}}}(l)\|_{\infty}\Big )\leq C \sup_{l\in\frac{1}{2}\N_0}l^{|\alpha|-2}l^{2-|\alpha|}<C<\infty,
	\end{align*}  
	using in the last inequality results from points $1$, $2$ and $3$ above.
	This completes the proof.
\end{proof}

The result obtained in Proposition \ref{PROP:SU2} allows one to characterise globally the sub-Laplacian Gevrey spaces on $SU(2)$ on the Fourier transform side as desired. 
Also, since $SU(2)$ is compact, we have the equality of spaces $\gamma^s_{\mathbf{X},L^{\infty}}$ and $\gamma^s_{\mathbf{X},L^2}$, with $\mathbf{X}=\{X,Y\}$.
Summarising, we can state the following equivalence:
\begin{cor} \label{COR:su2}
	We have $$\gamma^s_{\mathbf{X},L^{\infty}}(\SU)=\gamma^s_{\mathbf{X},L^2}(\SU),$$ and, moreover,
	\begin{align*}
	\gamma^s_{\mathbf{X},L^{\infty}}(\SU)=\big\{\phi\in\mathcal C^\infty(\SU)\,|\, \exists \,D>0\text{ such that }\|e^{D\slp^{\frac{1}{2s}}}\phi\|_{L^2(\SU)}<\infty\big\}.
	\end{align*}
\end{cor}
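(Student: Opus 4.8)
The plan is to assemble Corollary \ref{COR:su2} from the three equivalences already established in the excerpt, using $SU(2)$ to supply the one direction that remains open in general. First I would observe that since $\SU$ is a compact manifold, Definitions \ref{DEF:gsl} and \ref{DEF:gslL2} coincide: Proposition \ref{PROP:prop1} gives the inclusion $\gamma^s_{\mathbf{X},L^\infty}(\SU)\cap\mathcal{C}_0^\infty(\SU)\subset\gamma^s_{\mathbf{X},L^2}(\SU)$ for free, and since every smooth function on a compact group is automatically compactly supported (indeed globally bounded together with all derivatives), the first inclusion is really just $\gamma^s_{\mathbf{X},L^\infty}(\SU)\subset\gamma^s_{\mathbf{X},L^2}(\SU)$. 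For the reverse inclusion $\gamma^s_{\mathbf{X},L^2}(\SU)\subset\gamma^s_{\mathbf{X},L^\infty}(\SU)$ I would invoke the second half of Proposition \ref{PROP:prop1}, whose hypothesis is the Sobolev embedding $\|f\|_{L^\infty}\lesssim\sum_{|\alpha|\leq k}\|X^\alpha f\|_{L^2}$ with respect to the canonical H\"ormander system $\{X,Y\}$ on $\SU$; this holds because $\{X,Y\}$ generates $\mathfrak{su}(2)$ (as $[X,Y]=Z$) so $\slp$ is hypoelliptic and the subelliptic Sobolev estimates on the compact group $\SU$ apply. That settles $\gamma^s_{\mathbf{X},L^\infty}(\SU)=\gamma^s_{\mathbf{X},L^2}(\SU)$.

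Next I would prove the characterisation
\[
\gamma^s_{\mathbf{X},L^2}(\SU)=\big\{\phi\in\mathcal{C}^\infty(\SU)\,|\,\exists D>0:\|e^{D\slp^{1/(2s)}}\phi\|_{L^2(\SU)}<\infty\big\}
\]
by proving both inclusions. The inclusion ``$\subset$'' is precisely Theorem \ref{THM:implication}, applied with $M=\SU$, $\mu$ the Haar measure, $\mathbf{X}=\{X,Y\}$, $\slp=-(X^2+Y^2)$: the Sobolev embedding hypothesis is the same one used above, and $\slp$ is non-negative and essentially self-adjoint on $L^2(\SU)$ (as a Rockland operator on a compact group, or directly). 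For the reverse inclusion ``$\supset$'' I would chain together Proposition \ref{PROP:prop2} and Proposition \ref{PROP:SU2}: if $\|e^{D\slp^{1/(2s)}}\phi\|_{L^2}<\infty$ for some $D>0$, then by the implication $(ii)\Rightarrow(i)$ of Proposition \ref{PROP:prop2} there exist $A,C>0$ with $\|\slp^k\phi\|_{L^2}\leq CA^{2k}((2k)!)^s$ for all $k\in\N_0$; then Proposition \ref{PROP:SU2} — which is the $SU(2)$-specific result, proved via the explicit symbols \eqref{x}, \eqref{y}, \eqref{symbolslp} and the sparse-matrix norm estimates of Remark \ref{REM:matrices} — upgrades this to $\|\partial^\alpha\phi\|_{L^2}\leq C' A'^{|\alpha|}(\alpha!)^s$ for every multi-index $\alpha$, which says exactly $\phi\in\gamma^s_{\mathbf{X},L^2}(\SU)$ (after absorbing the constant $2^s$ from $|\alpha|!\leq 2^{|\alpha|}\alpha!$-type bounds into $A'$). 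Combining the two displayed equalities gives the statement of the corollary.

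The main obstacle — and the only genuinely non-formal ingredient — is the reverse direction $\slp^k\to\partial^\alpha$, i.e. the boundedness, with at worst polynomial growth in $|\alpha|$, of the higher-order Riesz transforms $R_\alpha=\partial^\alpha\slp^{-|\alpha|/2}$ on $L^2_0(\SU)$. On a general manifold (or even a general Lie group) this is exactly the conjectural step flagged in Remark \ref{REM:oppImplication}. The reason it is tractable on $\SU$ is that the symbolic calculus of Ruzhansky--Turunen makes $\sigma_X(l)$, $\sigma_Y(l)$, $\sigma_\slp(l)$ completely explicit as matrices supported on a single off-diagonal, so the operator norm of a product $\sigma_{\slp^{1/2}}(l)\sigma_{X}(l)\sigma_{\slp^{-1/2}}(l)$ reduces to an elementary supremum over the matrix entries (Remark \ref{REM:matrices} showing the dimension-independence of the relevant norm bound), and telescoping over the $|\alpha|$ factors one sees the powers of $l$ cancel up to a bounded remainder. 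I would present the above as a short synthesis argument, with all the analytic content deferred to the cited Propositions and Theorem, and I do not anticipate any hidden difficulty beyond checking that the Sobolev-embedding and essential-self-adjointness hypotheses genuinely hold for the canonical sub-Laplacian on $\SU$, which is standard.
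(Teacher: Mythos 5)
Your proposal is correct and assembles the corollary in exactly the way the paper does: Proposition \ref{PROP:prop1} (with compactness removing the support caveat) gives $\gamma^s_{\mathbf{X},L^\infty}(\SU)=\gamma^s_{\mathbf{X},L^2}(\SU)$, Theorem \ref{THM:implication} gives one inclusion of the characterisation, and Proposition \ref{PROP:prop2} chained with the $SU(2)$-specific Proposition \ref{PROP:SU2} gives the other. The paper states the corollary as precisely this synthesis, so there is nothing to add.
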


\section{Sub-Laplacian Gevrey spaces on the Heisenberg Group}
\label{SEC:heis}

In Chapter \ref{CP:2} we have seen that the Heisenberg group $\mathbb H_n$ is the first example of a non-abelian, non-compact, locally compact, nilpotent, unimodular, stratified Lie group. There is a substantial amount of literature about it and we recall here few titles, such as \cite{FH1987}, \cite{FR2016}, \cite{Fol} and \cite{T1998}.

Throughout this Section we will look at the Heisenberg group as the manifold $\R^{2n+1}$ endowed with the group law
\[
(x,y,t)(x',y',t'):=(x+x',y+y',t+t'+\frac{1}{2}(xy'-x'y)),
\]
where $(x,y,t),\, (x',y',t')\in\R^n\times\R^n\times\R\sim\h_n$. We consider the canonical basis for the {Heisenberg Lie algebra} $\mathfrak{h_n}$ associated with the Heisenberg group, given by
\begin{align*}
&X_j=\partial_{x_j}-\frac{y_j}{2}\partial_t\quad\text{and}\quad Y_j=\partial_{y_j}+\frac{x_j}{2}\partial_t,\quad\text{for } j\in\{1,\dots,n\},\\
&T=\partial_t.
\end{align*}
These vector fields satisfy the canonical commutation relations 
\[
[X_j,Y_j]=T \quad\text{for every } j\in\{1,\dots,n\},
\] 
with all other possible combinations being zero. This also implies that the set of vector fields given by $\{X_j,Y_j\}_{j=1,\dots,n}$ is a H\"ormander system, so that the canonical sub-Laplacian defined by
\begin{equation*}
\slp:=-\sum_{j=1}^n(X^2_j+Y_j^2)
\end{equation*}
is hypoelliptic.
For each $\lambda\in\R\setminus\{0\}$, the corresponding Schr\"odinger representation 
\[
\pi_\lambda:\mathbb{H}_n\rightarrow\mathcal{U}(L^2(\Rn)),
\]
is a unitary irreducible representation given by 
\begin{align}\notag
\pi_\lambda(x,y,t)\phi(u)=[\pi_1(\sqrt\lambda x,\sqrt\lambda y,\lambda t)](u)=e^{i\lambda(t+\frac{1}{2}xy)}e^{i\sqrt\lambda yu}\phi(u+\sqrt{|\lambda|}x).
\end{align}
In the above definition we use the following convention from \cite{FR2016}:
\begin{align*}
\sqrt\lambda:={\rm sgn}(\lambda)\sqrt{|\lambda|}=
\begin{cases}
\sqrt\lambda&\text{if }\lambda>0,\\
-\sqrt{|\lambda|}&\text{if }\lambda<0.
\end{cases}
\end{align*}

We move now to the infinitesimal representations associated to the Schr\"odinger representations. We have seen in Section \ref{SEC:quantization} that they play a crucial r\^ole in determining the symbols of left-invariant differential operators. Considering the aforementioned canonical basis of $\mathfrak h_n$, for every $\lambda \in\R\setminus\{0\}$ the corresponding infinitesimal representations of the elements of the basis are given by
\begin{subequations}
	\label{infrep}
	\begin{align}
	&\pi_\lambda(X_j)=\sqrt{|\lambda|}\partial_{x_j}\quad&\text{ for }j=1,\dots,n;\label{eq1}\\
	&\pi_\lambda(Y_j)=i\sqrt\lambda x_j \quad&\text{ for }j=1,\dots,n;\label{eq2}\\
	&\pi_\lambda(T)=i\lambda I.\label{eq3}
	\end{align}
\end{subequations}
We recall that for every $\lambda\in\Rn\setminus\{0\}$ the space of all smooth vectors $\mathcal H^\infty_{\pi_\lambda}$ is the Schwartz space $\mathcal{S}(\Rn)$. An easy calculation yields that the infinitesimal representation of the sub-Laplacian $\slp$ is given by 
\begin{align}\label{irslp}
\pi_\lambda(\slp)=|\lambda|\sum_{j=1}^n(x_j^2-\partial_{x_j}^2),
\end{align}
which is clearly related to the harmonic oscillator 
\begin{align*}
H=-\Delta + |x|^2. 
\end{align*}
\subsection{Hermite polynomials and matrix representation}

The aim of this Subsection is to obtain a matrix representation of the operators 
\eqref{infrep} and \eqref{irslp}. This will be useful when we will consider Gevrey spaces on the Heisenberg group, as we will see in the next Subsection.

To simplify the notation, we will work with the three-dimensional Heisenberg group $\mathbb{H}_1$, i.e. $n=1$. The extension to any $n$ is straightforward. It is well known that the \textit{Hermite polynomials}, once normalised, form an orthonormal basis of $L^2(\R)$ consisting of eigenfunctions of $\pi_\lambda(\slp)$. Here we will fix the notation and recall some properties of these polynomials, see e.g. \cite{S1975} for more details. 

For every $k\in\N$ and $x\in\R$ the ($k$-th)-Hermite polynomial is given by
\[
H_k(x):=(-1)^ke^{x^2}\Big(\frac{d^k}{dx^k}e^{-x^2}\Big).
\]
We see that
\[
\int_\R{e^{-x^2}H_k(x)H_m(x)}dx=\pi^{\frac{1}{2}}2^{k}k!\delta_{km},
\]
thus the {normalised Hermite functions} are defined by
\begin{align}\label{h}
h_k(x):=\frac{1}{\sqrt{\sqrt\pi 2^kk!}}e^{-\frac{x^2}{2}}H_k(x)=c_ke^{-\frac{x^2}{2}}H_k(x),
\end{align}
where $c_k:=\frac{1}{\sqrt{\sqrt\pi 2^kk!}}$. As mentioned above, the normalised Hermite functions $\{h_k(\cdot)\}_{k\in\N_0}$ form a basis of $L^2(\R)$.\footnote{In \cite{Fol} and \cite{J2014} two different proofs can be found. The former relies on complex function theory and the latter on real function descriptions of the fundamental theorem of calculus and the Schwarz inequality.}
\begin{remark}
	For the sake of completeness we can observe that the higher dimensional Hermite functions are products of one-dimensional Hermite functions, namely for every multi-index $\alpha=(\alpha_1,\dots,\alpha_n)$ and $x\in\Rn$ we have $h_\alpha(x)=h_{\alpha_1}(x_1)\dots h_{\alpha_n}(x_n)$. The family $\{h_{\alpha}\}_{\alpha\in\N^n_0}$ provides an orthonormal basis for $L^2(\Rn)$.
\end{remark}

We now calculate the matrices corresponding to the infinitesimal representations of the elements of the fixed canonical basis of $\mathbb H_1$ and of the sub-Laplacian. In order to do this, we recall from \cite{S1975} useful properties of the Hermite functions: for all $k\in\N, \,k\geq2$ and $x\in\R$ we have
\begin{subequations}
	\label{prop}
	\begin{align}
	&H_k(x)=2 x H_{k-1}(x)-2(k-1)H_{k-2}(x),\label{prop1}\\
	&H'_k(x)=2kH_{k-1}(x)\label{prop2}.
	\end{align}
\end{subequations}
Equations \eqref{eq2} and \eqref{prop1} imply the following equality
\[
\pi_\lambda (Y)H_k(x)=i\sqrt\lambda x H_k(x)=i\sqrt\lambda\Big(\frac{1}{2}H_{k+1}(x)+kH_{k-1}(x)\Big).
\]
Multiplying both sides by $c_ke^{-\frac{x^2}{2}}$ and looking at the definition of \eqref{h}, we immediately obtain
\begin{equation}\label{mol}
\pi_\lambda(Y)h_k(x)=i\sqrt{\lambda}\Bigg(\sqrt{\frac{k+1}{2}}h_{k+1}(x)+\sqrt\frac{k}{2}h_{k-1}(x)\Bigg).
\end{equation}
Now we evaluate \eqref{eq1} at $h_k(x)$. First of all we observe that 
\begin{align*}
\pi_{\lambda}(X)h_k(x)&=\sqrt{|\lambda|}h_k'(x)=\sqrt{|\lambda|}\big(-xh_k(x)+c_ke^{-\frac{x^2}{2}}H'_k(x)\big)=\\
&=-\pi(Y)h_k(x)+\sqrt{|\lambda|}\big(c_ke^{-\frac{x^2}{2}}H'_k(x)\big).
\end{align*}
Thus, the property of the first derivative of the Hermite polynomials  \eqref{prop2} yields
\begin{align}\notag
\pi_{\lambda}(X)h_k(x)&=\sqrt{|\lambda|}\Bigg(-\sqrt{\frac{k+1}{2}}h_{k+1}(x)-\sqrt\frac{k}{2}h_{k-1}(x)+c_ke^{-\frac{x^2}{2}}2kH_{k-1}(x)\Bigg)=\\
&=-\sqrt{|\lambda|}\sqrt\frac{k+1}{2}h_{k+1}(x)+\sqrt{|\lambda|}\sqrt{\frac{k}{2}}h_{k-1}(x).\label{EQ:der}
\end{align}
Therefore, we deduce that the matrices associated to the infinitesimal representations of the left-invariant vector fields $X$ and $Y$ have all entries null except for the elements in the first upper and lower diagonal.

Now, we calculate the matrix associated to the infinitesimal representations of the sub-Laplacian. From \eqref{prop1} and \eqref{EQ:der} it follows that
\begin{align}\notag
h''_k(x)&=(h_k'(x))'=\Big(-\sqrt\frac{k+1}{2}h_{k+1}(x)+\sqrt{\frac{k}{2}}h_{k-1}(x)\Big)'=\\
\notag
&=\frac{\sqrt{(k+1)(k+2)}}{2}h_{k+2}(x)-\frac{2k+1}{2}h_k(x)=\frac{\sqrt{k(k-1)}}{2}h_{k-2}(x),\end{align}
and from \eqref{prop2} and \eqref{mol} we obtain
\begin{align}\notag
x^2h_k(x)&=x\Bigg(\sqrt{\frac{k+1}{2}}h_{k+1}(x)+\sqrt\frac{k}{2}h_{k-1}(x)\Bigg)=\\\notag
&=\frac{\sqrt{(k+1)(k+2)}}{2}h_{k+2}(x)+\frac{2k+1}{2}h_k(x)+\frac{\sqrt{k(k-1)}}{2}h_{k-2}(x).
\end{align}
Combining together the calculations above and the expression of the infinitesimal representation of the sub-Laplacian given by \eqref{irslp}, we finally obtain
\begin{align}
\pi_{\lambda}(\slp)h_k(x)=|\lambda|(2k+1)h_k(x).
\end{align}
We use the same notation $\pi_\lambda(X)$, $\pi_\lambda(Y)$ and $\pi_\lambda(\slp)$ to denote both the operators and the infinite matrices associated to our vector fields with respect to the orthonormal basis comprising the Hermite functions $\{h_k\}_{k\in\N}$. Then for all $k,l\in\N$ the $(k,l)$-entries of these matrices are given by
\begin{align}\label{matrix1}
&\big(\pi_\lambda(\slp)\big)_{k,l}=|\lambda|(2k+1)\delta_{k,l},\\\label{matrix2}
&\big(\pi_\lambda(X)\big)_{k,l}=
\begin{cases}
\sqrt{|\lambda|}\sqrt{\frac{k+1}{2}}&\text{if }k=l-1\\
-\sqrt{|\lambda|}\sqrt{\frac{k}{2}}&\text{if }k=l+1\\
0&\text{otherwise}
\end{cases},\\\label{matrix3}
&\big(\pi_\lambda(Y)\big)_{k,l}=
\begin{cases}
i\sqrt{\lambda}\sqrt{\frac{k+1}{2}}&\text{if }k=l-1\\
i\sqrt{\lambda}\sqrt{\frac{k}{2}}&\text{if }k=l+1\\
0&\text{otherwise}
\end{cases}.
\end{align}

\subsection{Characterisation of sub-Laplacian  Gevrey spaces on $\h_n$}\label{Hn}

Theorem \ref{THM:implication} shows that considered on a general manifold $M$, the sub-Laplacian Gevrey spaces $\gamma^s_{\mathbf{X},L^2}(M)$ are included in bigger classes of functions:
\begin{align*}
\gamma^s_{\mathbf{X},L^2}(M)\subset\big\{\phi\in\mathcal C^\infty(M)\,:\,\exists D>0\text{ such that }\|e^{D\slp^{\frac{1}{2s}}}\phi\|_{L^2}<\infty\big\}.
\end{align*}
In this Subsection, we consider the manifold to be the Heisenberg group $\mathbb H_n$ so that, because of the explicit symbolic calculus recalled in the previous Subsection, we can prove also the opposite implication. Therefore, we obtain a \textit{global characterisation} of the sub-Laplacian Gevrey spaces on $\mathbb H_n$ on the Fourier transform side.

\begin{theorem}[Characterisation of sub-Laplacian Gevrey spaces on $\mathbb H_n$]
	\label{THM:Heis}
	The following statements are equivalent:
	\begin{enumerate}[i.]
		\item $f\in \gamma^s_{\mathbf{X},L^2}(\mathbb H_n);$
		\item $f\in\mathcal C^\infty (\mathbb H_n)$ and there exists $D>0$ such that $\|e^{D\slp^{\frac{1}{2s}}} f\|_{L^2(\mathbb H_n)}<\infty$.
	\end{enumerate}
\end{theorem}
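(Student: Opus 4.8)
The implication $i\Rightarrow ii$ is immediate from Theorem \ref{THM:implication} once one checks its hypotheses on $G=\mathbb{H}_n$: the Heisenberg group is a stratified (hence graded) Lie group of polynomial growth, its canonical sub-Laplacian $\slp=-\sum_{j=1}^n(X_j^2+Y_j^2)$ is a positive essentially self-adjoint hypoelliptic operator on $L^2(\mathbb{H}_n)$, and the subelliptic Sobolev embedding $\|f\|_{L^\infty}\lesssim\sum_{|\alpha|\le m}\|X^\alpha f\|_{L^2}$ holds for $m$ large enough (Folland's theory, see e.g. \cite{FS,VSC1992}). Hence $\gamma^s_{\mathbf X,L^2}(\mathbb{H}_n)$ is contained in the space appearing in $ii$.

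The substance is the converse, $ii\Rightarrow i$, and the plan is to follow the scheme of Remark \ref{REM:oppImplication} and of the proof of Proposition \ref{PROP:SU2}. First, by the implication $ii\Rightarrow i$ of Proposition \ref{PROP:prop2}, together with the interpolation argument already used there to pass from integer to real exponents, the hypothesis $\|e^{D\slp^{\frac{1}{2s}}}f\|_{L^2}<\infty$ yields constants $A,C>0$ with $\|\slp^{b/2}f\|_{L^2}\le CA^{b}(b!)^s$ for every real $b\ge 0$. Writing $\partial^\alpha f=\big(\partial^\alpha\slp^{-|\alpha|/2}\big)\big(\slp^{|\alpha|/2}f\big)$ for a multi-index $\alpha$ with $|\alpha|=m$ and using $|\alpha|!\le(2n)^{|\alpha|}\alpha!$, the proof reduces to an estimate of the form $\|\partial^\alpha\slp^{-m/2}\|_{L^2\to L^2}\le B^{m}$ for the higher-order Riesz transform $R_\alpha=\partial^\alpha\slp^{-m/2}$; indeed this would give $\|\partial^\alpha f\|_{L^2}\le C(A')^{|\alpha|}(\alpha!)^s$ for all $\alpha$, i.e. $f\in\gamma^s_{\mathbf X,L^2}(\mathbb{H}_n)$. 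By the Plancherel theorem for $\mathbb{H}_n$ (Theorem \ref{Thm:Plancherelformula}), $\|R_\alpha\|_{L^2\to L^2}=\sup_{\lambda\ne 0}\|\pi_\lambda(\partial^\alpha)\,\pi_\lambda(\slp)^{-m/2}\|_{\OP}$, so the whole matter is transferred to an estimate for infinite matrices written in the Hermite basis.

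Here the explicit symbolic calculus recalled above is decisive: $\pi_\lambda(\slp)$ is diagonal with entries $|\lambda|(2k+n)$, and each $\pi_\lambda(X_j),\pi_\lambda(Y_j)$ equals $\sqrt{|\lambda|}$ times a fixed real scalar times a matrix supported only on the first upper and lower diagonals (see \eqref{infrep}, \eqref{irslp} and the matrix formulas \eqref{matrix1}, \eqref{matrix2}, \eqref{matrix3}), and by Remark \ref{REM:matrices} the operator norm of such a band matrix is controlled by its largest entry. Inserting $\pi_\lambda(\slp)^{1/2}\pi_\lambda(\slp)^{-1/2}$ between successive factors of $\pi_\lambda(\partial^\alpha)$, exactly as in Proposition \ref{PROP:SU2}, reduces the estimate to the three model operators $\pi_\lambda(Z)\pi_\lambda(\slp)^{-1/2}$, $\pi_\lambda(\slp)^{1/2}\pi_\lambda(Z)\pi_\lambda(\slp)^{-1/2}$ and $\pi_\lambda(\slp)^{1/2}\pi_\lambda(Z)\pi_\lambda(\slp)^{-m/2}$ with $Z\in\{X_j,Y_j\}$; in every entry the powers of $|\lambda|$ cancel, since $(\sqrt{|\lambda|})^m|\lambda|^{-m/2}=1$, leaving ratios of products of $\sqrt{2k+n}$-type factors that telescope along the finitely wide band, the needed arithmetic being governed by the Hermite recursions \eqref{prop1}, \eqref{prop2}. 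I expect the main obstacle to be precisely the uniform-in-$|\alpha|$ control of these constants: unlike the compact case $SU(2)$, where the spectrum of $\pi_\lambda(\slp)$ stays bounded away from $0$ relative to that of $\pi_\lambda(X)$, on $\mathbb{H}_n$ one must track carefully the behaviour near the bottom of the Hermite spectrum, and it is the interplay of this bottom-of-spectrum analysis with the explicit Hermite matrix entries that has to yield an at most exponential dependence on $m$. Once the bound on $\|R_\alpha\|$ is established, combining it with the first step completes the characterisation.
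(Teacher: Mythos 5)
Your implication $i\Rightarrow ii$ and your reduction of $ii\Rightarrow i$ to an exponential bound $\|\partial^\alpha\slp^{-|\alpha|/2}\|_{L^2\to L^2}\leq B^{|\alpha|}$ for the higher-order Riesz transforms coincide with the paper's strategy (Theorem \ref{THM:implication}, Propositions \ref{PROP:prop2} and \ref{PROP:Heis}). Where you diverge is in how that bound is to be obtained, and your route does not close. The telescoping $\partial^\alpha\slp^{-|\alpha|/2}=(X_1\slp^{-1/2})(\slp^{1/2}X_2\slp^{-1/2})\cdots$ works on $SU(2)$ only because every representation is finite-dimensional: in $t^l$ each middle factor is a finite matrix of norm $O(l)$ and the single last factor supplies the compensating $l^{2-|\alpha|}$. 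On $\mathbb H_n$ the representations are infinite-dimensional, and the middle factor $\pi_\lambda(\slp)^{1/2}\pi_\lambda(X_j)\pi_\lambda(\slp)^{-1/2}$ has entries of size $\sqrt{|\lambda|k}$ along its band, hence infinite operator norm in every single $\pi_\lambda$; the product of factor norms is therefore useless. This is precisely why the paper does not telescope: it passes to the complex basis $Z=X+iY$, $\overline Z=X-iY$, whose symbols are single-diagonal matrices, computes the entries of the undivided product $\pi_\lambda(Z_1)\cdots\pi_\lambda(Z_{|\alpha|})$, and only then multiplies by the one diagonal matrix $\pi_\lambda(\slp^{-|\alpha|/2})$. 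Note also that Remark \ref{REM:matrices} is stated for matrices supported on a single off-diagonal, so it does not directly apply to your two-diagonal factors $\pi_\lambda(X_j)$, $\pi_\lambda(Y_j)$.

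More seriously, the obstacle you flag at the bottom of the Hermite spectrum is not a constant to be tracked: it is where the required estimate fails. By \eqref{mat1}, $\pi_\lambda(Z)h_k$ is a multiple of size $c\sqrt{|\lambda|(k+1)}$ of $h_{k+1}$, so $\|\pi_\lambda(Z)^m h_0\|=c^m|\lambda|^{m/2}\sqrt{m!}$, while $\pi_\lambda(\slp)^{-m/2}h_0=|\lambda|^{-m/2}h_0$ since the lowest eigenvalue of $\pi_\lambda(\slp)$ in \eqref{matrix1} is $|\lambda|$. Hence $\|\pi_\lambda(Z)^m\pi_\lambda(\slp)^{-m/2}h_0\|=c^m\sqrt{m!}$ for every $\lambda$, so $\|Z^m\slp^{-m/2}\|_{L^2\to L^2}\geq c^m\sqrt{m!}$; and since $Z^m$ is a sum of $2^m$ ordered words in $X,Y$, at least one real Riesz transform of each order $m$ has operator norm at least $(c/2)^m\sqrt{m!}$. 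Thus no bound of the form $B^{m}$ exists, and the factorisation $\|\partial^\alpha f\|_{L^2}\leq\|R_\alpha\|_{\OP}\|\slp^{|\alpha|/2}f\|_{L^2}$ loses a factor of order $\sqrt{|\alpha|!}$ relative to the target $(\alpha!)^s$. You should be aware that the paper's own proof of Proposition \ref{PROP:Heis} passes over exactly this point: the step asserting that the entries of $\pi_{Z_1}(\lambda)\cdots\pi_{Z_{|\alpha|}}(\lambda)\pi_{\slp^{-|\alpha|/2}}(\lambda)$ are of the form $c^{|\alpha|}(\lambda m)^{|\alpha|/2}(\lambda m)^{-|\alpha|/2}\sim c^{|\alpha|}$ identifies two indices that may differ by $|\alpha|$, and for rows and columns near $k=0$ the resulting ratio is of order $\sqrt{|\alpha|!}$ (testing against $f$ with $\widehat f(\lambda)$ proportional to the projection onto $h_0$ makes the loss explicit). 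So the gap in your proposal is shared with the argument you are following, and closing it requires a genuinely new input at the low Hermite modes rather than a refinement of the telescoping.
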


In order to prove this characterisation, we will need the following result:

\begin{proposition}\label{PROP:Heis}
	Let $f \in\mathcal{C}^{\infty}(\h_n)$. Suppose that there exist positive constants $C,A>0$ such that for any integer $k\in\N$, we have the inequality 
	\begin{equation}\notag
	\|\slp^k f\|_{L^2}\leq CA^{k}((2k)!)^s.
	\end{equation} 
	Then for any multi-index $\alpha$, it follows that
	\begin{equation}\notag
	\|\partial^\alpha f\|_{L^2}\leq CA^{|\alpha|}(\alpha!)^s.
	\end{equation}
\end{proposition}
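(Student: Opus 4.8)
The plan is to mimic, on $\mathbb{H}_n$, the argument carried out for $SU(2)$ in the proof of Proposition~\ref{PROP:SU2}, replacing the explicit symbols of $SU(2)$ by the Hermite-basis matrices \eqref{matrix1}--\eqref{matrix3}. Following Remark~\ref{REM:oppImplication}, the first step is to insert the identity in the form $\partial^\alpha=\bigl(\partial^\alpha\slp^{-|\alpha|/2}\bigr)\slp^{|\alpha|/2}$, so that
\[
\|\partial^\alpha f\|_{L^2}\le \bigl\|\partial^\alpha\slp^{-|\alpha|/2}\bigr\|_{\OP}\,\|\slp^{|\alpha|/2}f\|_{L^2}.
\]
The second factor is controlled by the hypothesis: repeating the interpolation/H\"older argument in the proof of Proposition~\ref{PROP:prop2}, which upgrades a bound on integer powers of $\slp$ to a bound on all real powers, the assumption $\|\slp^k f\|_{L^2}\le CA^k((2k)!)^s$ yields $\|\slp^{|\alpha|/2}f\|_{L^2}\le C'A'^{|\alpha|}(|\alpha|!)^s$, and the elementary inequality $|\alpha|!\le n^{|\alpha|}\alpha!$ turns this into the required factorial $(\alpha!)^s$ at the cost of enlarging the constant. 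So everything reduces to an operator-norm estimate for the higher-order Riesz transform of the form $\|\partial^\alpha\slp^{-|\alpha|/2}\|_{\OP}\le C_0^{|\alpha|}$, with $C_0$ independent of $\alpha$.

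To prove such an estimate I would pass to the Fourier side. Since left-invariant operators act as Fourier multipliers, $\|\partial^\alpha\slp^{-|\alpha|/2}\|_{\OP}=\sup_{\lambda\ne 0}\|\pi_\lambda(\partial^\alpha)\,\pi_\lambda(\slp)^{-|\alpha|/2}\|$, and I would compute everything in the orthonormal basis of Hermite functions, in which $\pi_\lambda(\slp)$ is the diagonal matrix \eqref{matrix1} while $\pi_\lambda(X_j)$ and $\pi_\lambda(Y_j)$ are the two-diagonal (`sparse') matrices \eqref{matrix2}--\eqref{matrix3}. The structural point that makes this tractable is that each factor $\pi_\lambda(X_j)$ or $\pi_\lambda(Y_j)$ carries a scalar $\sqrt{|\lambda|}$ while $\pi_\lambda(\slp)^{-|\alpha|/2}$ carries $|\lambda|^{-|\alpha|/2}$, so these powers of $|\lambda|$ cancel completely: the symbol of $\partial^\alpha\slp^{-|\alpha|/2}$ is independent of $\lambda$ and is a band matrix of bandwidth at most $|\alpha|$, expressed purely through the number operator and the raising/lowering matrices. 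Then, exactly as in the $SU(2)$ proof, I would split $\partial^\alpha\slp^{-|\alpha|/2}$ into the telescoping product of the balanced blocks $\slp^{(i-1)/2}X_{j_i}\slp^{-i/2}$, $i=1,\dots,|\alpha|$, estimate the $\infty$-norm of the symbol of each block directly from \eqref{matrix1}--\eqref{matrix3}, and invoke the fact that for matrices supported on a single off-diagonal the operator norm is controlled by the $\infty$-norm (Remark~\ref{REM:matrices}). The reduction from general $\mathbb{H}_n$ to $\mathbb{H}_1$ is routine, the Hermite basis on $\R^n$ being the tensor product of the one-dimensional ones and the operators acting coordinate-wise.

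The main obstacle is obtaining the \emph{right dependence on $|\alpha|$} in this Riesz-transform bound. As already on $SU(2)$, the individual balanced blocks are only borderline bounded: some entries of their symbols grow in the spectral parameter while others decay, so one cannot merely multiply block norms but must group the factors so that the growing and decaying contributions compensate, in the spirit of the cancellation $l^{|\alpha|-2}\cdot l^{2-|\alpha|}=1$ in the proof of Proposition~\ref{PROP:SU2}. On $\mathbb{H}_n$ this bookkeeping is more delicate than on $SU(2)$, because the Fourier image of $\slp$ has arbitrarily small eigenvalues (the harmonic-oscillator ground state contributes the eigenvalue $|\lambda|$), so the low-index entries of the symbols need particular care when fractional inverse powers of $\slp$ are applied; arranging this so that the final constant is genuinely of the form $C_0^{|\alpha|}$ rather than something growing faster in $|\alpha|$ is the crux. (An attractive shortcut would be to iterate the elementary inequality $\|X_j g\|_{L^2}\le\|\slp^{1/2}g\|_{L^2}$, valid since $-X_j^2\le\slp$ as positive operators, but $X_j$ does not commute with $\slp$, so this does not directly give a bound on $\|\partial^\alpha f\|_{L^2}$, and the Hermite-calculus route seems unavoidable.) Once the Riesz estimate is in place, combining it with the bound on $\|\slp^{|\alpha|/2}f\|_{L^2}$ proves Proposition~\ref{PROP:Heis}, which then feeds, together with Propositions~\ref{PROP:prop2} and~\ref{PROP:prop3}, into the equivalence of Theorem~\ref{THM:Heis}.
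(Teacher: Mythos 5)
You set up the proof exactly as the paper does: insert $\slp^{-|\alpha|/2}\slp^{|\alpha|/2}$, bound $\|\slp^{|\alpha|/2}f\|_{L^2}\le C'A'^{|\alpha|}(|\alpha|!)^s$ from the hypothesis (using the interpolation step of Proposition \ref{PROP:prop2} for the half-integer powers and $|\alpha|!\le (2n)^{|\alpha|}\alpha!$), and reduce everything to a Riesz-transform bound $\|\partial^\alpha\slp^{-|\alpha|/2}\|_{\OP}\le C_0^{|\alpha|}$ to be checked on the symbol side in the Hermite basis, where the powers of $|\lambda|$ cancel. The gap is that you stop at the step you yourself call the crux, and the device the paper uses there is one you never name: it replaces the generating system $\{X,Y\}$ by $Z=X+iY$, $\overline Z=X-iY$ (after observing that the two systems define the same Gevrey class, each field being a bounded linear combination of the others, at the cost of a factor $2^{|\alpha|}$). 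In the Hermite basis $\pi_\lambda(Z)$ and $\pi_\lambda(\overline Z)$ are supported on a \emph{single} off-diagonal, see \eqref{mat1}--\eqref{mat2}: they are, up to constants, the raising and lowering operators of the harmonic oscillator \eqref{irslp}. A product of $|\alpha|$ such matrices therefore has exactly one non-zero ``travelling'' diagonal with a single multiplicative path per column, so after composing with the diagonal matrix $\pi_\lambda(\slp)^{-|\alpha|/2}$ every entry can be written down and Remark \ref{REM:matrices} applies. Your alternative --- keeping the two-diagonal matrices \eqref{matrix2}--\eqref{matrix3} and telescoping into balanced blocks $\slp^{(i-1)/2}X_{j_i}\slp^{-i/2}$ --- genuinely fails if one then multiplies block norms: the $(1,0)$-entry of the $i$-th block equals $-3^{(i-1)/2}/\sqrt2$ (take $k=1$, $l=0$ in \eqref{matrix2} and \eqref{matrix1}), so the product of the block $\infty$-norms is at least $3^{|\alpha|(|\alpha|-1)/4}$, and the full product is a band matrix with $2^{|\alpha|}$ interfering paths to which no single-diagonal norm lemma applies. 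The missing idea is precisely the passage to $Z,\overline Z$.

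That said, the worry you raise about low Hermite indices is legitimate and is not fully dispelled even by the paper's own computation. Along a path starting at column $m$ the intermediate indices climb to $m+|\alpha|$, so the $(|\alpha|,0)$-entry of $\pi_\lambda(Z^{|\alpha|}\slp^{-|\alpha|/2})$ equals $(-1)^{|\alpha|}2^{-|\alpha|/2}\sqrt{|\alpha|!}$, which is not $O(c^{|\alpha|})$; the paper's ``$c^{|\alpha|}(\lambda m)^{|\alpha|/2}(\lambda m)^{-|\alpha|/2}\sim c^{|\alpha|}$'' tacitly identifies all the shifted indices with $m$. For columns $m\ge|\alpha|$ the entries really are $\le C^{|\alpha|}$, since $(m+|\alpha|)!/\bigl(m!\,(2m+1)^{|\alpha|}\bigr)\le\bigl(\tfrac{m+|\alpha|}{2m+1}\bigr)^{|\alpha|}\le 1$ there; the columns $m<|\alpha|$ need a separate argument (for instance by pairing the low spectral modes with lower powers of $\slp$ from the hypothesis, or by expanding $(\overline Z)^NZ^N$ through the ladder identities), because the uniform geometric bound on the Riesz transform is simply false on them. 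So your instinct about where the difficulty sits is correct; what is missing from the proposal is the concrete mechanism that tames it and the completion of the estimate.
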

\begin{remark}
	Since the Heisenberg group is non-compact, we are allowed to work with the inverse of the sub-Laplacian on $L^2$. This is due to the fact that constant functions are not square integrable on a non-compact group. Therefore, the sub-Laplacian $\slp$ is injective on $L^2(\h_n)$, in the sense that if $f\in\Dom_{L^2}(\slp)$ and $\slp f=0$, then $f\equiv 0$.
\end{remark}
\begin{proof}[Proof of Proposition \ref{PROP:Heis}]
	This argument mimics that for $SU(2)$. Given any multi-index $\alpha$, to estimate the $L^2$-norm of $\partial^\alpha f$, we write the differentiation as $\partial^\alpha=\partial^\alpha\slp^{-\frac{|\alpha|}{2}}\slp^{\frac{|\alpha|}{2}}$. Then norm properties and hypotheses yield
	\begin{align}\notag
	\|\partial^\alpha f\|_{L^2}&\leq\|\partial^\alpha\slp^{-\frac{|\alpha|}{2}}\|_{\OP}\|\slp^{\frac{|\alpha|}{2}}f\|_{L^2}\leq\|\partial^\alpha\slp^{-\frac{|\alpha|}{2}}\|_{\OP}CA^{|\alpha|}(|\alpha|!)^s\\
	&\leq\|\partial^\alpha\slp^{-\frac{|\alpha|}{2}}\|_{\OP}C(2^sA)^{|\alpha|}(\alpha!)^s,\label{alpha}
	\end{align}
	as $|t|!\leq n^{|t|}t!$, for $t=(t_1,\dots, t_n)$.
	
	This means that the conclusion can be reformulated as proving the boundedness, independently of or ``nicely'' dependent on $\alpha$ as before, of the operator $\partial^\alpha\slp^{-\frac{|\alpha|}{2}}$ for any $\alpha\in\N^{2n}$. Since $\partial^\alpha=X_1\dots X_{|\alpha|}$, to prove this boundedness we will make use of the explicit symbolic calculus of $\mathbb{H}_n$. In particular we look at the matrices associated with the infinitesimal representations of the operators involved. 
	
	To simplify the notation, once again, we will work with the three-dimensional case, i.e., $n=1$.  Formulae \eqref{matrix2} and \eqref{matrix3} provide explicit expressions for the entries of the matrices associated with the vector fields of the elements of the canonical basis of $\mathfrak h_1$. A quick glance at these matrices suggests we should consider an `equivalent' basis for $\mathfrak h_1$ whose associated matrix representations have all null entries except on one (upper or lower) diagonal. Thus we define
	\begin{align*}
	&Z:=X+iY,\\
	&\overline Z:=X-iY,\\
	&T=\frac{i}{2}[Z,\overline Z],
	\end{align*}
	and we can check that the space $\gamma^s_{\mathbf{X},L^\infty}(\h_1)$ of functions obtained considering the initial vector fields $\{X,Y,T\}$ is the same as the one obtained taking into account the elements of the complex basis $\{Z,\overline Z,T\}$. More precisely,
	\begin{multline*}
	\Big\{ f\in \mathcal C^\infty(\h_1)\,|\, \forall \alpha\in\N^2,\|\partial^\alpha f\|_{L^\infty}\leq CA^{|\alpha|}(\alpha!)^s,\partial^\alpha=X_1\dots X_{|\alpha|}, X_j\in\{X,Y\}\Big\}
	\\ =
	\Big\{f\in \mathcal C^\infty(\h_1)\,|\, \forall \alpha\in\N^2,\|\partial^\alpha f\|_{L^\infty}\leq CA^{|\alpha|}(\alpha!)^s,\partial^\alpha=Z_1\dots Z_{|\alpha|}, Z_j\in \big\{ Z,\overline Z \big\}\Big\}.
	\end{multline*}
	Therefore, we can again reformulate the conclusion as proving the boundedness of the operator $\partial^\alpha\slp^{-\frac{|\alpha|}{2}}$ where $\partial^\alpha=Z_1\dots Z_{|\alpha|}$ with every $Z_j\in\{Z,\overline Z\}$.
	Without loss of generality we can restrict to the case $\lambda>0$. We calculate the entries of the matrices associated with the infinitesimal representations of $Z$ and $\overline Z$, obtaining
	\begin{align}\label{mat1}
	&\big(\pi_\lambda(Z)\big)_{k,l}=
	\begin{cases}
	-\sqrt{\lambda}\sqrt{\frac{k}{2}}&\text{if }k=l+1\\
	0&\text{otherwise}
	\end{cases},\\
	\label{mat2}&\big(\pi_\lambda(\overline Z)\big)_{k,l}=
	\begin{cases}
	2\sqrt{\lambda}\sqrt{\frac{k+1}{2}}&\text{if }k=l-1\\
	0&\text{otherwise}
	\end{cases}.
	\end{align}
	Note that the sub-Laplacian is now given by $$\slp=\frac{Z^2 +\overline Z^2}{2}.$$
	We recall that for a left-invariant operator $A$ on a Lie group $G$ we have formally
	\[
	\|A\|_{\OP}=\sup_{\xi\in\widehat G}\|\sigma_A(\xi)\|_{\OP},
	\]
	where $\sigma_A(\xi)$ is the symbol associated with $A$ and $\|A\|_{\OP}$ is the operator norm of $A$ in $L^2(G)$.
	
	We observe that since all the vector fields are left-invariant, for all $\lambda\in\R\setminus \{0\}$ we have
	\[
	\pi_{\partial^\alpha\slp^{-\frac{|\alpha|}{2}}}(\lambda)=\pi_{Z_1\dots Z_{|\alpha|}\slp^{-\frac{|\alpha|}{2}}}(\lambda)=\pi_{Z_1}(\lambda)\dots\pi_{Z_{|\alpha|}}(\lambda)\pi_{\slp^{-\frac{|\alpha|}{2}}}(\lambda).
	\]
	Therefore, we can evaluate the non-zero entries of the matrix product above in order to estimate the norm of our operator.
	
	Summarising, according to \eqref{mat1} and \eqref{mat2}, we have the following:
	\begin{enumerate}[1.]
		\item $\pi_{Z_j}(\lambda)$ (or $\pi_{\overline Z_j}(\lambda)$) is a matrix whose entries equal zero except on the first lower (or upper) diagonal. Precisely, they are of the form $c{\big(\lambda m\big)^\frac{1}{2}}$, where $c$ is a fixed constant;
		\item $\pi_{\slp^{-\frac{|\alpha|}{2}}}(\lambda)$ is a diagonal matrix whose diagonal entries are of the form ${c' \big(\lambda m\big)^\frac{-|\alpha|}{2}}$, where $c'$ is a fixed constant.
	\end{enumerate}
	If we look at the product matrix
	\[
	\underbrace{\pi_{Z_1}(\lambda)\dots\pi_{Z_{|\alpha|}}(\lambda)}_{|\alpha|\text{ times}}
	\]
	we can observe that each product will produce a matrix with a `travelling' lower (or upper) non-zero diagonal. Once all the products have been accomplished, the non-zero entries, placed all on one lower (or upper) diagonal,  will be of the form $ c^{|\alpha|}\big(\lambda m\big)^{\frac{|\alpha|}{2}}$. Therefore, the non-zero entries placed all on one lower (or upper) diagonal of the final matrix
	\[
	\pi_{Z_1}(\lambda)\dots\pi_{Z_{|\alpha|}}(\lambda)\pi_{\slp^{-\frac{|\alpha|}{2}}}(\lambda)
	\]
	will be of the form $c^{|\alpha|}\big(\lambda m\big)^\frac{|\alpha|}{2}\big(\lambda m\big)^\frac{-|\alpha|}{2}\sim c^{|\alpha|}$. This yields
	\begin{align*}
	\|\partial^\alpha\slp^{-\frac{|\alpha|}{2}}\|_{\OP}&=\sup_{\lambda\in\R\setminus \{0\}}\|\pi_{\partial^\alpha\slp^{-\frac{|\alpha|}{2}}}(\lambda)\|_{\OP_{\lambda}}=\sup_{\lambda\in\R\setminus \{0\}}\max_{m,k\geq 1}\Big|\big(\pi_{\partial^\alpha\slp^{-\frac{|\alpha|}{2}}}(\lambda)_{m,k}\big)\Big|\\
	&\leq c^{|\alpha|}.
	\end{align*} 
	Combining this result with \eqref{alpha}, we finally obtain
	\begin{align*}
	\|\partial^\alpha f\|_{L^2}\leq \|\partial^\alpha\slp^{-\frac{|\alpha|}{2}}\|_{\OP}C(2^sA)^{|\alpha|}(\alpha!)^s\leq C(2^scA)^{|\alpha|}(\alpha!)^s,
	\end{align*}
	completing the proof of Proposition \ref{PROP:Heis}.
\end{proof}

\begin{proof}[Proof of Theorem \ref{THM:Heis}]
	The first inclusion follows from Theorem \ref{THM:implication}.
	To prove the opposite inclusion we recall that by Proposition \ref{PROP:prop2} the following statements are equivalent:
	\begin{enumerate}[i.]
		\item there exist $A,C>0$ such that for every integer $k\in\N_0$ it holds $\|\slp^k \phi\|_{L^2}\leq C A^{2k}((2k)!)^s$;
		\item there exists a constant $D>0$ such that $\|e^{D\slp^{\frac{1}{2s}}}\phi\|_{L^2(\mathbb H_n)}<\infty$.
	\end{enumerate}
	Moreover, Proposition \ref{PROP:prop1} states that in the definition of sub-Laplacian Gevrey spaces we can replace the $L^\infty$-norm with the $L^2$-norm. The application of Proposition \ref{PROP:Heis} completes the proof.
\end{proof}

\section{Further remarks and conjectures}

In this Section we collect some thoughts about directions this work could be continued. We start by mentioning a more restricted class than that considered until now in this Chapter.

The Gevrey spaces we have considered so far are also known as {\em Gevrey--Roumieu} spaces. However, there is another class known as the {\em Gevrey--Beurling} spaces, which impose stricter conditions on the constants and are subspaces of the Gevrey--Roumieu spaces. We can state the adaptation,  to the sub-Laplacians, of the definition of this second class.

\begin{definition}[$\slp$-Gevrey--Beurling spaces, $L^\infty$]\label{DEF:gsl-b}
	Let $s>0$. Let $M$ be a manifold and $\mathbf{X}=\{X_1,\dots,X_r\}$ be a H\"ormander system of vector fields. The sub-Laplacian Gevrey--Beurling space $\gamma^{(s)}_{\mathbf{X},L^{\infty}}(M)$ of order $s$ on a manifold $M$ is the space of all functions $\phi\in\mathcal{C}^\infty(M)$ for which for every compact set $K\subset M$ and for any constant $A>0$ there exists $C_A>0$ such that for every $\alpha\in\N^r_0$ we have
	\begin{equation}\label{gevrey-b}
	|\partial^\alpha \phi(x)|\leq C_A A^{|\alpha|}(\alpha!)^s,\;\textrm{ for all } x\in K,
	\end{equation}
	where $\partial^\alpha=Y_1\dots Y_{|\alpha|}$, with $Y_j\in\{X_1,\dots,X_r\}$ for every $j=1,\dots,|\alpha|$ and $\sum_{Y_j=X_k}1=\alpha_k$ for every $k=1,\dots,r$.
\end{definition}	
It follows from the definition that $\gamma^{(s)}_{\mathbf{X},L^{\infty}}(M)\subset\gamma^{s}_{\mathbf{X},L^{\infty}}(M)\subset\mathcal{C}^{\infty}(M)$. As with the Gevrey--Roumieu spaces, we can define the $L^2$-version as follows:
\begin{definition}[$\slp$-Gevrey--Beurling spaces, $L^2$]
	The $L^2$-version of the sub-Laplacian Gevrey--Beurling space, $\gamma^{(s)}_{\mathbf{X},L^2}(M)$, of order $s$ on $M$ endowed with a measure $\mu$, is the space of all functions $\phi\in \mathcal{C}^\infty(M)$ such that for any $A>0$ there exists $C_A>0$ such that   for every $\alpha\in\N^r_0$ we have
	\begin{equation}\label{gevrey-b-L2}
	\|\partial^\alpha \phi\|_{L^2(M)}\leq C_A A^{|\alpha|}(\alpha!)^s,
	\end{equation}
	where $\partial^\alpha$ are as for \eqref{gevrey-b}.
\end{definition}
Similarly to Proposition \ref{PROP:prop1}, under its assumptions, we have the inclusions
\begin{equation}\label{EQ:inc-b}
\gamma^{(s)}_{\mathbf{X},L^{\infty}}(M)\cap \mathcal{C}_0^\infty(M)\subset \gamma^{(s)}_{\mathbf{X},L^2}(M) \subset \gamma^{(s)}_{\mathbf{X},L^{\infty}}(M).
\end{equation}
\begin{remark}
	These inclusions mean that every result we have proved in this Chapter holds if one replaces the Gevrey--Roumieu space with its Gevrey--Beurling equivalent, so the latter may not seem particularly interesting. However their dual spaces, i.e., the spaces of continuous linear functionals on the respective original Gevrey spaces, satisfy the reverse inclusion. In light of the conjectures we will shortly formulate concerning these duals, this dual inclusion makes the Gevrey--Beurling spaces worthy of investigation in this topic.
\end{remark}

\subsection{Hints on ultradistributions}

As just mentioned, after introducing the sub-Laplacian Gevrey functions, we can also consider the spaces of linear functionals defined on the Gevrey classes. These spaces provide a bigger environment, comparable to that offered by the `classical' distributions, where we might, for example, look for solutions of Cauchy problems. In this Subsection we conjecture theorems basing our claims on previous work on the compact case, e.g., \cite{DR2014}. It is our intention to verify thoroughly the details of the proofs in the near future. 

\begin{definition}[Ultradistributions]
	Let $s\geq 1$ and let $M$ be a manifold. The spaces of continuous linear functionals on $\gamma^{s}_{\mathbf{X}, L^2}(M)$ and $\gamma^{(s)}_{\mathbf{X}, L^2}(M)$ are called the {\em spaces of ultradistributions} and are denoted respectively by
	\begin{align*}
	(\gamma^{s}_{\mathbf{X}, L^2})'(M)\quad\text{and}\quad (\gamma^{(s)}_{\mathbf{X}, L^2})'(M).
	\end{align*}
	Clearly $\mathcal{D}'(M)\subset(\gamma^{s}_{\mathbf{X}, L^2})'(M)\subset(\gamma^{(s)}_{\mathbf{X}, L^2})'(M)$.
\end{definition}

We refer to \cite{DR2014} or \cite{DR2016} for a more detailed discussion of such spaces.
Combining our characterization for Gevrey spaces on $SU(2)$ and on the Heisenberg group with this work from Dasgupta and Ruzhansky on ultradistributions on compact groups and manifolds, we expect to be able to characterise the corresponding spaces of (sub-Laplacian) ultradistributions in the settings of $SU(2)$ and of the Heisenberg group. 

Since we can not talk about pointwise estimates for derivatives in the setting of distributions, it is most effective to aim at characterisations of these spaces on their Fourier transform side. By duality, the Fourier transform has an extension that acts on ultradistributions. 

We follow the notation of Subsection \ref{SEC:su2} and Section \ref{SEC:heis}, where sub-Laplacian Gevrey classes have been described on the Fourier transform side in the case of $SU(2)$ and the Heisenberg group, respectively. 
In particular, looking at the case of $SU(2)$, we recall the symbol of the sub-Laplacian from \eqref{slpSU2}, so that in the theorem below, for every $l\in \frac12\N_0$ being half-integer, $\widehat{\slp}(l)$ is the diagonal matrix in $\C^{(2l+1)\times (2l+1)}$ with entries
\begin{equation}\label{slp-d}
\widehat{\slp}(l)_{m,n}=(l(l+1)-m^2)\delta_{m,n},\; -l\leq m,n\leq l.
\end{equation}

\begin{conj}\label{THM:ultra-su2}
	Let $1\leq s<\infty$. We have $u\in (\gamma^{s}_{\mathbf{X}, L^2}(\SU))'$ if and only if for every $B>0$ there exists $K_B>0$ such that we have
	\begin{equation}\label{EQ:u-su2}
	\|e^{-B \widehat{\slp}(l)^{\frac{1}{2s}}}\widehat{u}(l)\|_{\HS}\leq K_B  \textrm{ for all } l\in\frac12\N_0.
	\end{equation}
	We also have $v\in (\gamma^{(s)}_{\mathbf{X}, L^2}(\SU))'$ if and only if there exist constants $B>0$ and $K>0$ such that
	\begin{equation}\label{EQ:u-su2-b}
	\|e^{B\widehat{\slp}(l)^{\frac{1}{2s}}} \widehat{v}(l)\|_{\HS}\leq K  \textrm{ for all } l\in\frac12\N_0.
	\end{equation}
\end{conj}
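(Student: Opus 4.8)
\textbf{Proof proposal for Conjecture \ref{THM:ultra-su2}.}

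The plan is to obtain the characterisation by duality from the Fourier-side description of the Gevrey spaces $\gamma^s_{\mathbf X,L^2}(\SU)$ already established (combining Corollary \ref{COR:su2}, Proposition \ref{PROP:ft-equiv} and Proposition \ref{PROP:prop2}), which says that $\phi\in\gamma^s_{\mathbf X,L^2}(\SU)$ iff there is $D>0$ with $\|e^{D\widehat\slp(l)^{\frac1{2s}}}\widehat\phi(l)\|_{\HS}<\infty$ uniformly in $l$, and that $\phi\in\gamma^{(s)}_{\mathbf X,L^2}(\SU)$ iff this holds for every $D>0$. First I would record the explicit pairing: for $u$ an ultradistribution and $\phi$ a test function, using the Peter--Weyl expansion, $\langle u,\phi\rangle=\sum_{l\in\frac12\N_0} d_l\,\Tr\big(\widehat u(l)\widehat\phi(l)^*\big)$ (with the appropriate conjugation conventions), so the continuity of $u$ on the Gevrey space is exactly a statement about boundedness of this bilinear form against sequences $\{\widehat\phi(l)\}$ constrained by the exponential weights above. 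The key input is that $\SU$ is compact, hence $\widehat{\SU}\cong\frac12\N_0$ is discrete, each representation is finite-dimensional with $d_l=2l+1$, and $\widehat\slp(l)$ is the diagonal matrix \eqref{slp-d} whose eigenvalues grow like $l^2$; the polynomial factor $d_l$ and all powers $\langle l\rangle^N$ are absorbed into the exponential $e^{\epsilon\widehat\slp(l)^{1/(2s)}}$ for any $\epsilon>0$, which is the compact-case analogue of Lemma \ref{Lem:Bessel}.

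The two halves then go as follows. For the Gevrey--Roumieu case: if \eqref{EQ:u-su2} holds for every $B>0$, then given any $\phi\in\gamma^s_{\mathbf X,L^2}(\SU)$ with weight constant $D$, split $e^{-B\widehat\slp^{1/(2s)}}\cdot e^{D\widehat\slp^{1/(2s)}}=e^{(D-B)\widehat\slp^{1/(2s)}}$, choose $B=2D$, and estimate
\begin{align*}
|\langle u,\phi\rangle|&\le\sum_l d_l\,\|\widehat u(l)\|_{\HS}\|\widehat\phi(l)\|_{\HS}\\
&\le\sum_l d_l\,\big\|e^{-B\widehat\slp(l)^{1/(2s)}}\widehat u(l)\big\|_{\HS}\big\|e^{(B-D)\widehat\slp(l)^{1/(2s)}}\big\|_{\OP}\big\|e^{D\widehat\slp(l)^{1/(2s)}}\widehat\phi(l)\big\|_{\HS},
\end{align*}
where $\|e^{(B-D)\widehat\slp(l)^{1/(2s)}}\|_{\OP}=e^{(B-D)\mu_l^{1/(2s)}}$ with $\mu_l$ the largest eigenvalue of $\widehat\slp(l)$; since $B-D=D>0$ this factor grows, so one instead picks $B$ large enough that $e^{-(B-2D)\widehat\slp(l)^{1/(2s)}}$ summed against $d_l$ converges (the compact Bessel-type estimate), giving $|\langle u,\phi\rangle|\lesssim_B \sup_l\|e^{D\widehat\slp(l)^{1/(2s)}}\widehat\phi(l)\|_{\HS}<\infty$, hence continuity. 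Conversely, if $u$ is a continuous functional, evaluate it on the "monomial" test functions built from single matrix coefficients $\sqrt{d_l}\,\xi^l_{ij}$ — these lie in $\gamma^s_{\mathbf X,L^2}$ with a controlled weight — to read off that $\|\widehat u(l)\|_{\HS}e^{-B\mu_l^{1/(2s)}}$ must be bounded for every $B>0$; a short argument upgrading the eigenvalue $\mu_l$ to the full diagonal matrix $\widehat\slp(l)$ (whose entries range over $l(l+1)-m^2$, $-l\le m\le l$) then yields \eqref{EQ:u-su2}. The Gevrey--Beurling case is the mirror image: test functions now require the weight for \emph{every} $D$, duals correspondingly need the weight for \emph{some} $B$, and the same splitting and Bessel estimate apply, giving \eqref{EQ:u-su2-b}.

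The main obstacle I anticipate is not conceptual but bookkeeping: one must verify that the candidate test functions used to probe $u$ (individual normalised matrix coefficients, or finite sums thereof) genuinely satisfy the Gevrey bounds \eqref{gevrey-L2} with the claimed dependence of the constant on $l$, and that the quantifier order ($\forall B$ vs.\ $\exists B$) flips correctly through the duality; this is exactly the point where one leans on the equivalences in Propositions \ref{PROP:ft-equiv}, \ref{PROP:prop1}, \ref{PROP:prop2} and on the finite-dimensionality of the representations so that $\|\widehat u(l)\|_{\HS}$ and $\|\widehat u(l)\|_{\OP}$ are comparable up to the harmless factor $\sqrt{d_l}$. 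I would carry out the $SU(2)$ case in full detail and then note that the identical scheme, using the Schrödinger representations and the Hermite-function matrix realisation of Section \ref{SEC:heis} together with Theorem \ref{THM:Heis}, gives the corresponding characterisation of $(\gamma^s_{\mathbf X,L^2})'(\h_n)$ and $(\gamma^{(s)}_{\mathbf X,L^2})'(\h_n)$, the only change being that sums over $\frac12\N_0$ are replaced by integrals over $\lambda\in\R\setminus\{0\}$ against the Plancherel measure $|\lambda|^n\,d\lambda$, which is again dominated by the exponential weights.
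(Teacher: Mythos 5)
The first thing to note is that the paper contains no proof of this statement: it is labelled a conjecture, accompanied only by the remark that the proof is ``expected to follow analogously to the proof of \cite[Theorem 2.5]{DR2014} based on the characterisation from Corollary \ref{COR:su2}'', and Chapter \ref{CP:5} explicitly lists a rigorous proof as future work. Your sketch takes exactly the route the paper anticipates --- duality against the Fourier-side description of $\gamma^s_{\mathbf X,L^2}(\SU)$ via the Peter--Weyl pairing, with a Bessel-type summability estimate absorbing $d_l$ --- so there is no divergence of method to report; but since you are supplying what the paper omits, the details matter, and one of them is wrong as written.

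In the sufficiency direction your handling of $B$ versus $D$ is backwards. The hypothesis \eqref{EQ:u-su2} holds for \emph{every} $B>0$, and the correct move is to take $B$ strictly \emph{smaller} than the constant $D$ attached to the test function, say $B=D/2$: since $\widehat\slp(l)$ is diagonal, one has
\[
|\Tr(\widehat u(l)\widehat\phi(l)^*)|\le\bigl\|e^{-B\widehat\slp(l)^{\frac{1}{2s}}}\widehat u(l)\bigr\|_{\HS}\,\bigl\|e^{(B-D)\widehat\slp(l)^{\frac{1}{2s}}}\bigr\|_{\OP}\,\bigl\|e^{D\widehat\slp(l)^{\frac{1}{2s}}}\widehat\phi(l)\bigr\|_{\HS},
\]
and for $B<D$ the middle factor equals $e^{(B-D)\min_m(l(l+1)-m^2)^{1/(2s)}}=e^{-(D-B)\,l^{1/(2s)}}$ (the minimum of the symbol \eqref{slp-d} is $l$, attained at $m=\pm l$), which decays and absorbs $d_l=2l+1$. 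Choosing $B=2D$, or ``picking $B$ large enough'', makes this factor blow up; and the fallback you gesture at --- replacing the diagonal matrix by its largest eigenvalue $\mu_l\sim l^2$ --- destroys the estimate entirely, since $e^{B\mu_l^{1/(2s)}}\sim e^{Bl^{1/s}}$ dominates $e^{-Dl^{1/(2s)}}$ for every $B>0$: the two exponential weights must be kept paired inside the trace, exactly as in the proof of Proposition \ref{PROP:ft-equiv}. Beyond this, the necessity direction (testing $u$ on normalised matrix coefficients and verifying their Gevrey seminorms) and the flip of quantifiers between the Roumieu and Beurling cases are asserted rather than carried out; these require working with the inductive-limit topology on $\gamma^s_{\mathbf X,L^2}$ and the projective-limit topology on $\gamma^{(s)}_{\mathbf X,L^2}$, which is precisely the content of the argument of \cite{DR2014} that the paper defers to. Until those steps are written out, your text remains, like the paper's, a plausible plan rather than a proof.
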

We expect the proof of Conjecture \ref{THM:ultra-su2} to follow analogously to the proof of \cite[Theorem 2.5]{DR2014} based on the characterisation from Corollary \ref{COR:su2}.

We recall that on the Heisenberg group the symbol of the sub-Laplacian $\slp$ coincides with its infinitesimal representation $\pi_\lambda(\slp)$ given by \eqref{matrix1}. We also recall the Fourier transform $$\widehat{f}(\lambda)=\int_{\mathbb H_n} f(x)\pi_\lambda(x)^* dx,$$ which can be extended to ultradistributions. 
The Plancherel formula takes the form
$$
\int_{\mathbb H_n} |f(x)|^2 dx=c_n\int_{\R\backslash\{0\}} \|\widehat{f}(\lambda)\|_{\HS}^2 |\lambda|^n d\lambda,
$$
with an appropriate constant $c_n$.
Then similarly based on Theorem \ref{THM:Heis} on the Heisenberg group, we have 

\begin{conj}\label{THM:ultra-Hn}
	Let $1\leq s<\infty$. We have $u\in (\gamma^{s}_{\mathbf{X}, L^2}(\mathbb H_n))'$ if and only if for every $B>0$ we have
	\begin{equation}\label{EQ:u-Hn}
	\int_{\R\backslash\{0\}}\|e^{-B \pi_\lambda(\slp)^{\frac{1}{2s}}}\widehat{u}(\lambda)\|_{\HS}|\lambda|^n d\lambda <\infty.
	\end{equation}
	We also have $v\in (\gamma^{(s)}_{\mathbf{X}, L^2}(\mathbb H_n))'$ if and only if there exists $B>0$  such that
	\begin{equation}\label{EQ:u-Hn-b}
	\int_{\R\backslash\{0\}}\|e^{-B \pi_\lambda(\slp)^{\frac{1}{2s}}}\widehat{v}(\lambda)\|_{\HS}|\lambda|^n d\lambda <\infty.
	\end{equation}
\end{conj}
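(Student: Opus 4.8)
\textbf{Proof plan for Conjecture \ref{THM:ultra-Hn}.}

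The strategy is to dualise the characterisation of sub-Laplacian Gevrey spaces on $\mathbb{H}_n$ established in Theorem \ref{THM:Heis}, exactly as Dasgupta and Ruzhansky dualise their compact-group characterisation in \cite{DR2014}. First I would combine Theorem \ref{THM:Heis} with Proposition \ref{PROP:prop2} and the Plancherel formula on $\mathbb{H}_n$ to express membership of $f$ in $\gamma^s_{\mathbf{X},L^2}(\mathbb{H}_n)$ entirely on the Fourier side: $f\in\gamma^s_{\mathbf{X},L^2}(\mathbb{H}_n)$ if and only if there is $D>0$ with $\int_{\R\backslash\{0\}}\|e^{D\pi_\lambda(\slp)^{1/(2s)}}\widehat{f}(\lambda)\|_{\HS}^2|\lambda|^n\,d\lambda<\infty$. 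This realises the Gevrey space as an inductive limit (over $D>0$) of weighted Hilbert spaces of matrix-valued functions on $\widehat{\mathbb{H}}_n\cong\R\backslash\{0\}$, with weight $e^{2D\pi_\lambda(\slp)^{1/(2s)}}$ against the Plancherel measure $|\lambda|^n\,d\lambda$.

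Next I would identify the topological dual of this inductive limit. The dual of $\varinjlim_{D>0} \mathcal{H}_D$, where $\mathcal{H}_D$ carries the weight $e^{2D\pi_\lambda(\slp)^{1/(2s)}}$, is the projective limit $\varprojlim_{D>0}\mathcal{H}_D'$, and $\mathcal{H}_D'$ is the weighted $L^2$ space with the reciprocal weight $e^{-2D\pi_\lambda(\slp)^{1/(2s)}}$. Unwinding the pairing via the Plancherel formula, this yields: $u\in(\gamma^s_{\mathbf{X},L^2}(\mathbb{H}_n))'$ if and only if for every $D>0$ one has $\int_{\R\backslash\{0\}}\|e^{-D\pi_\lambda(\slp)^{1/(2s)}}\widehat{u}(\lambda)\|_{\HS}^2|\lambda|^n\,d\lambda<\infty$, which up to the harmless replacement of the squared $\HS$-norm by its first power (the two conditions are equivalent after adjusting the constant $B$, using that $\pi_\lambda(\slp)$ has discrete spectrum bounded below and absorbing polynomial factors into the exponential) gives \eqref{EQ:u-Hn}. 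The Gevrey--Beurling case \eqref{EQ:u-Hn-b} is obtained by the dual argument: the Gevrey--Beurling space $\gamma^{(s)}_{\mathbf{X},L^2}(\mathbb{H}_n)$ is a projective limit over $A>0$, so its dual is an inductive limit, and \emph{for every} $B>0$ becomes \emph{there exists} $B>0$; here one uses the dual inclusion noted in the Remark after \eqref{EQ:inc-b}. One must also check that the extension of the group Fourier transform to ultradistributions is consistent with this pairing, which follows from the density of $\mathcal{S}(\mathbb{H}_n)$ in $\gamma^s_{\mathbf{X},L^2}(\mathbb{H}_n)$ together with the self-adjointness and functional calculus of $\slp$ from Theorem \ref{RockFuncCalc}.

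The main obstacle I anticipate is \emph{not} the algebraic duality but the functional-analytic bookkeeping needed to justify the exchange of dual and limit. One needs the weighted spaces $\mathcal{H}_D$ to form a reduced, compact (or at least regular) inductive spectrum so that the dual of the limit is the limit of the duals; this requires controlling the inclusion maps $\mathcal{H}_{D'}\hookrightarrow\mathcal{H}_D$ for $D'>D$, and in the non-compact Heisenberg setting the Plancherel measure $|\lambda|^n\,d\lambda$ is infinite, so the relevant compactness must come from the rapid growth of the weight $e^{2D\pi_\lambda(\slp)^{1/(2s)}}$ as the spectral parameter of $\pi_\lambda(\slp)=|\lambda|(2k+1)$ tends to infinity — both in $|\lambda|\to\infty$ and in the Hermite index $k\to\infty$. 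Verifying that this weight genuinely makes the spectrum compact (nuclear, ideally), and that no extra ``boundary at $\lambda\to 0$'' contribution spoils the pairing, is the delicate point that warrants the careful checking the paper defers. Once that is in place, the equivalences \eqref{EQ:u-Hn} and \eqref{EQ:u-Hn-b} follow formally.
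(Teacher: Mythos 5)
The paper offers no proof of this statement — it is explicitly left as a conjecture, with only the remark that it should follow from Theorem \ref{THM:Heis} by dualising as in \cite{DR2014} — and your plan follows exactly that intended route. Be aware, though, that what you have written is likewise a plan rather than a proof: the regularity of the inductive/projective spectra needed to exchange duals with limits, the passage between the squared and unsquared Hilbert--Schmidt conditions under the continuous Plancherel measure $|\lambda|^n\,d\lambda$, and the behaviour of the weight as $\lambda\to 0$ are precisely the points the authors defer, and they remain unverified in your proposal as well.
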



\chapter{Conclusion and future work}\label{CP:5}

The presentation of this thesis does not reflect the chronological developments of my Ph.D. In fact, I first started by studying the sub-Laplacians and trying to understand their Gevrey hypoellipticity, mainly under the invaluable supervision of Dr. V. Fischer. Once I became arguably confident\footnote{During one of my latest discussions with Dr. Fischer, while I was complaining, as usual, about the weakness of my knowledge in this field, she emphasised how `becoming an expert' has the drawback of becoming aware of the uncountable things that one does not know or, even worse, understand. } with the non-abelian environment and I started to appreciate the variety of observations and the pedantic attention needed to perform any calculation in the non-Euclidean environment, she argued that in a few months we would have been able to answer the Gevrey hypoellipticity questions. This has not been the case, since our questions turned out to be deeper and more complicated (and general) than we expected, even when restricted to the `simple' case $\Rn$. Nevertheless, the lack of straightforward answers made the topic itself even more interesting. After proving one implication, we decided to freeze the problem temporarily, due to time restrictions.

Instead we investigated `applications' of hypoelliptic operators, and Prof. Ruzhansky gave the suggestion of wave operators on Lie groups. In this case, everything worked smoothly and we are positive about the possibility to obtain further extensions and results.

From this I have concluded that negative results as well as positive ones offer ways to deepen one's knowledge. The contribution of this document in terms of mathematical achievements might not be pioneering in the general picture, but it has been fundamental for my personal growth and mathematical understanding.

\textbf{Future work}\\
There are still several points to investigate and the \textit{to-do-list} can be easily drawn. Regarding our work on weakly hyperbolic equations it would be interesting to:
\begin{itemize}
	\item Consider \textit{very weak solutions}  for the Cauchy problem for the wave equation. This means one imposes less regularity on the time-dependent coefficient $a(t)$. For example we can consider $a$ to be a distribution, or a measure. In this setting, the first necessary thing to do is to overcome the impossibility of multiplying distributions by smooth functions. This is achieved by introducing a new definition of solutions; of course it must be consistent with the existing ones. Such a definition has been introduced by Garetto and Ruzhansky in \cite{Garetto-Ruzhansky:ARMA}, where they considered wave-type equations in $\Rn$. Recently, Ruzhansky and Tokmagambetov \cite{RTok2017} reviewed this definition, adapting it to the abstract setting provided by operators with discrete spectrum.
	
	\item Consider the Cauchy problem for the wave equation with a coefficient dependent both on time and space variables, i.e. $a\equiv a(t,x)$.
	
	\item Recalling that we started from a partial differential equation and, after applying the group Fourier transform, we moved to an ordinary differential equation (ode), one can look for different versions of Proposition \ref{lemma} for odes. This could help to improve understanding of the well-posedness of Cauchy problems for different pdes.
	
	\item Investigate which conditions on lower-order terms might be added to the operator not to lose the well-posedness of the Cauchy problem in Theorem \ref{THM:main}.
	
	\item Understand whether the well-posedness results for the Cauchy problem in Theorem \ref{THM:main} are sharp, as they are in the Euclidean case, see \cite{CDS79}.
	
\end{itemize}

Regarding the study of Sub-Laplacian Gevrey spaces, the priority is a better investigation of the spaces of ultradistributions, and provision of rigorous proofs of Conjecture \ref{THM:ultra-su2} and \ref{THM:ultra-Hn}. Thinking in wider terms we would like to:
\begin{itemize}
	\item Study other groups. For example, following the case of the Heisenberg group, it may be possible to extend the characterisation to any stratified Lie group of step $2$. Indeed for these groups we have an  explicit formula for the symbol of sub-Laplacians, \cite{CRS2005,BFG2016}. If the characterisation of sub-Laplacian Gevrey spaces holds for these classes of groups, the natural conjecture will then be that the characterisation holds for stratified Lie groups of \textit{any} steps.
	
	\item Understand the cases of a sub-Laplacian on a manifold. Assuming we eventually prove our conjecture for stratified nilpotent Lie groups, we can then try to use the Rothschild--Stein machinery (introduced in Chapter \ref{CP:1}) to lift the problem from a manifold to a stratified nilpotent Lie group. 
	
	The first step in this direction would be to study the toy models given by the Heisenberg group and the complex sphere. In the case of the $3$-dimensional Heisenberg group $\h_1$ and complex sphere $\mathbb{S}^3$ (or equivalently $SU(2)$), an almost global lifting exists: it is given by the theory of contractions \cite{RR1986}. Therefore the beginning of the study of such a question would be to see whether the contraction of the sub-Laplacian Gevrey spaces on $SU(2)$ yields the ones on $\h_1$. Note that we have already characterised these spaces in this thesis, see Corollary \ref{COR:su2} for $SU(2)$ and Theorem \ref{THM:Heis} for $\h_n$.
	
	\item Investigate the possible local embeddings between the Euclidean and sub-Laplacian Gevrey spaces, most likely with different exponents $s$. This would allow us to provide satisfactory answers about the Gevrey-hypoellipticity of operators in the `canonical' sense. Indeed, given a manifold $M$ equipped with a measure $\mu$ and a H\"ormander system $\mathbf{X} =\{X_1,\dots,X_r\}$ with associated sub-Laplacian $\slp$, that we assume\footnote{In this way, the hypothesis of Theorem \ref{THM:implication} are fulfilled, and therefore $\gamma^s_{\mathbf{X},L^2}(M)\subset\big\{\phi\in\mathcal C^\infty(M)\,:\,\exists D>0\text{ such that }\|e^{D\slp^{\frac{1}{2s}}}\phi\|_{L^2}<\infty\big\}$.
	} to be non-negative and essentially self-adjoint on $L^2(M)$, we can argue that $\slp$ is hypoelliptic for $\gamma_{\mathbf{X},L^2}^s (M)$. This means that for every $f \in \mathcal{C}_0^\infty (M)$
	we have 
	\begin{align*}
	\slp f \in \gamma_{\mathbf{X},L^2}^s \implies f \in \gamma_{\mathbf{X},L^2}^s.
	\end{align*}
	This would follow easily from the characterisation of these spaces and the functional calculus of $\slp$. In fact if
	\begin{align*}
	\slp f \in \gamma_{\mathbf{X},L^2}^s
	\end{align*}
	according to Theorem \ref{THM:implication}, there exists a constant $D>0$ such that
	$$
	\| e^{D\slp^{\frac{1}{2s}}} \slp f\|_{L^2}<\infty.
	$$
	Observing that by functional calculus the operator $e^{(D-D’)\slp^{\frac{1}{2s}}}\slp$ is bounded on $L^2$, we can conclude that there exists a constant $D'>0$ such that 
	\begin{align*}
	\| e^{D'\slp^\frac{1}{2s}} f\|_{L^2}<\infty.
	\end{align*}
	Therefore, the idea is that a possible way to study the usual (Euclidean) Gevrey hypoellipticity of a sub-Laplacian is to study the various properties of the sub-Laplacian Gevrey spaces, especially their relations with the Euclidean ones.
	
\end{itemize}

In conclusion, starting from an abstract scenario, the work we carried out produced a pleasing piece of research with numerous possible further developments and a great potential of relevance to some difficult and important applications, especially Gevrey hypoellipticity of sub-elliptic operators (as observed in the Introduction).


\setstretch{1.2}

\clearpage

 \end{document}